\def\newtheorems{\newtheorem{theorem}{Theorem}[section]
                 \newtheorem{cor}[theorem]{Corollary}
                 \newtheorem{prop}[theorem]{Proposition}
                 \newtheorem{lemma}[theorem]{Lemma}
                 \newtheorem{definition}[theorem]{Definition}
                 \newtheorem{notation}[theorem]{Notation}
                 \newtheorem{claim}[theorem]{Claim}
                 
                 \theoremstyle{definition}
                 \newtheorem{example}[theorem]{Example}
                 
                \theoremstyle{definition}
                \newtheorem{remark}[theorem]{Remark}
                 \newtheorem{question}{Question}[section]
                 }
\newcommand{\CCn}{\mathfrak{C}_n}
\newcommand{\CCk}[1]{\mathfrak{C}_{#1}}
\newcommand{\CCnk}[2]{\mathfrak{C}_{#1,#2}}
\newcommand{\CCnr}{\CCnk{n}{r}}
\newcommand{\Ban}{{\bf U}_n}
\newcommand{\Banr}{{\bf U}_{n,r}}
\newcommand{\Aut}{\mathop{\mathrm{Aut}}\nolimits}
\newcommand{\Out}{\mathop{\mathrm{Out}}\nolimits}
\newcommand{\Inn}{\mathop{\mathrm{Inn}}\nolimits}
\newcommand{\Dec}{\mathop{\mathrm{Dec}}\nolimits}
\newcommand{\crit}{\mathop{\mathrm{crit}}\nolimits}
\newcommand{\Core}{\mathop{\mathrm{Core}}\nolimits}
\newcommand{\Sym}{\mathop{\mathrm{Sym}}\nolimits}
\newcommand{\Homeo}{\mathop{\mathrm{Homeo}}\nolimits}
\newcommand{\Image}{\mathop{\mathrm{Image}}\nolimits}
\newcommand{\Root}{\mathop{\mathrm{Root}}\nolimits}
\newcommand{\Range}{\mathop{\mathrm{Range}}\nolimits}
\newcommand{\LoopStates}{\mathop{\mathrm{LoopStates}}\nolimits}
\newcommand{\core}[1]{\mathop{\mathrm{Core}}\nolimits(#1)}
\newcommand{\T}[1]{\mathcal{#1}}
\newcommand{\Rn}{\mathcal{R}_{n}}
\newcommand{\Rnr}{\mathcal{R}_{n,r}}
\newcommand{\Anr}{\Aut(\Gnr)}
\newcommand{\Bnk}[2]{\mathcal{B}_{#1,#2}}
\newcommand{\Bnr}{\Bnk{n}{r}}
\newcommand{\Snr}{\mathcal{S}_{n,r}}
\newcommand{\Mnr}{\mathcal{M}_{n,r}}
\newcommand{\LBnr}{\mathcal{L}\mathcal{B}_{n,r}}
\newcommand{\PBnr}{\mathcal{H}\mathcal{B}_{n,r}}
\newcommand{\HBnr}{\mathcal{H}\mathcal{B}_{n,r}}
\newcommand{\Ln}{\mathcal{L}_n}
\newcommand{\Lnr}{\mathcal{L}_{n,r}}
\newcommand{\Hn}{\mathcal{H}_n}
\newcommand{\Hnr}{\mathcal{H}_{n,r}}
\newcommand{\On}{\mathcal{O}_n}
\newcommand{\Nn}{\mathcal{N}_{n}}
\newcommand{\SOn}{\widetilde{\mathcal{O}}_{n}}
\newcommand{\Outnr}{\Out(G_{n,r})}
\newcommand{\Onk}[2]{\mathcal{O}_{#1,#2}}
\newcommand{\Onr}{\Onk{n}{r}}
\newcommand{\groupG}[2]{G_{#1,#2}}
\newcommand{\Gnr}{\groupG{n}{r}}
\newcommand{\concat}{\kern1.83pt\hat{\ }\kern1.4pt}
\newcommand{\pair}[2]{\langle#1 ,#2\kern1.4pt\rangle}
\newcommand{\trpl}[3]{\{ #1, #2, #3 \}}
\newcommand{\setm}[2]{\{#1\mid #2\}}
\newcommand{\fsetn}[2]{\{\,#1,\ldots ,#2\}}
\newcommand{\veceta}{\vec{\eta}}
\newcommand{\vecnu}{\vec{\nu}}
\newcommand{\veczeta}{\vec{\zeta}}
\newcommand{\fnn}[3]{#1:{#2}\to{#3}}
\newcommand{\Gnrpcn}{G_{n,r,\mathrm{pcn}}}
\newcommand{\Gnrdcn}{G_{n,r,\mathrm{dcn}}}
\newcommand{\Hnrsim}{H_{n,r,{\sim}}}
\newcommand{\nset}{\{0,1,\ldots,n-1\}}
\newcommand{\Xn}{X_n}
\newcommand{\xn}{\Xn}
\newcommand{\rset}{\left\{\dot{0},\dot{1},\ldots,\dot{r-1}\right\}}
\newcommand{\Wne}{W_{n,\epsilon}}
\newcommand{\xns}{\Wne}
\newcommand{\Wn}{W_n}
\newcommand{\Wnre}{W_{n,r,\epsilon}}
\newcommand{\Wnr}{W_{n,r}}
\newcommand{\wnr}[2]{W_{#1,#2}}
\newcommand{\wnre}[2]{W_{#1,#2,\epsilon}}
\newcommand{\rd}{\mathbf{\dot{r}}}
\newcommand{\N}{\mathbb{N}}
\newcommand{\Z}{\mathbb{Z}}
\newcommand{\im}{\mbox{ im}}
\newcommand{\seteq}{:=}
\newcommand{\dgfrac}[2]{#1 / #2}
\newcommand{\bbN}{\mathbb{N}}
\newcommand{\nrestriction}{\kern-2.5pt\upharpoonright\kern-2.5pt}
\newcommand{\gen}[1]{\langle #1 \rangle}
\newcommand{\Skip}[3]{#1#2 \ldots#2 #3}
\def\fs#1{\mbox{\it #1\kern 1.3pt}}
\def\rfs#1{\mbox{\rm #1\kern 1.3pt}}
\def\bfs#1{\mbox{\bf #1\kern 1.3pt}}
\def\fss#1{\mbox{\scriptsize\it #1\kern 1.3pt}}
\def\fst#1{\mbox{\tiny\it #1\kern 1.1pt}}
\def\sifs#1{\mbox{\scriptsize\it #1\kern 1.3pt}}
\def\srfs#1{\mbox{\kern0.7pt\scriptsize\rm #1\kern 1.3pt}}
\def\sbfs#1{\mbox{\kern0.7pt\srbf #1\kern -0.6pt}}
\def\srbfs#1{\mbox{\kern0.7pt\srbf #1\kern -0.6pt}}
\def\spfs#1{\mbox{\kern0.7pt\scmu #1\kern 1.3pt}}
\def\sspfs#1{\mbox{\kern0.5pt\sscmu #1\kern 1.1pt}}
\def\ssbfs#1{\mbox{\kern0.7pt\ssbf #1\kern 1.3pt}}
\def\fsm#1{\mbox{\tiny\it #1\kern 1.0pt}}
\newcommand{\scirc}
{\raise1pt\hbox{\scriptsize\kern1.5pt$\circ$\kern1.5pt}}
\title[Automorphisms via dynamics]{The further chameleon groups of Richard Thompson and Graham Higman: Automorphisms via dynamics for the Higman-Thompson groups $\Gnr$}
\author{C. Bleak, P. Cameron, Y. Maissel, A. Navas, and F. Olukoya}
\definecolor{RED}{rgb}{1,0,0}\definecolor{BLUE}{rgb}{0,0,1} 
\thanks{{\flushleft \textit{MSC} (2010): 20E36, 20F10, 37B05, 68R99.}\\
{\flushleft \textit{Keywords: Automorphism Groups, Higman--Thompson Groups, Chameleon Groups, Rational Group, Transducers}}}
\begin{document}

\maketitle
\par{\centering {\it In memory of Matayahu Rubin: researcher, teacher, and friend.}\par}
\vspace{.2 in}

{\flushleft {\it Abstract:}}\\
We describe, through the use of Rubin's theorem, the automorphism groups of the Higman-Thompson groups $\Gnr$ as groups of specific homeomorphisms of Cantor spaces $\CCnr$.  This continues a thread of research begun by Brin, and extended later by Brin and Guzm\'an: to characterise the automorphism groups of the `Chameleon groups of Richard Thompson,' as Brin referred to them in 1996.  The work here completes the first stage of that twenty-year-old program, containing (amongst other things) a characterisation of the automorphism group of $V$, which was the `last chameleon.'  The homeomorphisms which arise fit naturally into the framework of Grigorchuk, Nekrashevich, and Suschanski\u\i's \emph{rational group of transducers}: they are exactly those homeomorphisms which are induced by \emph{bi-synchronizing} transducers, which we define in the paper. This result appears to offer insight into the nature of Brin and Guzm\'an's \emph{exotic automorphisms}, while also uncovering connections with the theory of reset words for automata (arising in the Road Colouring Problem) and with the theory of automorphism groups of the full shift.

\tableofcontents

\section{Introduction}
In this article, we describe the automorphism groups $\Anr$ of the Higman-Thompson groups $\{\Gnr\}$, the first infinite family of finitely presented infinite (almost) simple groups to be discovered.

The description of the automorphism group of $V=G_{2,1}$  (and more generally of $G_{n,r}$) has remained a challenge to the community of researchers of the R. Thompson groups since Brin's 1996 article \cite{BrinCh}, which characterises the automorphism groups of $F$ and $T$, but which leaves $V$ as the last `chameleon.'  Brin and Guzm\'an's following article \cite{BGAut} explores many further properties of the automorphism groups of the generalised Thompson groups $F_{n,r}$ and $T_{n,r}$, which are subgroups of $G_{n,r}$. Properties  they discovered include the intriguing existence of `Exotic automorphisms' when $n>2$.  (The groups $F$ and $T$ correspond to $F_{2,1}$ and $T_{2,1}$ in a generalised notation introduced by Brown in \cite{BrownFinite}.)    However, Brin and Guzm\'an leave the mysterious groups of automorphisms of the groups $G_{n,r}$ untouched.  In \cite{BurilloClearyAut}, Burillo and Cleary further study the automorphism group of $F$, investigating some of its metric properties and providing a presentation for this group.

Our paper fills the main gap mentioned above.

To obtain our classification, we follow a three-step process. Firstly, we demonstrate through the use of Rubin's theorem \cite{Rubin} that the automorphism group $\Anr$ of $G_{n,r}$ can be naturally identified as an overgroup $\Bnr$ of $G_{n,r}$ in the group of homeomorphisms of a specific Cantor space $\CCnr$. Secondly, we show that any such homeomorphism must actually have a special property (\emph{finitely many local actions}) that is equivalent to the homeomorphism being representable by a finite state transducer as observed in \cite{GNSenglish}. Such homeomorphisms have inverses of the same sort, which means that we obtain $\Anr$ as a subgroup of the rational group $\Rnr \cong \Rn$ of Grigorchuk, Nekrashevych, and Suschanski\u{\i}.  (This group will reappear frequently in our discussion; so we abbreviate the authors as GNS.) Finally, we classify exactly which subgroup of $\Rnr$ we are getting: we show that the elements of $\Anr$ are precisely those homeomorphisms which can be represented by \emph{strongly synchronizing} transducers and with inverses also being strongly synchronizing (we call such homeomorphisms \emph{bi-synchronizing}).

As the reader can see from the above description, our characterisation of the automorphisms of the groups $\{\Gnr\}$ is similar to Brin's \cite{BrinCh} (and later Brin and Guzm\'an's \cite{BGAut}) for other Thompson-esque groups, in that we describe these automorphism groups as subgroups of homeomorphism groups of Cantor spaces through the use of Rubin's theorem, using the descriptions of the groups $\{\Gnr\}$ as groups of homeomorphisms of the relevant Cantor spaces.  In contrast to the earlier results, the total disconnected nature of Cantor space makes pursuing the outline of the papers \cite{BrinCh} and \cite{BGAut} more difficult.  Section \ref{sec:Rubin} of this paper represents the culmination of the previously taken approach; all of the later work is what is required to bridge the gap and analyse the results. We characterize $\Gnr$ as the set of elements fixing the equivalence classes of the relation that two points in $\CCnr$ are equivalent if they admit a common infinite suffix (in the natural labelling of the points in that space).  However, $\Gnr$ acts highly transitively on any given equivalence class under that relation (that is, $k$-transitively for all positive integers $k$).  Also, the points of any such equivalence class are spread densely throughout the Cantor space $\CCnr$. These features eventually lead to our representation by bi-synchronizing transducers.

The remainder of this work is concerned with several offshoots from our three-step outline mentioned earlier.  Firstly, we use our classification of the automorphism groups to obtain strong results about the outer automorphism groups of the groups $G_{n,r}$.  We are also able, for a fixed $n$, to assemble the set of groups $\{\Onr\mid 1\leq r<n\}$ into a larger group $\On$ which has elements entirely defined in a combinatorial fashion and with a synthetic multiplication operation, which group has subgroups connected to the theory of the group of automorphisms of the shift on $n$ letters.  We also explore properties of bi-synchronizing transducers in their own right as object of interest.

While we have not further investigated the automorphism groups of the groups $F_n$ and $T_n$ as given by Brin and Guzm\'an, the connection with transducers and the rational group appears to provide a very natural framework for exploring some of the questions raised in their paper.  Indeed, we hope the viewpoint taken here towards the groups of automorphisms that we study may help to further understand the `exotic' automorphisms found by Brin and Guzm\'an.

In the next subsections for given $1 \le r \le n$ we define the Cantor space $\CCnr$ and the groups $\Gnr$, $\Bnr$, and $\Rnr$ mentioned above (and some other groups of interest as well). After that we give precise statements of our chief results. We will also interleave some discussion of the state of current research on the questions answered here.

\subsection{Cantor spaces and groups}
For the remainder of this subsection, the symbols $r$ and $n$ will represent two natural numbers so that $1\leqslant r <n$.  It is fine to allow $r\geq n$ as well, but we do not as under the general form of the Higman definition of the groups $\Gnr$, the groups $G_{n,r}$ and $G_{n,(r+n-1)}$ are isomorphic.

Given such $r$ and $n$, the Cantor space $\CCnr$ is the space consisting of all infinite sequences defined as follows:
\[
\CCnr:=\{c a_1 a_2 a_3\ldots \mid a_i \in \nset, c\in \rd\},
\]
where $\rd$ is the set $\{\dot{0},\dot{1},\ldots, \dot{r-1}\}$ which is a set of $r$ symbols disjoint from $\nset$.  That is, $\CCnr$ can be thought of as a disjoint union of $r$ copies of the infinite $n$-ary Cantor space $\mathfrak{C}_n:=\nset^\omega$ (the standard topology on $\CCnr$ is the product topology, considering $\rd$ and $\{0,1,2,\ldots, n-1\}$ as finite discrete spaces).

Now, the group $\Gnr$ is then precisely the group generated by prefix replacement maps: one specifies two incomparable finite prefixes $c_1a_1a_2\ldots a_j$ and $c_2b_1b_2\ldots b_k$ (for some indices $j$ and $k$), and then `swaps' these prefixes  (here, two prefixes are incomparable if neither is a prefix of the other).  For example, a point $c_1a_1a_2\ldots a_ja_{j+1}a_{j+2}\ldots$ would map to $c_2 b_1b_2\ldots b_ka_{j+1}a_{j+2}\ldots$ while $c_2b_1b_2\ldots b_kb_{k+1}b_{k+2}\ldots$ would map to $c_1a_1a_2\ldots a_jb_{k+1}b_{k+2}\ldots$.  Note that one can think of this group as a group of piecewise affine transformations of the space $\CCnr$ which are locally orientation preserving.  The book \cite{Higmanfpsg} introduces these groups and is still a main source of information on the Higman-Thompson groups, which family of groups provides the first infinite source of infinite, finitely presented simple groups (the commutator subgroup of $\Gnr$ is always simple, and is equal to $\Gnr$ when $n$ is even, or is index two in $\Gnr$ when $n$ is odd).  The Higman-Thompson groups are much studied but are still of topical interest, retaining as they do some cloak of mystery.  Some investigations of these groups are \cite{Higmanfpsg,Pardo,MR1170379,Britaetal,Thumann}.

It is well known that in the case $n=2$ and $r=1$, the group $G_{2,1}$ is isomorphic to the R. Thompson group $V$, and in general, following the notation of Brown introduced in \cite{BrownFinite}, we will denote $G_{n,1}$ as $V_n$.

Similarly, there are subgroups $F_n<T_n<V_n$ (again adopting the notation of Brown).  The groups $F_n$, $T_n$ and $V_n$ naturally generalize the R. Thompson groups $F=F_2$, $T=T_2$, and $V=V_2$. (See Thompson's 1965 notes \cite{ThompsonNotes} or the oft-cited survey \cite{CFP} for more information on the R. Thompson groups.)  We will not discuss the groups $F_n$ and $T_n$ in any depth in this article, but we will relate the work done here to previous work carried out for those groups.

For the Cantor space $\mathfrak{C}_n$ there is the group $\mathcal{R}_n$ of homeomorphisms of $\mathfrak{C}_n$, called the \emph{rational group} (on $n$ letters) by its discoverers Grigorchuk, Nekrashevych, and Suschanski\u\i\, in \cite{ GNSenglish, GNSrussian}.  This is the group of homeomorphisms of $\mathfrak{C}_n$ that can be represented by finite (asynchronous) transducers inducing the appropriate transformations of the infinite sequences corresponding to points in $\mathfrak{C}_n$. A transducer is a directed edge-labelled graph with vertices called states, and with edge labels taken from a finite alphabet $\mathcal{A}$.  In normal usage, a (finite) transducer is considered to have an active state, and it reads an input letter  in the alphabet $\mathcal{A}$, transitions the active state using the directions provided by labels on the edges and writes an output word from the alphabet $\mathcal{A}$ according to the edge traversed.  The defining characteristic of a homeomorphism of $\mathfrak{C}_n$ which admits a finite asynchronous transducer to represent it is that such a homeomorphism has only finitely many local actions on the basic open sets of the relevant Cantor space; each basic open set maps to its image using a scaled version of one of these local actions.  The local actions then correspond to the states of the representative transducer.

By an essentially trivial modification of the groups $\mathcal{R}_n$, we introduce here the groups $\Rnr$, which are just like the groups $\mathcal{R}_n$ except that the resulting transducers process points in the Cantor spaces $\CCnr$.


\subsection{Discussion and statements of results}

Recall that the automorphism groups of $F$ and $T$ are described in Brin's landmark paper \cite{BrinCh}.  In the later paper \cite{BGAut}, Brin and Guzm\'an go on to explore the automorphism groups of $F_n$ and $T_n$ for $n>2$, where they make the startling discovery of `exotic' automorphisms.

Perhaps surprisingly, the methods of \cite{BrinCh, BGAut}  fail to restrict possibilities sufficiently to create a meaningful description of the automorphisms of $V_n$.  We say just a few words on this here.  The groups $F_n$ and $T_n$ can be thought of as groups (under composition) of certain piecewise affine homeomorphisms of the spaces $\mathbb{R}$ or $\mathbb{S}^1$, respectively.  The approaches of Brin in \cite{BrinCh} and of Brin and Guzm\'an in \cite{BGAut} both make use of Rubin's theorem to understand an automorphism of a group $F_n$ or $T_n$ as a topological conjugation by a homeomorphism of the relevant space.

For a given $n>1$, the group $V_n$ can be thought of as consisting of the piecewise affine transformations of the Cantor space $\CCn$;   in some sense $V_n$ is the group of ``PL approximation homeomorphisms'' of the full group of homeomorphisms of $\CCn$.  As such, $V_n$ represents a very ``large'' group, and Rubin's theorem again applies.  However, $V_n$ takes full advantage of the totally disconnected and homogeneous nature of its relevant Cantor space.  In consequence, the groupoid of local germs of elements of $V_n$ turns out to be too flexible to provide sufficiently restrictive information on its own to characterize the automorphisms of $V_n$.

Our first theorem represents a resolution of the problem mentioned above.  Through a heavy use of some versions of transitivity of the action of $\Gnr$ on $\CCnr$ we are able to show that an automorphism of $\Gnr$ will only admit finitely many types of local action on $\CCnr$, so automorphisms of $\Gnr$ are representable by finite transducers.

The ``transitivity'' of the action of $\Gnr$ on $\CCnr$ also allows us to see that for a given automorphism, after finitely many steps, the active state of our representative transducer has to be in a specific state.


Specifically, a transducer is \emph{strongly synchronizing at level $m$} if there is a natural number $m$ so that whenever the transducer reads an input word of length $m$, the resulting active state is then known, regardless of the initial active state.  A homeomorphism is representable by a \emph{bi-synchonizing transducer}  if there is a natural number $m$ so that the homeomorphism and its inverse are both representable by finite transducers which are strongly synchronizing at level $n$.  Note that there exist homeomorphisms representable by strongly synchronising transducers but where the inverse of the homeomorphism cannot be represented by a transducer with strong synchronization (see \cite{BleakDonovenJonusas} for examples of these sorts of homeomorphisms).

A transducer which has the property that after reading a specific string (a \emph{reset word}) from \emph{any} state one knows which of the states of the transducer has become the ``active state'' is called a synchronizing transducer (in the literature around the \v{C}ern\'y Conjecture and the road colouring problem (see, e.g., \cite{Trahtman09,Volkov2008})).  This established use of the adjective ``synchronizing'' motivates our choice of language.  Strongly synchronizing transducers are considered further \cite{BleakCameronOlukoya}.

Note there is also another unfortunate collision in nomenclature in the literature.  Transducers which transform input strings in a ``one letter in, one letter out'' fashion from each of their states are called synchronous transducers.  A typical example of such is the transducer whose states represent the standard generators of the Grigorchuk group.

\begin{theorem} \label{thm:MainTheorem}
Let $\Rnr$ represent the generalized GNS rational group of homeomorphisms of the Cantor space $\CCnr$ (that is, those homeomorphisms which are representable by finite initial transducers). The subgroup $\Bnr$ of $\mathcal{R}_{n,r}$ of homeomorphisms representable by bi-synchronizing finite transducers contains $\Gnr$, and is isomorphic to $\Anr$.\end{theorem}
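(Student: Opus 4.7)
The plan is to combine Rubin's theorem with a finiteness argument driven by the transitivity of the $\Gnr$-action on $\CCnr$. Since $\Gnr$ acts faithfully, minimally, and locally densely on the perfect, compact Hausdorff space $\CCnr$, Rubin's theorem applies and produces, for each $\phi \in \Anr$, a unique $h_\phi \in \Homeo(\CCnr)$ with $\phi(g) = h_\phi g h_\phi^{-1}$ for all $g \in \Gnr$. This yields an injection $\Anr \hookrightarrow \Homeo(\CCnr)$. The theorem is then established by (a) showing every such $h_\phi$ lies in $\Snr$, (b) exhibiting $\Gnr \subseteq \Snr$, and (c) showing conjugation by any element of $\Snr$ preserves $\Gnr$, so the image of the injection is exactly $\Snr$.

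For (a), I would first produce $h_\phi \in \Rnr$ by extracting a finite set of local germ-types. High transitivity of $\Gnr$ supplies, for any two cones $U, V \subseteq \CCnr$, an element $g \in \Gnr$ acting as an affine prefix-replacement homeomorphism $U \to V$. The intertwining relation $h_\phi g = \phi(g) h_\phi$ ties the germ of $h_\phi$ on $U$ to the germ on $V$, up to the explicitly known prefix-replacement data. A pigeonhole/compactness argument against a fixed finite basic cover of $\CCnr$ forces $h_\phi$ to realize only finitely many germs up to rescaling; the standard construction in \cite{GNSenglish} then converts such a homeomorphism into a finite asynchronous transducer, placing $h_\phi \in \Rnr$. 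To upgrade membership to $\Snr$, I would exploit the tail equivalence relation $\sim$ on $\CCnr$, where $x \sim y$ iff $x$ and $y$ eventually agree. Since every element of $\Gnr$ fixes each $\sim$-class setwise and acts highly transitively on it, while distinct $\sim$-classes are both dense and distinguishable by the $\Gnr$-action, any $h_\phi$ arising from conjugation by an automorphism must permute $\sim$-classes. Translated into transducer language this is exactly synchronization at some level: after reading a sufficiently long input prefix, the active state depends only on that prefix and not on the initial state. Applying the same argument to $\phi^{-1}$ supplies synchronization for $h_\phi^{-1}$, giving bi-synchronization.

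For (b), any element of $\Gnr$ is given by prefix replacement on a finite antichain of cones; this is directly encoded by a finite transducer whose states correspond to the prefixes involved, and reading any word longer than the maximal prefix length collapses the transducer to a single identity-like loop state, so it is bi-synchronizing (the inverse has the same form). For (c), bi-synchronization of $s \in \Snr$ forces $s$ to preserve $\sim$, and a short verification shows that conjugating a prefix-replacement map by such an $s$ again yields a prefix-replacement map, so $s g s^{-1} \in \Gnr$ for every $g \in \Gnr$; hence $s$ induces an automorphism of $\Gnr$ whose Rubin-associated homeomorphism is $s$ itself. The main obstacle is the finite-local-actions step in (a): squeezing enough out of the transitivity of the $\Gnr$-action to bound the number of local germs of $h_\phi$, rather than merely the continuity of $h_\phi$, and doing so uniformly over $\CCnr$. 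Once that bound is secured, the passage from synchronization to bi-synchronization is essentially symmetric, and the two remaining inclusions are routine.
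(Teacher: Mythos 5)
There is a genuine gap at the crucial step of your part (a): the claim that ``any $h_\phi$ arising from conjugation by an automorphism must permute $\sim$-classes. Translated into transducer language this is exactly synchronization at some level.'' That translation is false. Preserving the tail relation $\sim$ --- even fixing every tail class pointwise --- together with being representable by a finite transducer does \emph{not} imply synchronization. A concrete counterexample (on $\mathfrak{C}_2$, or with the obvious dotted-prefix modification on $\mathfrak{C}_{2,1}$): let $\phi$ send $0^k\concat 1\concat a\concat w \mapsto 0^k\concat 1\concat \bar a\concat w$ (flip the single letter following the first occurrence of $1$) and fix $0^\infty$. This is a homeomorphism, it alters at most one letter of every point, so it lies in $\Hnrsim$ (indeed it is pointwise canonical), and it is represented by a finite transducer with three local actions; but that transducer is not synchronizing at any level, since the word $0^m$ leaves both the ``waiting for the first $1$'' state and the identity state fixed for every $m$. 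So membership in $\Hnrsim$ (which is all that permuting tail classes gives you) cannot yield synchronization; what is needed is the full normalizer condition $h_\phi^{-1}\Gnr h_\phi\subseteq\Gnr$, and extracting synchronization from it is the real work. The paper does this by contradiction: if the minimal transducer for $\varphi$ fails to synchronize at every level, one finds a cycle word $w$ and two distinct states $s_1\neq s_2$ with $\pi_1(w,s_i)=s_i$, chooses $g\in\Gnr$ exchanging the cones that feed into $s_1$ and $s_2$, and then analyses the outputs $\lambda_1(w,s_1)$, $\lambda_1(w,s_2)$ along points of the form $\gamma_i\concat w^k\concat u$ to show that $\varphi^{-1}g\varphi$ would have to insert or delete a nontrivial block at arbitrarily great depths, hence cannot be a prefix-exchange map. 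Nothing of this kind is present, or even hinted at, in your sketch, and the bridge you propose in its place is unsound.

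Two further points are thinner than you suggest, though they are fillable. First, the finiteness of local actions, which you correctly flag as the main obstacle, is not a one-line pigeonhole: the paper needs the machinery of ``acting (almost) in the same fashion,'' an explicit construction of elements of $\Gnr$ matching arbitrary pairs of incomparable cones (its Lemma on maximal antichains of congruent sizes mod $n-1$), and a delicate inductive construction showing that if two local actions never eventually agree then $h\notin\Hnrsim$. Second, your part (c) (``a short verification shows $sgs^{-1}$ is again a prefix-replacement map'') is proved in the paper not by inspecting $\sim$ but by a transducer computation: elements of $\Gnr$ are exactly the bi-synchronizing maps whose core is the single identity state, the core of a product is controlled by the product of cores, and the core of $A_{\varphi^{-1}}\times C_g\times A_{\varphi}$ is strongly isomorphic to the core of $A_{\varphi^{-1}}\times A_{\varphi}$, which acts as the identity; hence the conjugate lies in $\Gnr$. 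You should either reproduce an argument of this type or supply some other genuine proof; ``preserves $\sim$'' is again not enough, as the counterexample above shows.
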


\subsubsection{On outer automorphisms}
{ As mentioned above, the union  over all valid $r$ of the outer automorphism $\Onr$ groups of $\Gnr$ forms a group under an appropriately defined transducer product. Thus we have the following theorem.
\begin{theorem}\label{thm: union of outnr is a group}
 Let $r$ be a positive integer less than $n$, and denote by $\Onr$  the outer automorphism groups of $\Gnr$.  Then  $\On :=\bigcup_{1\le r< n} \Onr,$ with an appropriately defined binary operation extending multiplication in $\Outnr,$ forms a group.
 \end{theorem}}

 The path to the proof of this is perhaps of interest.  One can
 show that any given transducer $A_{q_0}$ representing an
 element of $\Bnr$ has a special sub-transducer (the \emph{core}
 of $A_{q_0}$), which precisely characterises the
 outer-automorphism class of the image of the homeomorphism
 represented by $A_{q_0}$ under the natural quotient to the
 outer automorphism group.  The set of (equivalence classes of)
 such core transducers admits an easily computed product operation under
 which it is the group $\Onr$.


We have the following theorem.
 \begin{theorem} \label{thm:InfiniteOut}
 For $n\geqslant2$, and $1 \le r < n$ a positive integer, the group $\Onr$, and so $\On$, is infinite.
 \end{theorem}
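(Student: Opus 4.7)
The plan is to exhibit infinitely many pairwise inequivalent core transducers, producing infinitely many distinct elements of $\On$. By Theorem \ref{thm:r-independent} this suffices: in particular, taking $r=1$, one concludes $\Out(V_n)=\Out(G_{n,1})$ is infinite. The natural source for the infinite family is the subgroup $\Pn$ flagged in the introduction, which embeds into $\On$ and corresponds to a subgroup of $\Aut(\{0,1,\ldots,n-1\}^{\mathbb{Z}},\sigma)$. The latter group is classically known (Hedlund's theorem and its descendants) to be infinite for every $n\ge 2$: the shift $\sigma$ itself has infinite order, and the automorphism group of the full shift contains a wealth of block-code automorphisms of arbitrarily large memory.

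Concretely, I would fix $r=1$ and, for each natural number $k$, construct a bi-synchronizing initial transducer $T_k$ on $\mathfrak{C}_n$ whose induced homeomorphism $h_k$ is a shift-commuting map requiring synchronization level at least $k$. Then I would verify that each $h_k$ normalises $V_n$ -- using Theorem \ref{thm:MainTheorem} together with a direct check that conjugation by $h_k$ carries prefix-swap generators of $V_n$ to prefix-swap maps -- so $h_k\in\Snr$. Finally, I would extract the core $c(T_k)$ of each $T_k$ and argue that a core with synchronization level $k$ must have unboundedly many reachable states as $k$ grows, so that $c(T_1),c(T_2),\ldots$ cannot all be pairwise equivalent as finite automata.

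The main obstacle is the last step: showing that these cores represent pairwise distinct outer automorphism classes. The resolution rests on the invariance property established in the material leading up to Theorem \ref{thm:r-independent} -- namely, that the core of a transducer is well-defined on cosets of $\Inn(\Gnr)$ in $\Aut(\Gnr)$, so that pre- and post-composition with elements representing members of $\Gnr$ leaves the core unchanged. Once this invariance is in hand, the combinatorial claim that the $c(T_k)$ have unbounded state counts immediately yields infinitely many inequivalent outer automorphism classes, completing the proof that $\On$ is infinite.
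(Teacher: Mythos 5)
There is a genuine gap, and it is concentrated in the case $n=2$. Your infinite family is drawn from $\Pn$ (shift-commuting maps given by synchronous, zero-future block codes), and you justify its infinitude by appealing to the infinitude of $\Aut(\nset^{\Z},\sigma)$. But $\Pn$ is only a small subgroup of $\Aut(\nset^{\Z},\sigma)$: the shift itself and general sliding block codes with nontrivial anticipation are not in $\Pn$, and an arbitrary two-sided shift automorphism need not normalise $G_{n,1}$ acting on the one-sided space, i.e.\ need not give an element of $\Snr$ at all. In fact $\Pn$ is identified (see the discussion of \cite{BleakCameron} in the paper) with $\Aut(\nset^{\omega},\sigma)$, the automorphisms of the \emph{one-sided} shift, and by Hedlund's theorem $\mathcal{P}_2$ is cyclic of order two. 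This is precisely why Theorem \ref{thm:Examples} restricts the existence of infinite-order elements of $\Pn$ to $n>2$ and separately produces an infinite-order element of $\mathcal{L}_2$. So for $n=2$ there is no family $T_1,T_2,\ldots$ of the kind you describe inside $\Pn$, and your argument does not establish that $\mathcal{O}_2$ is infinite. A secondary issue is that even for $n\geq 3$ the key step -- constructing, for every $k$, a bi-synchronizing shift-commuting transducer whose core forces synchronization level at least $k$ -- is asserted rather than carried out, and it is where all the work lies.

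The paper's route is both lighter and uniform in $n$: it exhibits a single explicit transducer representing an element of infinite order in $\Ln\leqslant\On$ (the element of $\mathcal{L}_2$ in Figure \ref{fig-L2-Big} for $n=2$, the element of $\mathcal{P}_3$ in Figure \ref{fig-Inf-P3} for $n=3$, with the indicated modifications for larger alphabets), verifying infinite order by tracking the orbit of a witness point; the powers of such an element already give infinitely many distinct elements of $\On$. If you want to keep your framework, the fix is to replace the $\Pn$-based family, for $n=2$, by an infinite-order element of $\mathcal{L}_2$ (or any single infinite-order core), and to note that distinctness of its powers in $\On$ follows from the core/coset invariance you cite (strongly isomorphic minimal cores correspond to the same coset of $\Gnr$), without any need for unbounded synchronization levels.
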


\subsubsection{Further related groups of interest}
For given $n$ (and $r$), the group $\On$ has some very interesting subgroups.  One of these subgroups is $\Ln$, which is the image in $\On$ of those homeomorphisms representable by bi-synchronizing transducers that have locally constant Radon--Nikodym derivative.  In $\Bnr$, these elements form the subgroup of homeomorphisms with bi-Lipschitz action on the Cantor space $\CCnr$, which group we denote as $\LBnr$. For $1 \le r < n$, let $\Lnr$ denote the image of the group $\LBnr$ under the quotient to $\Onr$. In particular, $\Lnr = \Ln \cap  \Onr$.  The group $\Ln$ contains a (combinatorially defined) further subgroup $\Hn$ of great interest. Similar to the above, we set $\Hnr:= \Hn \cap \Onr$. The elements of $\Hn$ correspond to non-initial core transducers which are not only bi-synchronizing, but also synchronous (one letter read in becomes one letter written out, on each transition), and which have some state that acts as a homeomorphism of Cantor space.    As will be shown in \cite{BleakCameronOlukoya}, the group $\Hn$ embeds naturally as the subgroup of the automorphisms of the full shift $\Aut(\nset^{\Z},\sigma)$ which are given by sliding block codes which use no future information. However, it is not hard to show that the image group mentioned is actually isomorphic to $\Aut(\nset^\omega,\sigma)$, the automorphisms of the one sided shift on $n$ letters (as suggested to us by J. Hubbard).  It follows that $\mathcal{H}_2$ is actually cyclic of order two by a classic result of Hedlund \cite{Hedlund69}.  In \cite{BleakCameronOlukoya},  the authors provide a new proof of this classic result using a close analysis of the automorphism types of specific quotients of de Bruijn graphs.

 Throughout the paper (but, predominantly in Subsection \ref{s_lipschitz}), we provide examples of various group elements of the various groups, by giving representative transducers.  While the definitions are sufficient to immediately prove the first point of the following theorem, the remaining points are proven through demonstrations of the existence of transducers representing various group elements with appropriate properties.

\begin{theorem}\label{thm:Examples}
Let $1\leqslant r<n$ be integers.  We have
 \begin{enumerate}
\item $\Hn\leqslant\Ln\leqslant\On$,
\item there are elements of $\On$ which are not in $\Ln$,
\item there are elements of $\Ln$ which are not in $\Hn$,
\item for $n>2$ there are elements of $\Hn$ of infinite order,
\item there are non-bi-Lipschitz torsion elements of $\mathcal{O}_2$, and
\item there are elements of $\mathcal{L}_2$ of infinite order.
 \end{enumerate}
\end{theorem}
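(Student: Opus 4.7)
The strategy is to handle part (1) by a direct unpacking of definitions together with Theorem \ref{thm:r-independent}, and then to dispatch parts (2)--(6) by exhibiting explicit (core) transducers with the required structural and dynamical properties.

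Part (1) is immediate: by construction $\Pn$ consists of equivalence classes of bi-synchronizing \emph{synchronous} core transducers, and a synchronous transducer writes one letter for each letter read, so the associated homeomorphism preserves the natural length of cylinder neighbourhoods up to a constant factor; in particular every element of $\Pn$ is bi-Lipschitz, whence $\Pn\leqslant\Ln$. The containment $\Ln\leqslant\On$ holds by definition of $\Ln$ as the image in $\On$ of the bi-Lipschitz subgroup $\LSnr$ of $\Snr$, and the isomorphism $\On\cong\Onr$ is the content of Theorem \ref{thm:r-independent}.

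For the remaining assertions the plan is uniform: build a candidate finite transducer, verify bi-synchronization of both it and its inverse (by exhibiting a uniform level $n$ beyond which the active state is determined by the last $n$ input letters), and then read off the additional structural invariants from the transducer itself. Concretely, the Radon--Nikodym derivative of the associated homeomorphism at a periodic point is given by the ratio of output-word length to input-word length along the corresponding cycle of the core, which gives a clean test for membership in $\Ln$; synchronicity is visible directly from whether each transition writes exactly one letter; and the order of the induced outer automorphism is computed by iterating the transducer and core-reducing. For (2), we construct a core transducer with two cycles of differing input/output length ratio, so that the associated element of $\On$ has non-constant Radon--Nikodym behaviour and hence cannot lie in $\Ln$. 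For (3), we take a bi-Lipschitz element of $\Snr$ whose minimal core necessarily carries a transition writing or reading more than one letter, then show by comparison of the length spectra of images of cylinders that it is not equivalent to any synchronous core.

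For (4), with $n>2$, we invoke the embedding from Theorem \ref{thm:inAutShiftSpace}: it suffices to exhibit an automorphism of the full shift $(\nset^{\Z},\sigma)$ given by a sliding block code using no future information and of infinite order. Such elements exist classically in $\Aut(\nset^{\Z},\sigma)$ as soon as $n\geq 3$ (for instance by coding a partial shift on a non-trivial subalphabet suitably combined with permutations on the remaining letters), and pulling back along the embedding produces an infinite-order element of $\Pn$. The principal obstacles are (5) and (6), where the two-letter alphabet is tight: for (5) we design a small bi-synchronizing but non-synchronous transducer on $\{0,1\}$ whose action on $\mathfrak{C}_{2,r}$ is involutive yet whose core cycles have non-constant input/output length ratios, making the element torsion in $\mathcal{O}_2$ but non-bi-Lipschitz; for (6) we design a bi-Lipschitz (constant cycle ratio) bi-synchronizing transducer on two letters whose core represents an element of infinite order in $\mathcal{L}_2$, with infinite order witnessed by tracking a periodic point whose orbit under iteration never closes up after core-reduction. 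The main recurring technical hurdle in all of (2)--(6) is the bi-synchronization check for both the transducer and its inverse; once this is in place, the invariants that separate $\Pn$ from $\Ln$ and $\Ln$ from $\On$ are finite combinatorial data that can be read off directly, as will be exhibited in the examples of Subsection \ref{s_lipschitz}.
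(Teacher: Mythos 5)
Your part (1) is fine (though the isomorphism $\On\cong\Onr$ is really the content of the core--quotient lemma in the section on $\Snr\twoheadrightarrow\Onr$, of which Theorem \ref{thm:r-independent} is a consequence, rather than the other way round). The problem is with (2)--(6): the entire mathematical content of those assertions is the existence of specific bi-synchronizing transducers with the stated properties, and your text never produces one. You describe what the transducers should look like (two core cycles with different input/output length ratios for (2); a non-synchronous minimal core for (3); an involutive non-bi-Lipschitz machine on two letters for (5); an infinite-order bi-Lipschitz one on two letters for (6)) and then defer the exhibits to Subsection \ref{s_lipschitz} -- but those exhibits \emph{are} the paper's proof: the transducer of Figure \ref{fig-Shayo} (extended to $B_{n,r}$ in Figure \ref{fig-Shayo-more}) for non-bi-Lipschitz elements, Figure \ref{fig-L2-Big} for an infinite-order element of $\mathcal{L}_2\setminus\mathcal{P}_2$ (infinite order verified on the witness point $\ldots111(001)^\omega$), Figure \ref{fig-Inf-P3} for an infinite-order element of $\mathcal{P}_3$, and Figure \ref{fig-torsion-O2} for an order-two non-bi-Lipschitz element of $\mathcal{O}_2$, each accompanied by the bi-synchronization and order checks. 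In particular for the two cases you yourself flag as tight, (5) and (6), ``we design a transducer with such-and-such properties'' without the design is precisely the missing step. (A small simplification you miss: since the minimal core determines the class in $\On$ up to strong isomorphism, non-membership in $\Pn$ is read off from the minimal core failing to be synchronous; no ``length spectrum'' comparison against all synchronous representatives is needed.)

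Your route to (4) also has a concrete weak point. Theorem \ref{thm:inAutShiftSpace} is stated as an embedding of $\Pn$ into $\Aut(\nset^{\Z},\sigma)$; to pull an element back you need that the image is exactly the no-future-information subgroup, and, more importantly, the classical fact you need is the existence of infinite-order elements \emph{of that subgroup} (equivalently, essentially, of the automorphism group of the one-sided shift), not of $\Aut(\nset^{\Z},\sigma)$: the latter contains the infinite-order shift already for $n=2$, while $\mathcal{P}_2$ is finite, so infinite order in $\Aut(\nset^{\Z},\sigma)$ proves nothing by itself. Your suggested recipe -- ``coding a partial shift on a non-trivial subalphabet'' -- is exactly the sort of shift-like map whose inverse requires anticipation and which therefore need not lie in the relevant subgroup, so it would have to be justified, not asserted. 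The paper avoids all of this by simply writing down the two-state synchronous bi-synchronizing transducer of Figure \ref{fig-Inf-P3} and verifying by induction that the point with entries $2$ at negative indices and $1$ at non-negative indices lies on an infinite orbit; supplying an explicit element of that kind (or a precise citation for an infinite-order automorphism of the one-sided full shift on $n\geq3$ letters together with the surjectivity of the correspondence) is what your argument still owes.
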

\subsubsection{An example transducer}

 For those readers already comfortable with transducers, the following initial transducer (initial state $q_0$) of Figure \ref{fig_smallTrans} represents an element of $\Bnk{3}{2}$ with natural image in $\mathcal{L}_{3,2}$ non-trivial.
\begin{figure}[htbp]
\begin{center}
\begin{tikzpicture}[->,>=stealth',shorten >=1pt,auto,node distance=2.3cm,on grid,semithick,
                    every state/.style={fill=red,draw=none,circular drop shadow,text=white}]
  \node [state] (A)                                                                   {$q_0$};
  \node [state] (B)  [below= of A]                                                    {$q_1$};
  \node [state] (C)  [above= of A]                                                    {$q_2$};
  \node [state] (D)  [below left= of B]                                               {$q_3$};
  \node [state] (E)  [below right= of B]                                              {$q_4$};

\path (A) edge node {$\dot{1}/\dot{1}1$} (C);
\path (A) edge node [swap]{$\dot{0}/\varepsilon$} (B);
\path (B) edge [bend right=30] node [swap] {$1/\dot{1}0$} (C);
\path (B) edge node [swap]{$0/\dot{0}$} (D);
\path (B)  edge     node {$2/\dot{1}2$} (E);
\path  (C)  edge [in=105,out=75,loop]        node [swap]{$0/1$}(\CCnr);
\path  (C)  edge [bend right=35]                        node [swap] {$1/2$} (D);
\path  (C)  edge [bend left=70]                        node {$2/0$}(E);
\path  (D)  edge [in=240,out=210, loop]     node [swap]{$1/2$} (D);
\path  (D)  edge [bend left=70]      node {$0/0$} (C);
\path  (D)  edge                         node{$2/1$} (E);
\path  (E)  edge [in=330,out=300,loop]     node [swap] {$2/0$} (E);
\path  (E)  edge [bend left]      node {$0/2$} (D);
\path  (E)  edge [bend right=35]                node[swap]{$1/1$} (C);
\end{tikzpicture}
\end{center}
\caption{\label{fig_smallTrans}
An element of $\Bnk{3}{2}$}
\end{figure}

One can verify, for instance, that the states $q_2$, $q_3$, and $q_4$ are the states of the core, as reading any fixed word of length two in the alphabet $\{0,1,2\}$ from any state (other than $q_0$, which only takes words with first letter from the alphabet $\{\dot{0},\dot{1}\}$) will result in the same fixed state.

\subsection{Future directions}

The paper concludes with some open problems on which further work is needed.

There are some related articles \cite{BleakCameronOlukoya} and \cite{OlukoyaOrder}.  In those papers, the authors investigate many further properties of the groups $\Hn<\Ln<\On$.  There remain many mysteries.  For instance, the author of \cite{OlukoyaOrder} gives necessary and sufficient conditions for elements in $\mathcal{H}_n$, and therefore, for elements of $\Aut(\nset^\omega,\sigma)$, to have finite order. These conditions, however, do not yield a decision procedure.

It seems likely that the work in this article can be used to investigate the `exotic' automorphisms which arise for the subgroups $F_n<G_{n,1}$ for various values of $n$.  Indeed, it is relatively easy to build asynchronous finite transducers $\CCn\to\CCn$ which represent non-PL maps $[0,1]\to[0,1]$ which conjugate $F_n$ to $F_n$ in ways which cannot be realized by any inner automorphism of $F_n$ (for $n\geq2$).

\subsection{Acknowledgements}

We wish to thank the organizers of the conference ``Automorphism groups of topological Spaces, Bar Sheva, 2010'' who created the opportunity for some of the authors of this article to meet.   This meeting was fundamental for the genesis of this work.  We wish to thank Matthew G. Brin and Matatyahu Rubin for many interesting and insightful discussions related to this work, as well as for very careful readings of the manuscript in its various stages, which led to numerous improvements.  The first author would like to thank Daniel Lanoue for many enjoyable hours spent together studying the possible automorphisms of $V$ in 2007, which was the beginning of the first author's efforts towards answering the questions addressed in this paper.  The first and second authors wish to acknowledge support from EPSRC grant EP/R032866/1 received during the editing process of this article. The fourth author would like to thank St. Andrews University for its hospitality during the Workshop on the Extended Family of Thompson's Groups in 2014, and acknowledges the support of DySYRF (Anillo Project 1103, CONICYT) and Fondecyt's project 1120131. Finally, the fifth author was partly supported by Leverhulme Trust Research Project Grant RPG-2017-159.

\section{Language, Notation, and Groups}
In this section, we will more carefully construct the Cantor spaces $\CCn$ and $\CCnr$ for given $1\leqslant r < n\in \N$, and the groups $\Gnr$.  We will also develop notation and equivalence classes for sets of points in $\CCnr$ which will be valuable in later discussions.  First, here are some general conventions we will follow.

Given topological spaces $X$ and $Y$, we will denote by $\Homeo(X,Y)$ the full set of homeomorphisms from $X$ to $Y$.  We set $\Homeo(X):=\Homeo(X,X)$, which becomes a group under composition, the group of self-homeomorphisms of the space $X$ (we will also use $H(X)$ for $\Homeo(X)$ when notation is too heavy, e.g., if $G\leqslant\Homeo(X)$ we will write $N_{H(X)}(G)$ for the normalizer of $G$ in the full group $\Homeo(X)$).   If $G \leqslant \Homeo(X)$, we say $\pair{X}{G}$ is a \emph{space-group pair}, and we use right actions to denote the natural action of $G$ on $X$.  In particular, if $x\in X$, $U \subset X$, and $g$,$h\in G$, then we write
$xg$ or $x \cdot g$ for the image of $x$ under the map $g$, we write
\[
Ug:=\{ug\mid u\in U\},
\]we write $g^h:=h^{-1}gh$ and $[g,h] := g^{-1}h^{-1}gh$.  We will extend the right action language above without too much concern when it is natural to do so, e.g., if $h:X\to Y$ is a homeomorphism, we will write $g^h$ to represent the self-homeomorphism $Y\to Y$ given by the rule $y\mapsto yh^{-1}gh$.

Throughout the remainder of this article, we will use the right-action notation outlined above.  Note also that from now on, we will assume at random times that we have some given $1\leqslant r<n\in\N$, so that we can refer to a space $\CCnr$ or a group $\Gnr$ without comment.  Occasionally we might still explicitly instantiate these constants.
\subsection{Cantor spaces revisited}

Recall from above that we regard the Cantor space $\CCn$ as the set of all infinite
$\nset$-sequences under the standard product topology.  That is, give the set $\Xn\seteq\nset$ the discrete topology, and then we have
\[\CCn=\Xn^{\omega}\]
 under the product topology.

Further, set $$\Wne:=\Xn^*$$ (the set of finite or empty words (sequences) over the alphabet $\Xn$, where we will always use the symbol $\varepsilon$ to denote the empty word over any alphabet), and $$\Wn := \Xn^+$$ (the set of finite non-trivial words in the alphabet $\Xn$).  If $\eta,\nu\in \Wne$ are so that $\eta$ is a prefix of $\nu$, we denote this by $\eta \leqslant \nu$, and we also write $\eta<\nu$ if $\eta$ is a proper prefix of $\nu$.

\begin{definition}
Let $\nu,\eta\in \Wne$. We say  that $\nu$ and $\eta$ are \emph{incomparable}
if $\nu\nleq \eta$ and $\eta\nleq \nu$, and we denote this as  $\nu\bot \eta$.
\end{definition}

\begin{remark}
Observe from the definitions that if $w\in \Wne$, then $\varepsilon\leq w$.  Specifically, $\varepsilon\perp w$ is not true for any $w\in \Wne$. \end{remark}

  Give the set $\rd:=\rset$ the discrete topology and set $\CCnr:= \rd\times \CCn$ (so that $\CCnr\cong \sqcup_{i\in\rd} \CCn$ and hence $\CCnr$ is a Cantor space).  Further, set $\Wnr:=\rd\times\Wn$, and $\Wnre:=\rd\times\Wne$.  By an abuse of notation, we will consider $\Wnr\subset \Wnre$ as sets of nontrivial finite words with first letter from the alphabet $\rd$ and any latter letters from the alphabet $\Xn$, and we extend the meaning of $\leqslant, <$, and $\bot$ to $\Wnre$.

For $\eta\in \Wnre\cup \Wne$ and  $\nu \in \Wnre \cup \Wne \cup \CCnr \cup\CCn$, define $\eta \concat \nu$ to be the concatenation of the sequences $\eta$ and $\nu$. (In practice, if $\nu\in \Wnre\cup\CCnr$, then we will have  $\eta$ is the empty word $\epsilon$.)  For $\eta\in \Wnre$, we set $U_{\eta} := \setm{\eta \concat x}{x \in \CCn}$, and we set $U_{\epsilon} := \CCnr$.
We call $U_{\eta}$ the \emph{cone of $\eta$} or the {\emph{cone at $\eta$}}.  In these cases, we refer to $\eta$ as the address of the cone $U_\eta$.
Set $\Banr:=\setm{U_\eta}{\eta \in \Wnre}\cup\{U_\epsilon\}$.   The set $\Banr$ is a clopen basis
of the Cantor space $\CCnr$.

\begin{definition}\label{def:rotation}
If a word $w\in \Wn$ admits two non-trivial subwords $g$ and $h$ so that $w=g\concat h$, then we say $w$ is a \emph{rotation of the word $h\concat g$}.
\end{definition}

\begin{notation} \label{notn:subtraction}For $\eta\in \Wne\cup\Wnre$ and  $\nu \in \Wne\cup\CCn\cup\Wnre\cup\CCnr$, let $\eta \leqslant \nu$ mean that $\eta$ is a (possibly empty) prefix of $\nu$.  Furthermore, if $\eta\leqslant \nu$, there is $\tau\in \Wne\cup\CCn\cup\Wnre\cup\CCnr$ such that
$\eta\concat \tau=\nu$, and in this situation we define
$$\nu - \eta=\tau.$$
\end{notation}

Note that in the situation described by Notation \ref{notn:subtraction}, if $\tau\in \Wnre\cup\CCnr$, then in our practice we will have $\eta=\epsilon$.

\begin{definition}
Let $\veceta = \trpl{\eta_0}{\ldots}{\eta_{k - 1}} $ be in $ \Wnr^k$ for some integer $k\geq 1$.
We call $\veceta$ a \emph{antichain in $\Wnr$} if for any distinct $i,j  \in \{0, \ldots, k-1 \}$,
$\eta_i \not\leqslant \eta_j$.  Such an antichain will be called a \emph{complete antichain} if for every $\zeta \in \Wnre$,
there is $i < k$ such that
$\eta_i \leqslant \zeta$ or $\zeta \leqslant \eta_i$.
\end{definition}
     For an antichain $\veceta$ as in the definition above, we call the integer $k$ the \emph{length of $\veceta$}.  In general, we may also call a complete antichain in $\Wnr$ a \emph{prefix code}.  While an antichain is a tuple (implying an ordering), we use braces in our notation as most of the time we consider them simply as subsets of $\Wnre$.  The ordering is relevant to the definition of a \emph{prefix code map}, defined below in Subsection \ref{sec:GnrDef}.


\subsection{The groups $\Gnr$ and some interesting subgroups} \label{sec:GnrDef}
We are also interested in building up complex maps from simpler ones.  A foundation for this is provided as below.
\begin{definition}
If $\eta,\zeta \in \Wnre$
define $\fnn{g_{\eta,\zeta}}{U_{\eta}}{U_{\zeta}}$ by
$$
(\eta \concat x)  g_{\eta,\zeta} := \zeta \concat x
$$  for all $x\in \CCn$.  For given $\eta$ and $\zeta$ as above,
we will refer to the map $g_{\eta,\zeta}$ as \emph{the basic cone map from $U_\eta$ to $U_\zeta$.}
\end{definition}

Using the cone maps above, and complete antichains in the poset $\Wnr$, we can define a specific class of homeomorphisms on the Cantor spaces $\CCnr$.

\begin{definition} \label{def:PrefixCodeMap}
Let $\veceta = \trpl{\eta_{0}}{\ldots}{\eta_{k - 1}}$ and $\veczeta = \trpl{\zeta_{0}}{\ldots}{\zeta_{k - 1}}$
be complete antichains in $\Wnr$ of length $k$ for some fixed positive integer $k$.
Define
\[
g_{\veceta,\veczeta} := \coprod_{i < k} g_{\eta_i,\zeta_i}.
\]
\end{definition}

In the context above, we may call such a map $g_{\veceta,\veczeta}$ a \emph{prefix code map}.  It is well known (and easy to check) that compositions and inversions of prefix code maps are prefix code maps, so for fixed $r$ and $n$, the prefix code maps over $\CCnr$ form a group of homeomophisms.  We denote the resulting group as $\Gnr$.

\begin{notation} Let
\begin{IEEEeqnarray*}{rCl}
\Gnr :=  \{g_{\veceta,\veczeta}&\mid& \veceta,\veczeta
\mbox{ are complete antichains } \\ & \mbox{of} & \mbox{ $\Wnre$ of the same length}\}.
\end{IEEEeqnarray*}
\end{notation}

We note in passing that in Higman's framework, the groups $\Gnr$ are groups of automorphisms of free term algebras.  The definition we give above is a standard translation from the original Higman definition of the groups $\Gnr$ to a definition of these groups as groups of homeomorphisms of spaces.  For a detailed discussion of the history behind these multiple points of view, we refer the reader to section 4B of \cite{BrownFinite}.

Now, from the definition given above for the elements of $\Gnr$, for every $g \in \Gnr$ there is a minimal integer $k$ and two finite sets of cones
$$\fsetn{U_{\eta_0}}{U_{\eta_{k - 1}}}\subseteq \Banr$$
and
$$\fsetn{V_{\zeta_0}}{V_{\zeta_{k - 1}}}\subseteq \Banr,$$
each partitioning $\CCnr$, so that $g$ maps each  $U_{\eta_{i}}$  to $V_{\zeta_i}$ as the basic cone  map $g_{\eta_i,\zeta_i}$.

By considering the loops in the transducer in Figure \ref{fig_smallTrans}, the reader can see that the transformation it represents cannot be a prefix-code map (these loops re-write infinite strings to infinite strings using entirely different letters).  Specifically,  the induced homeomorphism  must project to a non-trivial outer automorphism of $G_{3,2}$.  However, there are easier examples of the general fact that the group $\Bnr$ has homeomorphisms which naturally project to non-trivial elements of $\mathcal{O}_{n,r}$.  Here we provide a simple family of such homeomorphisms for the reader to verify the claim.

\begin{definition} \label{definition:pi-twist}
Let $\sigma \in S_n$ be any permutation on the set $\Xn$.
Define the map $\widehat{\sigma}:\CCn\rightarrow \CCn$ by the rule
$(a_{1}a_{2} a_3 \ldots) \mapsto (a_1\sigma\, a_2\sigma\, a_3\sigma \ldots)$, and call this map the \emph{$\sigma\text{-twist}$ of $\CCn$}.  Further, define the map $\widehat{\sigma}_{n,r}:\CCnr\to\CCnr$ which is obtained by applying $\widehat{\sigma}$ to each cone  $U_k\cong \CCn$ for $k\in \rset$, and call this map the \emph{$\sigma$-twist of $\CCnr$.}
\end{definition}

We observe in passing that the $\sigma$-twists of $\CCn$ and of $\CCnr$ are homeomorphisms of the respective Cantor spaces. (Indeed, these homeomorphisms are obtained as induced actions on the boundary of the standard infinite trees used in the construction of the Cantor spaces under consideration, where the $\sigma$-twists actually represent infinite automorphisms of these trees.) The relevance of $\sigma$-twists is shown by the next remark.



\begin{remark}[Existence of non-trivial $\Out(G_{n,r})$]\label{OutExists}\label{PiTwistsInOut}
Let $1\leqslant r<n\in\Z$, and suppose $\sigma \in S_n$.  The map $\widetilde{\sigma}_{n,r}:G_{n,r}\to G_{n,r}$ defined by $g\mapsto \widehat{\sigma}^{-1}_{n,r}\cdot g\cdot \widehat{\sigma}_{n,r}$, for $\,\widehat{\sigma}_{n,r}$ the $\sigma$-twist of $\CCnr$ as defined above, is an automorphism of $\Gnr$.  Furthermore, $\widetilde{\sigma}_{n,r}$ belongs to $\Inn(G_{n,r})$ if and only if $\sigma$ is the trivial permutation.
\end{remark}

The fact that $\widetilde{\sigma}_{n,r}$ is an automorphism of $\Gnr$ is a direct computation that we leave to the reader.  If $\sigma$ is not trivial, then the conjugacy action of $\widehat{\sigma}_{n,r}$ cannot coincide with that of an element $g\in\Gnr$ as otherwise conjugation by the product $g^{-1}\widehat{\sigma}_{n,r}$ (which is not in $\Gnr$) would induce the trivial action on $\Gnr$, while it is easy to see that for any given non-trivial homeomorphism $\rho$ of the Cantor space $\CCnr$, there are elements of $\Gnr$ which fail to commute with $\rho$.

\begin{definition}
We say that $g \in \Homeo(\CCnr)$ is \emph{densely canonical} if for every nonempty
open set $U \subseteq \CCnr$ there are words $\eta$, $\zeta\in\Wnre$ with cones $U_{\eta} \subseteq U$ and $U_\zeta$ so that $g | _{U_{\eta}} = g_{\eta,\zeta}$.
\end{definition}

Clearly, the set $\Gnrdcn$ of densely canonical homeomorphisms of $\CCnr$ is a group under composition.

\begin{definition}
We say that $g \in \Homeo(\CCnr)$ is \emph{pointwise canonical}
if for every $x\in \CCnr$
there are $\eta_{1},\eta_{2} \in \Wnre$
and $y \in \CCn$ such that $x=\eta_{1}\concat y$
and $xg=\eta_{2}\concat y$.
\end{definition}

Again, the set $\Gnrpcn$ of pointwise canonical homeomorphisms of $\CCnr$ is a group under composition.

\begin{remark}
For all integers $1\leqslant r<n$, we have $\Gnr\leqslant \Gnrdcn\cap\Gnrpcn$.
\end{remark}

We now give another definition of $\Gnrpcn$.
Define a relation on $\CCnr$ by:
\[
x \sim y \Leftrightarrow \exists  \eta,\nu \in \Wnre, z\in\CCn \textrm{ so that } x=\nu \concat z \textrm{ and } y=\eta \concat z.
\]
This is an equivalence relation on $\CCnr$.
We denote the set of equivalence classes of $\sim$ by $\dgfrac{\CCnr}{{\sim}}$,
and the equivalence class of $x\in \CCnr$ by $[x]_{\sim}$.
Given $x$, $y\in \CCnr$, we say \emph{$x$ and $y$ have an equivalent tail} if and only if $y\in[x]_{\sim}$,  and we call $[x]_{\sim}$ the \emph{tail class of $x$}.
Let $H_{n,r,\sim}$ be the group of all homeomorphisms of $\CCnr$ that preserve $\sim$.
That is,
\[
H_{n,r,\sim}=\{g\in \Homeo(\CCnr)\,\mid\,y\in [x]_{\sim} \Leftrightarrow yg\in [xg]_{\sim}\}.
\]
Clearly, $\Gnrpcn$ is a subgroup of $H_{n,r,\sim}$.  By definition, $H_{n,r,\sim}$ has an induced action on $\dgfrac{\CCnr}{{\sim}}$, and from this point of view it is immediate that  $\Gnrpcn$ represents the kernel of the action of $H_{n,r,\sim}$ on $\dgfrac{\CCnr}{{\sim}}$.  That is, $\Gnrpcn\vartriangleleft H_{n,r,\sim}$, and
\[
\Gnrpcn= \{g\in H_{n,r,\sim}(\CCnr)\,\mid\, \forall x\in\CCnr,\, xg\in[x]_{\sim}\}.
\]
We say that \emph{$\Gnrpcn$ fixes $\sim$ pointwise}.

We observe the following easy lemma.

\begin{lemma}
For all $x\in\CCnr$, the group $\Gnr$ acts transitively on $[x]_\sim$.
\end{lemma}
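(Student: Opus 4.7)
The plan is to realise the required element of $\Gnr$ as a prefix code map $g_{\veceta,\veczeta}$ that swaps two cones $U_\nu$ and $U_\eta$ (with $\nu\perp\eta$) containing $x$ and $y$ respectively, while fixing a complementary antichain pointwise. Given $y\in[x]_\sim$, I would pick $\nu,\eta\in\Wnre$ and $z\in\CCn$ with $x=\nu\concat z$ and $y=\eta\concat z$; these exist by definition of $\sim$. If $x=y$ the identity works, so assume $x\neq y$. The key intermediate step is to arrange that $\nu$ and $\eta$ are incomparable.

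To force incomparability, observe that if $\nu\perp\eta$ already there is nothing to do, while otherwise, without loss of generality, $\nu<\eta$ and we may write $\eta=\nu\concat\tau$ with $\tau\in\Wn$. Writing $z=z_1z_2z_3\cdots$, I form the refinements $\nu_m:=\nu\concat z_1\cdots z_m$ and $\eta_m:=\eta\concat z_1\cdots z_m$, which still satisfy $x=\nu_m\concat z^{(m)}$ and $y=\eta_m\concat z^{(m)}$ with $z^{(m)}:=z_{m+1}z_{m+2}\cdots$. A short combinatorial calculation then shows that $\nu_m\leqslant\eta_m$ forces the word $z_1\cdots z_m$ to be $|\tau|$-periodic with initial block $\tau$; if this persisted for every $m$ then $z=\tau^\omega$, giving $y=\nu\concat\tau\concat\tau^\omega=\nu\concat z=x$, a contradiction. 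Hence some $m$ yields $\nu_m\perp\eta_m$, and we may replace the triple $(\nu,\eta,z)$ by $(\nu_m,\eta_m,z^{(m)})$.

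With $\nu\perp\eta$ the cones $U_\nu$ and $U_\eta$ are disjoint, and the open complement $\CCnr\setminus(U_\nu\cup U_\eta)$ decomposes as a finite union of basic cones $U_{\mu_1},\ldots,U_{\mu_{k-2}}$ whose roots are incomparable to both $\nu$ and $\eta$. Taking $\veceta=(\nu,\eta,\mu_1,\ldots,\mu_{k-2})$ and $\veczeta=(\eta,\nu,\mu_1,\ldots,\mu_{k-2})$ as maximal anti-chains of equal length gives $g:=g_{\veceta,\veczeta}\in\Gnr$, and by construction $xg=\eta\concat z=y$, as required.

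The only substantive point is the periodicity argument used to refine to incomparable prefixes; the rest of the argument — extending a two-element antichain to a maximal finite antichain in $\Wnr$ and then writing down the corresponding swap in $\Gnr$ — consists of routine manipulations of the prefix code generators that define $\Gnr$.
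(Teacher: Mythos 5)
Your proof is correct, but it takes a different route from the paper's. The paper's argument is shorter: given witnesses $x=\nu\concat z$, $y=\eta\concat z$, it simply extends $\nu$ and $\eta$ to two maximal anti-chains of the same length, with $\nu$ heading the domain anti-chain and $\eta$ heading the range anti-chain, and applies the resulting prefix code map $g_{\vec{\nu},\veceta}$, which sends $\nu\concat z\mapsto\eta\concat z$. Since the domain and range anti-chains of an element of $\Gnr$ are independent of one another, no incomparability of $\nu$ and $\eta$ is needed, and your periodicity argument (refining by $z_1\cdots z_m$ and noting that $\nu_m\leqslant\eta_m$ for all $m$ would force $z=\tau^\omega$ and hence $x=y$) becomes unnecessary for bare transitivity. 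That said, your periodicity step is sound, and your construction buys something the paper's does not: the transporting element is an involution swapping two disjoint cones and is the identity off $U_\nu\cup U_\eta$, so it has controlled (typically proper clopen) support; you also avoid the mild bookkeeping the paper leaves implicit, namely arranging that the two maximal anti-chains containing $\nu$ and $\eta$ respectively can be chosen of equal length (which needs the lengths to agree modulo $n-1$, and a re-choice of witnesses in the degenerate case $r=1$, $|\nu|=1$). So: same conclusion, with the paper trading your extra combinatorial step for a more flexible choice of prefix code map, and your version trading brevity for a small-support, order-two transporter.
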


\begin{proof}
For each $y\in [x]_{\sim}$ there are $\nu, \eta\in \Wnre$ and $z\in \CCn$ such that
$x=\nu\concat z$ and $y=\eta\concat z$. There exist two complete antichains
$\veceta = \trpl{\eta_{0}}{\ldots}{\eta_{k - 1}}$
and $\vec{\nu} = \trpl{\nu_{0}}{\ldots}{\nu_{k - 1}}$
such that $\eta=\eta_{0}$ and $\nu=\nu_{0}$, and hence $xg_{\vec{\nu},\veceta}=y$.
\end{proof}


\section{More on $\Gnrdcn$, $\Gnrpcn$, $\Gnr$ and $\Hnrsim$}\label{sec:Rubin}

If $X$ is a topological space and $x \in X$,
then we let $\rfs{Nbr}^X(x)$ denote the set of open neighbourhoods
of $x$ in $X$. A subset $A \subseteq X$ is {\it somewhere dense}, if for
some nonempty open set $U \subseteq X$, the intersection $A \cap U$
is dense in $U$.  The following is a version of the main result in \cite{Rubin}.

\begin{theorem}\label{t2.3} {\rm (M. Rubin)}
Let $\pair{X}{G}$ and $\pair{Y}{H}$ be space-group pairs.
Assume that $X$ is Hausdorff, locally compact, and without isolated
points, and that for every $x \in X$ and $U \in \rfs{Nbr}^X(x)$, the set
$\setm{xg}{g \in G \mbox{ and } g|_{(X - U)} =
\rfs{Id}|_{(X-U)}}$
is somewhere dense.
Assume that the same holds for $\pair{Y}{H}$.
Suppose {we have a group isomorphism} ${G}\cong_{\phi}{H}$.
Then there is $\varphi \in \Homeo(X,Y)$
such that $\varphi$ induces $\phi$.
That is,
$g\phi = g^{\varphi}$ for every $g \in G$.
\end{theorem}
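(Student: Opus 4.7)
The plan is to recover the topology of $X$ purely from the group-theoretic structure of $G$, use the isomorphism $\phi$ to transport this recovered structure to $H$, and then pass through a Stone-duality-type argument to produce $\varphi$. This is Rubin's reconstruction program, and the local-density hypothesis is precisely what makes it work.

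First, I would find a purely group-theoretic characterization of the \emph{support} of an element $g \in G$, or at least of relations between supports such as ``$\rfs{supp}(g) \cap \rfs{supp}(h) = \emptyset$'' and ``$\rfs{supp}(g) \subseteq \rfs{supp}(h)$.'' Concretely, two elements with disjoint supports commute, and one hopes to isolate disjointness using a finite conjunction of commutation and nontriviality relations that hold in $G$ precisely when the geometric supports are disjoint. The local-density hypothesis is what guarantees such a characterization is possible: for every $x$ and every neighbourhood $U$ of $x$, there are plenty of elements of $G$ whose support sits inside $U$ and whose orbit of $x$ is somewhere dense, so supports of group elements generate the regular-open topology of $X$ in a very robust way.

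Next, I would assemble from the (group-theoretically identified) supports a Boolean algebra $\mathbf{B}_G$ that can be shown to be canonically isomorphic to the algebra of regular open sets $\rfs{RO}(X)$; the local-density hypothesis is used again to verify that this algebra is rich enough to recover the topology of $X$ (and not merely some coarser structure). Performing the same construction for $\pair{Y}{H}$ produces $\mathbf{B}_H \cong \rfs{RO}(Y)$, and because $\phi \colon G \to H$ is an isomorphism that preserves every group-theoretic relation used to define these algebras, it induces a Boolean algebra isomorphism $\mathbf{B}_G \cong \mathbf{B}_H$. Since $X$ and $Y$ are Hausdorff, locally compact and without isolated points, Stone duality (applied to their regular-open algebras) yields a homeomorphism $\varphi \colon X \to Y$. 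Finally, one checks $g\phi = g^\varphi$ by noting that the action of any $g \in G$ on $X$ is determined by how it moves supports in $\mathbf{B}_G$, and the corresponding motion in $\mathbf{B}_H$ is by definition the action of $\phi(g)$, so tracking a point $x \in X$ via a decreasing sequence of supports collapsing to $x$ gives $(xg)\varphi = (x\varphi)\phi(g)$.

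The main obstacle is the first step: producing a genuinely group-theoretic formula that captures a topological relation among supports. All the later work is essentially bookkeeping, while this step is exactly where one has to exploit the somewhere-dense hypothesis on local orbits. Without it, elements of $G$ might agree on too much of $X$ to distinguish supports, and Boolean algebra generated from $G$ could collapse or fail to reach $\rfs{RO}(X)$. Rubin's technical innovation is a careful choice of such group-theoretic formulas (via ``extensive'' or ``locally moving'' systems), and any proof must either reproduce or quote that analysis.
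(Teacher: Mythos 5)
You should first note that the paper does not prove this statement at all: it is quoted as ``a version of the main result in \cite{Rubin}'', so there is no internal proof to compare against, and any self-contained proof would have to reproduce Rubin's reconstruction argument in full. Your outline does follow the general shape of that argument (group-theoretic interpretation of supports, an algebra of regular open sets attached to $G$, transport along $\phi$, recovery of a point map), but as written it is a plan rather than a proof, and the plan has two genuine gaps. The first you concede yourself: you never exhibit the group-theoretic formulas that define the support relation, or the regular-open algebra, inside $G$, and you close by saying any proof ``must either reproduce or quote that analysis.'' That analysis \emph{is} the theorem; the hypothesis that for every $x$ and every neighbourhood $U$ the set of images of $x$ under elements supported in $U$ is somewhere dense is exactly what must be converted into first-order expressibility of the topology, and without carrying that out nothing downstream can be checked.

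The second gap is a step that would fail as stated: ``Stone duality (applied to their regular-open algebras) yields a homeomorphism $\varphi\colon X\to Y$.'' The regular-open algebra of a locally compact Hausdorff space without isolated points does not determine the space up to homeomorphism: any space with a countable $\pi$-base and no isolated points has regular-open algebra isomorphic to the Cohen algebra, so $[0,1]$, $\mathbb{R}$ and the Cantor set, which are pairwise non-homeomorphic, all have isomorphic regular-open algebras. Rubin recovers points not from the Boolean algebra alone but from the algebra \emph{together with the group action}: a point corresponds to a group-definable filter of regular open sets, and it is precisely here that local compactness, Hausdorffness, the absence of isolated points and the somewhere-dense local-orbit hypothesis are used to show that these filters converge to unique points and that the resulting point map is a bijection intertwining the two actions. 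So the Boolean isomorphism induced by $\phi$ must be upgraded to an equivariant one and then combined with the action to manufacture $\varphi$; your final verification of $g\phi=g^{\varphi}$ presupposes exactly this extra structure, which the Stone-duality step alone does not supply.
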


Below, we refer to the homeomorphism $\varphi$ in the statement of Rubin's Theorem as a \emph{Rubin conjugator}.

\begin{cor}\label{cor:NormAut}
$\Anr\cong N_{H(\CCnr)}(\Gnr)$.
\end{cor}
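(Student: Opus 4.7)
The plan is to exhibit the natural homomorphism $\Psi \colon N_{H(\CCnr)}(\Gnr) \to \Anr$ defined by $\Psi(h)(g) := g^h = h^{-1}gh$, and to show it is a bijection by applying Theorem \ref{t2.3} to the space-group pair $\pair{\CCnr}{\Gnr}$. First I would check that the hypotheses of Rubin's theorem hold for this pair. The space $\CCnr$ is a Cantor space, hence Hausdorff, compact, and without isolated points. For the somewhere-dense condition, fix $x \in \CCnr$ and an open neighbourhood $U$ of $x$. Choose a basic cone $U_\eta \in \Bnr$ with $x \in U_\eta \subseteq U$. For each finite word $\tau \in \Wn$ of fixed length $k$, let $\tau'$ be the length-$k$ prefix of the suffix of $x$ beyond $\eta$; the prefix-code map $g \in \Gnr$ that swaps $U_{\eta \concat \tau'}$ with $U_{\eta \concat \tau}$ and is the identity off $U_{\eta \concat \tau'} \cup U_{\eta \concat \tau}$ is supported in $U_\eta \subseteq U$ and sends $x$ into $U_{\eta \concat \tau}$. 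Letting $\tau$ vary, the orbit of $x$ under $\{g \in \Gnr : g|_{\CCnr - U} = \mathrm{Id}\}$ meets every sub-cone of $U_\eta$, so it is dense in $U_\eta$ and in particular somewhere dense.

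For surjectivity of $\Psi$, given any $\phi \in \Anr$, apply Theorem \ref{t2.3} with both space-group pairs equal to $\pair{\CCnr}{\Gnr}$ to produce a homeomorphism $\varphi \in \Homeo(\CCnr)$ satisfying $g\phi = g^\varphi = \varphi^{-1} g \varphi$ for all $g \in \Gnr$. The equality $\varphi^{-1} g \varphi \in \Gnr$ for every $g \in \Gnr$ (and similarly for $\varphi$ replaced by $\varphi^{-1}$, using $\phi^{-1}$) shows that $\varphi \in N_{H(\CCnr)}(\Gnr)$, and by construction $\Psi(\varphi) = \phi$.

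For injectivity, the kernel of $\Psi$ is the centralizer $C_{H(\CCnr)}(\Gnr)$, so it suffices to show this centralizer is trivial. If $h \in \Homeo(\CCnr)$ is non-identity, pick $x \in \CCnr$ with $xh \neq x$, and choose disjoint basic cones $U_\eta \ni x$ and $U_\zeta \ni xh$. Take any non-trivial $g \in \Gnr$ supported in $U_\zeta$ (for example, the involution swapping two sub-cones $U_{\zeta \concat 0}$ and $U_{\zeta \concat 1}$ and fixing the rest of $\CCnr$); such $g$ fixes $x$ but may be arranged to move $xh$. Then $(x)(hg) = (xh)g \neq xh = (x)(gh)$, so $h$ does not commute with $g$, contradicting $h \in C_{H(\CCnr)}(\Gnr)$. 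Hence $\Psi$ is injective, completing the proof.

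The only non-routine step is the somewhere-dense verification, which is the standard device that unlocks Rubin's theorem for this kind of prefix-replacement group; everything else is formal bookkeeping around the convention $g^h = h^{-1}gh$ and the observation, already flagged in Remark \ref{OutExists}, that a non-trivial homeomorphism of $\CCnr$ cannot commute with every element of $\Gnr$.
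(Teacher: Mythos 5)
Your proposal is correct and follows essentially the same route as the paper: invoke Rubin's theorem with $X=Y=\CCnr$ to realise every automorphism as conjugation by a homeomorphism (surjectivity of the natural map from $N_{H(\CCnr)}(\Gnr)$), and then kill the kernel by showing the centralizer of $\Gnr$ in $\Homeo(\CCnr)$ is trivial. The only difference is one of detail: you verify the somewhere-dense hypothesis and exhibit an explicit non-commuting prefix-swap, where the paper simply asserts these facts (justifying the trivial centralizer via density of $\Gnr$-approximations of homeomorphisms), so your write-up is a more explicit rendering of the same argument.
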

\begin{proof}
An automorphism of $\Gnr$ is an isomorphism from $\Gnr$ to itself, so we set $X=Y=\CCnr$ in the statement of Rubin's Theorem, and we see that $\Anr$ is a quotient of $N_{H(\CCnr)}(\Gnr)$.  If $g\in H(\CCnr)$ is non-trivial, then there is some open set $U$ that is moved entirely off itself by $g$, so we can find an element $t$ of $\Gnr$ that acts simply as a transposition of two disjoint basic open cones within $U$.  It is immediate that $g$ and $t$ do not commute, and it follows that the kernel of the conjugation action of $H(\CCnr)$ on $\Gnr$ is trivial.  \end{proof}


The following lemma is standard in the literature which relies upon Rubin's theorem, for example \cite{brinHigherV,BleakLanoue,NekrashevychIMG}.
\begin{lemma} \label{lemma:tran}
Let $X$ be a topological space, $\sim$ an equivalence relation on $X$,
$H_\sim$ the subgroup of $\Homeo(X)$
consisting of all the homeomorphisms that preserve $\sim$ and
$G$ a subgroup of $H_\sim$ fixing all equivalence classes of $\sim$ and
acting transitively on each equivalence class.
Then  $\{ h\in \Homeo(X)\mid h^{-1}Gh\subseteq G \} \subseteq H_\sim.$
In particular,  $N_{H(X)}(G)\leqslant H_\sim$.
\end{lemma}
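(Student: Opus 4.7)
The strategy is first to recognise that under the hypotheses the $\sim$-classes are literally the $G$-orbits. Since every $g\in G$ fixes $\sim$ pointwise, each $G$-orbit lies inside a single $\sim$-class; since $G$ acts transitively on each $\sim$-class, each $\sim$-class is covered by a single $G$-orbit. So $\sim$-classes and $G$-orbits coincide, and preserving $\sim$ becomes the same as mapping $G$-orbits to $G$-orbits. The whole lemma is then the elementary fact that a homeomorphism which conjugates $G$ into itself must respect the partition into $G$-orbits.

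Concretely, take $h\in\Homeo(X)$ with $h^{-1}Gh\subseteq G$ and suppose $x\sim y$. By transitivity of $G$ on $[x]_\sim$ choose $g\in G$ with $xg=y$. Then, in the paper's right-action convention,
\[
 yh \;=\; (xg)h \;=\; (xh)\,(h^{-1}gh),
\]
and the element $h^{-1}gh$ lies in $G$ by hypothesis and so preserves $[xh]_\sim$ setwise. Thus $yh\in[xh]_\sim$, i.e.\ $xh\sim yh$; this is the forward implication in the definition of $H_\sim$.

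For the ``in particular'' clause about $N_{H(X)}(G)$, any $h$ in the normaliser also satisfies $hGh^{-1}\subseteq G$, so $h^{-1}$ lies in the same conjugation-hypothesis set. Running the calculation above with $h^{-1}$ in place of $h$ then yields the opposite implication $xh\sim yh\Rightarrow x\sim y$, furnishing the biconditional that defines $H_\sim$. I do not expect a genuine obstacle here: the substantive content is the identification of $\sim$-classes with $G$-orbits, after which everything reduces to the one-line identity $gh=h(h^{-1}gh)$. The only point to handle carefully is not to conflate the inclusion $h^{-1}Gh\subseteq G$ with equality, since the reverse implication needed for the biconditional genuinely relies on the equality that is guaranteed inside the normaliser.
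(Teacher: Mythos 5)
Your proof is correct and follows essentially the same route as the paper: choose $g\in G$ with $xg=y$ and compute $yh = xh\cdot(h^{-1}gh)$, then use that $h^{-1}gh\in G$ fixes $\sim$ pointwise. Your extra observations (that the $\sim$-classes coincide with the $G$-orbits, and that the reverse implication in the definition of $H_\sim$ is secured by applying the argument to $h^{-1}$ inside the normaliser) are sound and in fact slightly more careful than the paper's proof, which records only the forward implication.
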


\begin{proof}
Let $h\in \Homeo(X)$ so that $G^h\subset G$
and suppose $x\in X$ and $y\in [x]_\sim$.
Since $G$ acts transitively on $[x]_\sim $, there exists $g\in G$ such that $xg=y$.
Calculating, we have  $xhg^h=xhh^{-1}gh=yh$.  As $g^h\in G$, we see that $xh\sim yh$,
and since $x$ is an arbitrary element of $X$, we conclude that $h\in H_\sim$.
\end{proof}


\begin{cor}
\label{cor:2} We have:
 \begin{enumerate}
\item\label{cor:tail-class-preserved}$\Anr\cong N_{H(\CCnr)}(\Gnr)\leqslant \Hnrsim $

\item $\Aut(\Gnrpcn)\cong \Hnrsim$.
 \end{enumerate}
\end{cor}

\begin{proof} Point $(1)$ follows directly from Corollary \ref{cor:NormAut} and Lemma \ref{lemma:tran}.
For Point (2), notice that by the Lemma \ref{lemma:tran} we have  $N_{H(\CCnr)}(\Gnrpcn)\leqslant \Hnrsim$.
Moreover, if $h\in\Hnrsim$, $g\in \Gnrpcn$ and $x\in \CCnr$, then $xh^{-1}\sim xh^{-1}g$ so
$xh^{-1}h\sim xh^{-1}gh=xg^h$.
Thus $x\sim xg^h$, and as $x$ is arbitrary in $\CCnr$, we see that $g^h\in \Gnrpcn$, and therefore $h\in N_{H(\CCnr)}(\Gnrpcn)$. \end{proof}

The above Corollary \ref{cor:2}.\ref{cor:tail-class-preserved} will be used in Section \ref{sec:finitely-many-local-actions} to find conditions on automorphisms of $\Gnr$ (as homeomorphisms of $\CCnr$), enabling us to show those particular homeomorphisms can be represented by finite transducers.  The following Lemma and Proposition are of independent interest, but are not used later.

\begin{lemma}
Let $g\in \Gnrpcn$ and $\nu,\eta\in \Wnr$.
Then the set
$$A_{g,\nu,\eta} := \setm {x\in \CCnr}{x=\nu\concat y \mbox{ and }
x g=\eta\concat y \mbox { for some } y\in \CCn}$$
is closed.
\end{lemma}

\begin{proof}
Suppose that $\{x_i \}_{i=0}^\infty \subseteq A_{g,\nu,\eta}$
is a sequence that converges to $x\in \CCnr$.
For every large index $i$ there is $y_i\in \CCn$ such that $x_i=\nu\concat y_i$
and $x_i g=\eta\concat y_i$. So as $x_i \rightarrow x$, we have $\nu\concat y_i \rightarrow \nu\concat y=x$
for some $y\in \CCnr$ such that $y_i \rightarrow y$.
But $g$ is continuous, so $x_i g \rightarrow x g$, that is $\eta\concat y_i \rightarrow x g$.
Therefore, $\eta\concat y = x g$, hence $x \in A_{g,\nu,\eta}$. \end{proof}

\begin{prop}
$ \Gnrpcn=\Hnrsim \cap \Gnrdcn$.
\end{prop}

\begin{proof}
Let $g\in \Gnrpcn$ and let $U\subseteq \CCnr$ be an open set.
For every $\nu,\eta \in \Wnr$ set
$A_{g,\nu,\eta}^U=A_{g,\nu,\eta}\cap U$. Since $A_{g,\nu,\eta}$ is closed in $\CCnr$, we have
that $A_{g,\nu,\eta}^U $ is closed in $U$. Since $g\in \Gnrpcn$, we also have
$U=\bigcup _{\nu,\eta \in \Wnr} A_{g,\nu,\eta}^U $. Now, $U$ has the Baire property
and there are countably many $A_{g,\nu,\eta}^U $'s in the above union so there exists some $\nu$ and $\eta$ so that
$A_{g,\nu,\eta}^U $ is somewhere dense. Let $U_{\zeta}\subseteq U$ be a basic cone so that
$A_{g,\nu,\eta}^U \cap U_{\zeta}$ is dense in $U_\zeta$. But $A_{g,\nu,\eta}^U \cap U_{\zeta}$
is closed in $U_{\zeta}$  so  $A_{g,\nu,\eta}^U \cap U_{\zeta}=U_\zeta$ and therefore
$U_{\zeta}\subseteq A_{g,\nu,\eta}^U $. Let $\tau \in \Wne$ be such that
$\nu \concat \tau =\zeta$ and set $\omega=\eta \concat \tau$.
For every $\zeta\concat x\in U_{\zeta}$ we have
 $$(\zeta\concat x)g=(\nu \concat \tau \concat x)g=
\eta\concat \tau \concat x=\omega\concat x.$$
Thus, we have found a cone $U_\zeta\subset U$ and $\omega \in \Wne$
so that $g |_ {U_{\zeta}} = g_{\zeta,\omega}$ and hence $g\in \Gnrdcn$. Therefore, $ \Gnrpcn \leqslant \Hnrsim \cap \Gnrdcn$.

Conversely, let $g \in \Hnrsim \cap \Gnrdcn$.  Since $g \in \Gnrdcn$, there exist $\nu,\eta \in \Wnr$ such that $g |_{ U_{\nu}} = g_{\nu,\eta}$.
As every tail class has a representative in $U_{\nu}$ and $g \in \Hnrsim$,  we have that $g$ fixes $\sim$ pointwise. Therefore,
$\Hnrsim \cap \Gnrdcn \leqslant \Gnrpcn$. \end{proof}


\section{Local Actions}

Given $h\in \Homeo(\CCnr)$ and $U_{\nu}\in \Banr$, we would like
to investigate the action of $h$ on $U_{\nu}$. We begin by establishing some notation for
$h\in \Homeo(\CCn)$ and $U_\nu\in \Ban$.  We will then briefly discuss how to generalize what is
established for homeomorphisms of $\CCnr$.

For $U\subseteq \CCn$, define the \emph{root} of $U$ to be $\nu \in \Wne$
such that $U\subseteq U_{\nu}$ and $U_\nu$ is the minimal
element in $\Ban$ (with respect to inclusion) with this property.
Denote $$\nu := \Root(U).$$

\begin{definition}

Let $h:\CCn\longrightarrow \CCn$ be a continuous function.
The root function of $h$ is the function $\theta_h:\Wne \longrightarrow \Wne$ defined by:

$$\nu\mapsto \nu \theta_h=\Root(U_{\nu}h)$$
for all $\nu\in\Wne.$
\end{definition}

\begin{definition}
Let $h:\CCn\longrightarrow \CCn$ be continuous and injective, and $\nu\in \Wne$. The \emph{local action of $h$ on $\nu$}
is the injective continuous map $h_{\nu}:\CCn\longrightarrow \CCn$ defined by
$x \cdot h_{\nu}=y$, where  $(\nu\concat x) \cdot h=(\nu\cdot\theta_h) \concat y.$
\end{definition}


\begin{remark}
Given any homeomorphism $h:\CCn\to\CCn$ and $\nu\in \Wne$, it is easy to verify that:
 \begin{enumerate}
\item the local action of $h$ on $\nu$ is well defined,

\item $U_{\nu} \cdot h\subseteq U_{\nu\cdot\theta_h}$,

\item  $h_{\nu}$
is both continuous and injective,

\item  $h_{\nu}$ is surjective (and hence $h_{\nu}\in \Homeo(\CCn)$)
if and only if $U_{\nu}\cdot h=U_{\nu\cdot \theta_h}$, and

\item  if $\nu\concat \eta\in \Wne$,  then
$(\nu\concat \eta)\theta_{h}=\nu\theta_{h}\concat (\eta\theta_{h_{\nu}})$.
 \end{enumerate}
\end{remark}

We now discuss how to generalize these definitions to a homeomorphism $h$ from $\CCnr$ to itself.

  Let $P_h\subset \Wnre$ be the unique maximal set of strings such that:
 \begin{enumerate}
\item if $\nu\in P_h$, then $(U_\nu)h$ is contained in a specific cone $B_a\in \Banr$ for some $a\in \rd$, and
\item for any proper prefix $\mu$ of some $\nu \in P_h$, there are $a_1\neq a_2\in \rd$ and $x$, $y\in \CCn$ so that $\{a_1\concat x, a_2\concat y\}\subset (U_\mu)h$.
 \end{enumerate}
 In the case that $r=1$, we note in passsing that $P_h=\{\varepsilon\}$, since if there were other elements in $P_h$ we would not be able to satisfy the second property, as that are not two distinct elements of $\rd$.

Observe by the continuity of $h$ and compactness of $\CCnr$ that $P_h$ is a finite set which makes a complete antichain for $\Wnre$.  If $r>1$, then all members of $P_h$ are non-trivial words.  Now define, for any $\mu$ a proper prefix of an element of $P_h$, that $(\mu)\theta_h := \varepsilon$, so that the local action $h_\mu$ is a continuous injective map $h_{\mu}:\CCn\longrightarrow \CCnr$ (or from $\CCnr$ to $\CCnr$ if $\mu=\varepsilon$).  For each element $\nu\in P_h$, we set $(\nu)\theta_h$ to be the unique maximal common prefix of all the points in $(U_\nu)h$. We observe that for each element $\nu\in P_h$, the resulting string $(\nu)\theta_h$ will be a non-empty string beginning with a letter in $\rd$ and so there is an induced local action $h_\nu:\CCn\to\CCn$.
 Finally, for $\nu\in P_h$ and all $\xi = \nu\concat \rho$,  set $(\xi)\theta_h:=\nu\theta_h\concat \rho\theta_{h_\nu}$, where $h_\nu:\CCn\to\CCn$ is the local action of $h$ at $\nu$, as mentioned in the previous sentence.

Essentially, one needs a little care with the definition of local actions for a homeomorphism $h\in \Homeo(\CCnr)$, to handle the specific cases arising from the set of letters $\rd$. Specifically, there is a unique local action associated that is a map from $\CCnr$ to $\CCnr$, there are finitely many associated local actions which are maps from $\CCn$ to $\CCnr$, and finally the remaining associated local actions are maps from $\CCn$ to $\CCn$.


\section{On the Rational Group $\mathcal{R}_n$ and the Related Groups $\Rnr$}

In this section we will specify the group $\Rnr$, which is akin to the group $\Rn$ of GNS in \cite{GNSenglish}. For the most part, we follow the ideas and definitions of \cite{GNSenglish}.

\subsection{Defining standard transducers and associated notation}

\begin{definition} A \emph{transducer}
is a tuple $A=\left\langle  X_i,X_o,Q,\pi,\lambda \right\rangle$, where:
 \begin{enumerate}
\item $X_i$ is a finite alphabet, the \emph{input alphabet},

\item $X_o$ is a finite alphabet, the \emph{output alphabet},

\item $Q$ is a set, the \emph{set of states},

\item $\pi : X_i\times Q\longrightarrow Q$ is a mapping, the \emph{transition function}, and

\item $\lambda: X_i\times Q\longrightarrow X_o^*$ is a mapping, the \emph{output function}.
{
(Recall that $X_o^*$ is the set of all finite strings in the alphabet $X_o$; that is, ``*'' is the ``Kleene star'' operator.)}
 \end{enumerate}
An \emph{initial transducer} is a tuple $A_{q_0} =
\left\langle  X_i,X_o,Q,\pi,\lambda,q_0 \right\rangle$, where
$A=\left\langle  X_i,X_o,Q,\pi,\lambda \right\rangle$ is a transducer and
$q_0\in Q$.
\end{definition}

The word ``transducer'' is used since transducers are meant to model machines which transform inputs to outputs in a controlled fashion.  In particular, one is to imagine that there is always a specified ``active state'' (say $q\in Q$ for the purposes of this discussion) from which the transducer will process its input.  The transducer then ``processes from $q$'' as follows.  First, it reads a letter $a\in X_i$ from an input tape, and then it performs two actions.  These actions are:
 \begin{enumerate}
\item the active state changes to the state $\pi(a,q)$, and
\item the transducer writes the word $\lambda(a,q)$ to an output tape.
 \end{enumerate}
Thus, transducers transform input strings to output strings. An initial
transducer includes the specification of the initial state from which
processing starts.

The functions $\lambda$ and $\pi$ can be extended to the set $(X_i^*\backslash\{\varepsilon\})\times Q$
according to the following recurrence rules:
$$\pi(\mu\concat \nu,q) := \pi(\nu,\pi(\mu,q)) \quad \textrm{and} \quad
\lambda(\mu\concat \nu,q) := \lambda(\mu,q)\concat\lambda(\nu,\pi(\mu,q)),$$
where $\mu\in X_i$ and $\nu\in X_i^*$.  \label{infExtensions}Now further extend the definition of $\lambda$ so that it is defined on inputs from
 $X_i^{\omega}$ in the obvious manner.
Note that an infinite string might still be transformed to a finite string if while processing the infinite string we at some stage visit a state $q$ with no output on the next letter, and from then on, always visit states that give no output as we process the remaining letters of the input string.

In all that follows below, we will assume that our transducers are constructed in such a way as to never transform an infinite string to a finite string.  In this way, for any state $q\in Q$, we see that $\lambda(\cdot,q):X_i^\omega \to X_o^\omega$ will always represent a continuous map which map we will denote as $h_{A_q}:X_i^\omega \to X_o^\omega$.

\subsection{Some technicalities for transducers}
Let us now consider a fixed initial transducer $A_{q_0}\seteq\left\langle  X_i,X_o,Q,\pi_A,\lambda_A,q_0 \right\rangle$.
For both $A$ and $A_{q_0}$, we say the transducer is \emph{finite} whenever $|Q|<\infty.$

Now consider the transducer $A_{q_0}$, and consider the continuous map $h = h_{A_{q_0}} :X_i^\omega\to X_o^\omega$ induced by $A_{q_0}$.  Recall that for any word $\tau\in X_i^*$ we use the notation $h_\tau$ to represent the local action of $h$ at $\tau$, while $\tau \theta_h = \Root(U_\tau h)\in X_o^*$.  Let $q\in Q$ and $\nu\in X_i^*$ be such that $\pi_A(\nu,q_0)=q$. If $\lambda_A(\nu,q_0)$ is a proper prefix of $\nu\theta_h$, then $q$ is considered a \emph{state of incomplete response}, as the infinite outputs from $q$ will have some non-empty guaranteed common prefix.
Also, if for some $q\in Q$ and for every $\nu \in X_i^*$
we have $\pi(\nu,q_0)\ne q$, then we say \emph{$q$ is an inaccessible state}, noting that such a state $q$ could never have any bearing on the definition of the map $h$.  Finally,
if two accessible states $q_1,q_2\in Q$ are so that $h_{A_{q_1}}=h_{A_{q_2}}$, then we say that $q_1$ and $q_2$ are \emph{equivalent states} (these states would be called \emph{$\omega$-equivalent} in \cite{GNSenglish}).

We say the transducer $A_{q_0}$ is \emph{minimal} if $A_{q_0}$ has no states of incomplete response, has all states accessible, and whenever $q_1,q_2\in Q$ are equivalent states, we have $q_1=q_2$.

Aside from the definition of $\omega$-equivalence above, there is also a more categorical definition of equivalence of transducers.  We say two transducers on a fixed alphabet $A$ are \emph{strongly isomorphic transducers}, denoted by $=_{si}$, if there is a bijection between the states of those transducers so that this bijection on states commutes with the transition and output functions of the two transducers. Note that this notion represents an equivalence of transducers which is independent of an initial state.  E.g., two strongly isomorphic transducers, both reading some input alphabet $A$, when assigned non-corresponding initial states would generally represent different functions on $A^{\infty}$.

We note that a general transducer is \emph{asynchronous} in the sense that reading any input letter from some state, we have no guarantee that the output of the function $\lambda$ will be a single letter.  In the special case that for all states $q\in Q$ of the transducer, and for all input letters $x\in X_i$, we have a guarantee that $|\lambda(x,q)|=1$, then we say the transducer is \emph{synchronous}.  Thus, if we do specify that a transducer is asynchronous, then this generally means that we are reminding the reader that (at that time) we have no guarantee that the output of the function $\lambda$ has length $1$.

One can represent transducers as directed labelled graphs as follows.  Let $A$ be a transducer as above.  Our graph $\Gamma_A$ will have vertex set the set $Q$, and a directed edge from $q$ to $\pi(a,q)$ for each pair $(a,q)\in X_i\times Q$, and where we label this edge with the string ``$a/w$'' where $w = \lambda(a,q)$ is a word in $X_0^*$.

\subsection{Transducers acting on $\CCnr$}\label{Subsection:transducersCCnr}
We will use transducers to model self-homeomorphisms of the spaces $\CCnr$.  This introduces various issues that will cause us to slightly modify our definition of transducers.  Above, we already extended the definition of $\lambda$ to receive infinite inputs.  We still need to discuss what needs to be done to represent the fact that the letters in $\rd$ only appear once in a string  representing a point in $\CCnr$.  Another technical issue arises in the case $r=1$, which would make the upcoming definitions extremely unwieldy.

Thus, we can specify an initial transducer $A_{q_0}$ as above with alphabets $X_i=X_o=\rd\sqcup\{0,1,2,\ldots,n-1\}$, and we can ask when this transducer induces a continuous map $h:\CCnr\to\CCnr$ as above (perhaps even a self-homeomorphism).  Such a transducer will actually map the Cantor space $X_i^\omega$ to $X_i^*\sqcup X_i^\omega$, which is a much bigger space than we need if we want to restrict to a map from $\CCnr$ to $\CCnr$.  We thus wish to consider a class of initial transducers which are found by restricting these larger transducers to their actions on $\CCnr$ (and only allow such transducers which turn such infinite inputs into infinite outputs).  Of course, we would like an internal characterisation of these restricted transducers, which we provide below.

An initial \emph{transducer for $\CCnr$} will be a tuple $$A_{q_0}=(\rd,\Xn,R,S,\pi,\lambda, q_0)$$ so that:
 \begin{enumerate}
\item $R$ is a finite set, and $S$ is a (finite or countably infinite) set disjoint from $R$; in the framework above,
$Q:= R\sqcup S$ is the \emph{set of states},

\item $q_0$ belongs to $R$ and is the \emph{initial state},

\item $\pi$ is piecewise defined function (the \emph{transition function}) taking an input letter and a state, and producing a state according to rules, and

\item $\lambda$ is a piecewise defined function (the \emph{output function}) taking an input letter and a state, and producing a (empty, finite, or infinite) output string.
 \end{enumerate}

The domain of $\pi$ and $\lambda$ is
$$\left( \rd\times\left\{q_0\right\}\right) \bigsqcup \left(\left\{0,1,\ldots,n-1\right\} \times \left( Q\backslash\left\{q_0\right\} \right)\right),$$ the range of $\pi$ is $Q\backslash\{q_0\}$, and the range of $\lambda$ is $\Wne\sqcup\CCn\sqcup\Wnre\sqcup\CCnr$.

The functions $\pi$ and $\lambda$
{(as well as the extended version of $\lambda$ as described above)}
will obey some further rules, as follows:
 \begin{enumerate}
\item whenever $\pi(x,q_1) = q_2 \in R$, we have $q_1\in R$ and $\lambda(x,q_1)=\varepsilon$,

\item whenever $q_1\in R$ and $\pi(x,q_1)=q_2$, with $q_2\in S$, we have $\lambda(x,q_1)\in \Wnr$,

\item if $q\in S$, then for all $x\in \Xn$ we have $\lambda(x,q)\in \Wne$ and $\pi(x,q)\in~S$,


\item if $q\in Q$ and $w$ is a non-empty word so that $\pi(w,q)=q$, then $\lambda(w,q)\neq \varepsilon$, and

\item whenever $q\in S$ and $x\in \CCn$, we have $\lambda(x,q)\in \CCn$.
 \end{enumerate}

The  core idea of these conditions is that the letters in $\rd$ cannot appear in the output from a single input string more than one time, and then just at the beginning of the total output string given any initial input string.  Notice the last two conditions guarantee that $A_{q_0}$ will induce a function $h_{A_{q_0}}:\CCnr\longrightarrow\CCnr$, since no point (which by definition is given as an infinite string) will be transformed into a finite output string.  Also, the penultimate condition guarantees that the graph underlying the transducer admits no directed cycles in the states in $R$.  If either of the last two conditions fails, the transducer is \emph{degenerate}; we will call a transducer as above satisfying all of these conditions
\emph{non-degenerate}.

\subsection{Transducers, continuity, and $\omega$-equivalence\label{omega}}

Suppose we are given a non-degenerate initial transducer $A_{q_0}$ of one of the two varieties above.  In the first case, our transducer induces a map
{$h_{A_{q_0}}: \CCn \to \CCn$,}
and in the second case, a map $h_{A_{q_0}}:\CCnr\to\CCnr$.  It is very easy, as mentioned above, to prove in either case that the map $h_{A_{q_0}}$ is continuous, and we say that $A_{q_{0}}$ represents the continuous map $h_{A_{q_0}}$.

Note that if a continuous function $h:\CCn\to\CCn$ or $h:\CCnr\to\CCnr$ can be represented by an initial transducer $A_{q_0}$ (and we will show below that it can, following a discussion of \cite{GNSenglish}), then $A_{q_{0}}$ is not unique amongst all of the transducers which represent $h$.  Thus, two initial transducers are said to be \emph{$\omega$-equivalent} if they represent the same continuous function.  Note that on occasion we wish to consider whether two states of (possibly distinct, and perhaps non-initial) transducers are themselves $\omega$-equivalent.  By this we mean taking these states as initial states of the associated transducers, and determining if the resulting maps are equal.

We now trace through the basics of a construction in \cite{GNSenglish} but in our context, to show how to build a tranducer representing a homeomorphism from $\CCnr$ to $\CCnr$.    The construction we describe can be used to build a transducer representing any continuous map from $\CCnr$ to itself, although in this more general case, the construction might result in a transducer with a state from which one might read in one input letter and write as output a point in Cantor space.  The construction below, even in the case of homeomorphisms, can result in a transducer where the set $S$ (and so, $Q$) is infinite.

Let $h\in \Homeo(\CCnr)$.  We inductively define a non-degenerate initial transducer  $$\widetilde{A}_{q_{0}}\seteq(\rd,\Xn,R,S,\pi,\lambda,q_0)$$ representing $h$ as follows:
 \begin{enumerate}
\item set $Q := \Wnre\sqcup \{\varepsilon\}$,
\item for $a\in \rd$ define $\pi(a,\varepsilon) := a$,
\item for $a\in \rd$ define $\lambda(a,\varepsilon) := a \theta_h$,
\item for $\nu\in Q\backslash\{\varepsilon\}$ and $x\in \Xn$ define $\pi(x,\nu) := \nu \concat x$,
\item for $\nu\in Q\backslash\{\varepsilon\}$ and $x\in \Xn$ define $\lambda(x,\nu) := (\nu \concat x) \theta_h -\nu\theta_h$,\label{item:lambdaDef}
\item set $R\subset Q$ to be the set of states $\nu\in \Wnre\sqcup\{\varepsilon\}$ for which $\nu\theta_h=\varepsilon$,\label{pt:defR}
\item set $S\seteq Q\backslash R$,
\item set $q_0 := \varepsilon\in Q$ as the initial state.
 \end{enumerate}
 In the case that $r=1$, replace point (\ref{pt:defR}) with the following:
\begin{enumerate}
\setcounter{enumi}{5}
\item set $R=\{\varepsilon\}$ (note that in this case, $\lambda(\dot{0},\varepsilon)$ has prefix $\dot{0}$),
\end{enumerate}
\begin{remark}
The following points are worth mentioning.  The first is important notationally, whilst the second is a technical point relating to our transition from the general rational group to the ``rational group acting on $\CCnr$.''    The third point will be discussed more fully in the paragraphs to follow, while the fourth point is to support the reader in recalling what happens over the more standard space $\CCn$.
 \begin{enumerate}
\item It is immediate by construction that the initial transducer $\widetilde{A}_{q_0}$ represents $h$.  That is, $h=h_{\widetilde{A}_{q_0}}$.
\item Using the inductive extension of $\lambda$ to the domain $\Wnre\sqcup \{\varepsilon\}$, we have
$$\lambda(x,\nu) = (\nu\concat x)\theta_h -\lambda(\nu,\varepsilon)$$ and also $$\lambda(\nu\concat x,\varepsilon)=(\nu\concat x)\theta_h$$ (we take $\lambda(\varepsilon,\varepsilon)=\varepsilon$ to found the induction).
\item Even though $\widetilde{A}_{q_0}$ is a non-degenerate transducer with no inaccessible states and no states of incomplete response, it might not be minimal, and it might not be synchronous.
\item In the case of transducers representing functions from $\CCn$ to $\CCn$, there would be no set of letters $\rd$ and no set $R$ of states, so that the resulting transducer would be listed as a quintuple, with $Q=\Wne$.
 \end{enumerate}
\end{remark}

We discuss the third point above.  First, note as before that there is no general algorithm to transform the above transducer $\widetilde{A}_{q_0}$ to a synchronous $\omega$-equivalent transducer.  For instance, many homeomorphisms of $\CCnr$ are not induced by automorphisms of a forest of $r$ infinite $n$-ary rooted trees.  In general, though, we can still reduce $\widetilde{A}_{q_0}$ to a minimal transducer $A_{q_0}$ as in the subsection \ref{s_reduction} below.
\begin{remark}
There is a bijection between the states of the reduced transducer $A_{q_0}$ and the set of local actions of $h$ (including the local action of $h$ at the empty input). Thus, while $\widetilde{A}_{q_0}$ is an infinite transducer, $A_{q_0}$ might be finite.  Finally, we note that if $\widetilde{A}_{q_0}$ represents a homeomorphism of $\CCnr$, then given a letter-state pair $(x,q)$ in the domain of $\lambda$, we have that $\lambda(x,q)$ is always a finite word. (Note that, for instance, for a continuous function $f:\CCn\to\CCn$ which sends an entire cylinder set $U_\mu$ to an eventually periodic point, the transducer produced by following the construction above may be finite, whilst the guaranteed output after some finite input might still be an infinite word; but such a transducer would not represent an injective function.)\end{remark}

The following theorem is direct consequence of Theorem 2.5 of \cite{GNSenglish}.

\begin{theorem} [GNS]\label{GrigThm}
Let $1\leqslant r<n$ be integers. Any homeomorphism $h:\CCnr\longrightarrow \CCnr$ can be represented by a finite non-degenerate initial transducer if and only if the set of local actions of $h$ is finite.
\end{theorem}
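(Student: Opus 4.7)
The plan is to prove the two directions separately, using the explicit construction of $\widetilde{A}_{q_0}$ given above together with a minimisation procedure.

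For the ``only if'' direction, suppose $h\in\Homeo(\CCnr)$ is represented by a finite non-degenerate initial transducer $A_{q_0}=\langle\rd,\nset,R,S,\pi,\lambda,q_0\rangle$. The key observation is that each accessible state $q\in Q$ determines a continuous map $h_{A_q}:X_i^\omega\to X_o^\omega$, and for any word $\nu\in\Wnre$ in the domain of definition we have
\[
\lambda(\nu\concat x,q_0)=\lambda(\nu,q_0)\concat \lambda(x,\pi(\nu,q_0))
\]
for all $x$ in the remaining input alphabet. Comparing this with the definition $\theta_h(\nu)\concat(x\cdot h_\nu)=(\nu\concat x)\cdot h$ shows that the local action $h_\nu$ coincides (up to the finite incomplete-response prefix $\theta_h(\nu)-\lambda(\nu,q_0)$) with $h_{A_{\pi(\nu,q_0)}}$. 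Hence every local action of $h$ is determined by one of the finitely many states in $Q$, together with one of finitely many finite correcting prefixes (the possible values of $\theta_h(\nu)-\lambda(\nu,q_0)$, which are bounded in length by the finiteness of $Q$). Thus the set of local actions of $h$ is finite.

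For the ``if'' direction, assume $h$ has only finitely many local actions. Form the (possibly infinite) non-degenerate initial transducer $\widetilde{A}_{q_0}$ constructed immediately before the theorem, whose state set is $\Wnre\sqcup\{\varepsilon\}$ and which represents $h$ by construction. The remark following that construction notes that after applying the reduction of Subsection~\ref{s_reduction} (in particular, the identification of states with equal induced local action), the states of the resulting minimal automaton $A_{q_0}$ are in bijection with the set of local actions of $h$ (including the initial one at $\varepsilon$). Hence under our hypothesis $A_{q_0}$ is finite; since this reduction preserves the represented map and non-degeneracy, we obtain a finite non-degenerate initial transducer representing $h$.

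The two directions together give the theorem. The only genuinely non-trivial ingredient is the minimisation step in the ``if'' direction, where one must verify that identifying equivalent states and removing incomplete response yields a well-defined finite transducer representing the same homeomorphism; this is essentially the content of Theorem~2.5 of \cite{GNSenglish} adapted from $\mathcal{R}_n$ to the setting of $\Rnr$, and the required modifications are purely cosmetic, since the extra structure given by the ``initial layer'' $R\subset Q$ (processing the single $\rd$-letter) plays no role in the equivalence of states lying entirely in $S$. I expect this bookkeeping — carefully tracking which states arise in $R$ versus in $S$, and ensuring that after identification the resulting automaton still satisfies the non-degeneracy clauses — to be the main, though routine, obstacle.
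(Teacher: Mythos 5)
Your proof is correct and follows essentially the route the paper intends: the paper simply invokes Theorem 2.5 of \cite{GNSenglish} together with the construction of $\widetilde{A}_{q_0}$ and the remark that, after the reduction of Subsection \ref{s_reduction}, the states of the minimal transducer biject with the local actions of $h$, which is exactly the argument you spell out for the ``if'' direction. Your ``only if'' direction (each local action agrees with the action $h_{A_{\pi(\nu,q_0)}}$ of one of the finitely many states up to a finite correcting prefix, which in fact depends only on the state) is the standard converse and is likewise fine.
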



\subsection{Reducing transducers\label{s_reduction}}

When given a non-degenerate initial transducer $$A_{q_{0}}=(Q_A,X,\pi_A,\lambda_A,q_0)$$ recall that we say that $A_{q_0}$ is \emph{minimal}
if all of its states are accessible, it has no states of incomplete response, and any two distinct states represent distinct local maps.

In this section we describe the process of reduction that turns an initial transducer into a minimal transducer, following the discussion of \cite{GNSenglish}.  Note that the proofs in \cite{GNSenglish} are given with a view to transducers representing self-homeomorphisms of $\CCn$, but the processes are effectively unchanged in our context of self-homeomorphisms of $\CCnr$, so the steps described below take the view that $A_{q_0}$ is representing a continuous function from $\CCnr$ to $\CCnr$.

\vspace{.1 in}

\noindent {\bf{Step 1:}} (Removing states  of incomplete response) Let $h_{A_{q_0}}:\CCnr\to\CCnr$ be the continuous map induced by $A_{q_0}$.  {\it To simplify notation below, we will simply refer to this map as $h$.}  Recall further that for any word $\tau\in \Wnre$ the notation $h_{\tau}$ represents the local action of $h$ at $\tau$, while $\tau\theta_h = \Root(U_\tau h)\in\Wnre$.  Let $q\in Q$ and $\nu\in \Wnre$ be such that $\pi_A(\nu,q_0)=q$. Recall further that if $\lambda_A(\nu,q_0)$ is a proper prefix of $\nu\theta_{h}$, then $q$ is considered a state of incomplete response. That is, by the time one traverses the transducer from $q_0$ to $q$, the output word $\lambda(\nu,q)$ is only a proper prefix of the word $\nu\theta_h$ that is eventually guaranteed to be output as a prefix of the ultimate result on processing any point in $\CCnr$ with initial prefix $\nu$.  In particular, if we were to continue processing with any sufficiently large input, the transducer will first write the suffix $\tau= \nu\theta_h - \lambda(\nu,q)$, before writing any other output.  Thus, we strip the impending $\tau$ output from the next few transitions away from $q$, and add it as a suffix to all current outputs $\lambda(x,p)$ where $x$ is a letter and $p$ is a state so that $\pi(x,p) = q$.  After this is done, we have a  transducer $\omega$-equivalent to our original transducer, while the state $q$ is no longer a state of incomplete response (see Subsection \ref{omega} for the definition of $\omega$-equivalence).  Thus, one carries out this process for each word $\nu\in \Wnre$, taken in shortlex ordering, to inductively remove all states of incomplete response.

Note that in \cite{GNSenglish}, the first step is to add a new initial state $q_{-1}$, which has transitions described by a modified transition function $\pi$ copying the transitions from $q_0$, and with the first guaranteed outputs computed to this new initial state.  By this method the algorithm of \cite{GNSenglish} avoids creating a circuit of modifications (so that the resulting algorithm might never stop).  However, as our transducers act on $\CCnr$, we do not need to do this: the state $q_0$ is the unique state that reads letters from $\rd$, and will never admit incoming transitions from elsewhere in the transducer, as all of our input strings start with such a letter but also only have one such letter.  In particular, we can simply execute the algorithm of \cite{GNSenglish} using the original $q_0$ instead of a new state $q_{-1}$, thus this step of the algorithm will result in an $\omega$-equivalent transducer using the same set $Q_A$ of states as for the original transducer $A_{q_0}$.
\vspace{.1 in}

\noindent {\bf{Step 2:}} (Removing inaccessible states) Remove from $Q$ every state that can not be reached
from the initial state $q_0$. That is, if for some $q\in Q$ and for every $\nu \in \Wnre$
we have $\pi(\nu,q_0)\ne q$, then remove $q$ from $Q$.  When $Q$ is finite, we can detect in a finite number of steps if at some length $k$, the words in $\Wnre$ of length $k$ only cause the transitions of $\pi$ from $q_0$ to hit states already seen by following transitions from $q_0$ for shorter words, at which point any unvisited states will never be visited, and can be removed.  Thus, our new transducer may have fewer states than the original.
\vspace{.1 in}

\noindent {\bf{Step 3:}} (Identifying states representing the same local actions) If for two accessible states $q,p\in Q\backslash \{q_0\}$  (in the case of a map on $\CCn$, do this across all of $Q$) we have that for every $\nu\in \Wn$ the equation $\lambda(\nu,q)=\lambda(\nu,p)$ holds, then we can identify $p$ and $q$ as a single state.  Note that if we have removed states of incomplete response, then as discussed by \cite{GNSenglish}, any two $\omega$-equivalent states will satisfy this equivalence across finite words.

 After these potential identifications, our new transducer can have no more states than the original transducer $A$.

Grigorchuk et al. prove (in their context, but the proof is essentially the same in ours) that these steps produce from the transducer $A_{q_0}$ a reduced
non-degenerate initial transducer in its $\omega$-equivalence class, which is unique up to isomorphism of transducers (see Proposition 2.8 of \cite{GNSenglish}).

\subsection{The groups $\Rn$ and $\Rnr$}

Given transducers $$A=(X_n,Q_A,\pi_A,\lambda_A) \mbox{ and } B=(X_n,Q_B,\pi_B,\lambda_B)$$ one can form $A*B$, their \emph{product transducer},  as follows.
$$
A*B=(X_n,Q_A\times Q_B,\pi_{A*B},\lambda_{A*B})$$

where $$\pi_{A*B}(x,(p,q))=(\pi_A(x,p),\pi_B(\lambda_A(x,p),q))$$ and
$$\lambda_{A*B}(x,(p,q))=\lambda_B(\lambda_A(x,p),q).$$

If $A$ and $B$ are assigned initial states $p_0$ and $q_0$ respectively, then the initial state of the product transducer $A_{p_0}*B_{q_0}$ is taken as $(p_0,q_0)$.  The reader can easily verify that if $A_{p_0}$ and $B_{q_0}$ are transducers representing continuous functions $f$ and $g$ respectively, then we have  $fg=h_{A_{p_0}}h_{B_{q_0}}=h_{A_{p_0}*B_{q_0}}$.

In \cite{GNSenglish} GNS show that a homeomorphism $f:\CCn\to\CCn$ representable by a finite initial transducer $A_{q_0}$ has its inverse $f^{-1}$ also representable by a finite transducer.  We give an exposition of their argument in Appendix \ref{appendixInversion}, as the inversion process plays an essential role in what follows.

We are now in position to define the {\em rational groups} $\Rn$ and $\Rnr$.

\begin{definition}
Define $\Rn\subset \Homeo(\CCn)$ and $\Rnr\subset \Homeo(\CCnr)$ to be the sets of homeomorphisms which can be represented by minimal transducers which are finite.
\end{definition}

We now have the following lemma.
\begin{lemma}\label{RnGroup}
For integers $1\leqslant r<n$, each of the sets $\Rn$ and $\Rnr$ forms a group under composition.
\end{lemma}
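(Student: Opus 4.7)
The plan is to verify the three group axioms by invoking Theorem \ref{GrigThm}: membership in $\Rnr$ (respectively $\Rn$) is equivalent to having only finitely many distinct local actions. The identity map has a single local action (itself), so it lies in $\Rnr$; it then suffices to show that the class of homeomorphisms with finitely many local actions is closed under composition and inversion.

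For closure under composition, I would fix $h, g$ with finitely many local actions and, for each $\nu \in \Wnre$, derive from the definitions the identities $\theta_{hg}(\nu) = \theta_g(\theta_h(\nu))$ and $(hg)_\nu = h_\nu \cdot g_{\theta_h(\nu)}$, where $\cdot$ denotes composition of right actions. Since each factor on the right is drawn from a finite set, the local actions of $hg$ are contained in a finite set of compositions and hence form a finite set themselves. The only subtlety is handling the initial step when $\nu \in \rd$ (in the $\Rnr$ setting), for which $\theta_h(\nu)$ is a word in $\Wnre$ starting with a dotted letter; but the subsequent application of $g$ proceeds analogously, reducing to the general case.

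For closure under inverses, I would follow the approach of \cite{GNSenglish} and construct, from a minimal finite transducer $A$ representing $h$, a finite initial transducer for $h^{-1}$ whose states are pairs $(q, w)$, with $q$ a state of $A$ and $w$ a short ``buffer'' of committed-but-unread output letters of $A$. Reading a letter $a$ at state $(q, w)$, the inverse transducer seeks the unique letter that, when input to $A$ at state $q$, produces an output extending $wa$; injectivity of $h$ guarantees this letter is well defined when it exists. The crucial step, and the main obstacle, is establishing a uniform bound on the length of the buffer $w$: this rests on the finiteness of the set of local actions of $h$ together with the continuity of $h^{-1}$, which prevents the synchronization delay between inputs and outputs of $A$ from growing without bound. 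Once such a bound is obtained, the resulting inverse transducer has finitely many states, so $h^{-1} \in \Rnr$ by Theorem \ref{GrigThm}. The adaptation from the $\Rn$ case to the $\Rnr$ case affects only a finite top layer handling $\rd$-transitions at the initial state and introduces no new obstruction.
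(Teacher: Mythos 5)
The paper offers no argument of its own here: it simply cites \cite{GNSenglish} and remarks that the proof there carries over to $\Rnr$, so your proposal is in the same spirit, namely reconstructing the Grigorchuk--Nekrashevych--Suschanski\u\i{} argument and checking that the $\rd$-layer at the initial state causes no new difficulty. Your treatment of inverses is exactly their inversion algorithm (the one the paper itself invokes later, via their Proposition 2.21, when discussing cores), and you correctly identify that the uniform bound on the buffer length is the point where compactness and continuity of $h^{-1}$ must be used; as a sketch this is fine.

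The composition step, however, rests on identities that are false in general. One only has $\theta_g(\theta_h(\nu)) \leqslant \theta_{hg}(\nu)$ (in your right-action convention, $h$ applied first), with equality precisely when $h_\nu$ followed by $g_{\theta_h(\nu)}$ already equals $(hg)_\nu$; this is the content of points (4) and (5) of the paper's Lemma \ref{claim:propert}. The discrepancy occurs because $U_\nu h$ may be a \emph{proper} subset of $U_{\theta_h(\nu)}$, so that after applying $g$ the image can acquire a longer common prefix than $\theta_g(\theta_h(\nu))$ --- exactly the phenomenon of states of incomplete response that the minimisation algorithm must correct. The finiteness conclusion survives with a small repair: for $x\in\CCn$ one has $(\nu\concat x)(hg)=\theta_g(\theta_h(\nu))\concat\bigl((x\, h_\nu)\, g_{\theta_h(\nu)}\bigr)$, so $(hg)_\nu$ is obtained from the composite of $h_\nu$ and $g_{\theta_h(\nu)}$ by deleting from every output the word $\theta_{hg}(\nu)-\theta_g(\theta_h(\nu))$, and this word is the greatest common prefix of the image of the composite, hence is itself determined by the pair $(h_\nu, g_{\theta_h(\nu)})$. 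Thus $(hg)_\nu$ is a function of that pair, which ranges over a finite set, and Theorem \ref{GrigThm} applies as you intended. With that correction your argument goes through and agrees with the route the paper implicitly takes.
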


Lemma \ref{RnGroup} is proven in the case of $\Rn$ in \cite{GNSenglish} and the details of that proof work just as well for $\Rnr$.  Appendix \ref{appendixInversion} describes inversion in detail.

\begin{remark} The group $\Gnr$ coincides with a subgroup of $\Rnr$.  In particular, for $g\in\Homeo(\CCnr)$, we will have $g\in\Gnr$ if and only if $g$ is in $\Rnr$ and if, when $g$ is represented by a minimal non-degenerate transducer $A_{q_0}=(\rd,\Xn,R,S,\pi,\lambda,q_0)$, then there is some constant $m$ and a state $q\in S$ so that  for any word $w\in\Wnr$ with $|w|\geq m$, we have $\pi(w,q_0)=q$, and for all $x\in\Xn$, we have $\lambda(x,q)=x.$
\label{remark:trivial-core}\end{remark}

The following is a general proposition for transducers transforming a Cantor space into itself. For instance, it will apply to elements of the various types of rational groups we are discussing here.  We give the statement for the GNS form of the rational group.
\begin{prop}\label{prop:AllStatesHomeosToSynch}
Suppose $A=\left\langle \Xn,Q,\pi_A, \lambda_A\right\rangle$ is a finite transducer.  If, for each state $q\in Q$, the induced map $h_{A_q} : \CCn\to\CCn$ is a self-homeomorphism of $\CCn$, then $A$ is synchronous, has no states with incomplete response, and for each state $q$, the restricted map $\lambda_A(\cdot,q):\Xn\to\Xn$ is a permutation.
\end{prop}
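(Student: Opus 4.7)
The plan is to turn the hypothesis that each $h_{A_q}$ is a self-homeomorphism of $\CCn$ into the rigid identity $U_{x}\cdot h_{A_q}=U_{\lambda_A(x,q)}$ for every state $q$ and letter $x$, from which all three conclusions will follow almost mechanically.

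First I would write out the recursive description of the action: for $x\in\nset$, $q\in Q$, and $\xi\in\CCn$, one has $h_{A_q}(x\concat\xi)=\lambda_A(x,q)\concat h_{A_{\pi_A(x,q)}}(\xi)$. By hypothesis $h_{A_{\pi_A(x,q)}}$ is a homeomorphism of $\CCn$, hence surjective, so the right-hand side sweeps out exactly the cone $U_{\lambda_A(x,q)}$ as $\xi$ varies. This gives the key equality $U_x\cdot h_{A_q}=U_{\lambda_A(x,q)}$. From this, $\lambda_A(x,q)=\varepsilon$ is impossible: otherwise $U_x\cdot h_{A_q}=\CCn$, leaving no room in the codomain for the images of the other $U_y$, $y\neq x$, contradicting injectivity of $h_{A_q}$.

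Next I would exploit that $h_{A_q}$ sends the partition $\{U_x\}_{x\in\nset}$ of $\CCn$ to a family of pairwise disjoint cones $\{U_{\lambda_A(x,q)}\}_{x\in\nset}$ whose union is all of $\CCn$. These arise from a prefix code of cardinality $n$ that tiles $\CCn$, so Kraft's identity gives $\sum_{x\in\nset}n^{-|\lambda_A(x,q)|}=1$. Since each summand is at most $n^{-1}$ and there are exactly $n$ of them, equality forces $|\lambda_A(x,q)|=1$ for every $x$. This proves that $A$ is synchronous and, because the resulting single letters are distinct and exhaust $\nset$, that $\lambda_A(\cdot,q)\colon\nset\to\nset$ is a permutation.

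Finally, to rule out states of incomplete response, I would promote the equality $U_x\cdot h_{A_q}=U_{\lambda_A(x,q)}$ to all finite words by induction on length: writing $\nu=\mu\concat y$, the inductive description of the action together with the inductive hypothesis applied at the state $\pi_A(\mu,q)$ yields $U_\nu\cdot h_{A_q}=U_{\lambda_A(\nu,q)}$. Consequently $\theta_{h_{A_q}}(\nu)=\Root(U_\nu\cdot h_{A_q})=\lambda_A(\nu,q)$ for every $\nu\in\Wn$ and every choice of initial state $q$, so the output already written equals the eventual guaranteed prefix and no state of incomplete response can arise. The only (minor) obstacle is to execute the Kraft step cleanly; once the image-of-cone identity is in place, synchronicity, the permutation property, and completeness of response all fall out in parallel.
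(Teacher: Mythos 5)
Your proposal is correct and follows essentially the same route as the paper: you use surjectivity of the maps at the target states to show each one-letter cone maps onto the exact cone of its output word, injectivity to get disjointness, and then conclude that the $n$ output words form a complete antichain that must consist of the $n$ one-letter words. Your Kraft-equality step and the explicit induction for completeness of response are just slightly more formal versions of what the paper asserts directly (that the only complete antichain of cardinality $n$ is the set of length-one words, and that $w_a$ is therefore the full guaranteed response).
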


\begin{proof}
Recall that the map $A_q:\CCn\to\CCn$ is defined as the map on Cantor space induced as the infinite extension of the map $\lambda_A(\cdot,q):\Xn^*\to \Xn^*$.  Now, for each letter $a\in\Xn$, we have a word $w_a\seteq\lambda_A(a,q)$ and a state $q_a\seteq \pi(a,q)\in Q$.

By assumption, the image of $A_q$ is the whole of the Cantor space, and $A_q$ is also injective.  Therefore, we must have
$$\CCn
=
\Image(h_{A_q}) \hspace{0.2cm}
=
\bigsqcup _{a\in \Xn} w_a\concat \Image(h_{A_{q_a}}) \hspace{0.2cm}
=
 \bigsqcup _{a\in \Xn} w_a\concat \,\CCn.$$
We know that for any $a\in \Xn$, we have $\Image(h_{A_{q_a}})$ is all of Cantor space because the map $h_{A_{q_a}}$ is a homeomorphism for each state $q_a$
(and so, $w_a$ must represent a complete response for the state $q$ with input $a$). Moreover, we know the union above is also a disjoint union of these cylinder sets:
 if there are $a,b\in \Xn$ with $a\neq b$ but $w_a\concat \, \CCn\cap w_b\concat \, \CCn\neq \emptyset$, then the map $h_{A_q}$ would not be injective.

But this means that the set of words $\left\{w_a\mid a\in\Xn\right\}$ is a complete antichain (with cardinality $n$) for the poset of words $\Xn^*$, and the only such antichain is the set of $n$ distinct words of length $1$.  In particular, for all $a\in\Xn$, we have $|\lambda(a,q)|=1$, and the restricted map $\lambda_A(\cdot,q):\Xn\to\Xn$ is actually a permutation.
\end{proof}



\section{Automorphisms Admit Few Local Actions}\label{sec:finitely-many-local-actions}

This section is devoted to the proof of the following result.

\begin{theorem}\label{finiteActions}
Let $\phi \in \Anr$, and let $\varphi : \CCnr \to\CCnr$ be the Rubin conjugator representing $\phi$.  Then the set
\[
\mathcal{L A}_\varphi = \{f:\CCn\to\CCn\mid \exists \nu\in \Wnr, f=\varphi_{\nu} \}
\]
is finite.
\end{theorem}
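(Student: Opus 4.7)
My plan is to extract uniform control over the local actions $\varphi_\mu$ from the Rubin conjugation identity $g\varphi = \varphi\, g^\varphi$, valid for every $g \in \Gnr$, together with the fact that $\Gnr$ is rich in cone-swapping elements.

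First, I would fix a reference word $\nu_0 \in \Wnr$. For an arbitrary $\mu \in \Wnr$, construct a prefix-exchange element $g_\mu \in \Gnr$ whose restriction to $U_\mu$ is the basic cone map $g_{\mu, \nu_0}$ and whose restriction to $U_{\nu_0}$ is $g_{\nu_0, \mu}$, with $g_\mu$ taken to be the identity on a natural prefix-code refinement of the complement $\CCnr \setminus (U_\mu\cup U_{\nu_0})$. By the Rubin identity, $g_\mu^\varphi = \varphi^{-1}g_\mu\varphi \in \Gnr$, so $g_\mu^\varphi$ has a canonical minimal prefix-code decomposition $g_{\vec{\alpha}, \vec{\beta}}$.

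Second, I would apply $g_\mu \varphi = \varphi\, g_\mu^\varphi$ to a generic point $\mu\concat y \in U_\mu$ with $y \in \CCn$. The left side yields $\theta_\varphi(\nu_0)\concat y\,\varphi_{\nu_0}$; the right side yields $(\theta_\varphi(\mu)\concat y\,\varphi_\mu)\cdot g_\mu^\varphi$. Partitioning the variable $y$ according to which piece $U_{\alpha_i}$ of $g_\mu^\varphi$'s decomposition contains $\theta_\varphi(\mu)\concat y\,\varphi_\mu$, one obtains that on each piece $\varphi_\mu$ is obtained from $\varphi_{\nu_0}$ by a prefix prepend or strip, with modification words controlled by the finite data $\theta_\varphi(\mu)$, $\theta_\varphi(\nu_0)$, $\alpha_i$, and $\beta_i$. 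Finiteness is then extracted as follows: by choosing $g_\mu$ to have a uniformly bounded description outside of $U_\mu \cup U_{\nu_0}$ (the ``extension to the complement'' only needs to accommodate a single fixed reference cone), one arranges that the piece of $g_\mu^\varphi$'s decomposition on the fixed region $\varphi(U_{\nu_0})$ has uniformly bounded complexity, independent of $\mu$. This gives only a finite pool of possible prefix modifications, and hence a finite set of possible values for $\varphi_\mu$.

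The main obstacle I expect is handling the case where $\varphi(U_\mu)$ is a proper open subset of $U_{\theta_\varphi(\mu)}$: in this situation, the identification of the relevant pieces of $g_\mu^\varphi$'s decomposition becomes subtle, and one must analyse separately the cases where $\alpha_i \leqslant \theta_\varphi(\mu)$ and $\theta_\varphi(\mu) < \alpha_i$, together with the subcases coming from comparing $\theta_\varphi(\nu_0)$ with $\beta_i$. A careful bookkeeping of these finitely many boundary configurations, possibly combined with an initial reduction (via an \emph{a priori} compactness argument) to ``deep enough'' $\mu$ for which $\varphi(U_\mu) = U_{\theta_\varphi(\mu)}$ is an actual cone, should close the gap, because only finitely many distinct boundary behaviours can arise across the bounded decomposition of $g_\mu^\varphi$ on the fixed region $\varphi(U_{\nu_0})$.
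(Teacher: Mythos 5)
There is a genuine gap, and it sits exactly where you locate the ``finiteness extraction.'' The claim that choosing $g_\mu$ with a uniformly bounded description outside $U_\mu\cup U_{\nu_0}$ forces the decomposition of $g_\mu^\varphi=\varphi^{-1}g_\mu\varphi$ over the region $\varphi(U_{\nu_0})$ (really, over $\varphi(U_\mu)$, which is the region your identity uses) to have complexity bounded independently of $\mu$ is unjustified, and in fact circular. The prefix code of $g_\mu^\varphi$ on that region is the composite $\varphi^{-1}$ followed by $g_{\mu,\nu_0}$ (or $g_{\nu_0,\mu}$) followed by $\varphi$, so its pieces and prefix depths are governed precisely by the local behaviour of $\varphi$ on $U_\mu$ and $U_{\nu_0}$ --- the unknown the theorem is about --- and not by how $g_\mu$ is completed off $U_\mu\cup U_{\nu_0}$. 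Membership $g_\mu^\varphi\in\Gnr$ gives no bound on these depths that is uniform in $\mu$: if the set of local actions of $\varphi$ were infinite, the decompositions would be unboundedly fine, so postulating a uniform bound is tantamount to assuming the conclusion. The proposed fallback, reducing to ``deep enough'' $\mu$ with $\varphi(U_\mu)=U_{\theta_\varphi(\mu)}$, is also not available: for elements of $\Snr$ whose representing transducer has non-homeomorphism states, the image of $U_\mu$ fails to be a cone for infinitely many addresses $\mu$, at all depths.

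What your Rubin-identity computation does legitimately give (after the boundary-case bookkeeping you describe) is this: for each $\mu$ there is $k_\mu$ such that $\varphi_{\mu\concat\chi}=\varphi_{\nu_0\concat\chi}$ for all $\chi$ with $|\chi|\geq k_\mu$; in the paper's language, $\varphi$ acts on $U_\mu$ and $U_{\nu_0}$ \emph{almost in the same fashion}. That is essentially the content of Corollary \ref{cor:fashion}, and your direct manipulation of $g_\mu\varphi=\varphi g_\mu^\varphi$ is a reasonable alternative route to it (the paper reaches it via Proposition \ref{pro:ind}, Corollary \ref{cor:exis}, Lemma \ref{lemma:g} and Lemma \ref{lem:gh}). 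But this statement only transports local actions below $\mu$ into the family of local actions below the fixed $\nu_0$, which is still indexed by infinitely many words and is a priori infinite; finiteness does not follow, and no maximum of the $k_\mu$ over all $\mu$ exists a priori. The paper closes this gap with an extra mechanism your proposal lacks: it also compares each cone with its own children (pairs $U_\nu$ and $U_{\nu\concat a}$), takes a single critical level $k$ as a maximum over the finitely many pairs arising from one fixed finite antichain, and then runs an induction on word length (Corollary \ref{cor:finite types}) in which the relation $h_{\chi\concat b\concat\eta'}=h_{\chi\concat\eta'}$ strictly shortens addresses, forcing every local action at depth at least $k+3$ into the finite pool $\{h_{\tau\concat\nu}\}$ with $\tau$ in the antichain and $|\nu|=k$. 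Your swaps $g_\mu$ preserve the suffix, so they cannot shorten words; to complete your argument you would need to add comparisons between nested cones and a uniformisation step of this kind, rather than the bounded-complexity claim.
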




This paragraph contains an informal description of the steps we carry out to show this theorem.  First, we argue that any homeomorphism $h$ of $\CCnr$ which preserves $\sim$ has the property that under any two given basic cones there is a pair of ``parallel'' cones (cones at the same relative address) where the two local actions of $h$ are identical.  (Recall that we have already shown that any automorphism of $\Gnr$ must preserve $\sim$.) Then, we observe that if we compose a homeomorphism which is acting the same way on a pair of cones, with another homeomorphism which after a fixed finite depth under the minimal clopen decomposition of the images acts the same way everywhere (elements of $\Gnr$ act in this way), then the composition will have the same local actions on deep enough parallel cones.  Finally, by thinking of elements of $\Gnr$ as acting on the conjugator $h$, we can pair off arbitrary local actions of $h$ and $h^{-1}$.  Since the result of the conjugation must be in $\Gnr$ these local actions must cancel each other out in finite time to become the identity action. This implies that we can already witness all of the interesting local actions of $h$ within the child cones of our initial two cones, and within a bounded distance: immediately implying the existence of finitely many local actions.

 Note in advance that in this section, if we refer to an antichain for a subset $A$ of the set $\Banr$ of cones, we are referring to an antichain for the partial order on $\Banr$ induced by the ``containment'' partial order of cones: $U_\eta\leq U_\zeta$ if and only if  $U_\eta \supseteq U_\zeta$, which happens if and only if $\eta\leq \zeta\in \Wnre$.  This isomorphism of partial orders will occasionally be used without further comment.

\begin{definition}
For any clopen set $U\subset \CCnr$ there is a minimal finite antichain $A\subseteq \Wnre$
such that $U=\bigcup_{U_\eta\in A} U_\eta$. We denote this antichain by $\Dec(U)$,
and we call it the {\em{decomposition of $U$}}.
 \end{definition}

Note that if $A$ is an antichain in $\Banr$ such that $U=\bigcup A$, then for every
$U_{\eta}\in A$, the set Exp$_\eta(A)\seteq(A\backslash\{U_{\eta}\})\cup \setm{U_{\eta\concat l}}{0\leqslant l\leqslant n-1}$
is also an antichain in $\Banr$ that satisfies $U=\bigcup A$.  Let us call the resulting antichain Exp$_\eta(A)$ the \emph{basic expansion of $A$ at $\eta$}.  We will not allow basic expansions for subsets of $\Banr$ which are not antichains, and we will only define basic expansions for an antichain at the addresses of cones in that antichain.

\begin{definition} The antichain $\Dec_{bal}(U)$ is the minimal antichain $A$ in $\Banr$ satisfying
$U=\bigcup A$ and such that $|\eta_{1}|=|\eta_{2}|$ holds for every $U_{\eta_{1}},U_{\eta_{2}}\in A$.
\end{definition}

We note that from any finite antichain $\Dec(U)$ one can find a finite sequence of basic expansions to create the finite antichain $\Dec_{bal}(U)$.

Although the next two lemmas are somewhat obvious, we provide complete proofs.

\begin{lemma}\label{lemma:1}
Let  $g,h:\CCnr\rightarrow \CCnr$ (or respectively $g,h:\CCn\rightarrow \CCn$) be
two different continuous functions. Then for any $j\in\N$ there exists $\nu\in\Wnr$ (resp. $\nu\in\Wn$)
such that $\Root(U_{\nu}g)\perp \Root(U_{\nu}h)$ and with $|\nu|>j,$ $|\Root(U_{\nu}g)|>j$, and $|\Root(U_{\nu}h)|>j$.
\end{lemma}

\begin{proof}
The proofs for the two cases are the same in concept, so we only give the argument for the case over the Cantor space $\CCnr$.

Let  $g,h:\CCnr\rightarrow \CCnr$ be
two different continuous functions, $j\in\N$, and $\zeta\in \CCnr$ such that $\zeta g\ne \zeta h$.  Set $p\seteq \zeta g$ and $q\seteq \zeta h$.  As $p\neq q$ there are $\mu_p,\mu_q\in\Wnr$, which are non-trivial finite prefixes of $p$ and $q$ respectively, so that $\mu_p\perp \mu_q$. Consequently, $U_{\mu_p}\cap U_{\mu_q}=\emptyset$. (Of course $p\in U_{\mu_p}$ and $q\in U_{\mu_q}$.)  As $g$ and $h$ are continuous, the preimage neighbourhoods $N_{\zeta,g} \seteq U_{\mu_p}g^{-1}$ and $N_{\zeta,h}\seteq U_{\mu_q}h^{-1}$ of $\zeta$ are both open.  Hence, there is a long (non-trivial) prefix $\nu$ of $\zeta$ so that $U_\nu\subset N_{\zeta,g}\cap N_{\zeta,h}$.  We may assume that $|\nu|>j$.  By construction we now have $U_{\nu}g\cap U_{\nu}h\subset U_{\mu_p}\cap U_{\mu_q}=\emptyset,$ and more specifically, $\mu_p\leq \Root(U_\nu g)$ while $\mu_q\leq \Root(U_\nu h)$, so these roots must be incomparable.  By choosing a long enough string $\nu$ we may also insist that $|\Root(U_{\nu}g)|>j$ and $|\Root(U_{\nu}h)|>j$.
\end{proof}

\hspace{0.2cm}

\begin{lemma}\label{lemma:beta}
Let $h\in \Homeo(\CCnr)$ and $U_{\tau},U_{\eta}\in \Banr$.
Then for every $U_{\tau^{\prime}}\subseteq U_{\tau}\, h$ there exist $\eta^{\prime}\in\Wnr$ and $\chi \in \Wn$ so that $\Root(U_{\eta}\,h)<\eta^{\prime}$ and $\tau^{\prime}<\Root(U_{\tau \concat \chi}\, h)$, $\eta^{\prime}<\Root(U_{\eta \concat \chi}\, h)$.
\end{lemma}

\begin{proof}
Let $h$, $\tau$, $\eta$, and $\tau'$ be given so as to satisfy the hypotheses of the statement (so, e.g., $U_{\tau'}\subseteq U_\tau h$).  Let $p\in U_\tau$ so that $p \,  h\in U_{\tau'}$.  Define $\delta\seteq p-\tau$.  By continuity, for a long enough non-trivial finite prefix $\widetilde{\chi}$ of $\delta$, we have $\tau^{\prime}<\Root(U_{\tau\concat\widetilde{\chi}} \,  h)$.  Set $q\seteq(\eta\concat \delta)\,h\in U_\eta  \, h$, and determine $\eta'$ a long enough non-trivial finite prefix of $q$ so that $\Root(U_{\eta} \,  h)<\eta'$.  Again by continuity, there is a long enough non-trivial finite prefix $\widehat{\chi}$ of $\delta$ so that $\eta^{\prime}<\Root(U_{\eta\concat \widehat{\chi}} \,  h)$.  Now set $\chi$ to be the longer of the two words $\widetilde{\chi}$ and $\widehat{\chi}$.  By construction, the conclusions of the lemma statement are satisfied.

\end{proof}

The following represents a key step in the proof of Theorem \ref{finiteActions}.  It shows (via Corollary \ref{cor:2}.\ref{cor:tail-class-preserved}) that any automorphism $h$ of $\Gnr$ has the property that in any two disjoint cones of $\CCnr$ there are subcones found at the same relative address where $h$ has the same local action.

\begin{prop}
\label{pro:ind}
Let $h\in \Homeo(\CCnr)$ and let $U_{\tau},U_{\eta}\in \Banr$.  Suppose $h\in H_{n,r,\sim}$. Then there is $\chi\in\Wne$ so that $h_{\tau\concat \chi}=h_{\eta\concat \chi}$.
\end{prop}

\begin{proof}
We will prove the contrapositive statement.

Let $h\in \Homeo(\CCnr)$ and suppose that $U_{\tau}, U_\eta\in\Banr$.  Suppose further that for all $\chi\in\Wne$ we have the local maps $h_{\tau\concat \chi}$ and $h_{\eta\concat \chi}$ are different.  We will show $h\not\in H_{n,r,\sim}$.

Firstly, let $\{ ( \mu_{i} , \nu_{i} ) \}_{i=0}^{\infty} \subset \Wnr \times \Wnr$ be a sequence which visits every pair $(\alpha,\beta)\in \Wnr\times\Wnr$ infinitely often.  That is, for every $\alpha,\beta\in \Wnr$ we have
$$ \big| \{ i \mid  (\mu_{i},\nu_{i} ) = (\alpha,\beta) \} \big| =\aleph_{0}.$$

We will construct by induction four convergent sequences of members of $\Wnre$, namely
$\{\tau_{i}\}_{i=0}^{\infty}$,
$\{\eta_{i}\}_{i=0}^{\infty}$,
$\{\tau_{i}^{\prime}\}_{i=0}^{\infty}$ and
$\{\eta_{i}^{\prime}\}_{i=0}^{\infty}$.  The sequences will have the following properties:

\begin{enumerate}
\item $\tau_{i}\rightarrow \zeta$,
\item $\eta_{i}\rightarrow \rho$,
\item $\tau_{i}^{\prime}\rightarrow \zeta^{\prime}$,
\item $\eta_{i}^{\prime}\rightarrow \rho^\prime$,
\end{enumerate}
for some $\zeta, \rho, \zeta^\prime,$ and $\rho^\prime$ in $\CCnr$.
Furthermore, we will have by construction that $\tau_0 = \tau$ and $\eta_0 = \eta$, and for every $i\in \N$,
the following properties will also hold:
 \begin{enumerate}
\item [($P_a$)]  $\tau_{i}< \tau_{i+1}$,
$\eta_{i}< \eta_{i+1}$ ,
$\tau_{i}^{\prime}< \tau_{i+1}^{\prime}$
and $\eta_{i}^{\prime}< \eta_{i+1}^{\prime},$

\item [($P_b$)] $\tau_{i+1}-\tau_{i}=\eta_{i+1}-\eta_{i},$

\item [($P_c$)]
$$\Root(U_{\tau_{i}}h)<\tau_{i}^{\prime}<\Root(U_{\tau_{i+1}}h)$$
and
$$\Root( U_{\eta_{i}}h)<\eta_{i}^{\prime}<\Root(U_{\eta_{i+1}}h), \textrm{ and}$$

\item [($P_d$)] if $\mu_i< \tau_{i}^{\prime}$ and $\nu_i< \eta_{i}^{\prime}$,
then $(\tau_{i}^{\prime}-\mu_i)\perp (\eta_{i}^{\prime}-\nu_i)$.
 \end{enumerate}

(As will be clear from the proof of Claim \ref{claim:brokentails} just below, these four properties are precisely the properties required so that two points from the same equivalence class under $\sim$ are sent to two points which cannot be in the same equivalence class.  The property $(P_d)$, and the defining property of the sequence $(\mu_i,\nu_i)$, allow us sufficient control so that after removing two arbitrary prefixes from the image points, we can still determine that the resulting suffixes are incomparable.)
\vspace{.2 cm}

\begin{claim} \label{claim:brokentails}
If we can construct the four sequences as above, then $h\notin H_{n,r,\sim}$.
\end{claim}

\begin{proof}[Proof of Claim:]

Assume for a while that we have constructed these sequences. Then we have
 \begin{enumerate} [(i)]
\item $\bigcap_{i=0}^{\infty} U_{\tau_{i}}=\{ \zeta\}$ and
$\bigcap_{i=0}^{\infty} U_{\eta_{i}}=\{ \rho\}$,

\item $\bigcap_{i=0}^{\infty} U_{\tau_{i}^{\prime}}=\{ \zeta^{\prime}\}$ and
$\bigcap_{i=0}^{\infty} U_{\eta_{i}^{\prime}}=\{ \rho^{\prime}\}$, and

\item$ \zeta-\tau_{0}=\rho-\eta_{0}$,
 \end{enumerate}
 where we have (i) and (ii) follows from $(1)$--$(4),$ while (iii) follows from $(P_b)$.  Property (iii) now yields $\zeta\sim \rho$,  while Properties (i), (ii), and ($P_c$) give us that
$$\{\zeta h\} =  \big( \bigcap_{i=0}^{\infty} U_{\tau_{i}} \big)h =
\bigcap_{i=0}^{\infty} \left(U_{\tau_{i}}h\right)\subseteq
\bigcap_{i=1}^{\infty} U_{\tau_{i-1}^{\prime}}=\{\zeta^{\prime}\},$$
$$\{\rho h\} =  \big( \bigcap_{i=0}^{\infty} U_{\eta_{i}} \big)h =
\bigcap_{i=0}^{\infty} \left(U_{\eta_{i}}h\right)\subseteq
\bigcap_{i=1}^{\infty} U_{\eta_{i-1}^{\prime}}=\{\rho^{\prime}\}.$$

\noindent Thus, $\zeta h=\zeta^{\prime}$ and $ \rho h=\rho^{\prime}$. However,
we now show that $\zeta^{\prime}\nsim \rho^{\prime}$, so that we can conlude $h\notin H_{n,r,\sim}$.

Indeed, if there are $\alpha,\beta\in \Wnr$ and $\delta\in \CCn$ such that
$\zeta^{\prime}=\alpha\concat \delta$ and $\rho^{\prime}=\beta\concat \delta$,
then choosing $i\in \bbN$ satisfying $( \mu_{i}, \nu_{i} ) = (\alpha, \beta)$
and also large enough so that
$\alpha < \tau_{i}^{\prime}$ and $\beta < \eta_{i}^{\prime}$, then we have
that both $\tau_{i}^{\prime}-\alpha$ and $\eta_{i}^{\prime}-\beta$ are both non-trivial initial segments of $\delta$ and so must be compatible. But
we have $\tau_{i}^{\prime}-\alpha=(\tau_{i}^{\prime}-\mu_{i})\perp (\eta_{i}^{\prime}-\nu_{i})=\eta_{i}^{\prime}-\beta$, a contradiction.

\end{proof}

We will now construct our sequences.  The main tools will be Lemmas \ref{lemma:1} and \ref{lemma:beta}.  The main point to control is the growth of the sequences $(\tau_i)$ and $(\eta_i)$, noting that the extension of one forces the extension of the other.  Note further that we must maintain sufficient data so as to guarantee that the resulting image points under $h$ are not in the same equivalence class under $\sim$.  In fact, all of the difficulties are faced in the construction at the base level, although we still give the parallel arguments while carrying out the inductive step.

To begin with the construction of the sequences satisfying ($P_a$), ($P_b$), ($P_c$) and ($P_d$) above,
carry out the following steps:
 \begin{enumerate}
\item Set $\tau_{0} := \tau$, $\eta_{0} := \eta$,
$\chi_{\tau_{0}} := \Root(U_{\tau_{0}}h)$ and
$\chi_{\eta_{0}} := \Root(U_{\eta_{0}}h)$.  We now work to define $\tau'_0$ and $\eta'_0$ (which will be extensions of $\chi_{\tau_0}$ and $\chi_{\eta_0}$).

\item  Compare
$(\mu_{0}, \nu_{0} )$ with $(\chi_{\tau_{0}}, \chi_{\eta_{0}})$.

 \begin{enumerate}
\item If $\mu_{0}\nleqslant\chi_{\tau_{0}}$
then choose an $\alpha^{\prime}\in\Wn$ so that
$U_{\alpha^{\prime}}\subset (\CCn)h_{\tau_{0}}$, $\mu_{0}\perp \chi_{\tau_{0}}\concat \alpha^{\prime}$ (note that if $\chi_{\tau_0}<\mu_0$ then by the definition of the function $\Root$ we can choose $\alpha'$ so that incomparability is achieved on the first letter of $\alpha'$, and if $\chi_{\tau_0}\perp\mu_0$ then any non-trivial $\alpha'$ so that $U_{\alpha^{\prime}}\subset (\CCn)h_{\tau_{0}}$ will do).

\item Else, if $\nu_{0}\nleqslant\chi_{\eta_{0}}$ choose any $\beta^{\prime}\in\Wn$  so that
$U_{\beta^{\prime}}\subset (\CCn)h_{\eta_{0}}$
and $\nu_{0}\perp \chi_{\eta_{0}}\concat \beta^{\prime}$, noting that such non-trivial $\beta^\prime$ will exist as in the previous point for choosing $\alpha^{\prime}$.

\item If $\mu_{0}\leqslant \chi_{\tau_{0}}$ and $\nu_{0}\leqslant \chi_{\eta_{0}}$, then in these next four subcases we will choose an $\alpha^{\prime}\in\Wn$ or a $\beta^{\prime}\in\Wn$ to force a local incomparability condition:

 \begin{enumerate}
\item If $\chi_{\tau_{0}}-\mu_0 = \chi_{\eta_{0}}-\nu_0$,
then as $h_{\tau_{0}}\ne h_{\eta_{0}}$, by Lemma \ref{lemma:1}
there exists $\nu\in\Wn$ so that $$\Root(U_{\nu}h_{\tau_{0}})\perp  \Root(U_{\nu}h_{\eta_{0}}).$$  Now choose some $\alpha'\in\Wn$ with $U_{\alpha^{\prime}}\subset U_{\nu}h_{\tau_{0}}$.
\item Else if $\chi_{\tau_{0}}-\mu_{0}< \chi_{\eta_{0}}-\nu_{0}$, then
set $\theta:=(\chi_{\eta_{0}}-\nu_{0})-(\chi_{\tau_{0}}-\mu_{0})$ (observe $\theta\neq \varepsilon$) and choose
any $\alpha^{\prime}\in \Wn$ so that
$U_{\alpha^{\prime}}\subset (\CCn)h_{\tau_{0}}$ and with
$\theta\perp\alpha^{\prime}$ (since $\chi_{\tau_{0}} = \Root(U_{\tau_{0}}h)$ we can even insist that the first letter of $\alpha^{\prime}$ is different from the first letter of $\theta$).

\item Else if $\chi_{\tau_{0}}-\mu_{0}> \chi_{\eta_{0}}-\nu_{0}$, then
set $\theta:=(\chi_{\tau_{0}}-\mu_{0})-(\chi_{\eta_{0}}-\nu_{0})$ (again, $\theta\neq \varepsilon$) and choose
any  $\beta^{\prime}\in\Wn$ so that
$U_{\beta^{\prime}}\subset (\CCn)h_{\eta_{0}}$ such that
$\theta\perp\beta^{\prime}$  (since $\chi_{\eta_{0}} = \Root(U_{\eta_{0}}h)$ we can even insist that the first letter of $\beta^{\prime}$ is different from the first letter of $\theta$).

\item Else we must have $(\chi_{\tau_{0}}-\mu_{0}) \perp (\chi_{\eta_{0}}-\nu_{0})$.  Choose any $\alpha^{\prime}\in\Wn$ so that
$\Root((\CCn)h_{\tau_{0}})<\alpha^{\prime}$.

 \end{enumerate}

 \end{enumerate}

\item
If we have chosen a value for $\alpha^{\prime}$, then it is nontrivial and so setting $\tau_{0}^{\prime}:=\chi_{\tau_{0}}\concat \alpha^{\prime}$ we have $\chi_{\tau_0}=\Root(U_{\tau_{0}}h) <\tau_{0}^{\prime}.$
By Lemma \ref{lemma:beta}, there exists $\eta^{\prime}\in\Wnr$  and
$\chi \in \Wn$ so that $$\Root(U_{\tau_{0}}h) <\tau_{0}^{\prime}<\Root(U_{\tau_{0}\concat \chi}h),$$ and
 $$\Root(U_{\eta_{0}}h)<\eta^{\prime}<\Root(U_{\eta_{0}\concat \chi}h).$$ Now we set $\eta_{0}^{\prime}:=\eta^{\prime}$,
$\tau_{1}:=\tau_{0}\concat \chi$ and
$\eta_{1}:=\eta_{0}\concat \chi$.

\item
If instead we have chosen a value for $\beta^{\prime}$, then it is nontrivial and so setting $\eta_{0}^{\prime}:=\chi_{\eta_{0}}\concat \beta^{\prime}$ we have $\Root(U_{\eta_{0}}h)<\eta_0^{\prime}$.
By Lemma \ref{lemma:beta}, there exists $\tau^{\prime}\in\Wnr$  and $\chi \in \Wn$
such that $$\Root(U_{\tau_0}\,h)<\tau^{\prime}<\Root(U_{\tau_{0}\concat \chi}\,h)$$ and

$$\Root(U_{\eta_{0}}h)<\eta_0^{\prime}<\Root(U_{\eta_{0}\concat \chi}h).$$  Now we set $\tau_{0}^{\prime}:=\tau^{\prime}$,
$\tau_{1}:=\tau_{0}\concat \chi$ and
$\eta_{1}:=\eta_{0}\concat \chi$.

 \end{enumerate}

\noindent We have

\noindent $(a)$
$\tau_{0}< \tau_{1}$, $\eta_{0}< \eta_{1}$ and
$\tau_{1}-\tau_{0}=\eta_{1}-\eta_{0}$,

\noindent $(b)$

$$\Root(U_{\tau_{0}}h) <\tau_{0}^{\prime}<\Root(U_{\tau_{0}\concat \chi}h)$$ and

$$\Root(U_{\eta_{0}}h)<\eta_0^{\prime}<\Root(U_{\eta_{0}\concat \chi}h),$$ which together imply $(P_c)$ for $i=0$.

\noindent $(c)$ if $\mu_{0}< \tau_{0}^{\prime}$ and $\nu_{0}< \eta_{0}^{\prime}$ (by construction these cases may only arise from (2).(c)), then $(\tau_{0}^{\prime}-\mu_{0})\bot (\eta_{0}^{\prime}-\nu_{0})$ by our choice of $\alpha^{\prime}$ or of $\beta^{\prime}$,

\noindent $(d)$
$h_{\tau_{1}}\ne
h_{\eta_{1}}$.

\noindent Note that $(d)$ is guaranteed by our general hypotheses that for all $\chi\in\Wne$ we have $h_{\tau\concat \chi}\neq h_{\eta\concat\chi}$.  We have now verified the properties ($P_*$) for the case $i=0$ except the second clause of ($P_a$), which we cannot check yet as $\tau_1^{\prime}$ and $\eta_1^{\prime}$ are not yet defined.

Next, assume that we have already defined
$$\fsetn{\tau_{0}}{\tau_{k}},
\fsetn{\eta_{0}}{\eta_{k}},
\fsetn{\tau_{0}^{\prime}}{\tau_{k-1}^{\prime}}
\mbox{ and } \fsetn{\eta_{0}^{\prime}}{\eta_{k-1}^{\prime}}$$ for some $k\geq 1$ in such a way that the following holds:

\noindent $(a)$ for every $0\leqslant i< k$, we have
$\tau_{i}< \tau_{i+1}$,
$\eta_{i}<\eta_{i+1}$, $\tau_{i+1}-\tau_{i}=\eta_{i+1}-\eta_{i}$,
and for $0 \leqslant i < k-2$, we have $\tau_{i}^{\prime}< \tau_{i+1}^{\prime}$
and $\eta_{i}^{\prime}< \eta_{i+1}^{\prime}$,

\noindent $(b)$

$$\Root(U_{\tau_{i}}h) <\tau_{i}^{\prime}<\Root(U_{\tau_{i+1}}h)$$ and

$$\Root(U_{\eta_{i}}h)<\eta_i^{\prime}<\Root(U_{\eta_{i+1}}h),$$
 for every $0 \leqslant i < k$,

\noindent $(c)$ if $\mu_{i}< \tau_{i}^{\prime}$ and $\nu_{i}<\eta_{i}^{\prime}$,
then $(\tau_{i}^{\prime}-\mu_{i})\perp (\eta_{i}^{\prime}-\nu_{i})$,

\noindent $(d)$
$h_{\tau_{k}}\ne
h_{\eta_{k}}$.

\noindent We want to define $\tau_{k+1},\eta_{k+1},\tau_{k}^{\prime}$ and $\eta_{k}^{\prime}$.
We can do this by following the logic of the definitions of $\tau_1$, $\eta_1$, $\tau^{\prime}_0$ and $\eta^{\prime}_0$ from the foundational definitions of $\tau_0$ and $\eta_0$.

Specifically, we carry out the following steps:
 \begin{enumerate}
\item Set
$\chi_{\tau_{k}}:=\Root(U_{\tau_{k}}h)$ and
$\chi_{\eta_{k}}:=\Root(U_{\eta_{k}}h)$, and note that
$\tau_{k-1}^{\prime}< \chi_{\tau_{k}}$ and
$\eta_{k-1}^{\prime}< \chi_{\eta_{k}}$.

\item  Compare
$(\mu_{k}, \nu_{k})$ with $(\chi_{\tau_{k}}, \chi_{\tau_{k}})$.

 \begin{enumerate}
\item If $\mu_{k}\nleqslant \chi_{\tau_{k}}$ or $\nu_{k}\nleqslant \chi_{\eta_{k}}$ then:

 \begin{enumerate}
\item If $\mu_{k}\nleqslant \chi_{\tau_{k}}$, then choose any  $\alpha^{\prime}\in\Wn$ so
$$\Root((\CCn)h_{\tau_{k}})<\alpha^{\prime}$$
and such that $\mu_{k}\perp \chi_{\tau_{k}}\concat \alpha^{\prime}$, just as in (2).(a) for the case $k=0$.

\item Else if $\nu_{k}\nleqslant \chi_{\eta_{k}}$, then choose any $\beta^{\prime}\in\Wn$ so that
$\Root((\CCn)\,h_{\eta_k})<\beta^{\prime}$ and
so that $\nu_{k}\perp \chi_{\eta_{k}}\concat \beta^{\prime}$, just as in (2).(b) for the case $k=0$.

 \end{enumerate}

\item If $\mu_{k}\leqslant \chi_{\tau_{k}}$ and $\nu_{k}\leqslant \chi_{\eta_{k}}$, then:

 \begin{enumerate}
\item If $\mu_{k}= \chi_{\tau_{k}}$ and $\nu_{k}= \chi_{\eta_{k}}$,
then as $h_{\tau_{k}}\ne h_{\eta_{k}}$, by Lemma \ref{lemma:1} there exists $\nu\in\Wn$ so that  $$\Root(U_{\nu}h_{\tau_{k}})\perp \Root(U_{\nu}h_{\eta_{k}}),$$ so choose any $\alpha^{\prime}\in\Wn$ with
$$\Root(U_{\nu}h_{\tau_{k}})<\alpha^{\prime}.$$

\item Else if $\chi_{\tau_{k}}-\mu_{k}< \chi_{\eta_{k}}-\nu_{k}$, then
set $\theta:=(\chi_{\eta_{k}}-\nu_{k})-(\chi_{\tau_{k}}-\mu_{k})$ (observe $\theta\neq \varepsilon$) and choose
any $\alpha^{\prime}\in\Wn$ so that
$\Root((\CCn)h_{\tau_{k}})<\alpha^{\prime}$ and with
$\theta\perp\alpha^{\prime}$.

\item Else if $\chi_{\tau_{k}}-\mu_{k}> \chi_{\eta_{k}}-\nu_{k}$, then
set $\theta:=(\chi_{\tau_{k}}-\mu_{k})-(\chi_{\eta_{k}}-\nu_{k})$ (again, $\theta\neq \varepsilon$) and choose
any $\beta^{\prime}\in\Wn$ so that $U_{\beta^{\prime}}\subset (\CCn)h_{\eta_{k}}$ and such that
$\delta\bot\beta^{\prime}$.

\item Else we must have $( \chi_{\tau_{k}}-\mu_{k} ) \bot (\chi_{\eta_{k}}-\nu_{k} )$.  Here, choose any $\alpha^{\prime}\in\Wn$ so that  $\Root((\CCn)h_{\tau_{k}})<\alpha^{\prime}.$

 \end{enumerate}

 \end{enumerate}

\item
If we have chosen a value for $\alpha^{\prime}$, then it is nontrivial and so setting $\tau_{k}^{\prime}:=\chi_{\tau_{k}}\concat \alpha^{\prime}$ we have $\Root(U_{\tau_{k}}h) <\tau_{k}^{\prime}.$
By Lemma \ref{lemma:beta}, there exists $\eta^{\prime}\in\Wnr$  and
$\chi \in \Wn$ so that $$\Root(U_{\tau_{k}}h) <\tau_{k}^{\prime}<\Root(U_{\tau_{k}\concat \chi}h),$$ and
 $$\Root(U_{\eta_{k}}h)<\eta^{\prime}<\Root(U_{\eta_{k}\concat \chi}h).$$ Now we set $\eta_{k}^{\prime}:=\eta^{\prime}$,
$\tau_{k+1}:=\tau_{k}\concat \chi$ and
$\eta_{k+1}:=\eta_{k}\concat \chi$.

\item
If instead we have chosen a value for $\beta^{\prime}$, then it is nontrivial and so setting $\eta_{k}^{\prime}:=\chi_{\eta_{k}}\concat \beta^{\prime}$ we have $\Root(U_{\eta_{k}}h)<\eta_k^{\prime}$.
By Lemma \ref{lemma:beta}, there exists $\tau^{\prime}\in\Wnr$  and $\chi \in \Wn$
so that $$\Root(U_{\tau_k}\,h)<\tau^{\prime}<\Root(U_{\tau_{k}\concat \chi}\,h)$$ and
$$\Root(U_{\eta_{k}}h)<\eta_k^{\prime}<\Root(U_{\eta_{k}\concat \chi}h).$$  Now we set $\tau_{k}^{\prime}:=\tau^{\prime}$,
$\tau_{k+1}:=\tau_{k}\concat \chi$ and
$\eta_{k+1}:=\eta_{k}\concat \chi$.

 \end{enumerate}
 Note that in the special case of $k=1$ in the above construction, we will construct $\tau_2$, $\eta_2$, $\tau_1^{\prime}$ and $\eta_1^{\prime}$, and specifically, we will have

 $$\chi_{\tau_1}=\Root(U_{\tau_1}\,h)<\tau_1^{\prime}<\Root(U_{\tau_{2}}\,h)$$ and

$$\chi_{\eta_1}=\Root(U_{\eta_{1}}h)<\eta_1^{\prime}<\Root(U_{\eta_{2}}h).$$  Recall that we have already determined some shorter strings, and in particular we can extend these two relationship strings to the left as:

 $$\chi_{\tau_0}=\Root(U_{\tau_0}\,h)<\tau_0^{\prime}<\Root(U_{\tau_1}\,h)<\tau_1^{\prime}<\Root(U_{\tau_{2}}\,h)$$ and

$$\chi_{\eta_0}=\Root(U_{\eta_{0}}h)<\eta_0^{\prime}<\Root(U_{\eta_{1}}h)<\eta_1^{\prime}<\Root(U_{\eta_{2}}h).$$

From this we have $\tau_0^{\prime}<\tau_1^{\prime}$ and $\eta_0^{\prime}<\eta_1^{\prime}$ completing the verification of the properties $(P_*)$ at the base level.

Now, the verification of the desired properties $(P_*)$ at the $k^{th}$ level works in similar fashion to the verification at the base level.

This then completes the construction of the sequences, and thus the proof of the proposition.
\end{proof}

\vspace{0.2cm}

\begin{cor} \label{cor:exis}
If $h\in H_{n,r,\sim}$ then there exist $U_\nu,U_{\eta}\in \Banr$ such that
$\nu$ and $\eta$ are incomparable, $U_\nu \cup U_{\eta}\ne \CCnr$ and
$h_{\nu}=h_{\eta}$.
\end{cor}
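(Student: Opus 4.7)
The plan is to obtain the corollary as a direct application of the contrapositive of Proposition~\ref{pro:ind}, with a small piece of preparation at the start to guarantee the resulting pair of cones does not cover all of $\CCnr$.

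First, I would fix any pair of incomparable words $\tau_0, \eta_0 \in \Wnr$ whose cones satisfy $U_{\tau_0} \cup U_{\eta_0} \neq \CCnr$. Such a pair always exists: even in the tightest case $r=1$, $n=2$, the words $\tau_0 = \dot{0}\concat 00$ and $\eta_0 = \dot{0}\concat 01$ are incomparable with $U_{\tau_0}\cup U_{\eta_0} = U_{\dot{0}\concat 0} \subsetneq \CCnr$; whenever $r \geq 2$ or $n \geq 3$ there is even more room to choose such a pair deep inside a single subcone.

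Next, I would invoke the contrapositive of Proposition~\ref{pro:ind} for the pair $(U_{\tau_0}, U_{\eta_0})$. Since $h \in H_{n,r,\sim}$, the hypothesis of that proposition must fail, so there exists some $\chi \in \Wne$ with $h_{\tau_0\concat \chi} = h_{\eta_0\concat \chi}$. Setting $\nu := \tau_0\concat \chi$ and $\eta := \eta_0\concat \chi$, the three required properties are then immediate: (i) $\nu \bot \eta$, because appending a common suffix to two incomparable words preserves incomparability; (ii) $U_\nu \subseteq U_{\tau_0}$ and $U_\eta \subseteq U_{\eta_0}$, so $U_\nu \cup U_\eta \subseteq U_{\tau_0} \cup U_{\eta_0} \neq \CCnr$; and (iii) $h_\nu = h_\eta$ by the choice of $\chi$.

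There is essentially no obstacle, as all of the real work is already packaged into Proposition~\ref{pro:ind}. The only cosmetic step is the selection of the initial pair of words inside a proper subcone of $\CCnr$, which is trivial given the freedom afforded by the hypothesis $1 \leq r < n$ together with the ability to descend arbitrarily deep into the tree.
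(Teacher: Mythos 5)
Your proposal is correct and is exactly the intended derivation: the paper states the corollary immediately after Proposition~\ref{pro:ind} with no separate proof, the point being precisely that one applies the contrapositive of that proposition to a pair of incomparable cones chosen inside a proper subcone of $\CCnr$, just as you do. Your added observations (that appending the common suffix $\chi$ preserves incomparability and keeps the cones inside the original pair, so the union stays proper) are the only details needed, and they are handled correctly.
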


\vspace{0.2cm}

\begin{definition}
For $h\in \Homeo(\CCnr)$ and $U_{\nu},U_{\eta}\in \Banr$, we say that $h$
acts on $U_{\nu}$ and $U_{\eta}$
\emph{in the same fashion}  provided that
${h}_{\nu}= {h}_{\eta}$.

\end{definition}

\begin{definition}
Let $h\in \Homeo(\CCnr)$ and $U$ and $W$ clopen sets in $\CCnr$. We say that $h$ acts on $U$ and $W$
\emph{almost in the same fashion} provided that ${h}_{\nu \concat \chi}= {h}_{\eta \concat \chi}$
holds for all $\chi\in \Wn$ with $|\chi|\geq k$  and all  $U_{\nu}\in \Dec(U)$, $U_{\eta}\in \Dec(W)$.

When  $h$ acts on $U$ and $W$ almost in the same fashion and $k$ is the minimal natural
number which satisfies  the condition in the above definition, we define \emph{$\crit_{h}(U,W) := k$},
which we refer to as  \emph{the critical level of $U$ and $W$ with respect to $h$.}
\end{definition}

\begin{definition}We say that $h$ acts on clopen sets $U$ and $W$
\emph{in the same fashion uniformly}  provided that for every
$U_{\nu}\in \Dec(U)$ and $U_{\eta}\in \Dec(W)$, we have
 ${h}_{\nu} = {h}_{\eta \concat \zeta}$
for any $\zeta \in W_{n,\varepsilon}$.

We say that $h$ acts on $U$ and $W$
\emph{almost in the same fashion uniformly}  provided that
there exists  $k\in \bbN$ such that for every
$U_{\nu}\in \Dec(U)$, $U_{\eta}\in \Dec(W)$ and $\chi,\zeta \in \Wn$ with both $|\chi|$, $|\zeta|\geq k$, we have ${h}_{\nu \concat \chi}= {h}_{\eta \concat \zeta}$.
\end{definition}

\begin{remark}
\label{claim:1}
Let $g\in \Gnr$ and let $U$ and $W$ be any two clopen sets.
Then $g$ acts on $U$ and $W$ almost in the same fashion uniformly.
Indeed, local actions associated to small-enough cones are all the identity map.
\end{remark}

The following lemma should be obvious but the two that follow after will require some explanation.

\begin{lemma}
\label{lem:sameFashion}
Let $g,h\in \Homeo(\CCnr)$ and let $U_{\nu},U_{\eta}\in \Banr$.
Suppose that $U_{\nu}g=U_{\nu^{\prime}}\in \Banr$,
$U_{\eta}g=U_{\eta^{\prime}}\in \Banr$,
$g$ acts on $U_{\nu}$ and $U_{\eta}$
in the same fashion
and $h$ acts on $U_{\nu^{\prime}}$ and $U_{\eta^{\prime}}$
in the same fashion. Then $gh$ acts on $U_{\nu}$ and $U_{\eta}$
in the same fashion.
\end{lemma}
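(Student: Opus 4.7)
The plan is to establish a \emph{chain rule} for local actions, namely $(gh)_{\mu}=g_{\mu}\,h_{\mu\theta_{g}}$ whenever $U_{\mu}g$ is itself a full basic cone, and then to read off the conclusion from the two hypotheses. The assumptions $U_{\nu}g=U_{\nu'}$ and $U_{\eta}g=U_{\eta'}$ do two things for us: they force $\nu\theta_{g}=\nu'$ and $\eta\theta_{g}=\eta'$, and they guarantee (by the remark following the definition of local action) that $g_{\nu},g_{\eta}:\CCn\to\CCn$ are genuine self-homeomorphisms rather than merely continuous injections into some proper subcone.

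For any $x\in\CCn$, the definitions of local action applied twice yield
\[
(\nu\concat x)(gh)=\bigl((\nu\concat x)g\bigr)h=\bigl(\nu'\concat (xg_{\nu})\bigr)h=(\nu'\theta_{h})\concat (xg_{\nu}h_{\nu'}),
\]
where we used $U_{\nu}g=U_{\nu'}$ in the second step (so $xg_{\nu}\in\CCn$, legitimising the third step). Meanwhile $\nu\theta_{gh}=\Root(U_{\nu}gh)=\Root(U_{\nu'}h)=\nu'\theta_{h}$, and reading off the suffix in the displayed identity gives $(gh)_{\nu}=g_{\nu}h_{\nu'}$ under the right-action composition convention. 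The identical argument at $\eta$ produces $(gh)_{\eta}=g_{\eta}h_{\eta'}$.

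Now invoke the hypotheses. That $g$ acts on $U_{\nu}$ and $U_{\eta}$ in the same fashion is, by definition, exactly $g_{\nu}=g_{\eta}$, and similarly $h_{\nu'}=h_{\eta'}$ from the hypothesis on $h$. Composing these two equalities gives $(gh)_{\nu}=g_{\nu}h_{\nu'}=g_{\eta}h_{\eta'}=(gh)_{\eta}$, which is precisely the assertion that $gh$ acts on $U_{\nu}$ and $U_{\eta}$ in the same fashion.

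I do not anticipate any real obstacle here: the lemma is, in essence, a formal statement of the naturality of local actions under composition, and the hypotheses are written so as to make every step of the chain rule go through cleanly. The only point that merits care is the surjectivity of $g_{\nu}$ and $g_{\eta}$ onto $\CCn$, which is exactly what the basic-cone-to-basic-cone assumption on $g$ supplies.
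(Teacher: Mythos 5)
Your proof is correct: the hypotheses $U_{\nu}g=U_{\nu'}$ and $U_{\eta}g=U_{\eta'}$ are exactly what make $\nu\theta_{gh}=\nu'\theta_h$ (and likewise at $\eta$), so the chain rule $(gh)_{\nu}=g_{\nu}h_{\nu'}$, $(gh)_{\eta}=g_{\eta}h_{\eta'}$ holds and the conclusion follows from $g_{\nu}=g_{\eta}$ and $h_{\nu'}=h_{\eta'}$. The paper gives no argument here (it declares the lemma obvious), and your computation is precisely the intended verification, consistent with the root-function identities the paper records later (items (4)--(6) of Lemma \ref{claim:propert}); you also correctly isolate the one point needing care, namely that the exact cone-to-cone hypothesis on $g$ is what rules out $\Root(U_{\nu}gh)$ being strictly longer than $\nu'\theta_h$.
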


\begin{lemma}
\label{lem:gh}
Let $g,h\in \Homeo(\CCnr)$ and let $U_{\nu},U_{\eta}\in \Banr$.
Suppose that
$g$ acts on $U_{\nu}$ and $U_{\eta}$
in the same fashion
and $h$ acts on $U_{\nu}g$ and $U_{\eta}g$
almost in the same fashion uniformly. Then $gh$ acts on
$U_{\nu}$ and $U_{\eta}$ almost
in the same fashion.
\end{lemma}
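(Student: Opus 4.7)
The plan is to decompose both $(gh)_{\nu\concat\chi}$ and $(gh)_{\eta\concat\chi}$ via the composition formula for local actions, and then show that for $|\chi|$ large both factors agree. The key formulas are that for any $\mu\in\Wnre$,
\[
(gh)_\mu \;=\; g_\mu\,\cdot\, h_{\mu\theta_g},\qquad \mu\theta_g \;=\; \nu\theta_g\concat\chi\theta_{g_\nu}\ \text{when }\mu=\nu\concat\chi,
\]
both of which follow by expanding the recursive definitions of the root function and the local action of a composition (item (5) of the earlier remark on local actions). So it suffices to show that, for $|\chi|$ large enough, the two $g$-factors coincide and the two $h$-factors coincide.

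For the $g$-factors: the hypothesis $g_\nu=g_\eta$ together with the recursion $g_{\nu\concat\chi}=(g_\nu)_\chi$ immediately gives $g_{\nu\concat\chi}=g_{\eta\concat\chi}$ for every $\chi\in\Wne$, and also $\chi\theta_{g_\nu}=\chi\theta_{g_\eta}$. Writing $\chi':=\chi\theta_{g_\nu}$, this yields the common tail
\[
(\nu\concat\chi)\theta_g \;=\; \nu\theta_g\concat\chi',\qquad (\eta\concat\chi)\theta_g \;=\; \eta\theta_g\concat\chi'.
\]

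For the $h$-factors, I would invoke a uniform continuity estimate: because $g$ is a homeomorphism of $\CCnr$ (so each $g_\nu$ is a homeomorphism onto its image), for every $N\in\bbN$ there exists $k_0(N)$ such that $|\chi|\geq k_0(N)\Rightarrow |\chi'|\geq N$; otherwise a convergent subsequence of arbitrarily long $\chi$'s with bounded $\chi\theta_{g_\nu}$ would supply two distinct points of $\CCn$ with the same image under $g_\nu$. Now let $\Dec(U_\nu g)=\{U_{\nu_i'}\}$ with $\nu_i'=\nu\theta_g\concat\alpha_i$ and $\Dec(U_\eta g)=\{U_{\eta_j'}\}$ with $\eta_j'=\eta\theta_g\concat\beta_j$, and let $k$ be the constant supplied by the uniformly-same-fashion hypothesis on $h$. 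Taking $N$ larger than $k+\max_{i,j}(|\alpha_i|,|\beta_j|)$ and setting $k':=k_0(N)$, for any $\chi\in\Wn$ with $|\chi|\geq k'$ we can write $\nu\theta_g\concat\chi'=\nu_i'\concat\rho$ and $\eta\theta_g\concat\chi'=\eta_j'\concat\sigma$ with $\rho,\sigma\in\Wn$ and $|\rho|,|\sigma|\geq k$. The hypothesis on $h$ then gives $h_{\nu_i'\concat\rho}=h_{\eta_j'\concat\sigma}$, i.e., $h_{(\nu\concat\chi)\theta_g}=h_{(\eta\concat\chi)\theta_g}$.

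Combining these facts through the composition formula yields $(gh)_{\nu\concat\chi}=(gh)_{\eta\concat\chi}$ for every $\chi\in\Wn$ with $|\chi|\geq k'$, which is precisely the assertion that $gh$ acts on $U_\nu$ and $U_\eta$ almost in the same fashion, with critical level bounded by $k'$. The main obstacle is the uniform-length estimate $|\chi|\geq k_0(N)\Rightarrow|\chi'|\geq N$: everything else is bookkeeping, but this step is where one truly uses that $g$ is a homeomorphism (rather than merely continuous), and where the possibly asynchronous behaviour of $g_\nu$ must be controlled.
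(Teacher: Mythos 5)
Your proof is correct, but it takes a genuinely different route from the paper's. The paper argues by contradiction: it extracts (by a compactness/finite-intersection argument) a single point $y$ witnessing the failure of almost-same-fashion along all of its prefixes, then pushes that one point through $g$, lands in fixed cones $U_\alpha\in\Dec(U_\nu g)$, $U_\beta\in\Dec(U_\eta g)$ with a common tail, and uses the uniform hypothesis on $h$ to show that at a deep enough prefix of $y$ the two images must agree, a contradiction; since only one point is tracked, it never needs any uniformity in $g$. You instead argue directly: you factor $(gh)_{\nu\concat\chi}$ through the pair $\bigl(g_{\nu\concat\chi},\,h_{(\nu\concat\chi)\theta_g}\bigr)$, use $g_\nu=g_\eta$ to get the common relative root $\chi'=\chi\theta_{g_\nu}$ on both sides, and then use a uniform estimate to force $\chi'$ to be long enough that $\nu\theta_g\concat\chi'$ and $\eta\theta_g\concat\chi'$ sit at depth at least $k$ inside single cones of $\Dec(U_\nu g)$ and $\Dec(U_\eta g)$ (this containment deserves the one-line remark that two cones sharing a point are nested, so $|\chi'|>\max(|\alpha_i|,|\beta_j|)$ rules out the root cone overflowing a decomposition cone); this buys an explicit bound on the critical level and an argument uniform over all $\chi$ of a given length, and it sidesteps the paper's somewhat delicate pointwise-witness step. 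Two small repairs to your write-up: the identity $(gh)_\mu=g_\mu\cdot h_{\mu\theta_g}$ holds only after deleting the fixed word $\mu\theta_{gh}-(\mu\theta_g)\theta_h$ (the composite can have extra guaranteed output), but since that word is determined by the pair of factors, equality of the two pairs still yields $(gh)_{\nu\concat\chi}=(gh)_{\eta\concat\chi}$; and your key estimate $|\chi|\geq k_0(N)\Rightarrow|\chi'|\geq N$ is exactly uniform continuity of $g_\nu$ on the compact space $\CCn$ — in the limiting argument the two image points eventually agree on their first $N$ letters, which already gives the contradiction, so injectivity of $g$ (being a homeomorphism rather than merely continuous) is not in fact what that step uses.
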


\begin{proof}
Suppose $g,h\in \Homeo(\CCnr)$, $U_{\nu},U_{\eta}\in \Banr$, that
$g$ acts on $U_{\nu}$ and $U_{\eta}$
in the same fashion and also that $h$ acts on $U_{\nu}g$ and $U_{\eta}g$
almost in the same fashion uniformly.

We will show that the composition $gh$ acts on $U_{\nu}$ and $U_{\eta}$ almost
in the same fashion by giving a precise calculation of the action of the composition for ``parallel'' cones that are deep enough in $U_{\nu}$ and $U_\eta$ so that the images under $g$ of these deep cones are themselves contained in parallel cones that are then deep enough so that $h$ is acting in the same fashion at both of those latter cones.  As the total local actions of the composition on our initial deep cones are (almost) given as the composition of the same pair of local actions, they must then be the same.  We now formalise this discussion.

There is $k\in\N$ so that for all $\chi,\zeta\in\Wn$ with both $|\chi|$, $|\zeta|\geq k$, and for all $\xi$, $\theta$ with $U_\xi\in\Dec(U_\nu g)$ and $U_\theta\in\Dec(U_{\eta} g)$, we have the equality $h_{\xi\concat\chi}=h_{\theta\concat\zeta}$.

Observe that there is a minimal length $m\in\N$ so that for all $\mu\in\Wnre$ of length at least $m$, such that either  $U_\mu\subset U_\nu g$ or
$U_\mu\subset U_\eta g$,  we have $\mu = \gamma\concat \chi$ for some $\chi$ with $|\chi|>k$ and for some $\gamma$ with either $U_\gamma\in \Dec(U_{\nu} g)$ or $U_\gamma\in \Dec(U_{\eta} g)$ (respectively). Therefore for words $\mu, \phi \in \Wnre$ of length at least $m$ such that $U_{\mu} \subset U_\nu g$ and $U_{\phi} \subset U_\eta g$ we have $h_{\mu} = h_{\phi}$.

Further observe that as $g$ acts on $U_{\nu}$ and $U_{\eta}$ in the same fashion, for all long enough $\chi\in\Wn$, two things must happen.  First, $g_{\nu\concat \chi}=g_{\eta\concat\chi}$, and second, both $|\Root(U_{\nu\concat \chi} g)|>m$ and $|\Root(U_{\eta\concat\chi} g)|>m$.  Note that there is a minimal value $s\in\N$ so both of these things happen  for all $\chi\in \Wn$ of length at least $s$.

For the next part of the argument, we extend the concatenation operator ``$\concat$''so that it may be applied to sets of strings as follows.  If $A$ and $B$ are sets of strings then we define $A\concat B\seteq \{x\concat y\mid x\in A, y\in B\}$.  We also want to allow the operator to join, via concatenation, a single string to a set of strings, or a set of strings to a string, in the obvious ways.

Let $\chi\in\Wn$ so that $|\chi|\geq s$. Notice that $$U_{\nu \concat\chi}g =((\nu \concat \chi)\theta_{g})\concat (\CCn g_{\nu\concat\chi})$$
and likewise $U_{\eta\concat\chi}g = ((\eta \concat \chi)\theta_{g})\concat (\CCn g_{\eta\concat\chi})$.
Thus $$U_{\nu\concat\chi} gh = \{ (((\nu\concat \chi)\theta_{g}) \theta_{h}) \concat ((\CCn g_{\nu\concat\chi}) h_{(\nu\concat \chi)\theta_{g}})\}$$ likewise,
$$U_{\eta \concat \chi} gh = \{ (((\eta\concat \chi)\theta_{g}) \theta_{h})\concat ((\CCn g_{\eta\concat\chi}) h_{(\eta\concat \chi)\theta_{g}})\}.$$

We will now study how the composition $gh$ moves individual points from the sets $U_{\nu\concat\chi}$ and $U_{\eta \concat \chi}$.  Recall (by definition), that $$\Root(U_{\nu \concat \chi} g) = (\nu\concat \chi)\theta_{g} \mbox{ and } \Root(U_{\eta \concat \chi} g) = (\eta\concat \chi)\theta_{g},$$ so we have, by the discussion of third and fourth paragraphs of this proof, that $h_{(\nu\concat \chi)\theta_{g}} =h_{(\eta\concat \chi)\theta_{g}}$.
  Now let $\rho$ be the greatest common prefix of the set $(\CCn g_{\nu\concat\chi} )h_{(\nu \concat \chi)\theta_{g}}$. Therefore, as $g_{\nu\concat\chi} = g_{\eta  \concat \chi}$,
$\rho$ is also the greatest common prefix of the set $(\CCn g_{\eta\concat\chi}) h_{(\eta\concat \chi)\theta_{g}}$.
Thus, we have $\Root(U_{\nu \concat \chi} gh) = ((\nu\concat \chi)\theta_{g}) \theta_{h}  \concat \rho$ and
$\Root(U_{\eta \concat \chi}gh) = ((\eta\concat \chi)\theta_{g}) \theta_{h} \concat \rho$.
It therefore follows that $(gh)_{\nu \concat \chi} = (gh)_{\eta \concat \chi}$ since for any $y \in \CCn$ we have

$$(\nu \concat\chi\concat y)gh = (((\nu\concat \chi)\theta_{g}) \theta_{h}) \concat (((y) g_{\nu\concat\chi}) h_{(\nu\concat \chi)\theta_{g}})$$

and

$$(\eta \concat\chi\concat y)gh = (((\eta\concat \chi)\theta_{g}) \theta_{h})\concat (((y) g_{\eta\concat\chi}) h_{(\eta\concat \chi)\theta_{g}})$$

where $((y) g_{\nu\concat\chi}) h_{(\nu \concat \chi)\theta_{g}}$ is equal to $((y) g_{\eta\concat\chi}) h_{(\eta\concat \chi)\theta_{g}}$ and has $\rho$ as a prefix.

In particular, we can conclude that $gh$ acts on
$U_{\nu}$ and $U_{\eta}$ almost
in the same fashion.
\end{proof}

The following represents a ``fundamental exercise'' for those developing their understanding within the body of theory relating to the Higman--Thompson groups.

\begin{lemma}
\label{lemma:g}
Let $\nu_{1},\nu_{2},\eta_{1},\eta_{2}\in \Wnr$ such that $\nu_{1}\bot\nu_{2}$
, $\eta_{1}\bot\eta_{2}$
, $U_{\nu_{1}}\cup U_{\nu_{2}}\ne \CCnr$ and
$U_{\eta_{1}}\cup U_{\eta_{2}}\ne \CCnr$.
Then there exists  $g\in \Gnr$ such that $g \! \mid_{U_{\nu_{1}}} = g_{\nu_{1},\eta_{1}}$
and  $g \! \mid_{U_{\nu_{2}}} = g_{\nu_{2},\eta_{2}}$.
\end{lemma}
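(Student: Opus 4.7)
The plan is to build $g$ by hand as a prefix code map in $\Gnr$, specifying antichains on the source and target sides that contain $\nu_1, \nu_2$ and $\eta_1, \eta_2$ respectively, and then invoking the definition of $\Gnr$ directly.

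First I would extend $\{\nu_1,\nu_2\}$ to a maximal antichain in $\Wnr$. Since $\nu_1 \bot \nu_2$, the cones $U_{\nu_1}$ and $U_{\nu_2}$ are disjoint, so their complement $\CCnr \setminus (U_{\nu_1}\cup U_{\nu_2})$ is a nonempty clopen set (using the hypothesis $U_{\nu_1}\cup U_{\nu_2} \neq \CCnr$), which decomposes as a disjoint union $U_{\mu_1}\sqcup \cdots \sqcup U_{\mu_k}$ for some $k \geq 1$ and pairwise incomparable $\mu_i \in \Wnr$. Thus $\vec{\alpha} := (\nu_1,\nu_2,\mu_1,\ldots,\mu_k)$ is a maximal antichain of length $k+2$. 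Similarly, extend $\{\eta_1,\eta_2\}$ to a maximal antichain $\vec{\beta} := (\eta_1,\eta_2,\xi_1,\ldots,\xi_l)$ of length $l+2$ with $l \geq 1$.

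Next, I would refine $\vec{\alpha}$ and $\vec{\beta}$ only in the complementary parts so that both antichains acquire the same length. The key observation is the standard Higman modular invariant: every maximal antichain of $\Wnr$ has cardinality $\equiv r \pmod{n-1}$. This follows by induction on expansion, starting from the base antichain $\rd$ of size $r$ and noting that each elementary expansion $\mu \leadsto \{\mu\concat 0, \ldots, \mu\concat (n-1)\}$ changes the cardinality by exactly $n-1$. Hence $k+2 \equiv l+2 \pmod{n-1}$, so $k \equiv l \pmod{n-1}$. Without loss of generality $l \geq k$; then iteratively expanding one of the $\mu_i$ exactly $(l-k)/(n-1)$ times (which is possible because $k\geq 1$) produces a refined antichain $\vec{\alpha}'$ of length $l+2$ that still contains $\nu_1$ and $\nu_2$ as entries. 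Set $\vec{\beta}' := \vec{\beta}$.

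Finally, I would order $\vec{\alpha}'$ and $\vec{\beta}'$ so that their first two entries are $(\nu_1,\nu_2)$ and $(\eta_1,\eta_2)$ respectively, and pair up the remaining entries arbitrarily. Then $g := g_{\vec{\alpha}',\vec{\beta}'}$ is by definition an element of $\Gnr$, and by construction $g|_{U_{\nu_1}} = g_{\nu_1,\eta_1}$ and $g|_{U_{\nu_2}} = g_{\nu_2,\eta_2}$, which is exactly what is required. The only subtlety is the mod-$(n-1)$ arithmetic used to synchronize the two antichain lengths, but this is a direct consequence of the Higman invariant and poses no real obstacle.
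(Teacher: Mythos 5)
Your proposal is correct and follows essentially the same route as the paper: both arguments extend $\{\nu_1,\nu_2\}$ and $\{\eta_1,\eta_2\}$ to maximal antichains, use the hypothesis $U_{\nu_1}\cup U_{\nu_2}\neq\CCnr$ (and likewise for the $\eta$'s) to guarantee a spare third element, and equalize the lengths by repeated elementary expansions, since the cardinalities agree modulo $n-1$, before reading off $g$ as the resulting prefix code map. The only cosmetic difference is that you invoke the general Higman invariant (every finite maximal antichain in $\Wnr$ has size $\equiv r \pmod{n-1}$), whereas the paper verifies the congruence by an explicit cardinality count of two specially constructed antichains; both verifications are sound.
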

\begin{proof}
Suppose $\nu_{1},\nu_{2},\eta_{1},\eta_{2}\in \Wnr$ such that $\nu_{1}\bot\nu_{2}$
, $\eta_{1}\bot\eta_{2}$
, $U_{\nu_{1}}\cup U_{\nu_{2}}\ne \CCnr$ and
$U_{\eta_{1}}\cup U_{\eta_{2}}\ne \CCnr$. There is a natural $k>2$ (with $k\equiv r\mod (n-1)$) so that we can find two complete prefix codes
$ \vecnu\seteq\{\alpha_1,\alpha_2,\ldots,\alpha_k\}, \veceta\seteq \{\beta_1,\beta_2,\ldots,\beta_k\}\subset \Wnr$ with $\alpha_1= \nu_1$, $\alpha_2=\nu_2$, $\beta_1=\eta_1$, and $\beta_2=\eta_2$.  In particular, the prefix code map $g_{\vecnu,\veceta}$ (see Definition \ref{def:PrefixCodeMap}) is an element of $\Gnr$ satisfying the conclusion of the lemma.
\end{proof}

Relating Lemma \ref{lemma:g} to the ideas above we obtain the following.
\begin{cor}
\label{cor:fashion}
Let $h\in \Homeo(\CCnr)$ such that $h^{-1} \Gnr h \subseteq \Gnr$.
Then for every $U_{\nu},U_{\eta}\in \Banr$
such that $ U_{\nu}\cup U_{\eta}\ne \CCnr$, the map
$h$ acts on $U_{\nu}$ and $U_{\eta}$ almost in the same fashion.
\end{cor}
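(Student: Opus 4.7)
My plan is to lift the single ``seed'' compatibility $h_{\nu_0} = h_{\eta_0}$ provided by Corollary~\ref{cor:exis} to every pair of cones, by conjugating via a carefully chosen prefix-code map of $\Gnr$. First, Lemma~\ref{lemma:tran} applied to the hypothesis $h^{-1}\Gnr h \subseteq \Gnr$ places $h$ in $\Hnrsim$, so Corollary~\ref{cor:exis} supplies incomparable cones $U_{\nu_0}, U_{\eta_0} \in \Bnr$ with $U_{\nu_0} \cup U_{\eta_0} \neq \CCnr$ and $h_{\nu_0} = h_{\eta_0}$. These two cones serve as the seed of the argument.

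Now fix any $U_\nu, U_\eta \in \Bnr$ with $U_\nu \cup U_\eta \neq \CCnr$. I would first treat the incomparable case $\nu \bot \eta$ (the case $\nu=\eta$ is trivial, and the comparable case reduces to the incomparable one by interposing a third cone $U_\xi$ disjoint from $U_\nu$ with $U_\nu \cup U_\xi \neq \CCnr$ and taking the maximum of the two resulting thresholds). Assuming $\nu \bot \eta$, Lemma~\ref{lemma:g} produces $g \in \Gnr$ with $g|_{U_\nu} = g_{\nu,\nu_0}$ and $g|_{U_\eta} = g_{\eta,\eta_0}$. By hypothesis, $g' := h^{-1} g h$ lies in $\Gnr$, so $gh = hg'$.

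Expanding both sides via the chain rule for local actions gives, on the $(gh)$ side,
\[
(gh)_{\nu\concat\chi} \;=\; g_{\nu\concat\chi}\cdot h_{\theta_g(\nu\concat\chi)} \;=\; h_{\nu_0\concat\chi},
\]
since $g$ restricts to a basic cone map on $U_\nu$, forcing $g_{\nu\concat\chi}=\mathrm{id}$ and $\theta_g(\nu\concat\chi)=\nu_0\concat\chi$; and on the $(hg')$ side,
\[
(hg')_{\nu\concat\chi} \;=\; h_{\nu\concat\chi}\cdot g'_{\theta_h(\nu\concat\chi)}.
\]
Since $g'\in\Gnr$ is a prefix-code map defined by a finite antichain, there is a bound $M$ with $g'_\xi=\mathrm{id}$ for every $\xi$ of length at least $M$. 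Moreover, the uniform continuity of $h$ on the compact space $\CCnr$ provides a threshold $k_\nu$ beyond which $|\chi|\geqslant k_\nu$ forces $|\theta_h(\nu\concat\chi)|\geqslant M$. For $\chi$ of length at least $k_\nu$ the two right-hand sides coincide, giving $h_{\nu\concat\chi}=h_{\nu_0\concat\chi}$.

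Repeating the argument verbatim with $\eta$ in place of $\nu$ yields $h_{\eta\concat\chi}=h_{\eta_0\concat\chi}$ for $\chi$ past some threshold $k_\eta$. The seed equality $h_{\nu_0}=h_{\eta_0}$ propagates by the chain rule to $h_{\nu_0\concat\chi}=h_{\eta_0\concat\chi}$ for \emph{every} $\chi$, so for $|\chi|\geqslant\max(k_\nu,k_\eta)$ we conclude $h_{\nu\concat\chi}=h_{\eta\concat\chi}$, which is precisely the definition of $h$ acting on $U_\nu$ and $U_\eta$ almost in the same fashion. The main technical point that needs careful handling is the simultaneous depth control --- the triviality of deep local actions of $g'$ and the growth of $|\theta_h|$ by uniform continuity --- but neither observation is deep.
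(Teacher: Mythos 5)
Your proposal is correct and follows essentially the same route as the paper: the seed pair from Corollary \ref{cor:exis}, the transporting element $g$ from Lemma \ref{lemma:g}, the use of the hypothesis to get $g' = h^{-1}gh \in \Gnr$, and the reduction of the comparable case via a third incomparable cone. The only difference is cosmetic: where the paper invokes Lemma \ref{lem:sameFashion}, Remark \ref{claim:1} and Lemma \ref{lem:gh} to compare $gh$ with $hg'$, you in-line those facts by computing local actions on both sides of $gh = hg'$ directly, with the chain rule being exact in the regime you use it (since $g$ and, at sufficient depth guaranteed by uniform continuity, $g'$ act as rigid prefix replacements).
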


\begin{proof}
By Lemma \ref{lemma:tran}, we have $h\in H_{n,r,\sim}$, so by Corollary \ref{cor:exis}
there exist $U_{\nu^{\prime}},U_{\eta^{\prime}}\in \Banr$ such that $h$
acts on $U_{\nu^{\prime}}$ and $U_{\eta^{\prime}}$ in the same fashion.
Moreover, we can
choose these $\nu^{\prime}$ and $\eta^{\prime}$ such that $\nu^{\prime}\bot\eta^{\prime}$
and $ U_{\nu^{\prime}}\cup U_{\eta^{\prime}}\ne \CCnr$.

Let $U_{\nu},U_{\eta}\in \Banr$ be such that $ U_{\nu}\cup U_{\eta}\ne \CCnr$. Assume
first that $\nu\bot\eta$. By Lemma \ref{lemma:g} there exists  $g\in \Gnr$ such that
$g\! \mid_{U_{\nu}} = g_{\nu,\nu^{\prime}}$ and $g\! \mid_{U_{\eta}} = g_{\eta,\eta^{\prime}}$.
So, by Lemma \ref{lem:sameFashion}, $gh$ acts on
$U_{\nu}$ and $U_{\eta}$ in the same fashion.
Letting $f := h^{-1} g h \in \Gnr$, we have $hf = gh$.
By Remark \ref{claim:1} and Lemma \ref{lem:gh},
$h = ghf^{-1}$ acts on $U_{\nu}$ and $U_{\eta}$ almost in the same fashion.

Next, assume that $\nu\leqslant\eta$ or $\eta\leqslant\nu$.
Then there exists $\beta\in \Wnr$ such that $\nu\bot\beta$, $\eta\bot\beta$
and $U_{\beta}\cup U_{\nu}\ne \CCnr \ne U_{\beta}\cup U_{\eta}$.
So by the first case,
$h$ acts on $U_{\nu}$ and $U_{\beta}$ almost in the same fashion and
$h$ acts on $U_{\eta}$ and $U_{\beta}$ almost in the same fashion.
That is, there exists  $k_{1}\in \bbN$ such that for every
$\zeta \in \Wn$ with $|\zeta| \geq k_{1}$, we have that
$h$ acts on $U_{\nu\concat \zeta}$ and $U_{\beta\concat \zeta}$
in the same fashion, and there exists  $k_{2}\in \bbN$ such that for every
$\chi\in \Wn$ with $|\chi|\geq k_{2}$, we have that
$h$ acts on $U_{\eta\concat \chi}$ and $U_{\beta\concat \chi}$
in the same fashion. Taking $k := max\{k_{1},k_{2}\}$, we get that for every
$\mu \in \Wn$ with $|\mu|\geq k$, the map $h$ acts on  $U_{\nu\concat \mu}$,
$U_{\beta\concat \mu}$ and $U_{\eta\concat \mu}$ in the same fashion.
Hence, $h$ acts on $U_{\nu}$ and $U_{\eta}$ almost in the same fashion. \end{proof}

\

We are finally in position to complete the proof of Theorem \ref{finiteActions}, that
is, if $h\in N_{H(\CCnr)}(\Gnr)$, then $h$ admits only finitely many types of local actions (recall this implies that $h$ can be represented by a transducer with only finitely many states).  Notice
that the statement below is slightly more general in that we also allow homeomorphisms $h$ which conjugate $\Gnr$ into $\Gnr$, even if the image of $\Gnr$ under this general conjugation is a proper subset of $\Gnr$.  (Such homeomorphisms play a role in \cite{BleakDonovenJonusas}, for instance.)

\begin{cor} \label{cor:finite types}
Let $h\in \Homeo(\CCnr)$ be such that $h^{-1} \Gnr h \subseteq \Gnr$.
Then $h$ uses only finitely many types of local action.
\end{cor}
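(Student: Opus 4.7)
The plan is to apply Corollary~\ref{cor:fashion} twice---first horizontally across three subtrees at a common level, and then in a one-letter-shifted form---and to bootstrap the resulting identity to a uniform suffix bound. The main obstacle is that each instance of Corollary~\ref{cor:fashion} delivers its own pair-dependent critical level, so I need to combine a finite family of such levels into a single uniform bound on the local actions of $h$.

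I begin by choosing three pairwise incomparable cones $U_{\nu_1},U_{\nu_2},U_{\nu_3}$, with $\nu_1,\nu_2,\nu_3\in\Wnr$ of the same length $\ell$, sitting inside a maximal antichain of $\Wnr$; such a triple always exists because $n\geq 2$ and $r<n$ let me refine the obvious partition of $\CCnr$ until it has at least three elements. The third cone is disjoint from each pair, so $U_{\nu_i}\cup U_{\nu_j}\neq\CCnr$ for each $i\neq j$, and therefore also $U_{\nu_1}\cup U_{\nu_2\concat a}\neq\CCnr$ for every $a\in\nset$. Applying Corollary~\ref{cor:fashion} to the three pairs $(U_{\nu_i},U_{\nu_j})$ and taking the maximum of the resulting critical levels yields an integer $K$ with $h_{\nu_i\concat\chi}=h_{\nu_j\concat\chi}$ for all $i,j$ and all $\chi\in\Wn$ with $|\chi|\geq K$. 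Consequently, every $\mu\in\Wnr$ of length at least $\ell+K$ satisfies $h_\mu=h_{\nu_1\concat\chi_\mu}$, where $\chi_\mu$ is the suffix of $\mu$ of length $|\mu|-\ell$, and it suffices to prove that $\{h_{\nu_1\concat\chi}:\chi\in\Wn\}$ is finite.

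For the second application, I apply Corollary~\ref{cor:fashion} to the $n$ pairs $(U_{\nu_1},U_{\nu_2\concat a})$ for $a\in\nset$, and let $K'$ be the maximum of the resulting critical levels (finite, since $\nset$ is finite). Then $h_{\nu_1\concat\chi}=h_{\nu_2\concat a\concat\chi}$ whenever $|\chi|\geq K'$, and combining with the first step applied to the word $a\concat\chi$ (whose length is at least $K$ once $|\chi|\geq K-1$) I get $h_{\nu_2\concat a\concat\chi}=h_{\nu_1\concat a\concat\chi}$. Setting $K'':=\max(K',K-1)$, this gives
\[
h_{\nu_1\concat\chi}\;=\;h_{\nu_1\concat a\concat\chi}\qquad\text{for every }a\in\nset\text{ and every }|\chi|\geq K''.
\]

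A routine induction on $|w|$ then extends this to $h_{\nu_1\concat\chi}=h_{\nu_1\concat w\concat\chi}$ for every $w\in\nset^*$ and every $\chi$ with $|\chi|\geq K''$. Hence any $\mu\in\Wn$ with $|\mu|\geq K''$ may be written $\mu=w\concat\chi_0$ with $|\chi_0|=K''$, yielding $h_{\nu_1\concat\mu}=h_{\nu_1\concat\chi_0}$. Therefore $\{h_{\nu_1\concat\mu}:\mu\in\Wn\}$ is contained in the finite set $\{h_{\nu_1\concat\chi_0}:\chi_0\in\Wn,\ |\chi_0|\leq K''\}$, and combined with the finitely many local actions at words $\mu\in\Wnr$ of length less than $\ell+K$, the full set of local actions of $h$ is finite.
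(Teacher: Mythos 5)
Your overall strategy---extracting one uniform critical level from finitely many applications of Corollary \ref{cor:fashion}, then using a one-letter-shift identity and an induction to funnel all sufficiently deep local actions into a finite set---is essentially the paper's argument, and the second half of your proof (the derivation of $h_{\nu_1\concat\chi}=h_{\nu_1\concat a\concat\chi}$ for $|\chi|\geq K''$ via the pairs $(U_{\nu_1},U_{\nu_2\concat a})$, and the induction on $w$) is sound. The gap is in the first reduction. You invoke Corollary \ref{cor:fashion} only for the three pairs $(U_{\nu_i},U_{\nu_j})$, which controls the local actions at words whose length-$\ell$ prefix lies in $\{\nu_1,\nu_2,\nu_3\}$ and nothing else; yet you then assert that \emph{every} $\mu\in\Wnr$ with $|\mu|\geq\ell+K$ satisfies $h_\mu=h_{\nu_1\concat\chi_\mu}$. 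That does not follow: the level-$\ell$ antichain generally has $r\,n^{\ell-1}>3$ elements (it can have exactly three same-length members only in exceptional cases such as $r=1$, $n=3$, $\ell=2$), and for a word $\mu$ whose length-$\ell$ prefix is one of the other elements your argument gives no relation between $h_\mu$ and any $h_{\nu_1\concat\chi}$. Since the statement being proved must bound the local actions at all of $\Wnr$, this step cannot be skipped.

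The repair is cheap and lands you on the paper's proof. Take the full finite antichain of all length-$\ell$ words in $\Wnr$; any two of its cones have union different from $\CCnr$ because there are at least three of them, so Corollary \ref{cor:fashion} applies to every pair from this antichain (and also, if you want your shift identity directly, to the comparable pairs $(U_\rho,U_{\rho\concat a})$, which the corollary covers as well). Let $K$ be the maximum of these finitely many critical levels; then every $\mu$ with $|\mu|\geq \ell+K$ really does satisfy $h_\mu=h_{\nu_1\concat\chi_\mu}$, and the remainder of your argument goes through verbatim. This is exactly the paper's bookkeeping: it fixes an antichain $A$ of length-three words, sets $k$ to be the maximum of $\crit_h$ over all pairs from $A$ together with the pairs $(U_\nu,U_{\nu\concat a})$, and then runs an induction on $|\eta|$ equivalent to your induction on $w$.
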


\begin{proof}
Let us fix $A$ a complete antichain having at least three elements. For instance,
if $n=2$, we can take $A := \{ \dot{0}00,\dot{0}01,\dot{0}10,\dot{0}11\}$, and if $n>2$, we can take either
$A:=\rd\times \Xn$ or $A:=\rd \times \Xn^2$. In all cases, $A$ can be taken so that all of its words
are length three.

By Corollary \ref{cor:fashion}, for every $\nu,\eta\in A$ and $a \in \{0,\ldots,n-1\}$, the map $h$ acts on $U_{\nu}$ and
$U_{\eta}$ almost in the same fashion, and it also acts on $U_{\nu}$ and $U_{\nu\concat a}$ almost in the same fashion. Set
\begin{IEEEeqnarray*}{rCl}
k := \max
\big\{ \crit_h (U_{\nu},U_{\eta})\mid &\nu,&\eta \in A \mbox{ or } \nu\in A \mbox{ and } \\ &\eta&=\nu\concat a \mbox{ for some } a \in \Xn \big\}.
\end{IEEEeqnarray*}
Now, let $B:=\{\nu_0,\ldots,\nu_{n^{k} -1}\}$ be the set of all the $\nu\in \Wn$ with $|\nu|=k$,
ordered by the lexicographic order of $\Wne$.

We claim  that for every $\eta\in \Wnr$ with  $|\eta|\geq k+3$,
there exists $\nu \in B$ such that $h_{\eta}=h_{\tau\concat\nu}$ holds for every $\tau\in A$.

To show the claim above, we proceed by induction on $m := |\eta|$. If $m=k+3$, then $\eta=\chi\concat\nu$ for some $\chi\in A$
and $\nu\in B$. Now, by the choice of $k$, we have $h_{\chi\concat \nu}=h_{\tau\concat \nu}$ for every $\tau\in A$, as desired.

Assume now that for
every $\eta\in \Wnr$ with $|\eta|=m$ the claim is true.
Let $\eta\in \Wnr$ be such that  $|\eta|=m+1$. Write
$\eta=\chi\concat b \concat \eta^{\prime}$ for $\chi\in A$
and $b \in \{ 0,1,\ldots ,n-1\}$. Then $|\eta^{\prime}|\geq k$, so by the choice of $k$,
we have that
$h_{\chi\concat b \concat\eta^{\prime}}=h_{\chi\concat \eta^{\prime}}$.
By  the induction hypothesis, for $\chi\concat\eta^{\prime}$
there exists  $\nu \in B$ such that $h_{\chi\concat \eta^{\prime}}=h_{\tau\concat \nu}$
holds for every $\tau\in A$. Thus,
for every $\tau\in A$, we have $h_{\eta}=h_{\chi\concat b \concat\eta^{\prime}}
=h_{\chi\concat \eta^{\prime}}=h_{\tau\concat \nu}$, as desired. \end{proof}

Thus, for $h\in N_{H(\CCnr)}(\Gnr)$, we now have that $h$ admits only finitely many local actions, and so $h\in\Rnr$ and can be represented by a minimal initial transducer $${A}_{q_{0}}\seteq(\rd,\Xn,R,S,\pi_A,\lambda_A,q_0).$$  Now, for $\nu\in\Wnr$ minimality ensures us that the local map $h_\nu:\CCn\to\CCn$ is representable by the initial transducer $A_{\pi_A(\nu,q_0)}$ (since $\pi_A(\nu,q_0)$ is not a state of incomplete response).  That is, we have $h_\nu=h_{A_{\pi_A(\nu,q_0)}}$.  We now introduce simplified notation, reflecting the perspective that the local maps of $h$ are determined by the states of $A_{q_0}$.
\begin{notation}\label{notn-localMapAtState}
Suppose $h\in\Rnr$ is represented by the minimal initial transducer $${A}_{q_{0}}\seteq(\rd,\Xn,R,S,\pi_A,\lambda_A,q_0)$$ and that $q\in Q=R\sqcup S$.  By the notation $h_q$ we will mean the local map $h_{\nu}$ where $\nu\in\Wnre$ is such that $\pi_A(\nu,q_0)=q$.
\end{notation}
Note that we will also use the notation $h_q$ to represent the local map $h_\nu:\CCn\to\CCn$ where $\nu\in \Wne$ and $A_{q_0}=\left\{\Xn,Q,\pi_A,\lambda_A,q_0\right\}$ is a finite minimal initial transducer representing a rational endomorphism $h:\CCn\to\CCn$.



\section{Finding our Place in the Rational Group $\Rnr$}\label{sec:synchronizing}
Corollary \ref{cor:NormAut} shows that $\Anr\cong N_{H(\CCnr)}(\Gnr)$ where $H(\CCnr)$ is the full group of homeomorphisms $\Homeo(\CCnr)$.  Meanwhile, Theorem \ref{finiteActions} shows that any element $\phi\in N_{H(\CCnr)}(\Gnr)$ is a homeomorphism of $\CCnr$ that admits only finitely many types of local actions.  Thus, Theorem \ref{GrigThm} then implies  that any such homeomorphism $\phi$ is actually an element of $\Rnr$, since it is a homeomorphism that can be represented by a (non-degenerate) finite transducer.  In this section we complete the proof of Theorem \ref{thm:MainTheorem}.  That is, given $\phi\in\Homeo(\CCnr)$, then $\phi\in\Anr$ if and only if $\phi$ is representable by a finite bi-synchronizing transducer $A_{q_0}$.

We note that the arguments of Section \ref{sec:finitely-many-local-actions} can be strengthened, using the ability of  $\Gnr$ to move small basic open sets freely, to show that if $\phi\in\Anr$ is represented by a finite transducer $A_{q_0}$ then in fact $A_{q_0}$ is strongly synchronizing.  We give a combinatorial argument here, with the purpose of preparing the reader for the fully combinatorial discussions occurring in our analysis of the outer automorphism groups $\Onr$.

\subsection{Getting in sync}

Here we formally define the strongly synchronizing property of transducers.  The central idea of the following definition is that given any large enough input word, the active state resulting from reading the input word is fully determined only by that initial word.
\begin{definition}
Let $A=\langle X_i,X_o,Q_A,\pi_A,\lambda_A\rangle$ be a transducer, $m$ be a natural number, and
$\mathfrak{s} :X_i^m\to Q_A$ be a function so that if $w\in X_i^m$ and $q\in Q_A$, then $\pi_A(w,q) = \mathfrak{s} (w)$.  In this case we call $\mathfrak{s}$ a \emph{synchronizing map for $A$} and we say that \emph{$A$ is synchronizing at level $m$} or simply \emph{strongly synchronizing}.  If $A$ is initial and $A$ represents a homeomorphism $h$ on $\CCn$ and $h^{-1}$ also admits a representative initial transducer $B_{q_0}=\langle X_o,X_i, Q_B, \pi_B,\lambda_B,q_0\rangle$ which is  synchronizing at level $m$, then we say \emph{$A$ is bi-synchronizing (at level $m$).}
\end{definition}

Note that our language is not parallel in the sense that we use ``bi-synchronizing'' (instead of, e.g., ``bi-strongly-synchronizing'') for a transducer  with the property that it and its inverse are both strongly synchronizing.  We have chosen this language as it seems easier on the reader.
In agreement with the use in automata theory we say that a word $w \in X_i^m$ is a {\it synchronizing word} or alternatively a \emph{reset word},  if $\pi (w, q)$ does not depend on the state $q$.
Thus, $A$ is synchronizing at level $m$ whenever all words of length $m$ are reset words.

\emph{The above definitions extend to transducers acting on $\CCnr$ in the obvious way, using all valid strings of length $m$} (if a string begins with a letter from $\rd$, then it must be processed from the initial state).

\begin{remark}The transducer in Figure \ref{fig_smallTrans} of the Introduction is bi-synchronizing at level $2$ and represents a homeomorphism of $\mathfrak{C}_{3,2}$.
\end{remark}

The reader can also easily verify the points of the following remark for general transducers and also our more specific transducers which take inputs from $\CCnr$.
\begin{remark}
 \begin{enumerate}
\item Suppose $A$ is a transducer and $m$ is a natural number so that $A$ is synchronizing at level $m$.  Then for all natural $n>m$, we have that $A$ is synchronizing at level $n$.
\item Suppose $A=(\rd,\Xn, R, S, \pi,\lambda,q_0)$ represents a homeomorphism $h\in \Homeo(\CCnr)$ and is synchronizing at level $m$ for some positive integer $m$ with synchronizing map $\mathfrak{s}$ taking all inputs of length  $m$ to states in $Q := R\cup S$.  Then we have the following:
 \begin{enumerate}
\item for all
{$q_1 \in Q$}
and $q_2\in \Image (\mathfrak{s})$, there is a nontrivial word $w$ so that $\lambda(w,q_1) = q_2$,
\item for all $q\in \Image (\mathfrak{s})$ and all $w\in \Wn$, we have $\pi (w,q)\in \Image (\mathfrak{s})$, and
\item $\Image (\mathfrak{s})\subseteq S$.
 \end{enumerate}
 \end{enumerate}
\end{remark}

The image of the synchronizing map is therefore an inescapable set of states in its transducer.
In the remainder of this section, our initial transducers will all be given as acting on some space $\CCnr$, but all of the definitions apply similarly in the context of the original rational groups $\{\Rn\}_{n>1}$ of GNS.

Again, let $A_{q_0}=(\rd,\{0,1,\dots,n-1\},R,S,\pi,\lambda,q_0)$ be a transducer that is synchronizing at level $m$.  We call the maximal sub-transducer of $A_{q_0}$ which uses $\Image(\mathfrak{s})$ as its set of states the \emph{core of $A$}, which we denote as $\Core(A)$. (Note  that we drop the reference to the initial state $q_0$ in this notation, as the core is independent of initial state if $A_{q_0}$ is strongly synchronizing.)  Observe that $\Core(A)$ is a non-initial transducer in its own right with transition function $\widetilde{\pi}$ and output function $\widetilde{\lambda}$ where these functions are defined as the restrictions of the functions $\pi$ and $\lambda$ to the domain $\Xn\times \Image (\mathfrak{s})$.  That is, we have the induced transducer $$\Core(A) := \langle \Xn,\Image (\mathfrak{s}),\widetilde{\pi},\widetilde{\lambda}\rangle.$$
 Observe that if $A_{q_0}$ synchronizes at level $n$ for some integer $n$, then there is some $m\leqslant n$ so that $\Core(A)$ synchronizes at level $m$.

\begin{remark}
Note that for a strongly synchronizing transducer $$A_{p_0}=(\rd,\{0,1,\dots,n-1\},R_A,S_A,\pi_A,\lambda_A,p_0)$$ and an input letter $x$ that can be read from a state in $\Core(A_{p_0})$, there is a unique state $q$ of $\Core(A_{p_0})$ which satisfies $\pi_A(x,q)=q$.

The reader may easily verify that if $$B_{q_0}=(\rd,\{0,1,\dots,n-1\},R_B,S_B,\pi_B,\lambda_B,q_0)$$ is another strongly synchronizing transducer with $\Core(A_{p_0})$ strongly isomorphic to $\Core(B_{q_0})$, then this strong isomorphism of the cores is unique.
\end{remark}

\begin{theorem} \label{Thm:SynchronizingEndosMakesAMonoid} Let $\Mnr$ be the set of all endomorphisms $\tau:\CCnr\to\CCnr$ which are representable by strongly synchonizing transducers with all outputs finite.  The set $\Mnr$ forms a monoid under composition of maps.
\end{theorem}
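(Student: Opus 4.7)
The plan is to verify that $\Mnr$ contains the identity endomorphism and is closed under composition. The identity is witnessed by an explicit transducer with $R=\{q_0\}$ and $S=\{q\}$ defined by $\pi(\dot{i},q_0)=q$, $\lambda(\dot{i},q_0)=\dot{i}$ for each $\dot{i}\in\rd$, and $\pi(x,q)=q$, $\lambda(x,q)=x$ for each body letter $x$. This transducer has finite outputs, is non-degenerate, represents the identity on $\CCnr$, and is synchronizing at level one.

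For closure, suppose $\tau,\sigma\in\Mnr$ are represented by finite-output synchronizing transducers $A$ and $B$ at levels $m_A$ and $m_B$ respectively. Following the paper's right-action convention, I form the product transducer $C=A\times B$ with state set $Q_A\times Q_B$, initial state $(q_0^A,q_0^B)$, transition function
\[
\pi_C(x,(p,q)):=(\pi_A(x,p),\pi_B(\lambda_A(x,p),q)),
\]
and output function $\lambda_C(x,(p,q)):=\lambda_B(\lambda_A(x,p),q)$. A routine check, with the evident partition of states into $R_C$ and $S_C$ based on whether both $A$ and $B$ have consumed their $\rd$-prefix, shows that $C$ is a non-degenerate transducer on $\CCnr$ with all outputs finite and represents $\tau\cdot\sigma$. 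It remains to verify that $C$ is synchronizing. After any body-letter input run of length $m_A$, the $A$-coordinate has been forced into a core state $p(w)$ of $A$ determined solely by $w$, whence the subsequent $A$-output on further letters is determined by $w$ alone. I then need a uniform constant $M$ such that from any core state of $A$, any input of length at least $M$ produces at least $m_B$ output letters; then $k:=m_A+M$ will be the required synchronizing level for $C$.

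The principal obstacle is this uniform growth lemma, which I plan to establish by compactness. Reducing $A$ to eliminate unreachable states if necessary, every core state $p$ is reachable from $q_0^A$, so extending a reaching path by an arbitrary infinite body sequence yields a point of $\CCnr$ on which $A$ must output an infinite sequence, since $A$ represents an endomorphism of $\CCnr$. Hence no infinite body sequence applied to $p$ can produce finite output. The rooted tree whose vertices are pairs $(p,u)$, where $p$ is a core state of $A$ and $u\in\{0,\ldots,n-1\}^{\ast}$ is a body word with $|\lambda_A(u,p)|<m_B$, is thus finitely branching and has no infinite branch; by K\"onig's lemma it is finite, and any bound on its depth serves as $M$. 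Granted this, for any state $(p,q)\in Q_C$ and any body input $w$ of length at least $m_A+M$, the first $m_A$ letters synchronize the $A$-coordinate, the next $M$ letters produce at least $m_B$ letters of $A$-output, and feeding these letters through $B$ forces the $B$-coordinate into a state depending only on $w$. Hence $\pi_C(w,(p,q))$ depends only on $w$, so $C$ synchronizes at level $k=m_A+M$, proving $\tau\cdot\sigma\in\Mnr$ and thus that $\Mnr$ is a monoid.
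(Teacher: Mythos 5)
Your proof is correct and follows essentially the same route as the paper: form the transducer product with the same transition and output formulas, observe that the first coordinate synchronizes because $A$ does, and synchronize the second coordinate via a uniform lower bound on the length of $A$-output fed into the synchronizing transducer $B$, with the identity witnessed by the same two-state transducer. The only difference is in detail: you extract the uniform growth constant by K\"onig's lemma and use level $m_A+M$, whereas the paper argues from the absence of $\varepsilon$-output cycles and quotes level $mN$; your bookkeeping of the synchronizing level is, if anything, the more carefully justified of the two.
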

\begin{proof}
First observe that if $\phi,\rho$ are two endomorphisms of $\CCnr$ that are representable by strongly synchronizing transducers then these transducers are $\omega$-equivalent to finite transducers, since the core of each of these transducers is finite, and one can only visit finitely many states on directed paths in these transducers from the initial states to the cores (else these transducers would admit accessible cycles containing states outside of their core states, which would violate the strong synchronizing condition).  Therefore, we will assume in the remainder of this argument that we have two initial transducers $A$ and $B$ representing the maps $\phi$ and $\rho$ respectively, which are minimal and strongly synchronizing, and so finite.  We will denote the output and transition functions of these transducers as $\lambda_A,\lambda_B,\pi_A,$ and $\pi_B$ in the obvious fashion.

We follow the direct product construction in \cite{GNSenglish} of the composition of two endomorphisms $\phi$ and $\rho$ in $\Mnr$, to show the result can be represented by a finite transducer with set of states given as the product sets of the states of $A$ and $B$, and with all outputs finite.  From the general product, we pass to its minimised equivalent transducer following this outline: first we may need to remove some inaccessible states, secondly, we might need to modify some (finite) outputs to remove states of incomplete response, and thirdly, we may need to identify some equivalent states (which can be seen as restricting futher within the set of direct-product states).

We claim that the resulting transducer $C$ is also strongly synchronizing.

To this end observe firstly that the resulting transducer (and as well the transducers $A$ and $B$) has no cycles of states with $\varepsilon$ outputs, lest the transducer map a point in the Cantor space $\CCnr$ to a finite string.  In particular, there is a number $N$ so that from any state $q$ in $A$, if we read an input of length $N$, the output will be at least length one.

Next, suppose that $A$ synchronizes at level $k$ while $B$ synchronizes at level $m$.  Increase $N$ if necessary, so that $mN>k$, notice that $A$ still has the property that upon reading an input of length $N$ from any state $q$, the output will be a non-trivial word.  We claim that $C$ synchronizes at level $mN$.

The reader can see this by considering how the output and transition functions of $C$ are defined.  If one considers some state $(a,b)$ as the active state for $C$, where $a$ is a state in $A$ and $b$ is a state in $B$, then for each letter $j\in\Xn$ we have $\lambda_C(j,(a,b)) = \lambda_B(\lambda_A(j,a),b)$ while $\pi_C(j,(a,b))=(\pi_A(j,a),\pi_B(\lambda_A(j,a),b)$.  As the transitions in $A$ synchronise on inputs of length $k<mN$, then as the first coordinate of the transitions of $C$ mirrors the transitions of $A$, we see that the first coordinate is completely determined by an input of length $mN$.  Similarly, as $A$ must produce a word of length at least $m$ on reading any word of length $mN$ from any state, and as the second coordinate of the transitions of $C$ mirrors the transitions of $B$ over the outputs of $A$, we see that the second coordinate must be synchronised as well, and our claim is supported.

Therefore, it is the case that $\Mnr$ is at least a semigroup, however, the set $\Mnr$ contains an endomorphism representing the identity map on $\CCnr$ (since this can be represented by a two-state, synchronous and strongly synchronizing transducer), and thus $\Mnr$ is a monoid.
\end{proof}

\begin{remark}\label{rem:coreInProductOfCores}
Observe that for two endomorphisms of $\Mnr$, represented by strongly synchronizing transducers $A$ and $B$ with all outputs finite, the transducer representing the product of these endomorphisms has as set of states a subset of the product of the states of $A$ and of $B$, and has its core occuring over a subset of the states arising in the product of the states in the cores of $A$ and $B$.
\end{remark}

\begin{notation}
We denote by $\Snr$ the set of all homeomorphisms of $\CCnr$ which are representable by strongly synchronizing transducers.  We further denote by $\Bnr$ the set of all homeomorphisms of $\CCnr$ which are representable by bi-synchronizing transducers.
\end{notation}

We observe in passing that $\Bnr\subset \Snr = \Mnr\cap \Rnr$.

Theorem \ref{Thm:SynchronizingEndosMakesAMonoid} has the following corollary.
\begin{cor}
The subset $\Bnr$ of $\Rnr$ forms a subgroup of $\Rnr$ under composition of homeomorphisms.
\end{cor}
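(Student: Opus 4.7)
The plan is to verify that $\Snr$ satisfies the three subgroup axioms inside $\Rnr$: containment of the identity, closure under inversion, and closure under composition. The first two are essentially free from the definition of bi-synchronization, and all of the real work has already been done in Theorem \ref{Thm:SynchronizingEndosMakesAMonoid}; the corollary is just a matter of packaging.

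First I would note that the identity map on $\CCnr$ is representable by the obvious one-state synchronous transducer, which is trivially bi-synchronizing at level $1$ (with itself as inverse), so $\mathrm{Id} \in \Snr$. Closure under inverses is then immediate from the definition: if $h$ is bi-synchronizing at level $m$, then by symmetry of the definition (both $h$ and $h^{-1}$ have synchronizing representative transducers at level $m$), the map $h^{-1}$ is also bi-synchronizing at level $m$, so $h^{-1}\in\Snr$.

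The substantive step is closure under composition. Let $g,h\in \Snr$. Since $g$ and $h$ are homeomorphisms representable by synchronizing transducers, and since such transducers necessarily have all outputs finite (a synchronizing transducer cannot have a cycle of $\varepsilon$-transitions in its core, as is used in the proof of Theorem \ref{Thm:SynchronizingEndosMakesAMonoid}), both $g$ and $h$ lie in $\Mnr$. By that theorem, their composition $gh$ again lies in $\Mnr$; in particular $gh$ is a homeomorphism (as a composition of homeomorphisms) representable by a synchronizing transducer. To conclude $gh\in\Snr$ we must verify that $(gh)^{-1}$ is also so representable. But $(gh)^{-1}=h^{-1}g^{-1}$, and by closure under inversion both $h^{-1}$ and $g^{-1}$ lie in $\Snr\subset\Mnr$, so a second application of Theorem \ref{Thm:SynchronizingEndosMakesAMonoid} shows $h^{-1}g^{-1}\in\Mnr$ as well. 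Hence both $gh$ and its inverse are representable by synchronizing transducers, so $gh$ is bi-synchronizing.

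The main (and only) obstacle has already been overcome in Theorem \ref{Thm:SynchronizingEndosMakesAMonoid}, where care was needed to show that the synchronization level of the product transducer can be controlled using the fact that neither factor produces cycles of $\varepsilon$-outputs; here, we simply invoke that result twice. Combining the three axioms gives that $\Snr$ is a subgroup of $\Rnr$, completing the proof.
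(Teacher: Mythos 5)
Your proof is correct and follows essentially the same route as the paper, which derives the corollary directly from Theorem \ref{Thm:SynchronizingEndosMakesAMonoid}: the identity and closure under inversion are immediate from the definition of bi-synchronization, and closure under composition comes from applying the monoid theorem to both $gh$ and $(gh)^{-1}=h^{-1}g^{-1}$. Your write-up merely makes explicit the double application of the theorem that the paper leaves implicit.
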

We call the group $\Bnr$ the \emph{group of bi-synchronizing homeomorphisms of $\CCnr$}, or correspondingly \emph{the bi-synchronizing group} when the Cantor space being acted upon is clear.

We point out, from the proof of Theorem \ref{Thm:SynchronizingEndosMakesAMonoid}, that if one simply takes the full transducer product of two minimal transducers representing elements of $\Bnr$, then the result will be a connected transducer.  However, it can happen that some of the states of that product will be inaccessible.  Indeed, it can happen that a state corresponding to the product of two core states also might not be accessible (although, one can always read some input starting from this state to get into the core of the product transducer).  Thus, after computing any such general product, it is practically of great value to minimize the resulting transducer.

\subsection{Properties of strongly synchronizing and bi-synchronizing transducers}
In this subsection, we take a detour to unearth a more detailed structure theory for minimal, strongly synchronizing (or bi-synchronizing) transducers.

We begin our investigations by looking into how synchronization combines with the basic methods of minimization.
\begin{lemma}
Suppose that $0<m\in \N$ and that $A$ is an initial transducer that  represents a self-homeomorphism $\phi$ of $\CCnr$ and which synchronizes at level $m$.  Let $B$ be the minimal transducer $\omega$-equivalent to $A$ produced by the minimisation algorithm.  Then $B$ has finitely many states and synchronizes at level $m$ as well.
\end{lemma}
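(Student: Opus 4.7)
The strategy is to track the effect of each of the three minimisation steps described in Subsection~\ref{s_reduction} on the transition data. The crucial observation is that synchronization at level $m$ is a property of the transition function $\pi$ alone (it is independent of $\lambda$) and is preserved under any operation that restricts the state set while leaving $\pi$ untouched on the surviving states, or that quotients by an equivalence that is preserved by $\pi$. We therefore plan to verify separately that each step of the minimisation algorithm either leaves $\pi$ unchanged or quotients it consistently, and to establish finiteness along the way.

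\textbf{Finiteness.} First I would bound the number of accessible states of $A$. Since $A$ synchronizes at level $m$ via some map $\mathfrak{s}$, reading any admissible word of length $m$ from any state of $A$ lands in the finite set $\Image(\mathfrak{s})$. Hence the accessible states of $A$ are contained in
\[
\{q_0\}\,\cup\,\{\pi_A(w,q_0)\mid w\in \Wnre,\ |w|<m\}\,\cup\,\Image(\mathfrak{s}),
\]
a finite set (the middle set is finite because there are only finitely many words of length less than $m$, and $\Image(\mathfrak{s})$ is finite because it is indexed by a finite set of length-$m$ words). Step 2 discards precisely the inaccessible states, so after Step 2 the state set is finite; Steps 1 and 3 only modify outputs or identify states, so finiteness persists through to $B$.

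\textbf{Synchronization is preserved.} Step 1 (removing states of incomplete response) only alters the output function $\lambda$: it propagates guaranteed prefixes backwards along transitions. Since $\pi$ is untouched, the very same map $\mathfrak{s}$ continues to witness synchronization at level $m$. Step 2 (removing inaccessible states) preserves $\pi$ on the surviving states; I would check that $\Image(\mathfrak{s})$ survives Step 2, which follows because for any $q\in\Image(\mathfrak{s})$ there is an accessible state $q'$ (e.g.\ any state reached from $q_0$) and a length-$m$ word $w$ with $\pi_A(w,q')=q$, exhibiting $q$ as accessible. Step 3 identifies states $p\sim q$ whenever $h_{A_p}=h_{A_q}$; this is a congruence with respect to $\pi$, so the quotient transition function $\bar\pi$ is well-defined and the composition $\bar{\mathfrak{s}}:=[\,\cdot\,]\circ \mathfrak{s}$ witnesses synchronization of $B$ at level $m$.

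\textbf{Main subtlety.} The only point requiring care is the bifurcated state structure $Q=R\sqcup S$ for transducers acting on $\CCnr$: the initial state $q_0\in R$ only processes letters from $\rd$, while states in $S$ process letters from $\nset$. The synchronizing condition applies to length-$m$ admissible inputs, meaning words in $\nset^m$ read from states in $S$, together with words beginning in $\rd$ read from $q_0$. Once this bookkeeping is in place, the three verifications above are routine, since $q_0$ is always accessible and is never identified with another state (it is the unique state accepting inputs from $\rd$), so the quotient in Step 3 does not interfere with the distinguished role of $q_0$.
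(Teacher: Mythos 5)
Your proposal is correct and takes essentially the same route as the paper's proof: it walks through the three minimisation steps, observing that removing incomplete response leaves $\pi$ untouched, that the synchronizing map bounds the accessible states (giving finiteness once inaccessible states are removed), and that identifying states with equal local maps is a congruence for $\pi$, so the quotient still synchronizes at level $m$. The only detail the paper spells out that you assert without justification is why that identification is a $\pi$-congruence — namely that, after incomplete response has been removed, equivalent states have identical outputs on each input letter, so their successor states induce identical local maps and are therefore themselves identified.
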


\begin{proof}
We suppose that $0<m\in \N$ and that $A_{q_0}=(\rd,\{0,1,\ldots ,n-1\}, R_1,S_1,\pi_A, \lambda_A,q_0)$ is an initial transducer that synchronizes at level $m$  and which represents a self-homeomorphism $\phi$ of $\CCnr$ for some $0<r<n\in\N$.

We consider the minimisation algorithm as discussed in Section \ref{s_reduction}.

The first operation is to remove states of incomplete response.  Recall that a state $q$ has incomplete response to an input letter $x$ if the word $z=\lambda(x,q)$ is smaller than the guaranteed eventual output of $q$ on reading long enough inputs with initial letter $x$.  As our initial states cannot be in any cycle within the transducer (since the $\rd$ letters are only processed from one state which is never revisited) the process of removing states of incomplete response is simply to inductively adjust all outputs from accessible states by ``back-propagating'' guaranteed responses as far as possible. Thus, transitions in the new transducer mirror transitions in the old transducer, and the synchronization level is preserved.

The next operation is to remove inaccessible states. It is immediate that this does not increase the synchronization level (it could decrease the synchronization level, if we remove an inaccessible state $q$ from which the transitions into $\Core(A)$ require long inputs).  Also, we observe that the number of states has not increased, and that $q_0$ is still the initial state of the resulting transducer, which we will denote by $A_1$.

We can observe that the transducer $A_1$ must now have only finitely many states:  the states in the core, (which is a finite set), and all the states one can visit on the way to the core from the initial state (again, a finite set, since after we have read an input of length $m$, we are in the core).

The third operation is to identify each set of equivalent states to a single state.   If two states $q_1$ and $q_2$ are to be identified, then the resulting state will be the image under the new synchronizing map of any word that would have resulted under the old map in either of the two initial states. If an input letter $x$ is processed by $A_1$ at the two states $q_1$ and $q_2$, then we will have that $\lambda_{A_1}(x,q_1)=\lambda_{A_1}(x,q_2)$ and while the two states $\pi_{A_1}(x,q_1)$ and $\pi_{A_1}(x,q_2)$ might be different in $A_1$, they will induce identical local maps, so $\pi_{A_1}(x,q_1)$ and $\pi_{A_1}(x,q_2)$ will also have to have been identified.  Thus, the synchronization level of the resulting transducer might become shorter, but it cannot be increased by this process.
\end{proof}

Grigorchuk, et al., in \cite{GNSenglish}, argue the uniqueness (up to what we call strong isomorphism in this article) of any two minimal finite transducers representing the same homeomorphism of Cantor space.  Therefore, we introduce the following notation.
\begin{notation}
Let $h\in(\Rnr\cup\Rn)$.  By $A_h$ we will mean a minimal initial finite transducer representing $h$.
\end{notation}

\begin{definition}
We set the notation
\[
\Onr\seteq\{Core(A_g)\mid g\in\Bnr\}
\]
for the set of cores of the minimal transducers representing elements of $\Bnr$, up to equivalence under strong isomorphism (we suppress this equivalence in the notation).
\end{definition}

\begin{definition}\label{lem:OnrProd}
Let $T_1, T_2\in\Onr$.  We define a product $T_1T_2\in\Onr$ as follows:
\begin{enumerate}
\item Compute the full transducer product $T_1\ast T_2$.
\item \label{def:remIncResp nr} Pick any state $q$ of $T_1 \ast T_2$. Apply the ``Remove States of Incomplete Response'' algorithm in the paper \cite{GNSenglish} to the initial transducer $(T_1\ast T_2)_{q}$ to produce an initial transducer $\overline{(T_1T_2)}_{{q}'}$.
\item \label{def:corePass nr} Pass to the core of $\overline{(T_1T_2)}_{{q}'}$ to produce the result $\overline{(T_1T_2)}^{\,\circ}$.
\item \label{def:idEquivStates nr} Identify equivalent states of $\overline{(T_1T_2)}^{\,\circ}$ to produce the result in $\Onr$, which we denote by $T_1T_2$.
\end{enumerate}
\end{definition}

We have the following structural lemma about the core of (minimal) transducers representing elements of $\Bnr$.
\begin{lemma} \label{lem:core structure}For any $g,h\in\Bnr$, we have:
 \begin{enumerate}
\item \label{point:core containment} $\Core(A_g)\Core(A_h)$ is strongly isomorphic to $\Core(A_{gh})$;
\item \label{point:core equivalences}if $\Core(A_g)$ is strongly isomorphic to $\Core(A_h)$, then there is an element $v\in G_{n,r}$ so that $gv=h$.
\item \label{point:cosetCore} if $g$ and $h$ are in the same coset of $\Gnr$ in $\Bnr$, then $\Core(A_{g})$ is strongly isomorphic to $\Core(A_{h})$.
\end{enumerate}
\end{lemma}

\begin{proof}\leavevmode

\begin{enumerate}
\item The first point is essentially a direct computation. Observe that the full transducer product of the initial transducers $A_{g}$ and $A_{h}$ is the initial transducer $A_{g}*A_{h}$ which is $\omega$-equivalent to $A_{gh}$ by \cite{GNSenglish}. Moreover, it contains as a sub-transducer the full transducer product of $\Core(A_g)$ with $\Core(A_h)$. The last three paragraphs of the proof of Theorem \ref{Thm:SynchronizingEndosMakesAMonoid} shows that $A_{g} \ast A_{h}$ is strongly synchronizing, and    $\Core(A_{g} \ast A_{h})$ is contained in $\Core(A_{g}) \ast \Core(A_{h})$. Thus $A_{gh}$, the minimal transducer representing the initial transducer $A_{g}\ast A_{h}$ contains a transducer strongly isomorphic to $\Core(A_g)\Core(A_h)$. However since $A_{gh}$ is synchronizing, it follows that $\Core(A_{gh})$ is strongly isomorphic to $\Core(A_g)\Core(A_h)$.

\item
The second point will follow from the characterisation of $\Gnr$ as the subgroup of $\Bnr$ consisting of those homeomorphisms which are represented by reduced transducers with core a single state which acts as the identity.

The argument below relies on the inversion algorithm of GNS (Proposition 2.21 of \cite{GNSenglish}). This algorithm, at the request of early readers of this manuscript, is given in detail in Appendix~\ref{appendixInversion} and applies to any finite non-degenerate transducer $A_\phi$ inducing a homeomorphism $\phi$ of Cantor space.  
Note that for a given $q$ the injectivity of $h_q$ implies that the set of such valid $w$ is finite.  Similarly, the transitions and outputs for the minimal transducer representing the inverse function depend precisely on the transitions and outputs of the original transducer.  In particular, since $A_g$ and $A_h$ (in our context) have strongly isomorphic cores, the transducers $A_{g^{-1}}$ and $A_{h^{-1}}$ created by this algorithm will have strongly isomorphic cores.  It is now the case that the product transducer $A_{g^{-1}}\times A_{h}$ must have its core strongly isomorphic to the core of the product $A_{h^{-1}}\times A_{h}$.  From this it follows that $A_{g^{-1}h}$ has core acting as the identity, thus $v\seteq g^{-1}h$ must be in $\Gnr$.
\item To see the third point, observe that if $g$ and $h$ are  in the same  coset, then there is an element $f \in \Gnr$ such that $g f = h$. Noting that, as $f \in \Gnr$, $\Core(A_{f})$ is the single state identity transducer.  We have that  $\Core(A_{g}) \Core(A_{f})$ is strongly isomorphic to $\Core(A_{g f})$ by Lemma \ref{lem:core structure}.\ref{point:core containment}.  However, as $\Core(A_{f})$ is the single state identity transducer, we have $$\Core(A_{g})\Core(A_f)=\Core(A_{g}) \Core(A_{1_{G_{n,r}}})$$  which (by Lemma \ref{lem:core structure}.\ref{point:core containment}) is strongly isomorphic to $\Core(A_g)$ (here we are taking $1_{\Gnr}$ as the identity element of $\Gnr$). Thus, it follows that  $\Core(A_{h})=\Core(A_{g f})$ is strongly isomorphic to $\Core(A_{h})$.

\end{enumerate}
\end{proof}

\begin{lemma}\label{lem:assoc}
Let $f,g,h\in \Bnr$, then $(\Core(A_f)\Core(A_g))\Core(A_h)$ is strongly isomorphic to $\Core(A_f)(\Core(A_g)\Core(A_h))$.
\end{lemma}
\begin{proof}
This follows from Lemma \ref{lem:core structure}.\ref{point:core containment} and the associativity of composition.
\begin{IEEEeqnarray*}{rCl}
(\Core(A_f)\Core(A_g))\Core(A_h)&=_{si}&\Core(A_{fg})\Core(A_h) \\ &=_{si}& \Core(A_{(fg)h})
\end{IEEEeqnarray*}
\begin{IEEEeqnarray*}{rCl}
\Core(A_{f(gh)})&=_{si}&\Core(A_{f})\Core(A_{gh})\\ &=_{si}&\Core(A_f)(\Core(A_g)\Core(A_h))
\end{IEEEeqnarray*}
{while $A_{(fg)h}=_{si}A_{f(gh)}$.}
\end{proof}

\begin{prop}
The set $\Onr$ together with the multiplication of Definition~\ref{lem:OnrProd} is a group.
\end{prop}
\begin{proof}
Observe that by Lemma \ref{lem:assoc} the  multiplication is associative, and, as in that proof, $\Onr$ admits an identity element and inverses under that multiplication using Lemma~\ref{lem:core structure}.\ref{point:core containment} and the fact that $\Bnr$ is a group.
\end{proof}

\begin{theorem}
The map $\mbox{Core}$ from $\Bnr$ to $\Onr$ defined by $g \mapsto \Core(A_{g})$ is a group homomorphism with kernel $\Gnr$.
\end{theorem}
\begin{proof}
 Recall that Lemma~\ref{lem:core structure}.(\ref{point:core equivalences}--\ref{point:cosetCore})  establishes a bijection between the  cosets of $\Gnr$ and the group $\Onr$, where the bijection is induced by the function $\mbox{Core}: \Bnr \to \Onr$ (passing to the core of the minimal representative transducer for a group element). Further, Lemma~\ref{lem:core structure}.\ref{point:core containment} shows that the function $\mbox{Core}$ is actually a group homomorphism.

All that remains then is to observe that the identity of $\Onr$ is the single state identity transducer, which is precisely the image of the coset $\Gnr$.  Thus $\Gnr$ is normal in $\Bnr$ and $\Onr\cong \Bnr/\Gnr$.
\end{proof}

\begin{cor}
The group $\Gnr$ is a normal subgroup of $\Bnr$ and the quotient group $\Bnr/\Gnr$ is isomorphic to $\Onr$.
\end{cor}

 \begin{remark}
Considering the nature of $\Onr$, we should mention that the elements can be far stranger than one might expect.  For instance,
 Figure \ref{Fig: example showing dependence on r} demonstrates an element of $\Onk{4}{3}$ which has no state that
 acts on $\CCk{4}$ as a homeomorphism, even though the element admits a completion to an element of $\Bnk{4}{3}$ which of course does act as a homeomorphism
 of $\CCk{4,3}$. This is one example of why having a combinatorially defined product for elements of $\Onr$ without having to pass up to $\Bnr$ is desirable.
\end{remark}

\subsection{Invertible rational endomorphisms of $\Gnr$ are strongly synchronizing}

In this subsection, we show that
\begin{quote}
every automorphism of $\Gnr$ is in $\Bnr$.
\end{quote}
Note this implies that $\Aut(\Gnr)\cong \Bnr/K$, where $K$ is the kernel of the conjugation action of $\Bnr$ on $\Gnr$. Recall that the kernel of this quotient map is the trivial subgroup (for any non-identity element $h \in \Homeo(\CCnr)$, we can find an element in $\Gnr$ which fails to commute with $h$ (e.g., a transposition of well chosen cones will do)). Thus, to prove Theorem \ref{thm:MainTheorem}, we only need to verify the displayed statement above.

 We require some further language.

\begin{definition}
Let $A_{q_0}=(X_A,Q_A,\pi_A,\lambda_A,q_0)$ be an initial, finite transducer, and let $p \in Q_{A}$ a  state such that there is a path from $q_0$ to $p$ with non-empty output for $A_{q_0}$, we call such states \emph{non-trivially accessible (in $A_{q_0}$)}.
\end{definition}

We also need to expand the notion of inverse homeomorphisms to transducers.

\begin{definition}
Let $A_{q_0}=\langle X_A,Q_A,\pi_A,\lambda_A,q_0\rangle$ be a minimal, initial, finite transducer representing a self-homeomorphism $h_{q_0}$ of the Cantor space $X_A^{\omega}$.  Let $B_{p_0^{-1}}=\langle X_B,Q_{B},\pi_{B},\lambda_{B},p_0^{-1}\rangle$ represent the finite minimal initial transducer (with an initial state denoted by $p_0^{-1}$) so that the induced homeomorphisms $h_{q_0}$ and $h_{p_{0}^{-1}}$ are inverse (so, in particular, $X_A=X_B$).  We will refer to $A$ and $B$ as \emph{inverse transducers}.
\end{definition}

If $A_{q_0}$ is a given minimal initial finite transducer, and one creates its inverse transducer $B_{p_0^{-1}}$ as in the definition above,  we will use a convention of writing elements in the set $Q_B$ with a superscript ``$-1$'' (e.g., a state denoted by $p^{-1}$ would represent a state in $Q_B$, whereas a state denoted simply as $q$ would represent a state in $Q_A$).  This is to avoid confusion in our arguments below.  Note that generally there is not a bijection between $Q_A$ and $Q_B$. See the appendix on inversion for details.

We can now prove the main result of this subsection.  The following lemma provides a key insight into why automorphisms of $\Gnr$ are strongly synchronizing.    Note in advance that the proof is lengthy: we will attempt to break it into meaningful stages to support the reader.

When we conjugate an element $\alpha\in G_{n,r}$ by a rational homeomorphism $\phi$, the element $\alpha$ will apply a prefix-exchange map ``in the middle'' of the conjugation (so, any triple of resulting states from the product has middle state acting as the identity, for long enough inputs).  Thus, any such conjugations arbitrarily pair states of the inverse conjugator to states of the conjugator in the resulting action, as we can ignore the identity state in the middle.  The resulting product will be in $G_{n,r}$ only if all such pairings result in ``local prefix exchange maps''.  The following lemma proves the converse also holds.  Recall for the proof below Definition \ref{def:rotation} of a rotation of a word.

\begin{lemma}\label{lem:MainTheoremAutIsSynch}
Let $A_{q_0}=\langle \Xn,Q_A,\pi_A,\lambda_A,q_0 \rangle$ be a minimal, initial, invertible finite transducer.  Let $p_0^{-1}$ represent the initial state of the minimal transducer $$B_{p_0^{-1}}=\langle \Xn,Q_B,\pi_B,\lambda_B,p_0^{-1}\rangle$$ which is the inverse transducer of  $A_{q_0}$. Suppose that for all states $p^{-1} \in Q_{B}$ and for all $q \in Q_A$ we have the initial transducer $B_{p^{-1}}A_{q}$ admits a complete prefix code $\vec{x}_{p^{-1},q}$ for $\Wne$ so that for all $\gamma \in \vec{x}_{p^{-1},q}$ there is $\delta_\gamma\in \Xn^{\ast}$ so that for all $\Gamma\in \Xn^{\omega}$ we have $(\gamma \Gamma)B_{p^{-1}}A_{q} = \delta_\gamma \Gamma$. Then $A_{q_0}$ is strongly synchronizing.

\end{lemma}
\begin{proof}
We shall show that there exists $\mathfrak{t}$, a finite set of synchronizing words for $A_{q_0}$, such that every word in $\Xn^{\omega}$ has a prefix belonging to $\mathfrak{t}$. We shall call such a set of words a base. (Note that the base $\mathfrak{t}$ we produce might admit pairs of words which are comparable.) The maximum length of a word in our base will be a synchronizing length for $A_{q_0}$.

One of the main ideas of the proof which follows is that if $A_q$ and $B_{p^{-1}}$ eventually act as inverses (on an infinite tail), independent of our choices of $p^{-1}$ and $q$, then after reading a long enough output of $B$  the transducer $A$ must start processing in such a way as to revert input strings to what they originally were when read into $B$.  This will require that after reading long enough inputs from any state of $A$, then the current state of $A$ should become determined.

{\flushleft {\it Recording parameters:}}\\
The following part records the length of input required so that the product of $B_{p^{-1}}$ with $A_q$ acts on long enough inputs as a prefix exchange map, for any choices of $p^{-1}$ and $q$.  The three parameters to note are $r\leq \bar{r}$ and $l$, where we will have that any input of length $\bar{r}$ will result in a prefix exchange in the product with final output word of length at least $l$.

Using the finiteness of the sets of states of the transducers $A_{q_0}$ and $B_{p_0^{-1}}$, set $r$ to be the minimal natural number so that for any $q \in Q_A$ and $p^{-1} \in  Q_B$, and for each $\gamma \in \Xn^r$ there is $\delta_\gamma\in \Wn$ so that for all $\Gamma\in \Xn^{\omega}$ we have  $(\gamma \Gamma)B_{p^{-1}}A_{q} = \delta_\gamma \Gamma$. (Note that if we were to take a larger value for $r$, then for the correspondingly longer words $\gamma$, the dependent $\delta_\gamma$ will become some extension of the original $\delta_\gamma$ by some suffix of $\gamma$.)  Further determine $l$ as the minimal length such that if, for all  $\gamma \in \Xn^{r}$ and some $q \in Q_A$ and $p^{-1} \in Q_B$  and for all $\Gamma\in \Xn^{\omega}$ we have $(\gamma \Gamma)B_{p^{-1}}A_{q} = \delta_\gamma \Gamma$, then $|\delta_\gamma| \le l$. That is, $l$ is the maximal length of an output prefix for all of the resulting (determined by value of $r$) prefix exchange maps.  Finally, let $\bar{r} \ge r$ be such that for any word $\bar{\gamma} \in \Xn^{\bar{r}}$ and for any $q \in Q_A$ and  $p^{-1} \in Q_B$,  we have $|(\bar{\gamma})B_{p^{-1}}A_{q}| \ge l$ (we need this as possible incomplete response in the composite might mean that reading an input $\eta$ of length $r$ could fail to produce a completely determined output $\zeta$ (which would eventually appear if we read longer input strings)).

{\flushleft {\it Finding a base for $A_{q_0}$ covering a local cone $U_{y_1\ldots y_t}$}:}

The following argument is somewhat convoluted.  First we use $r$ and $\bar{r}$ to build a partial base for $A_{q_0}$ covering $U_{y_1\ldots y_t}$.  We discover our constructed partial base may fail to completely cover $U_{y_1\ldots y_t}$.  Then, by choosing a larger $\bar{r}$, we are able to repeat our construction to grow our partial base so that it actually covers  $U_{y_1\ldots y_t}$.

\vspace{0.1 in}
{\centerline {(1): {\it Finding a base for $A_{q_0}$ partly covering the local cone $U_{y_1\ldots y_t}$}}}
\vspace{0.1 in}

Now fix a state $p^{-1}$ in $Q_B$. Since $p^{-1}$ is a state of the minimal inverse transducer for $A_{q_0}$, by the GNS algorithm $p^{-1} = (w,p)$ for some state $p \in Q_A$ and a word  $w$ such that $w$ is a prefix of an element of $\Image(p)$ and  ${L}_{p}(w) = \epsilon$.
(Recall that ${L}_{p}(w)$ is the greatest common prefix of the set $(U_{w})(h_{p})^{-1}:= \{ \delta \in \CCn \mid (\delta)h_{p} = w\delta', \delta' \in \CCn\}$.)
 Let $x_1x_2 \ldots x_r x_{r+1} \ldots x_{\bar{r}} \in \Xn^{\bar{r}}$, and let $y_1, \ldots, y_{t}$ be such that $(x_1x_2 \ldots x_{\bar{r}})B_{(w,p)} = y_1 \ldots y_{t}$.
 Let us now focus on the cone at $y_1 \ldots y_{t}$, as described in the title of this part of the proof.

  We do not yet know that $y_1 \ldots y_{t}$ is a synchronizing word for $A_{q_0}$: it may be the case that there are two states of $A_{q_0}$ such that, if we read $y_1 \ldots y_{t}$ from these states, the resulting states will be unequal.  We show that there is a base of extensions of $y_1 \ldots y_{t}$ which synchronise $A_{q_0}$.

Let $q_1$ and $q_2$ be any arbitrary states of  $A$. There are minimal indices $r_1$ and $r_2$, both less than or
equal to $r$, so that for all words $ \widetilde{x}_{r_{1}+1} \ldots \widetilde{x}_{\bar{r}}$ and
$ \widetilde{x}_{r_{2}+1} \ldots \widetilde{x}_{\bar{r}}$ in $\Xn^{*}$, and for any $\xi \in \CCn$ we have $$(x_1x_2 \ldots x_{r_1} \widetilde{x}_{r_{1}+1} \ldots
\widetilde{x}_{\bar{r}}\xi) B_{(w,p)}A_{q_1} =  \delta_1\widetilde{x}_{r_{1}+1} \ldots
\widetilde{x}_{\bar{r}}\xi$$  and $$(x_1x_2 \ldots x_{r_2}\widetilde{x}_{r_{2}+1} \ldots
\widetilde{x}_{\bar{r}}\xi)B_{(w,p)}A_{q_2} =  \delta_2 \widetilde{x}_{r_{2}+1} \ldots \widetilde{x}_{\bar{r}}\xi$$
 for $\delta_1$ and $\delta _2 \in \Xn^*$ with $|\delta_1| \le l$ and $|\delta_2| \le l$. Let $$(y_1\ldots y_{t})A_{q_1} = z_1\ldots z_{l_1} x_{r_1+1} \ldots x_{r_1+i}$$ and $$(y_1\ldots y_{t})A_{q_2} = u_1\ldots u_{l_2} x_{r_2+1} \ldots x_{r_2+j}.$$

Notice that it cannot be the case that $$(y_1\ldots y_{t})A_{q_1} = z_1\ldots z_{l_1} x_{r_1+1} \ldots x_{\bar{r}}\rho$$ for some $\rho \in \Xn^{+}$, since picking a word  $\rho'$ which is incomparable to $\rho$ in its first letter we will then have
$$(x_1 \ldots x_{\bar{r}} \rho')B_{(w,p)}A_{q_1} =  z_1\ldots z_{l_1} x_{r_1+1} \ldots x_{\bar{r}}\rho \zeta$$ for some $\zeta$, which is a contradiction as by minimality of $l_1$  we must then have $\rho = \rho'$. The same argument is valid for $(x_1 \ldots x_{\bar{r}} \rho')B_{(w,p)}A_{q_2}$.

Therefore we may assume that $r_1 +i \le \bar{r}$ and $r_2 +j \le \bar{r}$.

Now let $\pi_B(x_1 \ldots x_{\bar{r}}, (w,p)) = (v,s)$. Recall
from the GNS inversion algorithm  that $wx_1\ldots x_{\bar{r}} -
\lambda_A(y_1 \ldots y_t, p) = v$ and $\pi_A(y_1\ldots y_t,p) =
s$. Since $A_{q_0}^{-1}$ has only finitely many states we may
further assume that $\bar{r}$ is chosen large enough so that $v$ is a suffix of $x_1 \ldots
x_{\bar{r}}$ (this is a convenient, but not necessary, assumption). This is because for each state $s'$
of $A$, there
are only finitely many words $w'$ such that $w'$ is a prefix of
an element of $\Image(s)$ and ${L}_{s}(w') = \epsilon$. Hence, we
may pick $\bar{r}$ such that for any
state $p'$ of $A$ and any word $\gamma' \in \Xn^{\bar{r}}$, we have $|\lambda_{A}(\lambda_{B}(\gamma, (w,p)), p' )|$ is
also greater than the maximum size of any word $w'$
such that $(w',s)$ is a state of $B$. Moreover we also have
${L}_{p}(wx_1\ldots
x_{\bar{r}}) = y_1 \ldots y_t$.  Let $wx_1 \ldots x_k =
\lambda_A(y_1 \ldots y_t, p)$ for
some $k \le \bar{r}$. Let $m$ be minimal such that for any state
$q$  of $A$ and any word $\alpha \in \Xn^{m}$ we have
$|\lambda_A(\alpha, q )| \ge \bar{r} - k$. Now notice that there
is a maximal set $\alpha = \{\alpha_1, \ldots, \alpha_{m_1}\}$ a
subset of $\Xn^{m}$ such that the greatest common prefix of
the elements of $\alpha$ is the
empty word, and such that $\lambda_A(\alpha_{a}, s) =
x_{k+1}\ldots x_{\bar{r}}\rho_{a}$ for some $\rho_{a} \in
\Xn^{\ast}$ and $1 \le a \le m_1$. This is because
${L}_{p}(wx_1\ldots x_{\bar{r}}) = y_1\ldots y_t$ and
$\pi_A(y_1\ldots y_t,p) = s$. Further notice that the set
$\alpha$ depends only on  $x_1\ldots x_{\bar{r}}$, $(w,p)$ and
$(v,s)$, i.e. it is independent of the choice of $q_1$ and  $q_2$.

Fix an $\alpha_a \in \alpha$. Let $r_a = \pi_A(\alpha_{a}, s)$.
Recall that $\Image(r_a)$ represents the image of the map $\lambda_A(*,r_a)$ from
$\CCn$ to itself. Observe that
$${L}_{s}(v\rho_{a}\Image(r_{a})) = \alpha_a \mbox{ and }
(\rho_{a}\concat\Image(r_{a}))B_{(v,s)} = U_{\alpha_a}.$$ Now let $t_1
:=\pi_A(y_1\ldots y_t, q_1)$ and  $t_2 :=\pi_A(y_1\ldots y_t,
q_2)$. Observe that $$(x_1\ldots x_{\bar{r}}\rho_a\concat \Image(r_a))B_{(w,p)}
= y_1\ldots y_tU_{\alpha_a}.$$ Now since
$(y_1\ldots y_{t})A_{q_1} = z_1\ldots z_{l_1} x_{r_1+1} \ldots
x_{r_1+i}$ and
$(y_1\ldots y_{t})A_{q_2} = u_1\ldots u_{l_2} x_{r_2+1} \ldots
x_{r_2+j}$ it must be the case for any
$\rho \in \CCn$ that the string $$x_{r_1+i+1}\ldots x_{\bar{r}}\rho_a$$ is a prefix of $\lambda_A(\alpha_{a}\rho, t_1)$. In a similar way  $x_{r_2+j+1}\ldots x_{\bar{r}}\rho_{a}$ is a prefix of $\lambda_A(\alpha_{a}\rho, t_2)$. Now since we assumed
that $A_{q_0}$ has no states of incomplete response, it must also be
the case that $\lambda_A(\alpha_{a}, t_{\ast}) = x_{r_{\ast} +
\sharp +1}\ldots x_{\bar{r}}\rho_{a}$
for $(\ast,\sharp) \in \{(1,i),(2,j)\}$. This is because
otherwise
$\Image(\pi_A(\alpha_{a}, t_{\ast}))$, for $\ast \in \{1,2\}$, will have a non-trivial common prefix contradicting the fact that $A_{q_0}$ has no states of incomplete
response. Now let
$\rho' \in \Image(r_a)$ be arbitrary, and let $(x_1\ldots
x_{\bar{r}}\rho_{a}\rho')B_{(w,p)} =
y_1\ldots y_t \alpha_a \rho$, then we must have that
$(\rho)A_{ \pi_A(\alpha_{a}, t_{1}) } =
(\rho)A_{ \pi_A(\alpha_{a}, t   _{2}) } = \rho'$. Since
$\rho'$ was arbitrary and
$(\rho_a \concat \Image(r_a))B_{(v,s)} = U_{\alpha_a}$
we have that for all $\rho'$ in $\CCn$ there is some $\rho$ in $\CCn$ such that this holds.
Thus, $\pi_A(\alpha_a,t_1)$ and $\pi_A(\alpha_{a},t_2)$
are $\omega$-equivalent.

Now as $q_1$ and $q_2$ were arbitrary states of $A_{q_0}$  we
must have, for any
pair $q'_1, q'_2$ of states of $A_{q_0}$, that $\pi_A(y_1\ldots
y_t\alpha_a, q'_1 )$  and $\pi_A(y_1\ldots y_t\alpha_a, q'_2 )$ are
$\omega$-equivalent (and so equal by minimality) for all $\alpha_a \in \alpha$.
Therefore the set of words $\{y_1\ldots  y_t\alpha_{a} \mid 1 \le a \le m_1\}$
is a synchronizing set of words.

\vspace{0.1 in}
{\centerline {(2): {\it Completing the base for $A_{q_0}$ covering the local cone $U_{y_1\ldots y_t}$}}}
\vspace{0.1 in}

Now we need to consider the elements of
$\beta:=\Xn^{m}\backslash\alpha$.  Note that these elements are independent of
the choice of $q_1$ and $q_2$. Let $\beta:= \{\beta_{1}, \ldots,
\beta_{m_2}\}$. Let $\beta_{b} \in \beta$ be arbitrary. In this
paragraph  we will show that there is a positive integer $m'_1$ and a set of synchronizing
words  $\{y_1\ldots y_{t}\beta_{b}\eta_{c} \mid 1 \le c \le m_1'\}$
where the $\eta_c$'s all have the same length and form a complete antichain for words with prefix $y_1\ldots y_{t}\beta_{b}$.  By assumption we have that $|\lambda_A(\beta_b, s))|
\ge  \bar{r} - k$. Let $x_{k+1}' \ldots x'_{\bar{r}}\rho'_{b} :=
\lambda_A(\beta_b, s)$. Now $x_{1} \ldots
x_{k}x_{k+1}'\ldots x'_{\bar{r}}\rho'_{b}$ has length
greater than or equal to $\bar{r}$. Therefore we may repeat the
arguments in the paragraph above with $|x_{1} \ldots
x_{k}x_{k+1}'\ldots x'_{\bar{r}}\rho'_{b}|$ in place of $|\bar{r}|$ and $x_{1} \ldots
x_{k}x_{k+1}'\ldots x'_{\bar{r}}\rho'_{b}$ in place of $x_{1} \ldots
x_{k}x_{k+1}\ldots x_{\bar{r}}$. Notice
that $\lambda_A(x_{1} \ldots x'_{\bar{r}}\rho'_{b},(w,p))$ is
either a prefix of $y_{1}\ldots y_t$ or contains $y_{1}\ldots
y_{t}$ as a prefix. Let $\lambda_A(x_{1} \ldots
x'_{\bar{r}}\rho'_{b},(w,p)) := y_{1}'\ldots y_{t'}'$, and let
$(v',s') := \pi_A(x_{1} \ldots x'_{\bar{r}}\rho'_{b},(w,p))$.
After repeating the argument, we end up with a new
set of synchronizing words, however this new set of
synchronizing words now contains a subset  $\{y_1\ldots
y_{t}\beta_{b}\eta_{c} \mid 1 \le c \le m_1'\}$ where the $\eta_c$'s
all have the same size and form a complete antichain for words with prefix $y_1\ldots y_{t}\beta_{b}$. Take the
union of the sets $\{y_1\ldots y_{t}\beta_{b}\eta_{c} \mid 1 \le c
\le m_1'\}$ and $\{y_1\ldots y_t \alpha_{a} \mid 1 \le a \le m_1\}$
and let $\mathfrak{u}(y_1\ldots y_t)$ denote the union.
Continuing in this way across all the $\beta_{b} \in \beta$, and
letting $\mathfrak{u}(y_1\ldots y_t)$ denote the total union at
each stage, we see that $\mathfrak{u}(y_1\ldots y_t)$ is a base for the clopen
set $U_{y_1\ldots y_t}$.

{\flushleft {\it From local to global}:}

Now to finish to proof it suffices  to construct a finite set $M \subset \Wne$, and $D\in \N$ satisfying the following conditions:
\begin{enumerate}
  \item[(A)] for every word $\gamma \in M$, $\mathfrak{u}(\gamma)$ exists and is a base of synchronizing words for $A_{q_0}$ for the clopen set $U_\gamma$,
  \item[(B)] if $N\seteq \cup _{\gamma\in M}\mathfrak{u}(\gamma)$ then for every element of $\nu \in \Wne$ of length greater than or equal to $D$ there is an element of $N$ which is a prefix of a (possibly trivial) rotation of $\nu$.
  \end{enumerate}

\vspace{0.1 in}
{\centerline {(1): {\it These conditions suffice.}}}
\vspace{0.1 in}

 Let us suppose that we have such $M$, $N$, and $D$.  Set $\mathfrak{t}$ to be the set of all words of length $D$.  We will show that such a set $\mathfrak{t}$ is a base of synchronizing words.

If each word of length at least $D$ contains a proper contiguous substring which is an element of $\mathfrak{u}(\gamma)$ for some $\gamma\in M$ then all strings of length at least $D$ contain a synchronizing word for $A_{q_0}$, so $D$ is a synchronizing length for $A_{q_0}$.  If not, and assuming $A_{q_0}$ is not strongly synchronizing, then there is a word $u=u_1u_2\ldots u_m$ and two distinct states $s$ and $t$ so that $\pi_A(u,s)=s$ while $\pi_A(u,t)=t$ (this follows by considering the sets of pairs of states that one gets when reading long strings of inputs from some initial pair $(s',t')$, as eventually some pair $(s,t)$ must be repeated due to the finiteness of the automaton underlying the transducer).  Now, there is some minimal $k$ so that $k\cdot m \geq D$.  We observe that $u^k$ has a rotation which has a prefix in $N$, but then we must have that reading $u^{2k}$ will be a synchronizing word for $A_{q_0}$, so that $s=t$, which is a contradiction.

\vspace{0.1 in}
{\centerline {(2): {\it Constructing $M$ and $D$ as described.}}}
\vspace{0.1 in}

  Let  $\mathscr{P}$ be the set of all non-trivially accessible states of $B_{(\epsilon,q_0)}$, observing that $\mathscr{P}$ is not empty.  For each $(w,p) \in \mathscr{P}$  let $\mathscr{O}((w,p)):= \{\lambda_{B}(\varphi, (w,p))\mid \varphi \in \Xn^{\overline{r}} \}$. The arguments above demonstrate that for each word $\varphi \in \Xn^{\overline{r}}$, and each state $(w,p) \in \mathscr{P}$, there is a set $\mathfrak{s}(\lambda_{B}(\varphi, (w,p)))$ of synchronizing words which is a base for $A_{q_0}$ over the clopen set $U_{\lambda_{B}(\varphi, (w,p))}$.  Set $M\seteq \{ \gamma \in \Wn \mid \exists (w,p) \in \mathscr{P} : \gamma \in \mathscr{O}((w,p)) \}$ and correspondingly set $N\seteq \{\zeta \mid \exists\gamma\in M,\zeta\in\mathfrak{u}(\gamma)\}$.

  Set the following values
\begin{eqnarray*}
C &\seteq& \max_{\gamma\in M} |\gamma|,\\
D &\seteq& \max_{\zeta\in N}|\zeta|.
\end{eqnarray*}

Note that $D\geq C$ as each word in $N$ is an extension of a word in $M$.  We are now ready to verify that $D$ and $M$ as determined satisfy our two properties.

Let $\delta \in \xn^*$ so that $|\delta|\geq D$. Note that there is a $\rho \in \CCn$ such that, $(\rho)h_{q_0}^{-1}=(\rho)h_{(\epsilon,q_0)} = \delta\delta\delta\ldots=\delta^\omega$. Let $\rho_1$ be the minimal prefix of $\rho$ such that $\pi_{B}(\rho_1, (\epsilon,q_0)) = (w,p) \in \mathscr{P}$. Let $\rho_2 \in \xn^{\overline{r}}$ be such that $\rho_1\rho_2$ is a prefix of $\rho$. Observe that since $\pi_{B}(\rho_1, (\epsilon,q_0)) = (w,p) \in \mathscr{P}$ then we must have  $\lambda_{B}(\rho_1, (\epsilon,q_0)) \ne \epsilon$.  Observe also (by definition) that $\lambda_{B}(\rho_2,(w,p))\in M$. Therefore, by the definition of $D$ and as $(\rho)h_{(\epsilon,q_0)} = \delta^\omega$, we have $\lambda_{B}(\rho_2,(w,p))$ is a prefix of a (possibly trivial) rotation of $\delta$ . Since $\delta \in \xn^*$ was chosen arbitrarily amongst words of length at least $D$, it follows that for every element $\nu \in \xn^*$ of length at least $D$ there is an element of $M$ which is a prefix of a possibly trivial rotation of $\nu$. Recall that for all $\gamma \in M$, $\mathfrak{u}(\gamma) \subseteq N$ is a base for $U_{\gamma}$, in particular, any word of length at least $D$ which has a prefix $\gamma$ in $M$, actually has a prefix in $\mathfrak{u}(\gamma)\subset N$.
\end{proof}
If one considers the proof above, as we pass into consideration of states of $B$ which appear on cycles of the automaton when we consider inputs to $B$ of the form $\delta^\omega$, it is clear that one can ignore the states of $A$ and of $B$ which can be reached without writing any output in the construction of the synchronisation function of $A$.  Note as well that the construction of the synchronisation function follows just as well in other contexts; the following remark extracts the necessary hypotheses for the construction of these particular synchronisation functions from the proof above.

\begin{remark}\label{rem:MainTheoremAutIsSynch}
Let $A_{q_0}=(\rd,\Xn,R_A,S_A,\pi_A, \lambda_A,q_0)$ be a minimal, initial, invertible finite transducer for an element of $\Rnr$, and with inverse homeomorphism represented by the minimal, initial finite transducer $B_{p_0^{-1}}=(\rd,\Xn,R_B,S_B,\pi_B, \lambda_B,p_0^{-1})$. Let $\mathscr{P} \subset S_{B}$ be such that for any word $\delta \in \Wn$  there is a (possibly trivial) rotation $\phi$ of $\delta$, a state $p^{-1} \in \mathscr{P}$ and word $\rho \in \CCn$ such that $(\rho)h_{p^{-1}} = \phi^\omega$. Let  $\mathscr{Q}\subset S_{A}$ be such that for all states $p^{-1} \in \mathscr{P}$ and for all $q \in \mathscr{Q}$ we have the initial transducer $B_{p^{-1}}A_{q}$ admits a complete prefix code $\vec{x}_{p^{-1},q}$ for $\Wne$ so that for all $\gamma \in \vec{x}_{p^{-1},q}$ there is $\delta\in \Xn^{\ast}$ so that for all $\Gamma\in \CCn$ we have $(\gamma \Gamma)B_{p^{-1}}A_{q} = \delta \Gamma$. Then, there is a $k \in \N$ such that for any word $\gamma \in \Wn$ of length $k$ the function $\pi_{A}(\gamma, \centerdot): \mathscr{Q} \to S_{A}$ takes only one value.
\end{remark}

We are now ready to complete the proof of Theorem \ref{thm:MainTheorem}.

\begin{cor}\label{cor:MainTheoremAutIsSynch}
Let $h\in \Rnr$ be such that $h=h_{q_0}$ for $$A_{q_0}= (\rd,\Xn,R_A,S_A,\pi_A, \lambda_A,q_0)$$ a minimal, initial, invertible finite transducer, where $\Gnr^h\leq \Gnr$. Then $A_{q_0}$ is strongly synchronizing.
\end{cor}
\begin{proof}
Let $h$ and $A_{q_0}$ be as in the statement, and suppose $h^{-1}$ is represented by some minimal, initial finite transducer $$B_{p_0^{-1}}=(\rd,\Xn,R_B,S_B,\pi_B, \lambda_B,p_0^{-1}).$$ Let $\mathscr{P}$ be the set of non-trivially accessible states of $B_{p_0^{-1}}$ and let $\mathscr{Q}:= S_{A}$. We observe that elements of $S_{A}$, by definition, are accessible from the initial state $q_0$ of $A$ by a non-empty word.

Fix a pair $(p^{-1},q) \in  \mathscr{P} \times \mathscr{Q}$. Let $\gamma, \mu, \nu \in \Wnr$ be such that  $\pi_{B}(\gamma, p_0^{-1}) = p^{-1}$, $\lambda_{B}(\gamma, p_0^{-1}) = \mu \ne \epsilon$ and $\pi_{A}(\nu, q_0) = q$. Let $g \in \Gnr$ be such that $g$ acts on the cone $U_{\mu}$ as a prefix replacement map, replacing the prefix $\mu$ with $\nu$. Let  $C_{s_0} = (\rd,\Xn,R_C,S_C,\pi_C, \lambda_C,s_0)$ be the minimal transducer representing $g$.  Observe that when the word $\gamma$ is read from the state $(p_0^{-1}, s_0, q_0)$ of the product $A\ast B \ast C$, then the resulting state is $(p^{-1}, s,q )$ where $s$ is the unique state of $C_{s_0}$ such that $h_{s}: \CCn \to \CCn$ is the identity map. Thus, we may identify the state  $(p^{-1}, s,q )$ of $A\ast B \ast C$ with the state  $(p^{-1},q )$ of $A\ast B $. Now since $h_{p_0^{-1}} g h_{q_0} \in \Gnr$ by definition, it follows that the map $h_{(p^{-1},q)} = h_{p^{-1}}h_{q}$ must act on $\CCn$ as a prefix exchange map, since it is a local action of $h_{p_0^{-1}} g h_{q_0}$. However, as $(p^{-1}, q)$ was an arbitrary state of $\mathscr{P} \times \mathscr{Q}$ it follows that for  any pair $((p')^{-1}, q') \in \mathscr{P} \times \mathscr{Q}$ the map $h_{(p')^{-1}}h_{q}$ acts on $\CCn$ as a prefix exchange map. Therefore by Remark~\ref{rem:MainTheoremAutIsSynch}, it follows that there is a number $k \in \N$ such that for any word $\gamma \in \Wn$ of length $k$ the map $\pi_{A}(\gamma, \centerdot): S_A \to S_{A}$ takes only one value. Now observe that there is an integer $j$ such that for any word $\zeta \in \Wnr$ of length $j$, $\pi_{A}(\zeta, \centerdot)$ is a map from  $S_A \sqcup R_A$ to $S_{A}$. Hence we conclude that $A$ is synchronizing at length $j+k$.
\end{proof}

\section{Combinatorial properties of strongly synchronizing transducers}

In this section we explore combinatorial properties of strongly synchronizing transducers. In the first subsection we study cycles in strongly synchronizing transducers, we also investigate conditions for when such transducers possess states that induce homeomorphisms. Some of the results of that subsection are used in our analysis of the outer automorphism groups appearing in Section~\ref{sec:naturalQuotient}. In the second subsection we introduce an algorithm for detecting when an automaton is strongly synchronizing.

\subsection{Prime words, circuits, and homeomorphism states.}
We call a word $w\in\Wn$ a \emph{prime word} if there is no prefix $z<w$ and $k\in\N$, with $k>1$, so that $z^k=w.$  For all $w\in\Wn$, there is a shortest prefix $z$ of $w$ so that there is a positive integer $k$ so that $w=z^k$, we call $z$ the \emph{prime root of $w$}, and note that the prime root of $w$ is in fact a prime word.

\begin{definition}
Let  $A_{p_0}= (\rd,\{0,1,\ldots ,n-1\},R_A,S_A,\pi_A, \lambda_A,p_0)$ be a strongly synchronizing finite transducer representing a continuous map $\CCnr\to\CCnr$.  We say a word $w\in\Wn$ \emph{represents a basic circuit of $A_{p_0}$ (at $p$)} if there is state $p$ so that $\pi_A(w,p)=p$ and if $w_1$ is a non-trivial proper prefix of $w$ then $\pi_A(w_1,p)\not =p$.
\end{definition}

The following lemma consists of various points which are all easy exercises.  We will prove points \ref{point_primeRootsOnCores}, \ref{point_allWordssynchronize}, and \ref{point_commonSegment}, leaving the other points to the reader.
\begin{lemma}\label{lem_circuitFacts}
Suppose that $$A_{p_0}= (\rd,\Xn,R_A,S_A,\pi_A, \lambda_A,p_0)$$ is a strongly synchronizing finite transducer, with all outputs on finite inputs finite.
 \begin{enumerate}
\item Given a core state $p$ and a word $w$ so that $\pi(w,p)=p$, then if $z$ is the prime root of $w$, then $\pi(z,p)=p$.\label{point_primeRootsOnCores}
\item Given any word $w\in\Wn$, there is a unique state $p$ so that $\pi_A(w,p)=p$. \label{point_allWordssynchronize}
\item If $w$ represents a basic circuit of $A_{p_0}$, then the unique state $p$ so that $\pi_A(w,p)=p$ has the property that $p$ is in $\Core(A)$, and furthermore, the word $w$ is prime.
\item Let $p\in \Core(A)$, and let $\mathcal{W}_p$ represent the set of all words representing basic circuits of $A$ (at $p$).  Then, $\mathcal{W}_p$ is finite.
\item The sets $\mathcal{W}_p$, for $p$ running over the states in $\Core(A)$, partition the set of all words representing basic circuits of $A$.
\item Given a core state $p$, given any letter $x\in\Xn$, there is a word in $\mathcal{W}_p$ beginning with the letter $x$.
\item For any state $q$ the set of minimal elements of the set
$\{\lambda_{q}(w)\mid w \in X^*\}\backslash\{\varepsilon\}$ (under the partial order on words in $\Wn$, where $w_1\leqslant w_2$ if $w_1$ is a prefix of $w_2$)
has no common initial prefix. \label{point_commonSegment}
 \end{enumerate}
\end{lemma}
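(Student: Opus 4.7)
The spine of the argument is Point~\ref{point_allWordssynchronize}, which I would establish first and use as a black box for most of the other points. Fix the synchronization level $m$ of $A$ and fix $w \in \Wn$. For existence, choose any state $q$ and pick $N$ with $|w^N|\geq m$; then by the synchronizing property, the state $s := \pi_A(w^N,q)$ depends only on the word $w^N$, not on $q$. Applying synchronization again at the longer word $w^{N+1}$ gives $\pi_A(w,s) = \pi_A(w^{N+1},q) = s$, so $s$ is $w$-fixed. For uniqueness, if $\pi_A(w,p) = p$ and $\pi_A(w,p') = p'$, iterating yields $\pi_A(w^N,p) = p$ and $\pi_A(w^N,p') = p'$ for every $N$, and taking $N$ large enough that $|w^N|\geq m$ forces $p = p'$.

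Point~\ref{point_primeRootsOnCores} is then essentially one line. If $p$ is a core state with $\pi_A(w,p) = p$ and $z$ is the prime root of $w$, so $w = z^k$, Point~\ref{point_allWordssynchronize} applied to $z$ produces a unique $p'$ with $\pi_A(z,p') = p'$, and iterating gives $\pi_A(z^k,p') = p'$. But the fixed state of $z^k$ is unique by Point~\ref{point_allWordssynchronize} again, so $p = p'$ and $\pi_A(z,p) = p$. The remaining points are corollaries: Point~3 because $\pi_A(w^N,p) = p$ for all $N$ forces $p \in \Image(\mathfrak{s}) = \Core(A)$, and primality holds since $w = z^k$ with $k>1$ would mean the proper prefix $z$ already returns to $p$ (by Point~\ref{point_primeRootsOnCores}), contradicting the distinct-intermediate-states requirement; Point~5 follows directly from the uniqueness in Point~\ref{point_allWordssynchronize}; Point~4 holds because a basic circuit has length at most $|Q|$; and Point~6 follows by noting that $\pi_A(x,p) \in \Core(A)$ and using that every core state equals $\mathfrak{s}(v)$ for some $v$ and is therefore reachable from any state, then shortening any resulting loop based at $p$ to eliminate interior repeats and produce a genuine basic circuit.

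The subtle point is \ref{point_commonSegment}, which implicitly rests on $A$ having been put in minimal form (which is harmless since the reduction in Section~\ref{s_reduction} can always be performed). The plan is contrapositive: suppose every minimal element of $\{\lambda_q(w) \mid w \in X^*\}\setminus\{\varepsilon\}$ shared a common non-empty initial prefix $u$. Because any non-trivial $\lambda_A(w,q)$ is a prefix-extension of some minimal element, this would mean \emph{every} non-trivial output from $q$ begins with $u$. Equivalently, the guaranteed eventual response at $q$ has $u$ as an initial segment, which is exactly the definition of $q$ being a state of incomplete response: one can produce an $\omega$-equivalent transducer by back-propagating $u$ across every transition entering $q$ and stripping $u$ off the outputs at $q$, contradicting minimality.

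The main obstacle is making the back-propagation in Point~\ref{point_commonSegment} rigorous: one must check that the new state obtained by stripping $u$ off $q$'s outputs is itself consistent (doesn't secretly reintroduce incomplete response further down the graph) and that performing this modification strictly reduces the transducer in the sense that distinguishes the output of the minimization algorithm from its input. This is where the canonical-representative aspect of the reduction algorithm of \cite{GNSenglish}, invoked through Section~\ref{s_reduction}, does all the work; once the state $q$ is seen to carry a non-trivial guaranteed response, one concludes that $A$ could not have been the minimal element of its $\omega$-equivalence class to begin with.
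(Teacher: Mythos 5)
Your proposal is correct and follows essentially the same route as the paper: points \ref{point_primeRootsOnCores} and \ref{point_allWordssynchronize} via powers $w^N$ of length at least the synchronization level (you merely prove them in the opposite order), and point \ref{point_commonSegment} by observing that a common non-trivial prefix of all outputs from a state is exactly an incomplete response, contradicting minimality --- the same implicit minimality assumption the paper's own proof invokes. You additionally supply the arguments for the points the paper leaves to the reader, and those are fine as well.
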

\begin{proof}
Suppose $m\in\N, w\in\Wn$, and $A_{p_0}= (\rd,\Xn,R_A,S_A,\pi_A, \lambda_A,p_0)$ a strongly synchronizing finite transducer, with all outputs on finite inputs finite, and where $A_{p_0}$ synchronizes at level $m$ for some non-negative integer $m$.

First suppose $p$ is a state of $\Core(A)$ and $\pi(w,p)=p$.  Let $z$ be the prime root of $w$, and $k$ be a positive integer so that $z^k=w$. Consider the word $z^m$.  It is immediate that $z^m$ is at least $m$ in length, and so it synchronizes $A_{p_0}$ to some state. Meanwhile, $\pi(w^m,p)=p$ and $w^m$ has suffix $z^m$, so $z^m$ must synchronize $A_{p_0}$ to $p$.  But now, $z^{m+1}$ also synchronizes $A_{p_0}$ to $p$, since $z^{m+1}$ has $z^m$ as a suffix.  Therefore, $\pi(z,p)=p$.  This shows point \ref{point_primeRootsOnCores}.  Release the variable $p$ now.

Since $|w|>0$ it is immediately clear that there is a length $m$ suffix of $w^m$ so in particular $w^m$ synchronizes $A_{p_0}$ to some state $p\in\Core(A)$.  It is then the case that $\pi(w^m,p)=p$.  Note that the prime root $z$ of $w$ is also the prime root of $w^m$, so by point \ref{point_primeRootsOnCores} the prime root $z$ of $w$ has $\pi(z,p)=p$.  It is then immediate that $\pi(w,p)=\pi(z^k,p)=p$, which shows point \ref{point_allWordssynchronize}.

Note that point \ref{point_commonSegment} follows from the fact that our transducer is minimal.  If there were a non-trivial common prefix $w$ for the output words from a state $q$, then the minimalisation procedure would have the transitions to $q$ having all their outputs ending with $w$ as a suffix, while $w$ would be removed from the prefixes of all of the outputs of $q$.
\end{proof}

The following lemma is effectively a continuation of Lemma \ref{lem_circuitFacts}, now under the context that our transducer is bi-synchronizing (and hence, acts bijectively on its target Cantor space).

\begin{lemma} \label{lem:CondBijLocalMaps}
Let $1\leqslant r<n\in\N$ and $h\in\Homeo(\CCnr)$ be represented by a finite minimal bi-synchronizing transducer $A$ with set of states $Q$.  We have the following.

 \begin{enumerate}
\item Every finite word $w$ is obtained as a prefix of some output from a state in $Q$.\label{allInitialSegments}

\item For any periodic equivalence class of the tail equivalence $\sim$ with (minimal) period $w$ there exists
a unique minimal $t\in \bbN$ and a unique circuit in $A$ with output $w^{t}$.\label{allFinalCycles}

\item If $q\in Q$ so that the local map $h_q:\CCn\to\CCn$ is bijective, then for all $p\in Q$ such that
$\pi(l,p)=q$ for some $l\in \Xn$, we have $\lambda(l,p)\ne \varepsilon$. \label{noEmptyapproaches}

\item If $p\in Q$ so that the local map $h_p:\CCn\to\CCn$ is a homeomorphism, and so that its outputs $\{\lambda(l,p)\mid l\in\Xn\}$ are all different from the empty word, then each such output word has length one, and the induced map $\lambda(\cdot,p):\Xn\to\Xn$ is a permutation.\label{homeoStateNoEmptyOutsHasLengthOneOutputs}
\item If $p\in Q$ so that the local map $h_p:\CCn\to\CCn$ is a homeomorphism, and so that its outputs $\{\lambda(l,p)\mid l\in\Xn\}$ are all of length one, then for all $q$ so that $\pi(l,p)=q$ for some $l\in\Xn$, the local map $h_q$ is also a homeomorphism.\label{homeoLengthOneOutputsTargetsHomeos}
 \end{enumerate}
\end{lemma}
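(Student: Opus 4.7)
The plan addresses the five points in order, using three tools: surjectivity of $h$ combined with tracking through the automaton for (1) and (2), injectivity of local maps for (3), and the uniform Bernoulli measure on $\CCn$ for (4) and (5). For (1), given any finite word $w$, prepend (if necessary) a dotted letter $c\in\rd$; surjectivity of $h$ implies that $h^{-1}(U_{cw})$ is a nonempty open set and so contains some basic cone $U_v$ with $v\in\Wnr$, and by minimality of $A$ the output $\lambda(v,q_0)=\theta_h(v)$ begins with $cw$. I would track the cumulative output as the letters of $v$ are consumed: by choosing the preimage cone carefully (taking a longer extension of $w$ and selecting $v$ suitably), one arranges a step during processing at which the cumulative output is exactly the single letter $c$. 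At that step the state reached lies in the core of $A$ (by bi-synchronization applied to a long enough initial segment), and the subsequent output begins with $w$, giving $w$ as the required prefix of an output from a non-initial state; when $w$ already starts with a dotted letter, the preimage-cone step delivers $w$ directly as a prefix of $\lambda(v,q_0)$.

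For (2), choose a periodic point $y=\alpha\concat w^{\omega}$ in the given equivalence class; bi-synchronization forces the sequence of states visited by $A$ while processing $y$ to stabilise at the unique state $p$ with $\pi(w,p)=p$ supplied by Lemma~\ref{lem_circuitFacts}(2). Setting $u=\lambda(w,p)$, examining the eventual tail of $h(y)$ shows that $u^{\omega}=\sigma^{k}(w^{\omega})$ for some $0\le k<|w|$, and hence $|w|$ divides $|u|$ and $u$ is a cyclic rotation of $w^{|u|/|w|}$. Iterating the basic cycle $s$ times for the minimal $s$ with $sk\equiv 0\pmod{|w|}$, the circuit obtained by reading $w^{s}$ from $p$ has output exactly $w^{t}$ with $t=s|u|/|w|$; uniqueness of $p$ forces uniqueness of the circuit, and taking the minimal resulting exponent handles the minimality of $t$.

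For (3), suppose $\lambda(l,p)=\varepsilon$ and $\pi(l,p)=q$ with $h_q:\CCn\to\CCn$ bijective. Then $h_p(l\concat x)=h_q(x)$ for every $x\in\CCn$, so $h_p(\{l\}\times\CCn)=h_q(\CCn)=\CCn$ fills the entire codomain. For any $l'\ne l$ the image $h_p(\{l'\}\times\CCn)$ is a nonempty subset of $\CCn$, which forces the images of the disjoint sub-cones $\{l\}\times\CCn$ and $\{l'\}\times\CCn$ to overlap, contradicting injectivity of $h_p$; this injectivity holds because $h$ is a homeomorphism and $h_p$ arises (up to stripping a common root prefix) as the restriction of $h$ to a cylinder.

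For (4) and (5), endow $\CCn$ with the uniform Bernoulli measure $\mu$ given by $\mu(U_v)=n^{-|v|}$. Writing $w_l=\lambda(l,p)$ and $p_l=\pi(l,p)$, since $h_p$ is a homeomorphism the clopen sets $E_l=w_l\concat h_{p_l}(\CCn)$ partition $\CCn$; by minimality (Lemma~\ref{lem_circuitFacts}(7)) each $h_{p_l}(\CCn)$ has root $\varepsilon$, so $E_l\subseteq U_{w_l}$ and $\mu(E_l)=n^{-|w_l|}\mu(h_{p_l}(\CCn))$. Summing the partition identity yields
\[
1=\sum_{l\in\nset}n^{-|w_l|}\mu(h_{p_l}(\CCn)).
\]
For (4), the hypothesis $w_l\ne\varepsilon$ gives $n^{-|w_l|}\le n^{-1}$, so the right-hand side is at most $\sum_l n^{-1}=1$ with equality forcing every $|w_l|=1$; equality must hold, making the $w_l$ distinct single letters (since the $E_l$ partition $\CCn$) that exhaust $\nset$, so $\lambda(\cdot,p)$ is a permutation. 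For (5), with lengths already equal to one, the identity becomes $\sum_l\mu(h_{p_l}(\CCn))=n$; each summand being at most $1$ forces each to equal $1$, so the closed set $h_{p_l}(\CCn)$ has full measure in $\CCn$ and hence equals $\CCn$, making $h_{p_l}$ a homeomorphism. The main obstacle is (1), where arranging a step at which the cumulative output has length precisely one requires a supplementary argument using bi-synchronization; the remaining points reduce to bookkeeping once the periodic-tail matching, injectivity, and measure-theoretic setups are in place.
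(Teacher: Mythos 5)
Points (3), (4) and (5) of your proposal are sound. Point (3) is essentially the paper's own argument. For (4) and (5) you take a genuinely different route from the paper: instead of the combinatorial counting of first letters of outputs, you use the uniform Bernoulli measure and the identity $1=\sum_{l\in\nset}n^{-|w_l|}\mu\bigl(h_{p_l}(\CCn)\bigr)$, squeezing to force $|w_l|=1$ in (4) and $\mu\bigl(h_{p_l}(\CCn)\bigr)=1$ in (5); since the sets $w_l\concat h_{p_l}(\CCn)$ are closed, pairwise disjoint by injectivity of $h_p$, and cover $\CCn$ by surjectivity, and since a closed subset of $\CCn$ of full measure is all of $\CCn$, this is correct and arguably cleaner than the paper's argument. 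The problems are in (1) and (2).

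Your point (2) rests on a false step. You read a point $y$ with tail $w^\omega$ through $A$, land on the unique circuit \emph{reading} $w$ (based at the state $p$ with $\pi(w,p)=p$), and then assert that its output $u=\lambda(w,p)$ satisfies $u^\omega=$ a shift of $w^\omega$. That assertion says $h$ maps the tail class of period $w$ back into itself, i.e.\ that $h$ fixes $\sim$ pointwise; this is the defining property of $\Gnrpcn$ and fails for general bi-synchronizing homeomorphisms. For example, the $\widehat{\sigma}$-twist by the transposition $(0\,1)$ on two letters sends points with tail $\overline{0}$ to points with tail $\overline{1}$: with $w=0$ the circuit reading $w$ outputs $u=1$, and $u^\omega$ is not a shift of $w^\omega$; the circuit the lemma actually asks for (output $0^t$) is the loop reading $1$. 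The statement concerns a circuit of $A$ whose \emph{output} is $w^t$, and its input label is in general unrelated to $w$. The paper obtains it by passing to the inverse transducer $B$ representing $h^{-1}$: take the unique circuit of $B$ that reads $w$, let $v$ be its output, and use the unique circuit of $A$ reading $v$; your divisibility and rotation bookkeeping can be recycled, but it must be run on that circuit, not on the $w$-reading circuit of $A$.

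For point (1), the step you yourself flag as the main obstacle is not a routine patch of your plan. You want a processing step at which the cumulative output is exactly the single dotted letter $c$, but there are minimal bi-synchronizing transducers, already representing elements of $\Gnr$, for which no such step exists on any input: e.g.\ in $G_{3,2}$ take a prefix exchange permuting the six depth-two cones so that, for each dotted letter $\dot{c}$, the images of $U_{\dot{c}\concat 0},U_{\dot{c}\concat 1},U_{\dot{c}\concat 2}$ do not all lie below the same dotted letter; then the cumulative output along every input jumps from $\varepsilon$ directly to a word of length two, so it never has length one, although the conclusion of (1) still holds (via the identity core state). So the deferred ``supplementary argument using bi-synchronization'' would have to be of a different shape — for instance showing that the tail of a suitably chosen target point beyond the cumulative output at some stage begins with $w$ — or you must settle for what the paper's own one-line proof actually gives, namely that $w$ occurs as a factor of a long output from $q_0$, which is immediate from surjectivity of $h$.
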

\begin{proof}
To see Point \ref{allInitialSegments}, simply observe that if some finite word $w$ is not obtained as a segment of output of some state in $Q$, then the image of $h$ cannot be clopen, since some points will be missing from the image of $h$ in each basic cone of $\CCnr$.

Point \ref{allFinalCycles} is essentially the dual point to point \ref{point_allWordssynchronize} of Lemma~7.9.  One can work in a minimal transducer $B$ representing $h^{-1}$, to detect the word $v$ of output produced by reading $w$ on the cycle that accepts $w$ in $B$.  Since $w$ is prime, the word $v$ is also prime (otherwise, if $v=r^t$ for some $t>1$, we have that $B$ has a cycle that accepts $r$, and the output of $r$).  Then, there is a unique cycle $C$ of $A$ labelled by $v$, and one can detect the value $t$ by seeing how many times $w$ is produced upon reading $v$ from the state that begins the cycle $C$.

Point \ref{noEmptyapproaches} follows from the fact that $q$ can produce a full copy of $\CCn$ as output.  Let $l$ be a letter in $\Xn$ so that $\pi(l,p)=q$, and $x\in\Xn\backslash\{l\}$.  Then, there is an output from $p$ obtained by first reading $x$, and then reading any infinite word.  This output can also be obtained from $p$ by first reading $l$.  In particular, $h$ would fail to be injective.

Point \ref{homeoStateNoEmptyOutsHasLengthOneOutputs} follows as we must be able, for any given word $w\in\Wn$, to find some word $z\in\Wn$ so that $\lambda(z,p)$ has $w$ as a prefix.  Therefore, we need to be able to write words with the $n$ distinct one letter prefixes provided by our alphabet $\Xn$, so, the $n$ outputs of $p$ must begin with the $n$ distinct letters in $\Xn$.  Furthermore, if for some $j\in\Xn$, the output $w_j=\lambda(j,p)$ has length greater than one and initial letter $k$, then from the state $p$ there will be at least $n-1$ outputs beginning with the letter $k$ that cannot occur as prefixes of any output written from the state $p$.  In particular, the length of all outputs from state $p$ on one-letter inputs is one, and these outputs form a function $\lambda(\cdot,p):\Xn\to\Xn$ which must be a permutation.

Finally, point \ref{homeoLengthOneOutputsTargetsHomeos} follows as each of the states in the image of the transition function $\pi(\cdot,p):\Xn\to Q$ is responsible for a full Cantor set of outputs. (The state $\pi(j,p)$ must produce all prefixes of all words in the Cantor space $\CCn$ so that $p$ itself will admit all outputs with prefix the letter $\lambda(j,p)$.)
\end{proof}

\begin{lemma} \label{lem:AllOutputsImpliesHomeoState}
Let $1\leqslant r<n\in\N$ and $h\in\Bnr$ be represented by a finite minimal bi-synchronizing transducer $A$ with set of states $Q$.  If for some state $q$ in $Q$, in the initial transducer $A_{q}$
any finite word in $\Wn$ can be obtained as an initial segment of some output,
then the local map $h_q$ is a self-homeomorphism of $\CCn$.\end{lemma}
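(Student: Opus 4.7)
The plan is to show that $h_q:\CCn\to\CCn$ is a continuous bijection and then invoke the classical fact that a continuous bijection from a compact space to a Hausdorff space is automatically a homeomorphism. Since $A$ is minimal, every state is accessible, so I pick any $\nu\in\Wnre$ with $\pi_A(\nu,q_0)=q$; by the notational convention just introduced, $h_q$ coincides with the local action $h_\nu$. The hypothesis that $h_q$ takes values in $\CCn$ forces $q\in S$ (otherwise $h_q$ would land in $\CCnr$), and continuity of $h_q$ is immediate from the transducer construction, since by assumption finite inputs yield finite outputs and infinite inputs yield infinite outputs.

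For injectivity I would exploit the defining equation for the local action, namely
\[
(\nu\concat x)\cdot h \;=\; (\nu\cdot\theta_h)\concat (x\cdot h_\nu)
\]
for every $x\in\CCn$. If $x_1 h_q = x_2 h_q$, comparing right-hand sides gives $(\nu\concat x_1)\cdot h = (\nu\concat x_2)\cdot h$, and since $h\in\Snr\subseteq\Homeo(\CCnr)$ is injective, $x_1=x_2$. So injectivity of $h_q$ descends for free from the fact that $h$ is a global homeomorphism.

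Surjectivity is where the actual hypothesis on $A_q$ enters. For an arbitrary basic cone $U_w\subseteq\CCn$ with $w\in\Wn$, the hypothesis furnishes a finite input $v$ such that $\lambda_A(v,q)$ has $w$ as a prefix. Then for any $y\in\CCn$, the point $(v\concat y)\cdot h_q$ begins with $\lambda_A(v,q)$ and hence lies in $U_w$. This shows $\Image(h_q)$ meets every basic open set of $\CCn$, so $\Image(h_q)$ is dense. Because $\CCn$ is compact and $h_q$ is continuous, $\Image(h_q)$ is compact and hence closed in $\CCn$; a dense closed subset must be the whole space, so $h_q$ is surjective.

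Honestly, there is no serious obstacle here: injectivity is a two-line consequence of $h$ being a homeomorphism, and surjectivity is a compactness-plus-density argument once the hypothesis is unpacked. The bi-synchronizing assumption on $h$ is used only implicitly, through the containment $\Snr\subseteq\Homeo(\CCnr)$; it is not the pivotal ingredient here. The conclusion $h_q\in\Homeo(\CCn)$ then follows from the standard compact-to-Hausdorff continuous-bijection principle.
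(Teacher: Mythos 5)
Your proposal is correct and follows essentially the same route as the paper: the paper's one-line proof likewise notes that the image of $h_q$ is closed (indeed clopen) and, by the hypothesis, meets every basic cone, hence equals $\CCn$, with injectivity and continuity of local actions already supplied by the general remarks on local maps of a homeomorphism. Your version merely spells out the compactness/density details and the compact-to-Hausdorff bijection principle that the paper leaves implicit.
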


\begin{proof}
By definition, $\Range(\lambda_{q})$ is a clopen set in $\CCn$ and by our assumption,
every basic cone in $\CCn$ intersects $\Range(\lambda_{q})$, thus $$\Range(\lambda_{q})=\CCn.$$
\end{proof}

\begin{definition}
Let $h\in\Rnr\sqcup\Rn$ be represented by a minimal transducer $A_{q_0}$.  We call any state $q$ of $A_{q_0}$, where the map $h_q$ is a self-homeomorphism of $\CCn$, a \emph{homeomorphism state}.
\end{definition}

Given $h\in\Bnr$ for some $1\leqslant r<n\in\N$, represented by a minimal transducer $A$, we have seen above (see Lemma \ref{lem_circuitFacts}(\ref{point_allWordssynchronize})) that $A$ has, for each letter $l\in\Xn$, a unique state $q_l$ so that $\pi(l,q_l)=q_l$.  We call the state $q_l$ the \emph{$l$-loop state}.  We define the set

$$\LoopStates(A)=\left\{ q\in Q\mid q \mbox{ is the }l\mbox{-loop state for some }l\in\Xn\right\}.$$

We now give some sufficient conditions for a loop state and its neighbours to
be homeomorphism states.

\vspace{.1 in}

\begin{lemma}
Let $1\leqslant r<n\in\N$, and $h\in\Bnr$ be represented by a minimal transducer $A$.  For each $l\in\Xn$ let $q_l\in \LoopStates(A)$ be the $l$-loop state. Suppose that for some $l\in\Xn$ we have that for all $j\in \Xn\backslash\{l\}$ the word $\lambda(j,q_l)$ is not trivial and has no non-trivial common prefix with $\lambda(l,q_l)$. Then, the local map $h_{q_l}$ is a homeomorphism $h_{q_l}:\mathfrak{C}_n\to\mathfrak{C}_n$, and in this case we have the following:
 \begin{enumerate}
\item $|\lambda(j,q_l)|=1$, for each $j\in\Xn$,
\item the map $\lambda(\cdot,q_l):\Xn\to\Xn$ is a permutation, and
\item for each state $p_j\seteq\pi(j,q_l),$ the state $p_j$ is a homeomorphism state.
 \end{enumerate} \label{loop-homeo}
\end{lemma}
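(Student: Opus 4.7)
The plan is to first establish that the local map $h_{q_l}:\CCn\to\CCn$ is a homeomorphism, after which items (1)--(3) follow cleanly from previously-proved results. Since $h_{q_l}$ is the local action of a homeomorphism $h\in\Snr$, it is automatically continuous and injective, so the real content is to prove surjectivity. By Lemma \ref{lem:AllOutputsImpliesHomeoState}, surjectivity reduces to showing that every finite word in $\Wn$ occurs as an initial segment of some output read by the initial transducer $A_{q_l}$.

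To set up, pick $\nu\in\Wnre$ with $\pi_A(\nu,q_0)=q_l$, and set $w\seteq\lambda(l,q_l)$; note $w\neq\varepsilon$, since non-degeneracy applied to the self-loop $\pi(l,q_l)=q_l$ forces $\lambda(l,q_l)\neq\varepsilon$. The hypothesis says that $w$ and each $\lambda(j,q_l)$ (for $j\neq l$) share no nontrivial prefix, so the cones $U_{\nu\theta_h w}$ and $U_{\nu\theta_h\,\lambda(j,q_l)}$ are pairwise disjoint. An induction on $k$ using the $l$-loop then yields
\[
h(U_{\nu l^k})\;\subseteq\; U_{\nu\theta_h w^k} \qquad \text{and} \qquad h\bigl(U_{\nu l^k}\setminus U_{\nu l^{k+1}}\bigr)\;\subseteq\; U_{\nu\theta_h w^k}\setminus U_{\nu\theta_h w^{k+1}}
\]
for every $k\geq 0$, with the fixed-point relation $h(\nu l^\infty)=\nu\theta_h w^\infty$ accounting for the intersection of the nested cones. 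The task is to upgrade these inclusions to equalities; it suffices to show $h^{-1}(U_{\nu\theta_h})\subseteq U_\nu$, since bijectivity of $h$ then gives $h(U_\nu)=U_{\nu\theta_h}$, equivalent to the sought-after surjectivity of $h_{q_l}$.

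I will invoke the bi-synchronization of $h$ for this upgrade: let $B$ be a minimal transducer representing $h^{-1}$, synchronizing at some level. Applying Lemma \ref{lem:CondBijLocalMaps}(2) to $B$ with respect to the periodic tail class of $l^\infty$ produces a unique circuit in $B$ outputting $l$; the identity $h^{-1}(\nu\theta_h w^\infty)=\nu l^\infty$ forces this circuit to be driven by input $w$ at a state $q_l^B$ of $B$, with $\pi_B(w,q_l^B)=q_l^B$ and $\lambda_B(w,q_l^B)=l$. Synchronization of $B$ on long $w$-power inputs routes every such tail through $q_l^B$ in a determined way, and careful output-tracking combined with the unique-circuit structure above shows that inputs to $B$ starting with $\nu\theta_h$ must produce outputs whose first $|\nu|$ letters are exactly $\nu$, yielding $h^{-1}(U_{\nu\theta_h})\subseteq U_\nu$. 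Once $h_{q_l}$ is a homeomorphism, all $n$ output words $\lambda(j,q_l)$ are nonempty (for $j\neq l$ by hypothesis, for $j=l$ as noted above), so Lemma \ref{lem:CondBijLocalMaps}(4) supplies items (1) and (2), and Lemma \ref{lem:CondBijLocalMaps}(5) supplies item (3). The main obstacle is the bi-synchronization argument above, which must precisely rule out that some input outside $U_\nu$ could map under $h$ into $U_{\nu\theta_h}$; this uses the full strength of Lemma \ref{lem:CondBijLocalMaps}(2) together with the minimality of $B$.
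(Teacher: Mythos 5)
Your overall architecture is sound: reducing the lemma to surjectivity of $h_{q_l}$, noting this is equivalent to $h(U_\nu)=U_{\theta_h(\nu)}$ (equivalently $h^{-1}(U_{\theta_h(\nu)})\subseteq U_\nu$), and then deriving items (1)--(3) from Lemma \ref{lem:CondBijLocalMaps} points (4) and (5) is exactly how the paper finishes. The gap is at your central step. The sentence ``careful output-tracking combined with the unique-circuit structure above shows that inputs to $B$ starting with $\theta_h(\nu)$ must produce outputs whose first $|\nu|$ letters are exactly $\nu$'' is precisely the statement to be proved, and it does not follow from the ingredients you name. Synchronization of $B$, the uniqueness (Lemma \ref{lem:CondBijLocalMaps}(2)) of a circuit in $B$ outputting a power of $l$, and minimality of $B$ all hold for \emph{every} element of $\Snr$ and \emph{every} loop state, with no reference to the hypothesis that the words $\lambda(j,q_l)$, $j\neq l$, are nontrivial and share no nontrivial prefix with $\lambda(l,q_l)$; but the containment $h^{-1}(U_{\theta_h(\nu)})\subseteq U_\nu$ is equivalent to surjectivity of $h_{q_l}$, which genuinely depends on that hypothesis (it can fail at a loop state when the hypothesis fails). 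In your sketch the hypothesis is used only for the forward inclusions about $w$-power tails, and those are consistent with $h(U_\nu)$ being a proper subset of $U_{\theta_h(\nu)}$. Concretely, what you must show is that the guaranteed output $\theta_{h^{-1}}(\theta_h(\nu))$ of $B$ after reading $\theta_h(\nu)$ has $\nu$ as a prefix; your facts only constrain how $B$ processes inputs whose tails are long powers of $w=\lambda(l,q_l)$, and say nothing about $\theta_h(\nu)\concat y$ for arbitrary $y\in\CCn$, which is where the root could a priori fall short of $\nu$. (A smaller inaccuracy: the circuit in $B$ you extract need not be driven by $w$ itself with output exactly $l$; a priori it is driven by some power $w^s$ with output $l^t$, and $w$ need not be prime.)

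For comparison, the paper stays on the forward side and this is where the hypothesis does its work: since $h$ is surjective onto $\CCnr$, for every $u\in\Wn$ there are image points containing contiguous blocks $(\lambda(l,q_l))^t\concat u$ for arbitrarily large $t$; because every $\lambda(j,q_l)$ with $j\neq l$ is nonempty and starts with a different letter than $\lambda(l,q_l)$, and by synchronization of $A$, such a long block can only be produced by reading a long run of $l$'s while sitting at $q_l$, followed by some word $z$ with $\lambda(z,q_l)$ having prefix $u$. Hence every finite word occurs as a prefix of an output from $q_l$, and Lemma \ref{lem:AllOutputsImpliesHomeoState} gives that $h_{q_l}$ is a homeomorphism. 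If you want to keep your inverse-transducer framing, you still need an argument of this type to inject the hypothesis; as written, the $B$-side step is asserted rather than proved.
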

\begin{proof}
Assume that $q_l$ is the loop state for some $l\in\Xn$, and that for all ${j\in \Xn\backslash\{l\}}$,  the word $\lambda(j,q_l)$ is not trivial and has no non-trivial common prefix with $\lambda(l,q_l)$.
For all $j\in\Xn$, set  $w_j\seteq\lambda(j,q_l)$. Observe that $\{q_l\}$ represents the only circuit with output $w_l$.  Further, as the outputs from the state $q_l$ on reading one-letter inputs are never trivial, given any letter $k\in\Xn$, there is some $j\in\Xn$ so that $\lambda(j,q_l)$ begins with the letter $k$.  (Otherwise, the homeomorphism $h$ would not be surjective. This is because it would be impossible, for large enough values of $t$, to produce any infinite sequence of letters representing a point in Cantor space with a contiguous substring of the form $\lambda(l,q_l)^t\concat k$.)

Therefore, we see that from the state $q_l$ we can produce all strings of length $1$ as prefixes of outputs.  However, the argument above is generic, since given any word $w\in \Wn$, the only way for $h$ to produce infinite sequences representing points of Cantor space, with contiguous substrings of the form $(\lambda(l,q_l))^t\concat w$ for arbitrarily large $t$, is to have an infinite string of input with a substring of the form $l^m$ for some $s$ near $t$ (that is, for values of $t$ much larger than the synchronizing length $m$), followed by some word $z$ so that $\lambda(z,q_l)$ has prefix $w$.

Therefore, $q_l$ is a homeomorphism state by Lemma \ref{lem:AllOutputsImpliesHomeoState}.

Furthermore, as $q_l$ is a homeomorphism state with all transitions with non-empty outputs, we have by Lemma \ref{lem:CondBijLocalMaps} Point \ref{homeoStateNoEmptyOutsHasLengthOneOutputs} that all the outputs on those transitions are length one and that the map $\lambda(\cdot,q_l):\Xn\to\Xn$ is a permutation.  We also have that for all states $q$ in the set $\left\{\pi(j,q_l)\mid j\in\Xn\right\}$ that $A_q$ induces a homeomorphism on $\CCn$ by Lemma \ref{lem:CondBijLocalMaps} Point \ref{homeoLengthOneOutputsTargetsHomeos}.
\end{proof}

\subsection{Detecting if an automaton is strongly synchronizing}\label{subsection: detecting synchronicity}
In this section we outline a combinatorial method for detecting when an arbitrary finite automaton is strongly synchronizing.

This method consists of iteratively applying the `collapsing procedure', which we define below, until there are no more `collapses' to be performed.

\begin{definition}[Collapsing procedure]\label{Construction:Collapsing procedure}
Let $A = \gen{X, Q_A, \pi_A}$ be a finite automaton. For each state $q \in Q_{A}$ let $[q]$ be the set of states $p \in Q_{A}$ such that the functions $\pi_{A}(\centerdot,p): X \to Q_{A}$ and $\pi_{A}(\centerdot, q): X \to Q_{A}$ are equal. Let $Q_{A_1} := \{[q] \mid q \in Q_{A}\}$ and observe that  $Q_{A_1}$ is a partition of $Q_{A}$.  Form a new automaton $A_1 = \gen{ X, Q_{A_1}, \pi_{A_1}}$ where, for $i \in X$ and $[q] \in Q_{A_1}$, we set $\pi_{A_1}(i, [q]) = [\pi_{A_1}(i, q)]$.
\end{definition}

\begin{remark}\label{Remark:no collapse implies that A and A1 are equal}
Let $A$ be an automaton and let $A_{1}$ be the automaton resulting from applying the collapsing procedure to $A$ as above. Observe that if a collapse is possible then $|A_{1}| < |A|$, otherwise  $|A_{1}| = |A|$ and $A_1 =_{si} A$. The strong isomorphism arises as  for a state $q$ of $A$, the state $[q]$ of $A_{1}$ is the set $\{q\}$.
\end{remark}

Let $A = \gen{X, Q_A, \pi_A}$ be an  automaton. Form a sequence $(A_i)_{i \in \N}$ of automata where $A_{0} = A$ and such that, for $j \in \N_1$, $A_j = \gen{X, Q_{A_j}, \pi_{A_j}}$ is the result of applying the collapsing procedure to the automaton $A_{j-1}$. The set of states $Q_{A_j}$ of $A_j$ is a  partition of $Q_{A_{j-1}}$ the set of states of $A_{j-1}$. Since $Q_{A_1}$ is a partition of $Q_{A_{0}}$, then,  by induction, the set $Q_{A_j}$ of states of $A_{j}$ corresponds to a partition $\T{P}(Q_{A_j})$ of $Q_A$. Thus for $j \in \N_{1}$, we identify the  states of $A_{j}$ with the elements of this partition so that  states of $Q_{A_j}$ correspond to subsets of $Q_A$. For $q \in Q_A$ and $j \in \N$ we fix the notation $[q]_j$ for the state of $Q_{A_j}$  containing $q$; if $j=0$, set $[q]_0 := q$. By definition of the collapsing procedure, for $j \in \N$, and distinct states $[p]_{j}$ and $[q]_{j}$ of $A_{j}$, $[p]_{j+1} = [q]_{j+1}$ if and only if the functions $\pi_{A_j}(\centerdot, [p]_{j})$  and $\pi_{A_j}(\centerdot, [q]_{j})$ are equal. In particular we have the following claim:

\begin{claim}
For $x \in X$ and $q \in Q_{A}$, $\pi_{A_{j}}(x,[q]_{j}) = [\pi_{A}(x, q)]_{j}$.
\end{claim}
\begin{proof}
We proceed by induction. By construction the claim holds for $A_{1}$. Let $k \in \N_{1}$ and assume that the claim holds for $A_{i}$ for all $1 \le i < k$. Let $x \in X$ and  $[q]_{k}$ be a state of $A_{k}$. Observe that $[q]_{k} = \{ p \in Q_{A}\mid \pi_{A_{k-1}}(y, [p]_{k-1})= \pi_{A_{k-1}}(y, [q]_{k-1}) \mbox{ for all } y \in X\}$. However by the inductive assumption we have: $[q]_{k} = \{p \in Q_{A} \mid [\pi_{A}(y, p)]_{k-1} = [\pi_{A}(y, q)]_{k-1} \}$. Thus for any $p \in [q]_{k}$ and any $y \in X$, we have $[\pi_{A}(y,q)]_{k} = [\pi_{A}(y,p)]_{k}$ since $[\pi_{A}(y, p)]_{k-1} = [\pi_{A}(y, q)]_{k-1}$.
\end{proof}

Whenever we have an automaton $A$ and a sequence  $(A_i)_{i \in \N}$ of automata with $A_0 = A$ and such that each subsequent term of the sequence is obtained from the previous one by applying the collapsing procedure, we shall identify, as above, the  set  $Q_{A_j}$ of states of $A_{j}$ with a partition of $Q_{A}$ and elements of $Q_{A_j}$ with subsets of $Q_{A}$. Further observe that if $i,j \in  \N$ and $i < j$, then $|A_i| \le |A_j|$. Thus the sequence $(A_i)_{i\in \N}$ is eventually constant.

We have the following result.

\begin{lemma}\label{Lemma:two states of A are indentified in Ai if and only if they process all words of length i identically}
   Let $A = \gen{X, Q_A, \pi_A}$ be an automaton and  $(A_i)_{i \in \N}$ be the sequence such that $A_0 = A$ and each subsequent term is the automaton resulting from applying the collapsing procedure to the previous one. Let $p, q \in Q_A$, then  $[p]_i = [q]_i$ in $A_{i}$ for some $i \in \N$  if and only if  for all words $\Gamma \in X^{i}$ $\pi_{A}(\Gamma, p) = \pi_{A}(\Gamma, q)$.
   \end{lemma}
   \begin{proof}
  We proceed by induction on $i$.
   Let $i= 0$ and  $p, q$ be states of $A$  such that $[p]_0 = [q]_0$. Since $A_0 = A$, we have $p =q$. Moreover, for any $s,t \in Q_A$ such that  $\pi_{A}(\epsilon,s) = \pi_{A}(\epsilon, t)$ then since $s = \pi_{A}(\epsilon,s)$ and $t = \pi_{A}(\epsilon,t)$, we conclude that $s =t$. This establishes the base case.

   Assume that for $k \in \N_{1}$ and for all $i < k$, $i \in N$, the statement of the lemma holds.

   Let $p, q \in Q_{A}$ be such that $[p]_{k} = [q]_{k}$. Let $\Gamma \in X^{k-1}$ and $ x \in X$ be arbitrary. Since $[p]_{k} = [q]_{k}$, by construction of the Collapsing procedure it follows that for all $y \in X$, $[\pi_{A}(y,p)]_{k-1} = [\pi_{A}(y, q)]_{k-1}$. Thus if $p':= \pi_{A}(x,p)$ and $q' := \pi_{A}(x,q)$, then we must have, $[p']_{k-1} = [q']_{k-1}$. Therefore by the inductive assumption we have, $\pi_{A}(\Gamma, p') = \pi_{A}(\Gamma, q')$, from this it follows that $\pi_{A}(x\Gamma, p) = \pi_{A}(x\Gamma, q)$. Since $x \in X$ and  $\Gamma \in X^{k}$ were arbitrary, we conclude that the functions $\pi_{A}(\centerdot, p): X^{k} \to X^{k}$ and   $\pi_{A}(\centerdot, q): X^{k} \to X^{k}$ are equal.

   Now let  $p,q \in Q_{A}$ be such that the functions $\pi_{A}(\centerdot, p): X^{k} \to X^{k}$ and $\pi_{A}(\centerdot,q): X^{k} \to X^{k}$ are equal. Let $x \in X^{k}$ be arbitrary, and $p' = \pi_{A}(x, p)$ and $q' = \pi_{A}(x, q)$. Observe that the functions $\pi_{A}(\centerdot, p'): X^{k-1} \to X^{k-1}$ and $\pi_{A}(\centerdot,q'): X^{k-1} \to X^{k-1}$ are equal since the functions $\pi_{A}(\centerdot, p): X^{k} \to X^{k}$ and $\pi_{A}(\centerdot,q): xX^{k-1} \to xX^{k-1}$ are equal. Thus by the inductive assumption for $k-1$ we have that $[p']_{k-1} =[q']_{k-1}$. Therefore as $x \in X$ was arbitrary, we have that for all $y \in X$, $\pi_{A_{k-1}}(y, [p]_{k-1})= \pi_{A_{k-1}}(y, [q]_{k-1})$, and so $[p]_k = [q]_{k}$ as required.

    \end{proof}

We may restate the lemma above as follows:

\begin{lemma}\label{Lemma:two states of A are not identical in Ai-1  if and only if they read  at least one word of length i-1 differently}
   Let $A = \gen{X, Q_A, \pi_A}$ be an automaton and  $(A_i)_{i \in \N}$ be the sequence such that $A_0 = A$ and each subsequent term is the automaton resulting from applying the collapsing procedure to the previous one. Let $p, q \in Q_A$. Then $[p]_i \ne [q]_i$ in $A_{i}$ for some $i \in \N$ if and only if there is a word $\delta \in X^{i}$ such that $\pi_{A}(\delta, p) \ne \pi_{A}(\delta, q)$.
\end{lemma}

We thus have the following theorem characterising when a finite automaton is strongly synchronizing.

  \begin{theorem}\label{Theorem:A is synch if and only if it can be reduced to the single state transducer, sycn level is min number of applications of collapsing procedure needed to accomplish this.}
   Let $A = \langle X, Q_A, \pi_{A} \rangle$ be an automaton. Form the sequence $(A_i)_{i \in \N}$ where $A_0 = A$ and each subsequent term of the sequence is the result of applying the collapsing procedure to the preceding term. Let $k$ be minimal such that $|A_{k}| = |A_{k+1}|$. Then $A$ is strongly synchronizing if and only if $A_{k}$ consists only of a single state. Moreover, if $k$ is minimal such that $|A_{k}|=1$, then $k$ is the minimal synchronizing level of $A$.
   \end{theorem}
   \begin{proof}
   In what follows, for a given non-negative integer $k$, we use $\N_{k}$ to denote the set of all naturals greater than or equal to $k$.

   Observe that for all $l \in \N_{k}$ we have $A_{l} = A_{k}$ by Remark~\ref{Remark:no collapse implies that A and A1 are equal}. Thus if $|A_{k}| \ne 1$, then $|A_{l}| \ne 1$ for any $l \in \N_{k}$. Therefore for any $j \in \N_{k-1}$, there is a pair of states $q, p  \in Q_{A}$ such that $[q]_{j+1} \ne [p]_{j+1}$ and so, by Lemma~\ref{Lemma:two states of A are not identical in Ai-1  if and only if they read  at least one word of length i-1 differently}, for any $j \in \N_{k-1}$ there is a word $\delta \in X^{j+1}$ and states $p,q \in Q_A$ such that $\pi_{A}(\delta, q) \ne \pi_{j}(\delta, p)$. From this we conclude that $A$ is not strongly synchronizing.

   Now suppose that $|A_{k}| = 1$ then by Lemma~\ref{Lemma:two states of A are indentified in Ai if and only if they process all words of length i identically} we have that for any pair $p,q \in Q_{A}$ and any word $\Gamma \in X^{k}$, $\pi_{A}(\Gamma, p) = \pi_{A}(\Gamma, q)$ and so  $A$ is strongly synchronizing. Moreover since $k$ is minimal such that $|A_{k}| =1$, Lemma~\ref{Lemma:two states of A are not identical in Ai-1  if and only if they read  at least one word of length i-1 differently} guarantees that it is the minimal synchronizing level of $A$.
   \end{proof}

We conclude with some examples and non-examples.

\begin{example}
Consider the following automaton, $A$, below:
\begin{center}
\begin{tikzpicture}[shorten >=0.5pt,node distance=3cm,on grid,auto,semithick,every state/.style={fill=red,draw=none,circular drop shadow,text=white}]
   \node[state] (q_0)   {$q_0$};
   \node[state] (q_1) [below left=of q_0] {$q_1$};
   \node[state] (q_2) [below right=of q_0] {$q_2$};
    \path[->]
    (q_0) edge [loop above] node {$0|0$} ()
          edge  node [swap] {$2|0$} (q_2)
          edge  node {$1|2$} (q_1)
    (q_1) edge[bend left]  node  {$0|0$} (q_0)
          edge [loop left=of q_1] node {$1|2$} ()
          edge  node {$2|1$} (q_2)
    (q_2) edge[bend right]  node [swap] {$1|1$} (q_0)
          edge[bend left]  node {$0|2$} (q_1)
          edge [loop right=of q_2] node {$2|0$} ();
\end{tikzpicture}
\end{center}
Then $\bar{A}$ is as follows:
\begin{center}
\begin{tikzpicture}[shorten >=0.5pt,node distance=3cm,on grid,auto,semithick, every state/.style={fill=red,draw=none,circular drop shadow,text=white}]
   \node[state] (q_0)   {$q_0$};
   \node[state] (q_1) [below left=of q_0] {$q_1$};
   \node[state] (q_2) [below right=of q_0] {$q_2$};
    \path[->]
    (q_0) edge [loop above] node {$0$} ()
          edge  node [swap] {$2$} (q_2)
          edge  node {$1$} (q_1)
    (q_1) edge[bend left]  node  {$0$} (q_0)
          edge [loop left=of q_1] node {$1$} ()
          edge  node {$2$} (q_2)
    (q_2) edge[bend right]  node [swap] {$1$} (q_0)
          edge[bend left]  node {$0$} (q_1)
          edge [loop right=of q_2] node {$2$} ();
\end{tikzpicture}
\end{center}
After one iterate of the collapsing procedure we find that the states $q_0$ and $q_1$ are in the same equivalence class and $q_2$ is in a class on its own. The quotient automaton is shown below:
\begin{center}
\begin{tikzpicture}[shorten >=0.5pt,node distance=3cm,on grid,auto,semithick,every state/.style={fill=red,draw=none,circular drop shadow,text=white}]
   \node[state] (q_0)   {$q_0$};
   \node[state] (q_2) [below right=of q_0] {$q_2$};
    \path[->]
    (q_0) edge [loop above] node {$0$} ()
          edge  node [swap] {$2$} (q_2)
          edge [loop left]  node {$1$} ()
    (q_2) edge[bend right]  node [swap] {$1$} (q_0)
          edge[bend left]  node {$0$} (q_0)
          edge [loop right=of q_2] node {$2$} ();
\end{tikzpicture}
\end{center}
After the second iterate of the procedure the quotient automaton is as follows:
\begin{center}
\begin{tikzpicture}[shorten >=0.5pt,node distance=3cm,on grid,auto,every state/.style={fill=red,draw=none,circular drop shadow,text=white}]
   \node[state] (q_0)   {$q_0$};
    \path[->]
    (q_0) edge [loop above] node {$0$} ()
          edge [loop right] node {$2$} ()
          edge [loop left]  node {$1$} ();
\end{tikzpicture}
\end{center}
We can likewise perform, the same process on $\bar{A}^{-1}$, which  is given below:
\begin{center}
\begin{tikzpicture}[shorten >=0.5pt,node distance=3cm,on grid,auto, semithick,every state/.style={fill=red,draw=none,circular drop shadow,text=white}]
   \node[state] (q_0)   {$q_0$};
   \node[state] (q_1) [below left=of q_0] {$q_1$};
   \node[state] (q_2) [below right=of q_0] {$q_2$};
    \path[->]
    (q_0) edge [loop above] node {$1$} ()
          edge  node [swap] {$0$} (q_2)
          edge  node {$2$} (q_1)
    (q_1) edge[bend left]  node  {$0$} (q_0)
          edge [loop left=of q_1] node {$2$} ()
          edge  node {$1$} (q_2)
    (q_2) edge[bend right]  node [swap] {$1$} (q_0)
          edge[bend left]  node {$2$} (q_1)
          edge [loop right=of q_2] node {$0$} ();
\end{tikzpicture}
\end{center}
This automaton can also be collapsed to a single state automaton in 2 steps. One can check that $A$ is bi-synchronizing on the second level with map give by:
\begin{equation*}
f:\left\{ \quad \begin{matrix}
            00 \mapsto q_0 & 10 \mapsto q_0 & 20 \mapsto q_1 \\
            01 \mapsto q_1 & 11 \mapsto q_1 & 21 \mapsto q_0 \\
            02 \mapsto q_2 & 12 \mapsto q_2 & 22 \mapsto q_2
          \end{matrix} \right.
\end{equation*}

We now illustrate a non-example. Let $B$ be the following transducer:
\begin{center}
\begin{tikzpicture}[shorten >=0.5pt,node distance=3cm,on grid,auto,semithick, every state/.style={fill=red,draw=none,circular drop shadow,text=white}]
   \node[state] (q_0)   {$q_0$};
   \node[state] (q_1) [below left=of q_0] {$q_1$};
   \node[state] (q_2) [below right=of q_0] {$q_2$};
    \path[->]
    (q_0) edge [loop above] node {$0|0$} ()
          edge  node [swap] {$1|2$} (q_2)
          edge  node {$2|1$} (q_1)
    (q_1) edge[bend left]  node  {$1|0$} (q_0)
          edge [loop left=of q_1] node {$2|2$} ()
          edge  node {$0|1$} (q_2)
    (q_2) edge[bend right]  node [swap] {$2|2$} (q_0)
          edge[bend left]  node {$0|0$} (q_1)
          edge [loop right=of q_2] node {$1|1$} ();
\end{tikzpicture}
\end{center}
This transducer is not synchronizing at any level, for instance there is no single state into which the automaton enters after reading any finite string of zeroes. Consider $\bar{B}$ below:
\begin{center}
\begin{tikzpicture}[shorten >=0.5pt,node distance=3cm,on grid,auto,semithick,every state/.style={fill=red,draw=none,circular drop shadow,text=white}]
   \node[state] (q_0)   {$q_0$};
   \node[state] (q_1) [below left=of q_0] {$q_1$};
   \node[state] (q_2) [below right=of q_0] {$q_2$};
    \path[->]
    (q_0) edge [loop above] node {$0$} ()
          edge  node [swap] {$1$} (q_2)
          edge  node {$2$} (q_1)
    (q_1) edge[bend left]  node  {$1$} (q_0)
          edge [loop left=of q_1] node {$2$} ()
          edge  node {$0$} (q_2)
    (q_2) edge[bend right]  node [swap] {$2$} (q_0)
          edge[bend left]  node {$0$} (q_1)
          edge [loop right=of q_2] node {$1$} ();
\end{tikzpicture}
\end{center}
No two states of this automaton are related under the equivalence class given in the procedure, and so the quotient automaton returned after the first iterate of the collapsing procedure is exactly the  same as $B$. Therefore the procedure returns that the automaton is not strongly synchronizing.
\end{example}

  \section{The natural quotient $\Bnr\twoheadrightarrow \Onr$\label{sec:naturalQuotient}}

 We begin by defining, for a given $n$, a set $\SOn$ of non-initial minimal strongly synchronizing transducers.  We
show that $\SOn$ is a monoid and contains a group
$\On$ which has as subgroups the groups $\Onr$, for each $1\leq r<n$.  We prove Proposition \ref{prop: nesting} giving a nesting criteria for the subgroups
$\Onr$ inside $\On$. We then give
an example of an element of $\mathcal{O}_{4,3}$ which is not an
element of $\mathcal{O}_{4,r}$ for any $r \in \{1,2\}$, thus showing that $\Onr$ depends on $r$ (although, this remains open at the level of isomorphism). To close the section, we give a direct, combinatorial method of inverting elements in $\On$ which does not rely on passing to a completion in $\Bnr$, inverting, and then taking the core.

\subsection{Classes and sets} For what follows, we establish two conventions.  The first convention faces a technical problem that must be dealt with, whilst the second simply supports a nicety.

  The first convention we establish is that we will regularly confound a (finite) transducer with its equivalence class under strong isomorphism.  That is, if we refer to a transducer, we are really referring to the class of all transducers which are strongly isomorphic to it.  Similarly, if we refer to a set of transducers, then we are actually referring to a set of equivalence classes of transducers, where all of the transducers in a given equivalence class are strongly isomorphic.  If we refer to the states of a transducer, the reader will be assumed to have chosen a representative of the appropriate equivalence class of transducers (under strong isomorphism) for the discussion that follows.  We employ this language to avoid having to explicitly choose representative transducers and having to constantly discuss independence of choice of transducers at each turn.  On occasion, however, we will still use the congruence notation ``$=_{si}$'' f
  or a strong isomorphism between any two given particular transducers when we think it will aid the reader in catching some pointed observation.

  Thus, when we define below a monoid or group of transducers, the elements of that monoid or group will actually be the equivalence classes of the transducers specified.

The second convention is not strictly necessary, but it allows us to use equivalence classes of transducers (under strong isomorphism) where each equivalence class is actually a set.  To achieve this, we place the sets of states of our transducers on a firmer foundation.

Specifically, recall that the set $V_\omega$ from the von Neumann hierarchy of sets has the property that all of its elements are finite sets. Furthermore, as in Chapter 1 of Kunen's text \cite{kunen2011set}, by interpreting (in the natural cases) the elements of a direct product of two sets $X$ and $Y$ as sets of the form $\{x,\{x,y\}\}$, where $x\in X$ and $y\in Y$, one sees that $V_\omega$ has the following two properties:
  \begin{enumerate}
      \item for all $Q\in \mathscr{P}_{fin}(V_{\omega})$, we have $Q\in V_\omega$, and
      \item for all $X, Y\in V_\omega$, we have $X\times Y\in V_\omega$.
  \end{enumerate}
 Here, note that we are using $\mathscr{P}_{fin}(V_\omega)$ to represent the set of all finite subsets of $V_\omega$. The two properties above are exactly what is required in our discussions for working combinatorially with finite transducers (e.g, we might sometimes pass to subsets of the set of states, for instance when passing to the core of a synchronizing transducer, and secondly, the set of states of a product transducer is the direct product of the sets of states). From this point forward, when we discuss a transducer, we are implicitly assuming the set of states of $T$ is an element of $V_\omega$, and the alphabet of $T$ will be one of the sets $X_n$, or in the case of a transducer representing an element of $\Gnr$, the alphabet will be the union of the sets $\rd$ and $X_n$ for some $r$ and $n$ natural with $0<r<n$.  Thus, the reader may assume if they choose that all of our transducers come from a specific set of transducers (as opposed, e.g., to a class of transducers).  Note that in the discussions below we will treat direct products simply as sets of ordered pairs of elements, as in standard usage.

We can now proceed with the main discussion of the section.

  \subsection{Around a monoid $\SOn$}

We first define our monoid of interest.

\begin{definition}\label{def: semigroup On}
Let $\SOn$ be the set consisting of those transducers $T = \langle\Xn, Q_T, \pi_T, \lambda_T\rangle$ satisfying the following conditions:
\begin{enumerate}
\item $T$ is synchronizing at level $k$ for some natural $k$, \label{def: semigroup On cond1}
\item \label{def: semigroup On cond2}  $\core{T}=T$,
\item \label{def: semigroup Nn cond} $T$ is minimal, and
\item \label{def: semigroup On cond3}  for each state $q$ of $T$, the induced map $\lambda(\cdot,q):\CCn\to\CCn$ is injective and has clopen image.
\end{enumerate}
\end{definition}

Note that in the above context, for ease of notation we sometimes confound a state $q$ of $T$ with the corresponding map $\lambda(\cdot,q):\CCn\to\CCn$ (as we consider states to represent local maps).  This for instance leads to the appearance of notation such as ``$\Image(q)$'' which seems clear enough and which is lighter to read than the full ``$\Image(\lambda(\cdot,q))$.''

\newcommand{\coreit}[1]{\left(#1\right)^\circ}
The following definition simply generalises the multiplication of Definition~\ref{lem:OnrProd} to our broader context (of working with the transducers in $\SOn$).

\begin{definition}\label{lem:SOnProd}
Let $T_1, T_2\in\SOn$.  We define the product $T_1T_2\in\SOn$ as follows:
\begin{enumerate}
\item Compute the full transducer product $T_1\ast T_2$.
\item \label{def:remIncResp} Pick any state $q$ of $T_1 \ast T_2$. Apply the ``Remove States of Incomplete Response'' algorithm in the paper \cite{GNSenglish} to the initial transducer $(T_1\ast T_2)_{q}$ to produce an initial transducer $\overline{(T_1T_2)}_{{q}'}$ and perform the following operations:
\item \label{def:corePass} Pass to the core of $\overline{(T_1T_2)}_{{q}'}$ to produce the result $\overline{(T_1T_2)}^\circ$.
\item \label{def:idEquivStates} Identify equivalent states of $\overline{(T_1T_2)}^\circ$ to produce the result in $\SOn$, which we denote by $T_1T_2$.
\end{enumerate}
\end{definition}

The following lemma discusses aspects of the product above, including that it is well defined.  Its proof is straightforward but mildy tedious: we give a brief discussion of the key ideas.
\begin{lemma}\label{lem:nonCommutingCoreOps}
Let $T_1, T_2\in\SOn$. We have:
\begin{enumerate}
\item For a connected strongly synchronizing transducer, the process of removing states of incomplete response does not change the set of states that are in the core;
\item For a connected strongly synchronizing transducer without states of incomplete response, the processes of identifying equivalent states and of passing to the core commute;
\item \label{pt:nonCommuting}For a connected strongly synchronizing transducer, the processes of identifying equivalent states and of removing states of incomplete response generally do not commute;
\item The product transducer $T_1T_2$ defined in Definition \ref{lem:SOnProd} is well defined, and does not depend on the state $q$ chosen in step \ref{def:remIncResp} of that definition.
\end{enumerate}

\end{lemma}

{\flushleft{\it Discussion:}}\\
To see this lemma one should recall that the ``Remove states of incomplete response'' process simply adds a new initial state $\tilde{q}$ (which will not survive passage to the core as it has no incoming edges), and then it modifies the output words of transitions (without modifying destinations). Therefore, the transformation of a core state of incomplete response into a core state of complete response depends only on the other states within the core.  In particular, the multiplication process is independent of the choice of initial state used in part \ref{def:remIncResp} of Definition \ref{lem:SOnProd}.

Further, we observe the basic fact from topology that the composition of two injective continuous maps from $\CCn$ to itself, where each of these maps has image a clopen set, will be another injective map with image a clopen set.  Thus, each state in the product (before removing incomplete response) gives an injective map with clopen image.  We then observe that removing incomplete response from such a state just removes the greatest common prefix from the set of infinite words that are output from that state, and this does not impact injectivity or the fact that the image of that state is clopen.

In Figure \ref{fig-nonCommuteOps} below, we provide an example transducer with two core states where the processes of removing states of incomplete response, and identifying equivalent states, do not commute when applied to the core (proving Point \ref{pt:nonCommuting}).

\begin{figure}[htbp]
\begin{center}
 \begin{tikzpicture}[->,>=stealth',shorten >=1pt,auto,node distance=2.3cm,on grid,semithick,every state/.style={fill=red,draw=none,circular drop shadow,text=white}]
   \node[state] (a)   {$q_0$};
   \node[state,yshift=-.3cm] (b) [below left=of a] {$b$};
   \node[state,yshift=-.3cm] (c) [below right=of a] {$c$};
    \path[->]
    (a) edge  [out=240,in=75]node [swap]{$0|\epsilon$} (b)
        edge  [out=300,in=105]node {$1|\epsilon$} (c)
    (b) edge [out=165,in=195, loop] node [swap] {$0|0$} ()
        edge[out=15,in=165]  node  {$1|0$} (c)
    (c) edge[out=195,in=345]  node {$0|1$} (b)
        edge [out=15,in=345,loop]  node {$1|1$} ();
\end{tikzpicture}
\end{center}
\caption{A bi-synchronizing transducer representing the identity.\label{fig-nonCommuteOps}}
\end{figure}

$\square$

Note that $\SOn$ contains the sets $\Onr$ for each $1\leq r<n$, and the multiplication of $\Onr$ agrees with the multiplication of $\SOn$ over any of those sets.  In fact we have the following result.

\begin{prop}
The set $\SOn$  with the multiplication defined above is a monoid.
\end{prop}
\begin{proof}
This is a straightforward but somewhat tedious exercise in the definitions.  The central point for checking associativity is the independence of the set of core states that arises when one removes incomplete response from different choices of states starting that process.
\end{proof}

We have the following theorem.  Recall for what follows that $\Snr$ is the submonoid of $\Rnr$ corresponding to homeomorphisms representable by strongly synchronizing transducers (note $\Snr$ is not closed under inversion).

\begin{theorem}\label{thm:CoreExtensibility}
Let $T = ( \Xn, Q_T, \pi_{T}, \lambda_{T})$ be a transducer.  Then  $T$ is an element of $\SOn$ if and only if
there is a natural $r$ with $1\leq r<n$ and an initial transducer $$A_{q_0}=(\rd,\Xn, R, S, \pi_{A}, \lambda_{A},q_0)$$ (with $Q_T\subset S$) representing an element $h_{q_0}\in\Snr$ with $\core{A_{q_0}}=T$.\end{theorem}

\begin{proof}
Let $A_{q_0}$  be a minimal transducer representing an element of $\Snr$, so in particular $\core{A}$ is
synchronizing. Conditions (1) and (2) follow immediately from the definition of $\Snr$.  As $A_{q_0}$ represents a homeomorphism of Cantor space then (3) follows as well as each state produces a local map which must be injective and which can be expressed as an open projection of a restriction of a homeomorphism to clopen set, and thus has clopen image.

For the reverse implication, let $T \in  \SOn$. We now describe how to construct a strongly synchronizing transducer representing a homeomorphism of $\CCnr$, for appropriate $n$ and $r$, with core equal to $T$.

Let $q$ be a state of $T$.  Denote by
$\Image(q)$ the set $\{\lambda_{T}(x, q) \mid x \in \CCn\}$ (note $\Image(q)$ corresponds to the image of a local map $h_q$ in the case that $T$  were an initial transducer admitting a state $q$).
By assumption $\Image(q)$ is clopen, and so can be written
as a union of finitely many disjoint basic open sets $\bigcup_{1 \le i \le
k} U_{\eta_i}$ where $\eta_i \in \Xn^{*}$. Let $M =  \max_{1
\le i \le k}\{|\eta_i|\}$, and let $\nu \in  \Xn^{*}$ be a prefix of some element of  $\Image(q)$ such
that $|\nu| \ge M$. Let $N \in \N$ be such that for all words $\phi \in \Xn^N$ we have $|\lambda_{T}(\phi,q)| \ge M$. Let
$\phi_1, \ldots, \phi_j$ be all those elements of $\Xn^N$ for
which $\nu$ is a prefix of $\lambda_{T}(\phi_i, q)$ for all $1 \le i \le
j$. Now set, for all $1\leq i\leq j$, $\lambda_{T}(\phi_i, q) = \nu \rho_i$ for some $\rho_i
\in \Xn^{\ast}$. Let $\pi_{T}(\phi_i, q) =
p_i$ for $1 \le i \le j$.
Now since  $\Image(q)$  is
clopen, and since $\nu \ge \eta_l$ for some $1 \le l \le k$,
then we must have that $\bigcup_{1 \le i \le j} \rho_i \concat
\Image(p_i) = \CCn$ (here $\rho_i \concat \Image(p_i) = \{\rho_i x \mid x \in \Image(p_i)\}$). Note that since $q$ is injective  the $\rho_i \concat \Image(p_i)$ are pairwise disjoint.

Now set $m=(n-1)j$ and $r=n-1$.  Observe that there are maximal finite anti-chains
of $\Wnr$ of size $m$ and $n-1$ respectively. Let $\{ \xi_1,
\ldots, \xi_{m} \}$ and $\{ \zeta_1, \ldots, \zeta_{n-1} \}$ be
two such antichains. Since $m$ is divisible by $j$, we may
partition the first antichain into blocks of length $j$ as follows:  $\{ \xi_1,
\ldots, \xi_{m} \} = \sqcup_{0 \le i <
n-1} \{ \xi_{ij +1}, \ldots, \xi_{(i+1)j}\}$.
Then we may define a map $\alpha_{T} : \CCnr \to \CCnr$ by
$\xi_{ij + l} x \mapsto \zeta_i \rho_l \lambda_{T}(x, p_l)$ for $0 < l \le j$ and $0 \le i < n-1$. It follows that  $\alpha_{T}$ is a homeomorphism since $\bigcup_{1 \le i \le j} \rho_i \concat
\Image(p_i) = \CCn$ where, as observed above, this is a union of disjoint sets. We may now apply the procedure laid out in Section \ref{omega} to compute a minimal initial transducer in $\Rnr$ representing $\alpha_{T}$ where we observe that the resulting transducer is strongly synchronizing as processing any word $\xi_{t}$ will result in processing from a state in $T$ for all extensions.

\end{proof}

\begin{remark}\label{rem:productEquivalence}
Let $A_{p_0}$, $B_{q_0}$, $C_{p_0q_0}$ be minimal initial strongly synchronizing transducers respectively representing elements $h_{p_0}$, $h_{q_0}$ and $h_{p_0}h_{q_0}$ of $\Snr$, respectively, for some given $n$ and $r$. Since the core of the product is contained in the product of the cores, we see that the well-definedness of the product in $\SOn$ gives  \[\core{C_{(p_0,q_0)}}=\core{\core{A_{p_0}}\core{B_{q_0}}}.\]
\end{remark}

We can now specify the subgroup $\Onr$ of the monoid $\SOn$ as those elements of $\SOn$ which correspond to minimal cores of elements of $\Bnr$.  Specifically we obtain the following theorem.
\begin{theorem}
\label{thm:out classification}For $1<r<n\in\N$, we have $\Onr\cong\Outnr$.
\end{theorem}

\begin{proof}

  This follows from Remark \ref{rem:productEquivalence}, and from the fact that $\Gnr$ is  precisely the subgroup of $\Bnr$ consisting of the homeomorphisms that are representable by transducers with representative cores which act as the identity.

 \end{proof}

 Inversion of elements in the subgroup $\Onr$ currently depends on passing up to an element of $\Bnr$, inverting a homeomorphism, and passing back through the quotient to $\Onr$.  This is unsatisfactory: we will give a combinatorial process for inversion in Subsection \ref{sec:inversion} which depends only on the element of $\Onr$ as a transducer and produces the correct inverse.

We are now in position to define $\On$ as the subgroup of $\SOn$ generated by the union of the subgroups $\Onr$.

\subsection{Viable combinations, transducer completions, and nesting conditions on the groups $\Onr$}\label{sec:nesting}
In order to explore the relationships amongst the subgroups $\Onr$ of $\On$, we will need to provide some combinatorial constructions which will help us to understand when an element of $\Onk{n}{i}$ is also an element of $\Onk{n}{j}$, for $i\neq j$.

Figure \ref{Fig: example showing dependence on r} depicts an element $T$ of $\mathcal{L}_4\subset\Onk{4}{3}$ which has two very interesting properties.  Firstly, and not directly relevent to the discussion of this section, the reader may verify that $T$ has no state which acts as a homeomorphism state, even though it is synchronous and it is the core of an element in $\Bnk{4}{3}$ (discussed in Subsection \ref{ssec:dependence on r}).  Secondly, and motivating the discussion in this subsection, $T$ has no completion to a transducer representing an element of $\Bnk{4}{1}$ or of $\Bnk{4}{2}$, so $T\not\in (\Onk{4}{1}\cup\Onk{4}{2})$ even though $T\in \Onk{4}{3}$.

\begin{figure}[htbp]

 \begin{center}
 \begin{tikzpicture}[->,>=stealth',shorten >=1pt,auto,node distance=5cm,on grid,semithick,
                     every state/.style={fill=red,draw=none,circular drop shadow,text=white}]
    \node[state] (q_0) [xshift=0cm, yshift=0cm]  {$q_1$};
    \node[state] (q_1) [xshift=3cm, yshift=-2cm] {$q_4$};
    \node[state] (q_2) [xshift=3cm, yshift=-4cm] {$q_5$};
    \node[state] (q_3) [xshift=0cm, yshift=-5cm] {$q_3$};
    \node[state] (q_4) [xshift=6cm, yshift=-3cm] {$q_2$};
     \path[->]
     (q_0) edge [in=105,out=75,loop] node [swap]{$0|0$} ()
           edge  node {$1|0,3|1$} (q_1)
           edge  [out=225,in=135]node [swap]{$2|1$} (q_3)
     (q_1) edge[out=260,in=100]  node  [swap]{$1|2$} (q_2)
           edge [in=50,out=80, loop] node  {$3|3$} ()
           edge [out=10,in=80] node {$0|2$} (q_4)
           edge [out=170, in = 100] node [swap] {$2|3$} (q_3)
     (q_2) edge[out=90,in=270]  node [swap]{$3|1$} (q_1)
           edge node [swap]{$2|1$} (q_3)
           edge [in=330,out=300, loop] node [swap] {$1|2$} ()
           edge node [swap] {$0|2$} (q_4)
      (q_3) edge[out=125,in=235]  node [swap]{$0|0$} (q_0)
            edge [out=90,in=180]  node [swap]{$3|0$,$1|1$} (q_1)
            edge [in=285,out=255, loop] node [swap] {$2|1$} ()
      (q_4) edge[out=90,in=0]  node {$1|0,3|3$} (q_1)
                 edge [out=45,in=45]  node [swap]{$0|0$} (q_0)
                 edge [in=330,out=300] node {$2|3$} (q_3);
 \end{tikzpicture}
 \end{center}
 \caption{An element $T$ of $\mathcal{L}_{4}$ without homeomorphism states}
  \label{Fig: example showing dependence on r}
 \end{figure}

In Theorem $\ref{thm:CoreExtensibility}$ we build a homeomorphism $\alpha_{T}$ depending on some initial choice of state $q$ in a strongly synchronizing transducer $T$ of $\SOn$.  In general, it turns out that starting from an initial choice of state $q$ as above is too limited for producing the set of all possible completions to homeomorphisms.  The process below weakens our dependencies and enables us to determine the nesting structure for the subgroups $\Onr$.

\begin{definition}[Viable combinations]
For a transducer $T \in \SOn$ we call a tuple $\mathfrak{v}_{j}:=((\rho_1, \ldots, \rho_j), (p_1, \ldots, p_j))$ such that:
\begin{enumerate}
\item  $(\rho_1, \ldots, \rho_j) \in (\Xn^{\ast})^j$ and $(p_1, \ldots, p_j) \in Q_T^{j}$.
\item  $\rho_i \concat \Image(p_i) \cap \rho_j \concat \Image(p_j) = \emptyset$ for $i \ne j$.
\item $\bigsqcup_{1 \le i \le j} \rho_i \concat \Image(p_i) = \CCn$ \label{def: fill a con}
\end{enumerate}
a (full) \emph{viable combination for $T$}.

If condition (3) is replaced by the weaker condition that $$\bigsqcup_{1 \le i \le j} \rho_i \concat \Image(p_i)$$ is just a \emph{clopen} subset of $\CCn$, then we say $\mathfrak{v}_j$ a \emph{partial viable combination}.
\end{definition}

\begin{remark}\label{rem:viableToHomeo}
Notice that given a $T\in\widetilde{\mathcal{O}}_n$ and a viable combination $\mathfrak{v}_j$ for $T$ there is an induced a homeomorphism $h_{\mathfrak{v}_j}$ from $\mathfrak{C}_{n,j} \to \mathfrak{C}_{n}$ by $\dot{a} x \mapsto \rho_{a} \lambda_{T}(x, p_a)$ for all $\dot{a} \in \{\dot{1}, \ldots, \dot{j}\}$ and arbitrary $x \in \CCn$ (the index $a$ is the value of $\dot{a}$ with the ``dot'' removed).  {\bf Here, and throughout this section, we allow $j\geq n$.}
\end{remark}

\begin{definition}[Single expansions of viable combinations]
Let $T = \langle\Xn, Q, \pi, \lambda\rangle \in \SOn$ and let
$\mathfrak{v}_{j} = ((\rho_1, \ldots, \rho_j), (p_1, \ldots,
p_j))$ be  a viable combination for $T$. Fix an $i$ such that $1
\le i \le j$, and let $\rho_{i,l} := \rho_i\concat\lambda(l,
p_i)$ and $p_{i,l} := \pi(l, p_i)$ for $l \in \Xn$, then
\begin{eqnarray*}
\mathfrak{v}_{j+n-1} := (&(&\rho_1, \ldots, \rho_{i-1},
\rho_{i,0}, \ldots,\rho_{i,(n-1)}, \rho_{i+1}, \ldots, \rho_{j}
), \nonumber \\ &(&p_1, \ldots, p_{i-1}, p_{i,0}, \ldots, p_{i, (n-1)},
p_{i+1},\ldots, p_{j}))
\end{eqnarray*}
is called a \emph{single expansion of
$\mathfrak{v}_{j}$}.
\end{definition}

\begin{remark}
It follows from the fact that a viable combination induces a homeomorphism that a single expansion of a viable combination for an element $T \in \SOn$ results in a new viable combination for $T$. Therefore a sequence of single expansions applied to a viable combination for $T$ also results in a new viable combination for $T$.
\end{remark}

\begin{definition}
For a transducer $T \in \SOn$ we say a full viable combination $$\mathfrak{v}_{j}:=((\rho_1, \ldots, \rho_j), (p_1, \ldots, p_j))$$ \emph{arises from a state $q \in T$} if there is a word $\eta \in X_{n}^{*}$ and  a set $\{\phi_1, \ldots, \phi_j\}$ of incomparable words, such that the following holds:
\begin{enumerate}
\item  $\lambda_{T}(\phi_i, q) = \eta\rho_i$, $\pi_{T}(\phi_i, q) = p_i$ and,
\item  $\sqcup_{1\le i \le j} \eta \rho_i \Image(p_i) = U_{\eta}$.
\end{enumerate}
\end{definition}

Note that implicit in the proof of Theorem~\ref{thm:CoreExtensibility} is a full viable combination arising from a state.

Now we may modify the construction given in the proof of Theorem \ref{thm:CoreExtensibility}.
\begin{lemma}\label{lem:viableCombinationsExtensions}
Let $1\leq r\leq n-1\in\N$, $T\in\SOn$ and $\mathfrak{v}$
denote the set of all viable combinations for $T$.  The transducer $T$ can be extended to an initial transducer $U_{q_0}$ representing an element of $\Snr$ with $\core{U}=T$ if and only if there exists $\mathfrak{v}_{j_1},
\ldots, \mathfrak{v}_{j_m}$ a sequence of viable combinations such that $r \equiv
\Sigma_{1 \le i \le m} j_i \equiv m \mod{n-1}$.
\end{lemma}
\begin{proof}
Let us assume that $T\in\SOn$, $\mathfrak{v}$
denotes the set of all viable combinations of $T$, $1\leq r\leq n-1$, and there exists $\mathfrak{v}_{j_1},
\ldots, \mathfrak{v}_{j_m}$ a sequence of viable combinations such that $r \equiv
\Sigma_{1 \le i \le m} j_i \equiv m \mod{n-1}$.  We will show that $T$ can be extended to a transducer $U$ as claimed.

First, set $j\seteq \Sigma_{1 \le i \le m} j_i$.
There exist finite complete antichains $\{ \xi_1, \ldots, \xi_{j}
\}$ and $\{\zeta_1, \ldots, \zeta_m\}$ of $\CCnr$.  Decompose $\{ \xi_1, \ldots, \xi_{j}
\}$ as $$\{ \xi_1, \ldots, \xi_{j}
\}:= \sqcup_{1 \le l < m} \{\rho_{l,1},\rho_{l,2},\ldots,\rho_{l,j_l}\},$$ where   $\rho_{l,k}$ denotes the string $\xi_{\Sigma_{1 \le i \le l-1} j_i
+k}$, when $1\leq l<m$ and $1\leq k\leq j_l$.  In particular we have \[\{ \xi_{\Sigma_{1 \le i \le l-1} j_i
+1}, \ldots, \xi_{\Sigma_{1 \le i \le l} j_i}\}=\{\rho_{l,1},\rho_{l,2},\ldots,\rho_{l,j_l}\}.\]

For a fixed $l$, $1 \le l < m$, consider the block $\{\rho_{l,1},\rho_{l,2},\ldots,\rho_{l,j_l}\}$, then $\alpha_{T}$ acts
on the space $\sqcup_{1 \le i \le j_{l} U_{\rho_{l,i}}}  \cong \mathfrak{C}_{n,j_l}$ as the map $h_{\mathfrak{v}_{j_l}}$ as in Remark \ref{rem:viableToHomeo} (note here, it is possible that $j_l>n$). Noting that $\mathfrak{C}_{n,j_l}\cong \CCn$ and
injecting the image to the cone $U_{\zeta_l}$, we can extend this definition across each of the blocks $\{\rho_{l,1},\rho_{l,2},\ldots,\rho_{l,j_l}\}$ for the full range of values of $l$ to get a well defined homeomorphism $\alpha_{T}$ of $\CCnr$ with finitely many local actions, and where the set of local actions corresponding to the states of $T$ are the only ones which appear infinitely often (so that $T$ is the core of the minimal transducer $V_{q_0}$ representing $\alpha_{T}$.

The other direction is technical but straightforward.  Assume that $T\in\SOn$, and $T$ extends to an initial minimal transducer $V_{q_0}$ with $\core{V_{q_0}}=T$ representing an element of $\Snr$, with initial state $q_0$ and output and transition functions $\lambda$ and $\pi$, respectively.  As $T$ is the core of $V_{q_0}$ it is already the case that if we look at all strings of $\Wnr$ of length $k+1$ where $k$ is the synchronizing length of $V_{q_0}$, then this provides a complete antichain for $\Wnr$ and reading any of these strings from the initial state of $V_{q_0}$ results in moving to a state in $T$.

Since core states of $V_{q_0}$ cannot write a letter from the alphabet $\rd$ we observe firstly that each word $\lambda(w,q_0)$ is non-empty for any $w\in \Wnr$ of length $k+1$, and further, as $h_{q_0}$ is a homeomorphism of $\CCnr$ that for each letter $\dot{a}\in \rd$ we have that the cone $U_{\dot{a}}$ decomposes as a disjoint union
\[
U_{\dot{a}}=\sqcup_{w\in X_a} \lambda(U_w,q_0)
\] where $X_a$ denotes the set of strings of length $k+1$ in $\Wnr$ so that if $w\in X_a$ we have $\lambda(w,q_0)$ is a string with first letter $\dot{a}$.  Let $Y_a\seteq(y_{w_1},y_{w_2},\ldots,y_{w_p})$ where we have $|X_a|=p$, and where we order $w_1<w_2<\ldots< w_p$ in the dictionary order on $X_a$, and where we further determine $y_{w_i}$ by the rule $\dot{a}y_{w_i}=\lambda(w_i,q_0)$ for each $w_i\in X_a$.  For  each $1\leq i\leq p$ set $q_i=\pi(w_i,q_0)$ the state reached in $V_{q_0}$ upon reading the word $w_i$ from $q_0$. Further set $Z_a=(q_1,q_2,\ldots,q_p)$, so that $(Y_a,Z_a)$ is a viable combination.  Now repeating this for each letter in $\rd$ gives us a set of viable combinations satisfying the modular arithmetic of the claim.

 \end{proof}

We may refer to a transducer $A_{q_0}\in\Snr$, which has core transducer $T$, as a \emph{completion of $T$ in $\Snr$}.   Note that if $A_{q_0}$ is bisynchronizing, then we might refine this language and say that $A_{q_0}$ is a completion of $T$ in $\Bnr$.

As the completions found in Lemma \ref{lem:viableCombinationsExtensions} create no cycles outside of the core transducer $T$ given to the process, thus, the resulting transducer in $\Snr$ is actually in $\Bnr$ when $T\in\On$ (that is, bi-synchronizing, not just strongly synchronizing).  Therefore, we obtain the following corollary.
\begin{cor}\label{cor:bisyncExtensions}
Let $1\leq r\leq n-1\in\N$, $T\in\On$ and $\mathfrak{v}$
denote the set of all viable combinations for $T$.  The transducer $T$ admits a completion to an initial transducer $U_{q_0}$ representing an element of $\Bnr$ if and only if there exists $\mathfrak{v}_{j_1},
\ldots, \mathfrak{v}_{j_m}$ a sequence of viable combinations such that $r \equiv
\Sigma_{1 \le i \le m} j_i \equiv m \mod{n-1}$.
\end{cor}

We are finally in position to discuss nesting properties for the subgroups $\Onr$ within $\On$.

Below, for integers $i$, $j$, and $n$, we write $i\mid j\mod{(n-1)}$ if there is an integer $k$
so that $i$ divides $j+k(n-1)$ in the ring of integers.
\begin{prop}\label{prop: nesting}
Suppose $0<i\leq j\leq n-1$ are natural numbers.  If $i \mid j \mod{(n-1)}$ then $\Onk{n}{i}\leq \Onk{n}{j}$.
\end{prop}
\begin{proof}
Let $0<i\leq j\leq n-1$ be natural numbers and assume $i \mid j \mod{(n-1)}$, and $A\in \Onk{n}{i}$ and $B\in \Bnk{n}{i}$ with core $A$.  We will show that $A\in\Onk{n}{j}$.

Since $i \mid j \mod{(n-1)}$, there is a complete antichain $\chi$ in $\wnr{n}{j}$ of length $mi$ for some $m>1$, where we may assume that every element of this antichain is a word of length at least two (i.e., not just a dotted letter).

Build a partition $\rho$ of $\chi$ into $m$ disjoint sets of cardinality $i$, and give each of these sets a bijection to the set of dotted letters from $0$ to $i-1$.  Construct a homeomorphism $\theta_{A,B}$ of $\CCnk{n}{j}$ as $m$ copies of $B$, by using a copy of $B$ for each element of the partition $\rho$, where we replace each dotted letter (from the perspective of $B$) by the string from the corresponding element of the partition under the appropriate bijection.  The homeomorphism $\theta_{A,B}$, when realised as a minimal transducer, has core $A$ and is an element of $\Bnk{n}{j}$.  In particular, $A\in \Onk{n}{j}$.

\end{proof}

Proposition \ref{prop: nesting} shows that subgroups $\Onr$ of $\On$ have a type of lattice structure, with the group $\Onk{n}{1}$ being minimal and a subgroup of the other groups while the group $\Onk{n}{n-1}$ is a container for all of the other groups and hence is $\On$.  In particular, we have.
\begin{prop}\label{Proposition: On is a group}
The set $\On$ under the multiplication inherited from $\SOn$ forms a group.
\end{prop}
\begin{proof}
This follows from Proposition \ref{prop: nesting} as we immediately have $\On=\Onk{n}{n-1}$, while $\Onk{n}{n-1}$ is a group.
\end{proof}

\subsection{On Lipschitz conditions, the groups $\On$, and subgroups of interest\label{s_lipschitz}}
Recall our notation that $\LBnr$ represents the subgroup of $\Bnr$ corresponding to the homeomorphisms of $\CCnr$ that are bi-lipschitz.  Also recall that we call $\Onr$ the image of $\Bnr$ under the quotient by its normal subgroup $\Gnr$, and the image of $\LBnr$ in $\Onr$ is the group $\Lnr$.  We further have the subgroup $\HBnr$ of $\LBnr$ consisting of homeomorphisms which can be represented by transducers with cores that are in fact synchronous, bi-synchronizing, and which contain a state which is a homeomorphism state, and we denote the similarly defined larger subgroup of $\On$ as $\Hn$.  (The condition above on the existence of a homeomorphism state is in fact necessary to create a subgroup for the synchronous bi-synchronizing core automata arising.  In \cite{BleakCameronOlukoya} the authors give an example of a transducer with all of the other properties which when squared becomes asynchronous after minimisation.)   In this subsection we explore some properties of these various groups.  The examples and discussion provided here imply all of the statements of Theorem \ref{thm:Examples}.

In what follows, we often discuss the core transducers which are simultaneously sub-transducers of initial transducers representing elements of the groups $\Snr$ and $\Bnr$, but also, they represent images under the quotients mentioned in the last paragraph.  These transducers do not naturally have initial states.  We show in Figure \ref{Fig: example showing dependence on r} of the following subsection an example of a transducer in $\mathcal{L}_{4,3}$ which has no homeomorphism state, even though it is in fact synchronous.  Thus, in these investigations, sometimes we focus on the combinatorics of the transducers as finite objects, and sometime we focus on their ``local dynamics'' as determined by their local maps.

For given $1\leqslant r<n\in\N$, recall that elements of $\Gnr$ are bi-Lipschitz. Also, due to its strong condition of being bi-synchronizing, any general element of $\Bnr$ might be expected to be bi-Lipschitz.  However, this first impression turns out to be false.

\begin{theorem}
For $1\leqslant r<n$, for $n>2$, the group $\Bnr$ contains elements which are not bi-Lipschitz.
\end{theorem}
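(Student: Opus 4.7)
The plan is to prove the theorem by explicit construction: exhibit a minimal non-degenerate bi-synchronizing transducer $A_{q_0}=(\rd,\{0,1,\ldots,n-1\},R,S,\pi_A,\lambda_A,q_0)$ for some $n\geq 3$ whose associated homeomorphism $\varphi\in\Snr$ fails to be bi-Lipschitz. The guiding idea is that membership in $\LSnr$ corresponds to the core of $A_{q_0}$ having a ``constant expansion ratio 1'' (each core transition outputting a single letter, i.e.\ the core is synchronous, or more generally each cycle balancing input and output lengths), while any asymmetry in a core cycle produces an unbounded shift between cylinder depths in the domain and the range.

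Concretely, I would design $A_{q_0}$ so that its core contains a state $q$ with a self-loop on some letter $l\in\{0,\ldots,n-1\}$ whose output $\lambda_A(l,q)$ has length strictly greater than $1$ (say length $2$), and then add a small number of auxiliary core states handling the other input letters from $q$ so that the resulting map is bijective. The role of $n\geq 3$ is to provide enough letters to build a genuine bijection compensating for the expansion on the $l$-loop: at $q$, the letter $l$ is mapped asynchronously, while the remaining $n-1\geq 2$ letters are routed to auxiliary core states whose structure absorbs the imbalance, for instance by having a corresponding contracting loop elsewhere. With such a construction in hand I would verify (i) non-degeneracy (no $\varepsilon$-cycle in $S$), (ii) that the transducer is synchronizing at some finite level $m$ by direct inspection, and (iii) that the inverse, constructed via the Grigorchuk--Nekrashevych--Suschanski\u\i\ inversion algorithm, is again synchronizing, so that $\varphi\in\Snr$.

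To deduce that $\varphi$ is not bi-Lipschitz, I would pick any $\nu\in\Wnre$ with $\pi_A(\nu,q_0)=q$ and consider, for each $k\geq 1$, pairs $x^{(k)}=\nu\concat l^k\concat\xi_1$, $y^{(k)}=\nu\concat l^k\concat\xi_2$ with $\xi_1,\xi_2\in\CCn$ starting with distinct first letters. Then $|x^{(k)}\wedge y^{(k)}|=|\nu|+k$, while tracing through $A_{q_0}$ gives $|\varphi(x^{(k)})\wedge\varphi(y^{(k)})|=|\lambda_A(\nu,q_0)|+k\,|\lambda_A(l,q)|$ (up to a bounded additive error coming from the first differing transition out of $q$). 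Hence the difference
\[
|\varphi(x^{(k)})\wedge\varphi(y^{(k)})|-|x^{(k)}\wedge y^{(k)}|=\bigl(|\lambda_A(\nu,q_0)|-|\nu|\bigr)+k\bigl(|\lambda_A(l,q)|-1\bigr)
\]
grows linearly in $k$, which contradicts the bounded-depth-shift characterisation of bi-Lipschitz maps with respect to the standard ultrametric on $\CCnr$.

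The main obstacle is building the bijective compensating structure around $q$: the global bijectivity condition forces the expansion on the $l$-loop to be balanced somewhere else in the core, and that compensating piece must itself fit into a bi-synchronizing transducer whose inverse is also bi-synchronizing. This is exactly the constraint that fails to admit a simple solution when $n=2$ (where the hypothesis $n>2$ appears), since the binary alphabet leaves essentially no room to simultaneously maintain the synchronizing condition on the inverse while carrying an asynchronous loop; with $n\geq 3$ the extra letter(s) supply the degrees of freedom needed to construct such a compensating core.
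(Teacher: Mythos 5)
Your overall strategy is the same as the paper's: exhibit an explicit bi-synchronizing transducer whose core contains a cycle on which input length and output length disagree, and then derive a contradiction with bi-Lipschitzness from the linearly growing depth shift along iterates of that cycle (the paper does exactly this with the three-state core transducer of Figure \ref{fig-Shayo}, whose cycle $a\to b\to a$ reads the length-two word $20$ but writes the length-three word $100$, and with its extension $B_{n,r}$ obtained by adjoining an initial state and identity-like transitions on the extra letters). However, as written your argument has a genuine gap: the theorem is an existence statement, so its entire content is the construction you defer. You never actually produce a transducer, and you yourself identify the unresolved point — ``building the bijective compensating structure around $q$'' so that the map is a homeomorphism, is synchronizing, and has a synchronizing inverse — which is precisely the nontrivial part. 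Moreover, your specific design (a self-loop at a core state $q$ on a letter $l$ with $|\lambda_A(l,q)|=2$) runs directly into the structural constraints the paper proves for loop states: if the outputs of all letters at the $l$-loop state are nontrivial and share no nontrivial common prefix, then all outputs there must have length one and form a permutation, so your loop would force empty outputs or shared prefixes elsewhere at $q$, and verifying that such a configuration still yields a bijection with bi-synchronizing inverse is exactly the work that cannot be waved away. The depth-shift computation you give is fine once such a transducer exists, but without the explicit example (or a completed general construction) the proof is not done.

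A secondary point: your closing explanation of the hypothesis $n>2$ — that the binary alphabet ``leaves essentially no room'' for an asynchronous core loop compatible with bi-synchronization — is contradicted by the paper itself, which exhibits a non-bi-Lipschitz (torsion) element of $\mathcal{O}_2$ represented by the four-state transducer of Figure \ref{fig-torsion-O2}. So non-bi-Lipschitz elements of $\mathcal{S}_{2,1}$ do exist; the restriction $n>2$ in this theorem reflects the particular infinite-order example constructed in its proof, not an obstruction at $n=2$, and that heuristic should not be used as a structural ingredient of your argument.
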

\begin{proof}
The transducer $B$ depicted in Figure \ref{fig-Shayo} is bi-synchronizing (at level two), minimal, and is its own core.  However, it is not synchronous.  In this case we are demonstrating for $n=3$.

\begin{figure}[htbp]
\begin{center}
 \begin{tikzpicture}[->,>=stealth',shorten >=1pt,auto,node distance=2.3cm,on grid,semithick,
                    every state/.style={fill=red,draw=none,circular drop shadow,text=white}]
   \node[state] (a)   {$a$};
   \node[state] (b) [below left=of a] {$b$};
   \node[state] (c) [below right=of a] {$c$};
    \path[->]
    (a) edge [out=105,in=135,loop] node [swap]{$1|0$} ()
        edge [out=45,in=75,loop] node [swap]{$0|2$} ()
        edge  [out=185,in=85]node [swap]{$2|1$} (b)
    (b) edge [out=200,in=230, loop] node [swap] {$2|1$} ()
        edge[out=65,in=205]  node  [swap]{$0|00$} (a)
        edge [out=330,in=210] node [swap]{$1|\varepsilon$} (c)
    (c) edge[out=95,in=355]  node [swap]{$0|02$} (a)
        edge [out=115,in=335]  node {$1|2$} (a)
        edge [in=345,out=195] node [swap] {$2|01$} (b);
\end{tikzpicture}
\end{center}
\caption{A non-bi-Lipschitz core transducer of infinite order.\label{fig-Shayo}}
\end{figure}

Although the initial transducer $B_{a}$ is bi-synchronizing (at level two) and will induce a self-homeomorphism of $\mathfrak{C}_3$, it does not produce a Lipschitz map on Cantor space.  Consider the cycle of edges labelled by inputs $2$ and $0$ respectively (connecting $a$ to $b$ and then going back to $a$).  This cycle has length two in the directed graph underlying the transducer and so input word of length two.  However, the output word written on reading this input while traversing this cycle has length three.  In particular, the   point $p=20\overline{20}$ maps to $q=100\overline{100}$ in a non-Lipschitz fashion as points from ever-smaller basic cones about $20\overline{20}$ face an ever-increasing general contraction factor (a witness to this is the sequence of  points $((20)^k\overline{0})_{k\in \N}$, where the $k^{th}$ such point has distance $1/{2^{(2k+1)}}$ from $p$, but lands under $\phi$ at the point $(100)^k\overline{2}$ at distance $1/2^{3k+1}$ from $q$.

It is not hard to build similar examples for any alphabet of size $n$, for $n>2$, based on this example.  In particular, we can build elements of $\Bnr$ for any $n>2$ which are not bi-Lipschitz.

To see that we can build a bi-synchronizing transducer $\widehat{B}_{n,r}$ representing an element $\tilde{B}_{n,r}$ of $\Bnr$ which uses $B$ as part of its core, we will simply add a state to the transducer $B$ depicted above, and many transition edges, as described below.

Figure \ref{fig-Shayo-more} depicts the resulting transducer $\widehat{B}_{4,2}$ (so, for $n=4$ and $r=2$).  Details of the construction follow.

\begin{figure}[htbp]
\begin{center}
\begin{tikzpicture}[->,>=stealth',shorten >=1pt,auto,node distance=2.6cm,on grid,semithick,
                    every state/.style={fill=red,draw=none,circular drop shadow,text=white}]   \node[state] (q) {$q_0$};
   \node[state] (a) [below = of q]  {$a$};
   \node[state] (b) [below left=of a] {$b$};
   \node[state] (c) [below right=of a] {$c$};
    \path[->]
    (q) edge  [out=265,in=95]node [swap]{$_{\dot{0}|\dot{0}}$} (a)
        edge  [out=275,in=85]node {$_{\dot{1}|\dot{1}}$} (a)
    (a) edge [out=115,in=145,loop] node [swap]{$_{1|0}$} ()
        edge [out=35,in=65,loop] node [swap]{$_{0|2}$} ()
        edge [out=255,in=285,loop] node [swap]{$^{3|3}$} ()
        edge  [out=225, in=65] node  {$\!\!\!_{2|1}$} (b)
    (b) edge [out=200,in=230, loop] node [swap] {$2|1$} ()
        edge[out=90,in=180]  node  [swap, near end]{$\!\!\!\!\!\!_{0|00}$} (a)
        edge[out=100, in=170]  node  {${3/3}$} (a)
        edge [out=330,in=210] node [swap]{$1|\varepsilon$} (c)
    (c) edge[out=80,in=10]  node [swap]{$0|02$} (a)
        edge[out=90,in=0]  node [near end]{$_{3|03}\!\!\!\!\!\!\!$} (a)
        edge [out=115,in=315]  node {$_{1|2}\!\!\!$} (a)
        edge [in=345,out=195] node [swap] {$2|01$} (b);
\end{tikzpicture}
\end{center}
\caption{A non-bi-Lipschitz transducer $\widehat{B}_{4,2}$ of infinite order in $\mathcal{S}_{4,2}$.\label{fig-Shayo-more}}
\end{figure}

The added state will be an initial state $q_0$.  For each symbol $x\in\rd$,  the transitions will be given by the rules $\pi(x,q_0)=a$ and $\lambda(x,q_0)=x$.  That is, $q_0$ admits $r$ edges from $q_0$ to $a$, one for each input from $\rd$, and each transition acts as the identity transformation on its letter of $\rd$.

The remaining transitions are as follows.  For each symbol $x$ in $\Xn\backslash\{0,1,2\}$, set $\pi(x,a)=a$ and $\lambda(x,a)=x$.  Furthermore, set $$\pi(x,b)=a \mbox{ and }\lambda(x,b)=x.$$  Finally, set $\pi(x,c)=a$ and $\lambda(x,c)=0x$.

The reader can verify that the transducers so constructed are bi-synchronizing and represent self-homeomorphisms for the spaces $\CCnr$.
\end{proof}

As mentioned above, it is perhaps surprising that for $n>1$ and $1\leq r<n$, the group $\Bnr$ contains elements that are not bi-Lipschitz.  As in fact the subgroup $\LBnr$ of bi-lipschitz homeomorphisms within $\Bnr$ is proper, one might now go to the other extreme, and wonder that perhaps $\LBnr$ would not have a complicated image in $\Onr$ (observing of course that $\Gnr\leq \LBnr$).  However, while the bi-Lipschitz condition is a very strong further condition on the bi-synchronizing maps, in \cite{BleakCameronOlukoya} the authors are able to show that the group $\LBnr$ contains (most of) the complexity of the automorphism group of the (two-sided) full shift on $n$ letters.

\medskip

The example depicted in Figure \ref{fig-L2-Big} is of an element in $\mathcal{L}_2$ that is not in the subgroup $\mathcal{H}_2$, and which is of infinite order.

We observe that elements of $\On$ act on the quotient space $\Xn^{\Z}/\langle \sigma\rangle$, where $\sigma$ is the shift. (Note that a transducer representing an element of $\On$ has no well defined index to which it is writing; for instance, this is clear if it does not represent the core of a Lipschitz transformation. However, because of synchronisation, such a transducer will transform a bi-infinite string to another bi-infinite string; one always has sufficient history to know what to do with a new letter of input. Note that this operation takes any two shift-equivalent bi-infinite sequences to two bi-infinite sequences which are also shift equivalent if indices are assigned.)  From this perspective, we can verify that the transducer in Figure \ref{fig-L2-Big} is actually of infinite order by finding an infinite orbit of the induced action on this quotient space.  For this purpose, the equivalence class of the point $\ldots111(001)^\omega$ (take as representative the point defined using coordinate entries of value $1$ at all negative indices, and with its first entry of value $0$ occurring at index $0$), serves nicely. The reader can verify this by a simple induction tracing the orbit as given below.
\begin{align*}
\ldots111(001)^\omega&\mapsto\ldots111\cdot01\cdot(001)^\omega\mapsto\\
\ldots111\cdot0101\cdot(001)^\omega&\mapsto\ldots111\cdot(01)^3(001)^\omega\mapsto^k\\
\end{align*}

\vspace{-.35 in}

\centerline{${\ldots111\cdot(01)^{k+3}\cdot(001)^\omega}$}
\vspace{.1 in}

\begin{figure}[htbp]
\begin{center}
\begin{tikzpicture}[->,>=stealth',shorten >=1pt,auto,node distance=2.3cm,on grid,semithick,
                    every state/.style={fill=red,draw=none,circular drop shadow,text=white}]
   \node[at={(3,5)},state] (a) {$a$};
   \node[at={(-3,5)},state] (b)  {$b$};
   \node[at={(-5,0)},state] (c) {$q_0$};
   \node[at={(0,2.5)},state] (d)  {$d$};
   \node[at={(3.0,2.5)},state] (e) {$e$};
   \node[at={(5,0)},state] (f)   {$f$};
   \node[at={(0,0)},state] (g)  {$g$};
   \node[at={(0,-3)},state] (h)  {$h$};
   \node[at={(-3,-5)},state] (i){$i$};
   \node[at={(3,-5)},state] (j)  {$j$};
    \path[->]
    (a) edge [out=180,in=0]node [swap]{$0|\varepsilon$} (b)
        edge [out=275,in=85]node {${1|\varepsilon}\!\!\!$} (e)
    (b) edge node [swap] {$0|100$} (c)
        edge [out=275,in=140]node [near start] {$1|01\!\!\!$} (g)
    (c) edge [out=167,in=193, loop] node [swap]{$0|0$} ()
        edge [out=15,in=195] node [near start,swap]{$1/1$} (d)
    (d) edge node {$0|\varepsilon$} (a)
        edge node {$1|\varepsilon$} (g)
    (e) edge [out=95,in=265]node {$_{0|01\!}$} (a)
        edge node [near start, swap]{$_{1|101}\!$} (d)
    (f) edge [out=95,in=320]node [swap]{$0|1$} (a)
        edge node {$1|01$} (d)
    (g) edge node [near end,swap]{$_{1|11}\!$} (h)
        edge [out=330,in=105] node [near start]{$0|00$} (j)
    (h) edge [out=212,in=238, loop] node [swap]{$1/1$}()
        edge node [near start]{$\!\!\!\!{0|0}$} (j)
    (i) edge node {$0|00$} (c)
        edge [out=75,in=210] node [near end]{$1|1$} (g)
    (j) edge node {$0|\varepsilon$} (i)
        edge node {$1|\varepsilon$} (f);
\end{tikzpicture}
\end{center}
\caption{An element of $\mathcal{L}_2$ of infinite order, which is not in $\mathcal{H}_2$.\label{fig-L2-Big}}
\end{figure}

 It is not hard, using the method of the construction following the example in Figure \ref{fig-Shayo-more}, to increase the alphabet size to $n$ for any given integer $n>2$.

\medskip

We give two further example transducers, exploring the boundary cases when $n$ is small.

The first such example, depicted in Figure \ref{fig-Inf-P3}, is of a transducer representing an element of $\mathcal{H}_3$ which is of infinite order.  One can prove by induction that the point $\overline{p}\seteq(p_i)\in\{0,1,2\}^{\Z}$ given by the rules
$$p_i=\left\{\begin{matrix}2 \textrm{ if }i<0\,\\1  \textrm{ if }i\geqslant0.\end{matrix}\right.$$
witnesses an infinite orbit under the iterated action of the transducer of Figure \ref{fig-Inf-P3}.

\begin{figure}[htbp]
\begin{center}
 \begin{tikzpicture}[->,>=stealth',shorten >=1pt,auto,node distance=5cm,on grid,semithick,
                    every state/.style={fill=red,draw=none,circular drop shadow,text=white}]  \node [state] (A)                {$q_0$};
  \node [state] (B)  [right= of A] {$b$};
 \path [->]
 (A) edge [out=145,in=170, loop] node [swap]{$0/1$} (A)
 (A) edge [out=190,in=215, loop] node [swap]{$1/2$} (A)
 (A) edge                        node [swap]{$2/0$} (B)
 (B) edge [out=150,in=30]        node [swap]{$1/1$}(A)
 (B) edge [out=210,in=330]       node {$0/2$}(A)
 (B) edge [out=347,in=13,loop]  node [swap]{$2/0$}(B);
\end{tikzpicture}
\end{center}
\caption{An element of $\mathcal{H}_3$ with infinite order.\label{fig-Inf-P3}}
\end{figure}

Our final example, depicted in Figure \ref{fig-torsion-O2}, is of a non-bi-Lipschitz torsion element of $\mathcal{O}_{2,1}$, given as below.  One can check combinatorially that the transformation represented by this transducer has order two.

\begin{figure}[htbp]
\begin{center}
\begin{tikzpicture}[->,>=stealth',shorten >=1pt,auto,node distance=5cm,on grid,semithick,
                    every state/.style={fill=red,draw=none,circular drop shadow,text=white}]  \node [state] (A)                {$q_0$};
  \node [state] (B)  [above= of A] {$b$};
  \node [state] (C)  [right= of B] {$c$};
  \node [state] (D)  [below= of C] {$d$};
 \path [->]
 (A) edge [out=212,in=237, loop] node [swap]{$0/0$} (A)
 (A) edge [out=85,in=275]        node [swap]{$1/1$} (B)
 (B) edge [out=265,in=95]        node [swap]{$0/10$} (A)
 (B) edge                        node [swap]{$1/\varepsilon$} (C)
 (C) edge                        node {$0/0$} (A)
 (C) edge                        node {$1/11$} (D)
 (D) edge                        node [swap]{$0/0$} (A)
 (D) edge [out=313,in=337, loop] node [swap]{$1/1$} (D);
 \end{tikzpicture}
 \end{center}
\caption{A torsion non-bi-Lipschitz map in $\mathcal{O}_2$.\label{fig-torsion-O2}}
\end{figure}

\subsection{More on the dependence of $\Out(\Gnr)$ on $r$}\label{ssec:dependence on r}
In Proposition \ref{prop: nesting}, we give a condition determining the nesting of the subgroups $\Onr$ of $\On$.  In this subsection we give an example transducer $T$ representing an element of  $\mathcal{L}_{4,3}$  with no homeomorphism states.  The transducer $T$ admits no completion to a larger transducer representing an element of $\mathcal{B}_{4,1}$ or of $\mathcal{B}_{4,2}$.  This shows us, e.g., that the induced actions of the cores of the transducers representing elements of the groups $\mathcal{B}_{4,2}$ and $\mathcal{B}_{4,3}$ on the set $\{0,1,2,3\}^{\Z}$ are not the same.  Therefore, we highlight the possibility that neither of the groups $\mathcal{O}_{4,1}$ and $\mathcal{O}_{4,2}$ are isomorphic to  $\mathcal{O}_{4,3}$.

Consider the transducer $T$ in Figure~\ref{Fig: example showing dependence on r}, we shall show below that all possible completions of this transducer are in $\mathcal{S}_{4,3}$.

    \begin{lemma}\label{lem:Completions in 4,3}
     Let $T$ be the transducer in Figure~\ref{Fig: example showing dependence on r}. Every possible  completion of $T$ is in $\mathcal{S}_{4,3}$.
    \end{lemma}

\begin{proof}

 First observe that $\Image(q_1) \sqcup \Image(q_4) = \CCn$, $\Image(q_3) \sqcup \Image(q_4) = \CCn$ and $\Image(q_2) \sqcup \Image(q_5) = \CCn$. Moreover,  for $(q_i,q_j) \in  Q_T \times Q_T \backslash \{(q_1,q_4), (q_3,q_4), (q_2,q_5)\}$, the union $\Image(q_i) \cup \Image(q_j)$ is not a disjoint union, and also, is not equal to $\CCn$.

 Let $A_{q_0}$ be a completion of $T$ in  $\mathcal{S}_{4,r}$
 for $0 < r < 4.$ Let $l$ be such that for all words
 $\Gamma$ of length $l$ in $W_{4,3}$ we have $\pi(\Gamma, q_0)$ is a state of $T$, and for all words $\chi\in W_4$ of length $l$  and any state $q$ of $Q_A\backslash\{q_0\}$ we
 have $\pi(\chi, q)$ is a state of $T$. Let $\phi_1, \ldots,
 \phi_j$ be all words of length $l$ in $W_{4,3}$ (so in particular $j=3\cdot 4^{l-1}$). Let $\rho_i :=
 \lambda_{A}(\phi_i, q_0)$ and let $p_i := \pi_{A}(\phi_i,
 q_0)$, for $1 \le i \le j$. Let $\rho_{k}\concat x_{k,i} :=
 \rho_{k}
 \concat \lambda_{A}(i, p_k)$, and $p_{k,i} = \pi_{A}(i, p_k)$
 for $1 \le k \le j$ and $i \in X_4$. Observe that for any
  state $q$ of $T$, the map $\lambda_{T}(\centerdot, q): X_4
  \hookrightarrow X_4$ has image size two and the set of
  pre-images of a point in the image also has size 2. Moreover if
  $i_1 \ne i_2$ are elements of $X_4$ which have the same image
  under $\lambda_{T}(\centerdot, q)$ then $(\pi_{T}(i_1
  q),\pi_{T}(i_2 q)) \in \{(q_1, q_4),(q_4, q_1),(q_3, q_4),(q_4, q_3),(q_2, q_5), (q_5, q_2)\}$. This means that for a given $k$ with $1 \le k \le
   j$, $\rho_k \concat x_{k, i_1} = \rho_{k} \concat x_{k, i_2}$ for some distinct
   $i_1$ and $i_2$ in $X_4$. Moreover since $\Image(p_{k, i_1})
   \sqcup \Image(p_{k,i_2}) = \mathfrak{C}_{4}$, then $\rho_k'\concat x_{k', i}$ is
   incomparable to $\rho_k\concat x_{k, i_1}$ otherwise $A_{q_0}$ is not injective. Thus the set $\{\rho_{k}\concat x_{k,
    i} \mid 1 \le k \le j, i \in \Xn \}$ has size $2j$.
    Since $A_{q_0}$ is a homeomorphism, we must have that the
    set $\{\rho_{k}\concat x_{k,i} \mid 1 \le k \le j, i \in \Xn \}$
    forms an antichain for $\mathfrak{C}_{4,3}$ otherwise
    $\sqcup_{1 \le k \le j, \ i \in \Xn} \rho_{k} \concat
    x_{k,i} \concat \Image(p_{k,i})  \ne \mathfrak{C}_{4,3}$.
    We conclude that $2j \equiv r \mod 3$. However, since $j=3\cdot 4^{l-1}$ we see that
    $j \equiv 0\mod 3$. Thus, we can conclude $r$ is congruent to $0$ mod 3, and therefore $r = 3$.
    \end{proof}

   Having shown that $\Onk{4}{1}$ and $\Onk{4}{2}$ sit properly as subgroups of $\Onk{4}{3}$, one could now consider whether or not the $\Onr$ have their isomorphism type depending on $r$ for a fixed $n$.  We mention this question in the final section.

   \subsection{Constructing inverses within $\On$} \label{sec:inversion}
In this subsection, we will look at the combinatorial construction of inverses of elements in $\On$.

Here and until the statement of the next lemma, fix $T \in \On$.  Let $q \in T$ be an arbitrary state. Let
$\Image(q) = U_{\eta_1} \sqcup U_{\eta_2} \sqcup \ldots \sqcup
U_{\eta_l}$ for $\eta_j \in \Xn^{*}$. Fix $i$ such that $1 \le i \le l$ and let $\nu_i :=
{L}_{q}(\eta_i)$, $p_i:= \pi(\nu_i, q)$ and $w_i = \eta_i -
\lambda(\nu_i, q)$. Notice that by definition of the function
${L}_{q}$ we must have that $U_{w_i} \subseteq \Image(p_i)$.
Further observe that ${L}_{q}(\eta_ix) = {L}_{q}(\eta_i)\concat
{L}_{p_i}(w_i\concat x)$. Therefore we may recursively define sets $Q_j'$ by the rules $$Q_0':= \{(w_i,p_i) \mid 1 \le i  \le l\}$$ and for $k \ge 0$ let
\begin{IEEEeqnarray*}{rCl}
Q_{k+1}' :=  &\{& \left(wx - \lambda({L}_{p}(w\concat x), p),
\pi({L}_{p}(w\concat x), p)\right) \mid x \in \Xn, (w,p) \in
Q_{k}'\} \nonumber \\
&\cup& Q_k'.
\end{IEEEeqnarray*}
By the previous observation if $(w,p) \in
Q_{k}'$ for some $k$ then ${L}_{p}(w) = \epsilon$ and $U_{w}
\subseteq \Image(p)$. Now since each state $p \in T$ has clopen
image and $T$ has finitely many states, there are only finitely
many words $w \in \Xn^{\ast}$ such that ${L}_{p}(w) =
\epsilon$ and such that $U_{w} \subseteq \Image(p)$. Therefore
there is a $j \in \N$ such that $Q_j' =   Q_{j+1}'$. Set $Q':=
Q'_{j}$. Now let $T'= \langle\Xn, Q', \pi', \lambda'\rangle$ be the
transducer with state set $Q'$ and input and output function
defined as follows:
\begin{IEEEeqnarray*}{rCl}
\pi'(x, (w,p)) & = & \left(wx -
\lambda({L}_{p}(w\concat x), p),
\pi({L}_{p}(w\concat x), p)\right); \nonumber \\
\lambda'(x, (w,p)) & = &
{L}_{p}(w\concat x)
\end{IEEEeqnarray*}
for all $x \in \Xn$ and $(w,p) \in Q'$. We may extend the
definition of $\pi'$ and $\lambda'$ to elements of $\CCn$ in the
obvious way. Observe that since $\lambda'(\delta, (w,p)) =
{L}_{p}(w\delta) = (w\delta)(T_{p})^{-1}$ for $\delta \in
\CCn$ and $(w,p)$ a state of $Q'$, then $(w,p)$ induces an
injective map on $\CCn$.

Note that $T'$ has no states of incomplete response: if there is some state $(w,p)$ of $Q'$ such that the set of words $\{\lambda'(x, (w,p)) \mid x \in \Xn \}$ admits a non-trivial common prefix, then ${L}(p, w) \ne \epsilon$ which is not possible by our construction.

Denote by $T^{-1}$ the result of identifying equivalent states of $T'$.  Motivated by the following lemma, we call $T^{-1}$ the \emph{core inverse of $T$}.

\begin{lemma}\label{lemma: core inverse and core of inverse of completion coincide}
Let $T = \langle\Xn, Q, \pi, \lambda\rangle \in \On$, and $$A_{q_0} = (\dot{\bf{r}},\Xn, R,S,\pi_0, \lambda_0,q_0)$$ be a completion of $T$ representing an element of $\Bnr$ (where $Q \subseteq S$).  Further suppose $B_{q_1}$ is the minimal transducer representing the inverse of $A_{q_0}$. If $T^{-1} = \langle\Xn, Q', \pi',\lambda'\rangle$ is the core inverse of $T$ as constructed above then $\core{B_{q_1}}$ is $\omega$-equivalent to  $T^{-1}$.
\end{lemma}
\begin{proof}
Let $A_{(\epsilon, q_0)} = (\rd, \Xn, R', S', \lambda'_0, \pi'_0, (\epsilon, q_0))$ be the transducer constructed by the GNS inversion algorithm in \cite{GNSenglish}. Notice that $B_{q_1}$ is $\omega$-equivalent to $A_{(\epsilon, q_0)}$. Our strategy will be to show that after reading any sufficiently long input in $A_{(\epsilon, q_0)}$ the resulting active state is a state of $T'$.

Let $q$ be a state of $T$ and let $\Image(q) = U_{\eta_1} \sqcup
U_{\eta_2} \sqcup \ldots \sqcup
U_{\eta_l}$ for some $\eta_i \in \Xn^{*}$. Let $M := \max\{
\eta_i \mid 1 \le i \le l\}$ and let $\mu$ be any word in
$\Xn^{\ast}$ such that $\lambda(\mu, q) = \eta_i\concat \Delta$ for
some $\Delta \in \Xn^{\ast}$ and some $1 \le i \le l$. Let $\Gamma \in \Wnr$ be any word
such that $\pi_0(\Gamma, q_0) = q$, observe that since $q \in S$
it must be the case that $\lambda_0(\Gamma, q_0) \ne \epsilon$.
We therefore have that $\lambda_0(\Gamma\concat \mu, q_0) =
\lambda_0(\Gamma, q_0)\concat\lambda(\mu, q)$. Consider now the
set ${L}_{q_0}(\lambda_0(\Gamma, q_0)\concat \eta_i)$. Since
$U_{\eta_i} \subset \Image(q)$, either $A_{q_0}$ is not
injective or $\Gamma$ is a prefix of  ${L}_{q_0}(\lambda_0(\Gamma,
q_0)\concat \eta_i)$. Thus we have  ${L}_{q_0}(\lambda_0(\Gamma,
q_0)\concat \eta_i) = \Gamma \concat {L}_{q}(\eta_i)$ and
\begin{IEEEeqnarray*}{rCl}
\pi'_0(\lambda_0(\Gamma, q_0) \concat \eta_i, (\epsilon, q_0)) &=& (\lambda_0(\Gamma, q_0) \concat \eta_i - \lambda_0(\Gamma \concat{L}_{q}(\eta_i), q_0), \\ &\pi&(\Gamma \concat{L}_{q}(\eta_i), q_0  )) \\&=& ( \eta_i - \lambda({L}_{q}(\eta_i), q), \pi({L}_{q}(\eta_i), q  )).
\end{IEEEeqnarray*}

By construction,  $(\eta_i - \lambda({L}_{q}(\eta_i), q), \pi({L}_{q}(\eta_i), q  ))$ is a state of $T'$.  Since $\Gamma \in W_{n,r}$ was arbitrary such that $\pi_0(\Gamma, q_0) = q$ and since $\core{A_{q_0}} = T$, we may chose $\Gamma$ as long as  we please, so that $\lambda_0(\Gamma, q_0)\concat \eta_i$ can be made arbitrarily long. Since $A_{(\epsilon, q_0)}$ is strongly synchronizing, as $A_{q_0} \in \Bnr$, then it follows that for sufficiently long $\lambda_0(\Gamma, q_0)\concat \eta_i,$ $$\pi'_0(\lambda_0(\Gamma, q_0) \concat \eta_i, (\epsilon, q_0))$$ is a state of $\Core(A_{(\epsilon, q_0)})$ and also a state of $T'$.  Now, since the core of $B_{q_1}$ is connected and minimal, we have $T^{-1} = \Core(B_{q_1})$.
\end{proof}

\begin{cor}
For any $1\leq r<n$, if $T \in \Onr$ then $T^{-1} \in  \Onr$ and we have $TT^{-1}=1_{\Onr}=T^{-1}T$.
\end{cor}

\begin{remark}
For $T \in \On$, the lemma above demonstrates that if there is a completion of $T$ which is bi-synchronizing, then all completions of $T$ are bi-synchronizing. In particular all completions of $T$ belong to some $\Bnr$ for appropriate $r$ and the cores of their inverses are all strongly isomorphic to $T^{-1}$.
\end{remark}

\section{Root functions, and detecting flavours of synchronicity}\label{sec:synchronization}
The goal of this section is to explore further properties of the root function, and to determine when a specific homeomorphism of $\CCnr$ begins to act (locally) as a synchronous transducer.  While not central to the main results of the paper, we find these results to be of interest when working with the group $\Bnr$, and in particular when one is trying to discern conditions that force a general element in $\Bnr$ to actually reside in one of the subgroups $\LBnr$ or $\PBnr.$

We start this discussion with the next lemma.

\begin{lemma} \label{claim:propert}
Let $g,h\in \Homeo(\CCnr)$, and let $\nu, \eta \in \Wnre$. Then:
 \begin{enumerate}
\item
$\theta_{h}$ is monotonic.

\item
$\nu\theta_{h}=\eta\theta_{h}$ and
$h_{\nu}=h_{\eta}$ \;\;\; $\Longrightarrow$ \;\;\; $\nu=\eta$.

\item
$\theta_{h}$ is injective $\Longleftrightarrow$ $\theta_{h}$ is an automorphism of $\Wnre$.

\item
$(\nu\theta_{h})\theta_{g}\leqslant (\nu\theta_{hg})$.

\item
$(\nu\theta_{h})\theta_{g}=\nu\theta_{hg}$
$\Longleftrightarrow$ $h_{\nu} g_{_{(\nu\theta_{h})}}
=(hg)_{\nu}$.

\item
$(\nu\theta_{h})\theta_{h^{-1}}=\nu$
$\Longleftrightarrow$ $(h^{-1})_{(\nu\theta_{h})}
=(h_{\nu})^{-1}$ $\Longleftrightarrow$ $U_{\nu}h=U_{(\nu\theta_{h})}$.

\item
If $\nu\theta_{h}\leqslant \nu'\theta_{h}$ implies $\nu\leqslant \nu'$
for every $\nu' \in \Wnre$, then $U_{\nu}h=U_{(\nu\theta_{h})}$.
\item

Assume that $U_{\nu}h=U_{(\nu\theta_{h})}$ and
$\nu\theta_{h}<(\nu \concat a)\theta_{h}$ for all $a\in\Xn$.
Then there is a permutation $\pi\in S_n$ so that $U_{\nu\concat a}h=U_{((\nu\concat (a\pi))\theta_{h})}$ for all $a\in\Xn$.

\item
$U_{\nu'}h = U_{((\nu')\theta_{h})}$ holds for every $\nu' \in \Wnre$
if and only if $\theta_h \in \Aut(\Wnre)$.

\item
There is no common nontrivial initial segment for all minimal elements of the set
$\{ \tau\theta_{h_{\nu}} \mid \tau \in {W_{n}} \} \setminus \{ \varepsilon \}$.
 \end{enumerate}
\end{lemma}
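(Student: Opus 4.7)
I would dispatch points (1)--(5) by unwinding the definition $U_\nu h \subseteq U_{\theta_h(\nu)}$ together with the minimality in the definition of $\Root$ and the homeomorphism nature of $h$. For (1): if $\nu \leqslant \eta$ then $U_\eta \subseteq U_\nu$, hence $U_\eta h \subseteq U_\nu h \subseteq U_{\theta_h(\nu)}$, so the minimality in the definition of $\theta_h(\eta)$ forces $\theta_h(\nu) \leqslant \theta_h(\eta)$. For (2), if $\theta_h(\nu) = \theta_h(\eta)$ and $h_\nu = h_\eta$, then the relation $(\nu \concat x) h = \theta_h(\nu) \concat (x \cdot h_\nu)$ gives $(\nu \concat x) h = (\eta \concat x) h$ for every $x \in \CCn$, and injectivity of $h$ yields $\nu = \eta$. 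Point (3) combines (1) and (2) with a symmetric argument for $\theta_{h^{-1}}$: an injective monotone self-map of the tree $\Wnre$ whose inverse (coming from $h^{-1}$) is also monotone must be a bijection onto the tree. For (4) and (5), I would chase the inclusions $(U_\nu)(gh) \subseteq U_{\theta_g(\nu)} \cdot h \subseteq U_{\theta_h(\theta_g(\nu))}$ (in whichever composition order the paper's conventions demand), reading off both the prefix inclusion in (4) and, by comparing the defining identities for $(gh)_\nu$ and for $g_\nu$ followed by $h_{\theta_g(\nu)}$, the precise condition under which both inclusions are simultaneously equalities, which is (5).

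\textbf{The ``equality'' block.} Points (6)--(9) pin down exactly when the containment $U_\nu h \subseteq U_{\theta_h(\nu)}$ is tight. For (6), I would show that the central equality $h(U_\nu) = U_{\theta_h(\nu)}$ is equivalent to each of the other two by applying $h^{-1}$ to both sides and invoking the homeomorphism property. For (7), given $y \in U_{\theta_h(\nu)}$, set $x = h^{-1}(y)$ and let $\nu_k$ be the length-$k$ prefix of $x$; for $k$ large, $h(U_{\nu_k})$ is a neighbourhood of $y$ contained in $U_{\theta_h(\nu)}$, forcing $\theta_h(\nu) \leqslant \theta_h(\nu_k)$, and the hypothesis yields $\nu \leqslant \nu_k$, so $x \in U_\nu$. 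For (8), the partition $U_\nu = \bigsqcup_a U_{\nu \concat a}$ maps under $h$ to a partition of $U_{\theta_h(\nu)}$; the assumption $\theta_h(\nu \concat a) > \theta_h(\nu)$ places each piece $h(U_{\nu \concat a})$ inside a single child cone $U_{\theta_h(\nu) \concat b_a}$, and a disjointness/exhaustion count (the images partition, and the child cones partition, both into $n$ pieces) forces $a \mapsto b_a$ to be a permutation with each containment an equality. Point (9) then follows by applying (7) to both $h$ and $h^{-1}$: a tree-automorphism $\theta_h$ has a monotone inverse, so the hypothesis of (7) is available for $h$ and $h^{-1}$ alike; the converse is immediate since the hypothesis makes $\nu' \mapsto \theta_h(\nu')$ a bijective monotone endomorphism of $\Wnre$.

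\textbf{Point (10), the main subtlety.} The claim is equivalent to the assertion that $h_\nu(\CCn)$ is not contained in any proper cone of $\CCn$, and this is where I expect the real care to be needed. I would argue by contradiction: suppose every minimal non-trivial $\theta_{h_\nu}(\tau)$ begins with a common non-trivial prefix $p$. By the monotonicity established in (1), applied to $h_\nu$, any $\tau$ with $\theta_{h_\nu}(\tau) \ne \varepsilon$ has a shortest prefix $\tau'$ along which the root first becomes non-trivial; $\theta_{h_\nu}(\tau')$ is then a minimal non-trivial root, and $\theta_{h_\nu}(\tau)$ extends it, so $p$ is a prefix of every non-trivial $\theta_{h_\nu}(\tau)$. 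Since for any $x \in \CCn$ the image $h_\nu(x)$ is an infinite word whose finite initial segments include all the roots $\theta_{h_\nu}(\tau)$ as $\tau$ runs over the finite prefixes of $x$, we conclude $h_\nu(x) \in U_p$ for every $x$, i.e.\ $h_\nu(\CCn) \subseteq U_p$. Using the defining identity $(\nu \concat x) h = \theta_h(\nu) \concat h_\nu(x)$, this gives $h(U_\nu) \subseteq U_{\theta_h(\nu) \concat p}$, which directly contradicts the minimality in the definition of $\theta_h(\nu)$.
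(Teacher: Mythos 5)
Most of your outline coincides with the paper's own proof: points (1), (2), (4)--(8) are the same unwinding of the definition of $\Root$ and of the local action, your direct argument for (7) is just the contrapositive of the paper's, and your treatment of (10) is correct and in fact fills in the paper's one-line remark properly (every nontrivial root extends a minimal one, the roots $\theta_{h_\nu}(x|_k)$ along prefixes of any $x$ eventually become nontrivial, so a common prefix $p$ of the minimal roots would force $h_\nu(\CCn)\subseteq U_p$ and contradict maximality of $\theta_h(\nu)$).

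The genuine gap is point (3), in the direction ``$\theta_h$ injective $\Rightarrow$ $\theta_h\in\Aut(\Wnre)$''. You appeal to the principle that an injective monotone self-map of $\Wnre$ whose ``inverse (coming from $h^{-1}$)'' is monotone must be a bijection, but $\theta_{h^{-1}}$ is not an inverse of $\theta_h$ in general: point (4) only gives $\theta_{h^{-1}}(\theta_h(\nu))\leqslant\nu$, and by point (6) the equality $\theta_{h^{-1}}(\theta_h(\nu))=\nu$ is \emph{equivalent} to $h(U_\nu)=U_{\theta_h(\nu)}$, which is precisely what injectivity is supposed to yield here, not something that may be assumed. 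Moreover, the abstract principle fails anyway: an injective monotone self-map of $\Wnre$ fixing $\varepsilon$ need not be surjective (for instance letter-doubling $a_1\ldots a_k\mapsto a_1a_1\ldots a_ka_k$ on the $\Wne$ part), so one must use that $h$ is a homeomorphism of all of $\CCnr$. The paper does this by induction on $|\nu|$: starting from $h(U_\varepsilon)=\CCnr=U_{\theta_h(\varepsilon)}$, assuming $h(U_\eta)=U_{\theta_h(\eta)}$ with $|\theta_h(\eta)|=|\eta|$, the $n$ disjoint clopen sets $h(U_{\eta\concat j})$ partition $U_{\theta_h(\eta)}$; injectivity forces $\theta_h(\eta\concat j)>\theta_h(\eta)$ strictly, so each piece sits inside a single child cone, and the same pigeonhole count you yourself use in (8) shows each piece equals a child cone $U_{\theta_h(\eta)\concat i}$. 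Hence $\theta_h$ preserves lengths and is an automorphism. Note this also repairs the converse of your (9), where ``bijective\ldots is immediate'' is too quick: injectivity is immediate from $h(U_{\nu'})=U_{\theta_h(\nu')}$, but surjectivity of $\theta_h$ requires (3) (or a repeat of the same level-by-level count).
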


\begin{proof}
$(1)$  For $\nu,\eta\in \Wnr$, we have
$$\nu\leqslant \eta \Longleftrightarrow
U_{\nu}\supseteq U_{\eta}
\Longleftrightarrow
U_{\nu}h\supseteq U_{\eta}h,$$
and the last inclusion implies that  $\Root(U_{\nu}h)\leqslant \Root(U_{\eta}h)$,
thus $\nu\theta_{h}\leqslant \eta\theta_{h}$.

$(2)$ $\nu\theta_{h}=\eta\theta_{h}$ and
$h_{\nu}=h_{\eta}$ imply that $U_{\nu}h=U_{\eta}h$,
which happens if and only if $U_{\nu}=U_{\eta}$, thus $\nu=\eta$.

$(3)$ One direction is trivial. In the other direction, assume that
$\theta_{h}$ is injective. We prove by induction on $l=|\nu|$ that
$|\nu|=|\nu\theta_{h}|$. That will imply that $\theta_{h}$ is a
bijection, and since it is also monotonic, the assertion follows.

The claim is obvious for $l = 0$.
Assume that for all $\eta \in \Wnre$ with $|\eta|<l$, $|\eta|=|\eta\theta_{h}|$
and $U_{\eta}h=U_{(\eta\theta_{h})}$.
Let $\eta,\nu\in \Wnre$ and $j\in \{0,1, \ldots, n-1\}$
such that $\nu=\eta\concat j$ and  $|\eta|=l-1$. Then by the induction hypothesis,
$$U_{\eta}h = U_{(\eta\theta_{h})}
= U_{\eta\concat 0}h\cup U_{\eta\concat 1}h \cup \ldots \cup U_{\eta \concat{n-1}}h.$$
So as $\theta_h$ is injective we must have that
$$\{ U_{\eta\concat 0}h, U_{\eta\concat 1}h, \ldots, U_{\eta\concat {n-1}}h \}=
\{ U_{(\eta\theta_{h}\concat 0)}, U_{(\eta\theta_{h}\concat 1)}, \ldots , U_{(\eta\theta_{h}\concat {n-1})} \}.$$
Thus $U_{\eta\concat j}h=U_{(\eta\theta_{h}\concat i)}$ for some
$i\in\{0,1\}$ which also implies that
$\nu\theta_{h}=\eta\theta_{h}\concat i$.
Hence $U_{\nu}h=U_{(\nu\theta_{h})}$ and $|\nu|=|\nu\theta_{h}|$.

$(4)$  Since $(U_{\nu})h\subseteq U_{(\nu\theta_{h})}$ we have that
$U_{\nu}(hg)=(U_{\nu}h)g\subseteq U_{(\nu\theta_{h})}g$.
Thus $(\nu\theta_{h})\theta_{g}\leqslant \nu\theta_{(hg)}$.

$(5)$ For every $y\in \CCnr$ we have on one hand that
$$(\nu\concat y)hg=\nu\theta_{hg}\concat (y)(hg)_{\nu}$$ and on the other
that $$((\nu\concat y)h)g=(\nu\theta_{h}\concat (y)h_{\nu})g=
(\nu\theta_{h})\theta_{g}\concat ((y)h_{\nu})g_{_{(\nu\theta_{h})}}.$$
Thus $(\nu\theta_{h})\theta_{g}=\nu\theta_{hg}$ $\Longleftrightarrow$
$(y)(hg)_{\nu}=((y)h_{\nu})g_{_{(\nu\theta_{h})}}$
for every $y\in \CCnr$.

$(6)$ The first equivalence is a special case of $(5)$ with $g=h^{-1}$.
For the second equivalence note that
$(\nu\theta_{h})\theta_{h^{-1}}=\nu$
$\Longrightarrow$ $U_{(\nu\theta_{h})}h^{-1}\subseteq U_{\nu}$ $\Longleftrightarrow$ $U_{(\nu\theta_{h})}\subseteq U_{\nu}h$.
But by definition $U_{(\nu\theta_{h})}\supseteq (U_{\nu})h$, thus
$U_{(\nu\theta_{h})}= (U_{\nu})h$.
The other direction is clear.

$(7)$
If $(U_{\nu})h\subsetneq U_{(\nu\theta_{h})}$, then there exists $\nu\nleq \eta\in \Wnre$
such that $U_{\eta}h\subsetneq U_{(\nu\theta_{h})}$, hence $\nu\theta_{h}\leqslant \eta\theta_{h}$.

$(8)$ Assume that $(U_{\nu})h=U_{(\nu\theta_{h})}$ and
$\nu\theta_{h}<\nu \concat a\theta_{h}$ for all $a\in\Xn$.
By the first condition we get that
$$U_{(\nu\theta_{h})}=\coprod_{a\in \Xn}(U_{\nu \concat a})h.$$
So by the second condition, we must necessarily have $\nu\concat a\theta_{h}=\nu\theta_{h} \concat a\pi$
for all $a \in \Xn$ for some permutation $\pi$ of $\Xn$. Thus, for all
$a\in \Xn$, we have $U_{\nu\concat a}h=U_{(\nu\theta_{h}\concat \pi(a))}.$

$(9)$ If $\theta_h \in \Aut(\Wnre)$ then $\theta_h^{-1} = \theta_{h^{-1}}$. Therefore, by $(6)$, for every
$\nu' \in \Wnre$ we have $U_{\nu'}h=U_{(\nu')\theta_{h}}$.
Conversely, assume that for every $\nu' \in \Wnre$ we have $U_{\nu'}h=U_{(\nu')\theta_{h}}$.
Then by $(6)$, we have $\theta_h\theta_{h^{-1}}=Id$, which implies
that $\theta_h$ is injective, and therefore by $(3)$, $\theta_h \in \Aut(\Wnre)$.

$(10)$ This follows as $\tau\theta_{h_{\nu}}=\Root(U_{\tau}h_{\nu})$.
\end{proof}

\begin{lemma}
Let $h\in \Homeo(\CCnr)$ and $\nu,\eta,\tau\in \Wnr$ be such that $U_{\eta}\in \Dec(U_{\nu}h)$.
Then:

 \begin{enumerate}
\item
If $\eta\leqslant \tau$, then $\nu\leqslant \tau\theta_{h^{-1}}$.

\item
If $\tau<\eta$, then $ \tau\theta_{h^{-1}}<\nu$.

 \end{enumerate}
\end{lemma}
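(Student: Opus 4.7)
The plan is to exploit the fact that $U_{\eta}\in\Dec(h(U_{\nu}))$ encodes two pieces of information: not only $U_{\eta}\subseteq h(U_{\nu})$, but also that $U_{\eta}$ is maximal among basic cones contained in $h(U_{\nu})$. Both parts then follow by pushing through $h^{-1}$ and comparing with the defining minimality of $\theta_{h^{-1}}(\tau)$, namely that $U_{\theta_{h^{-1}}(\tau)}$ is the smallest basic cone in $\Bnr$ containing $h^{-1}(U_{\tau})$.

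For part (1), I would first observe that $\eta\leqslant\tau$ is equivalent to $U_{\tau}\subseteq U_{\eta}$, so combined with $U_{\eta}\subseteq h(U_{\nu})$ this gives $h^{-1}(U_{\tau})\subseteq U_{\nu}$. Since $U_{\nu}$ is a basic cone containing $h^{-1}(U_{\tau})$ and $U_{\theta_{h^{-1}}(\tau)}$ is the minimal such cone, I conclude $U_{\theta_{h^{-1}}(\tau)}\subseteq U_{\nu}$, that is, $\nu\leqslant\theta_{h^{-1}}(\tau)$.

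For part (2), my first task is to establish $U_{\tau}\not\subseteq h(U_{\nu})$. Granting this, $h^{-1}(U_{\tau})\not\subseteq U_{\nu}$, while $h^{-1}(U_{\tau})\supseteq h^{-1}(U_{\eta})$, which is a nonempty subset of $U_{\nu}$. Hence $U_{\theta_{h^{-1}}(\tau)}$, which contains $h^{-1}(U_{\tau})$, meets $U_{\nu}$ but is not contained in $U_{\nu}$. Using the standard fact that any two basic cones in $\Bnr$ are either disjoint or comparable by prefix, I conclude $U_{\nu}\subsetneq U_{\theta_{h^{-1}}(\tau)}$, which reads precisely as $\theta_{h^{-1}}(\tau)<\nu$.

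The only (mild) obstacle is therefore justifying $U_{\tau}\not\subseteq h(U_{\nu})$ when $\tau<\eta$ strictly. For this I would argue by minimality of $\Dec$: if $U_{\tau}\subseteq h(U_{\nu})$, then every member of $\Dec(h(U_{\nu}))$ that meets $U_{\tau}$ must be a subcone of $U_{\tau}$ (otherwise it would be a strict ancestor of $U_{\tau}$ and hence comparable with $U_{\eta}\subsetneq U_{\tau}$, violating the antichain property of $\Dec$). Replacing all such subcones by the single cone $U_{\tau}$ would then yield a strictly smaller antichain whose union is still $h(U_{\nu})$, contradicting the minimality of $\Dec(h(U_{\nu}))$. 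With this in hand, both parts of the lemma fall out as described above.
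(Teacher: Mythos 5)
Your proof is correct and follows essentially the same route as the paper: part (1) is identical, and part (2) rests on the same key observation that $\tau<\eta$ forces $U_{\tau}\nsubseteq h(U_{\nu})$, differing only in that you obtain comparability of $\theta_{h^{-1}}(\tau)$ and $\nu$ from the nested-or-disjoint property of cones, while the paper gets it from monotonicity of $\theta_{h^{-1}}$ together with $\nu\leqslant\theta_{h^{-1}}(\eta)$. Your explicit justification of $U_{\tau}\nsubseteq h(U_{\nu})$ via minimality of $\Dec(h(U_{\nu}))$ carefully fills in a step the paper merely asserts.
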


\begin{proof} $(1)$ Since $U_{\eta}\in \Dec(U_{\nu}h)$, we have $U_{\eta} \subset U_{\nu}h $,
hence $U_{\eta}h^{-1}  \subset U_{\nu}$. If $\eta\leqslant \tau$ then this implies
$U_{\tau}h^{-1}\subseteq U_{\nu}$, thus $\nu\leqslant \tau\theta_{h^{-1}}$.

$(2)$ If $\tau < \eta$ then $U_{\tau} \nsubseteq U_{\nu}h $, hence
$U_{\tau}h^{-1}\nsubseteq U_{\nu}$. But $ \tau\theta_{h^{-1}}\leqslant \eta\theta_{h^{-1}}$
and $\nu \leqslant \eta\theta_{h^{-1}}$ so we must have $ \tau\theta_{h^{-1}}<\nu$.
\end{proof}

\vspace{0.25cm}

When we are given a homeomorphism $h\in \Homeo(\CCnr)$, starting with the root function $\theta_{h}$ of $h$, we can build
another function $\bar{\theta}_{h}:\Wnre\longrightarrow \Wnre$,  which we call the \emph{local root function of $h$}.

\begin{definition}
Define $\varepsilon\bar{\theta}_{h} := \varepsilon$, and for every $\nu\in \Wnre$ and $l\in \{0,\ldots,n-1\}$,
define $(\nu\concat l)\bar{\theta}_{h} := (\nu\concat l)\theta_{h}-\nu\theta_{h}$.
\end{definition}

Note that this function essentially detects the suffix which is added to $\nu \theta_h$ when constructing $(\nu\concat l)\theta_h$.

\begin{lemma}
For all $h\in \Homeo(\CCnr)$, the following are equivalent:
 \begin{enumerate}
\item
$\theta_{h}\in \Aut(\Wnre)$,
\item
$\nu\bar{\theta}_{h}\in \{0,\ldots,n-1\}$ holds for every $\nu\in \Wnr$,
\item
$\nu\bar{\theta}_{h}\ne \varepsilon$ holds for every $\nu\in \Wnr$,
\item
$\theta_{h}$ is injective.
 \end{enumerate}
\end{lemma}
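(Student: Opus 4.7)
I will establish the four conditions as equivalent through the cyclic chain $(1)\Rightarrow(2)\Rightarrow(3)\Rightarrow(4)\Rightarrow(1)$, with the bulk of the new work concentrated in $(3)\Rightarrow(4)$. The implication $(4)\Rightarrow(1)$ is handed to us directly by point~(3) of Lemma~\ref{claim:propert}, whose proof by induction on $|\nu|$ shows that an injective monotonic $\theta_h$ must preserve length and satisfy $h(U_\nu)=U_{\theta_h(\nu)}$, thus yielding a tree automorphism of $\Wnre$. The implication $(2)\Rightarrow(3)$ is immediate, since no single letter of $\nset$ is the empty word.

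For $(1)\Rightarrow(2)$ I would reuse the length-preservation that emerges from the proof of point~(3) of Lemma~\ref{claim:propert}: if $\theta_h\in\Aut(\Wnre)$, then $|\theta_h(\nu)|=|\nu|$ for every $\nu\in\Wnre$. Writing a typical $\nu\in\Wnr$ as $\nu=\zeta\concat l$ with $\zeta\in\Wnre$ and $l\in\nset$, monotonicity and length preservation together force $\bar\theta_h(\nu)=\theta_h(\nu)-\theta_h(\zeta)$ to consist of exactly one letter; since both $\theta_h(\zeta)$ and $\theta_h(\nu)$ lie in $\Wnre$ and therefore share the same initial $\rd$-letter, the single letter of difference must lie in $\nset=\{0,\ldots,n-1\}$.

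The main step, $(3)\Rightarrow(4)$, I would argue by contradiction. Suppose $\theta_h(\nu_1)=\theta_h(\nu_2)=\mu$ with $\nu_1\ne\nu_2$ in $\Wnre$. If $\nu_1$ and $\nu_2$ are comparable, the chain connecting them carries a constant value under the monotonic $\theta_h$, so at some internal step $\zeta<\zeta\concat l$ (with $\zeta\in\Wnre$ and $l\in\nset$, hence $\zeta\concat l\in\Wnr$) we would have $\bar\theta_h(\zeta\concat l)=\varepsilon$, directly contradicting~(3). If $\nu_1\perp\nu_2$, take $\zeta\in\Wnre$ to be their longest common prefix and write $\nu_i=\zeta\concat a_i\concat w_i$ with distinct $a_1,a_2\in\nset$. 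By~(3) both $\theta_h(\zeta\concat a_i)$ are strict extensions of $\theta_h(\zeta)$; since both are prefixes of the common value $\mu$, they are comparable. I would then analyse the partition $h(U_\zeta)=\bigsqcup_{l\in\nset}h(U_{\zeta\concat l})$ of disjoint clopen pieces, each sitting inside the strict sub-cone $U_{\theta_h(\zeta\concat l)}$ of $U_{\theta_h(\zeta)}$: the comparability of $\theta_h(\zeta\concat a_1)$ and $\theta_h(\zeta\concat a_2)$ forces the two corresponding pieces to share the same first-letter extension beyond $\theta_h(\zeta)$, collapsing two of the $n$ pieces into a common immediate sub-cone of $U_{\theta_h(\zeta)}$. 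A finite induction on $|\mu|-|\theta_h(\zeta)|$ then pushes this collapse deeper into the sub-tree until it conflicts with the defining property that $\theta_h(\zeta)$ is the root of $h(U_\zeta)$.

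The hard part will be the incomparable case when $n\geq 3$: for $n=2$ a single pigeonhole step leaves the sibling immediate sub-cone uncovered and thereby contradicts the root identity in one move, while for general $n$ a single collapse only forces two of the $n$ pieces into a shared sub-cone without immediately lengthening the root of $h(U_\zeta)$. The recursive descent into the over-occupied sub-cone must be tracked carefully, shrinking the configuration at each step into a strictly smaller instance of the same incomparable-roots problem, to extract the contradiction.
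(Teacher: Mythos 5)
Your implications $(1)\Rightarrow(2)\Rightarrow(3)$ and $(4)\Rightarrow(1)$ are fine (the latter is exactly point (3) of Lemma \ref{claim:propert}, which is also how the paper treats condition (4)). The genuine gap is the step $(3)\Rightarrow(4)$ in the incomparable case for $n\geq 3$, which you explicitly leave open, and the problem is not one of careful bookkeeping: the contradiction you are after cannot be extracted from the ingredients you allow yourself. Your sketch uses only that the pieces $h(U_{\zeta\concat l})$ are pairwise disjoint, that each sits inside $U_{\theta_h(\zeta\concat l)}$, and that $\theta_h(\zeta)$ is the \emph{root} of $h(U_\zeta)$ --- i.e.\ injectivity, continuity and root-minimality --- and it never uses surjectivity of $h$. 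With only those ingredients the implication is false once $n\geq 3$. For instance, on $\mathfrak{C}_3$ let $\gamma(x_1x_2x_3\ldots)=x_10x_20x_30\ldots$ and $\delta(x_1x_2x_3\ldots)=x_11x_21x_31\ldots$, and define a continuous injection $h$ by $h(2\concat x)=1\concat x$, $h(0\concat j\concat x)=0\concat j\concat j\concat\gamma(x)$ for $j\in\{0,1,2\}$, and $h(1\concat 0\concat x)=0\concat 0\concat 0\concat\delta(x)$, $h(1\concat 1\concat x)=0\concat 1\concat 2\concat\delta(x)$, $h(1\concat 2\concat x)=0\concat 2\concat 1\concat\delta(x)$. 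One checks that the images are pairwise disjoint, that every edge of the tree strictly increases the root (so $\bar{\theta}_{h}(\nu)\neq\varepsilon$ for all nonempty $\nu$, i.e.\ condition (3) holds), and yet $\theta_h(00)=\theta_h(10)=000$, so $\theta_h$ is not injective. Here the ``collapse'' of the two children $0$ and $1$ of $\zeta=\varepsilon$ into the sub-cone $U_0$ never conflicts with root-minimality, because the image of the third child fills $U_1$; your proposed recursive descent has nowhere to go, and no refinement of it that ignores surjectivity can succeed.

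So the missing idea is precisely the use of surjectivity, and with it the paper's top-down induction rather than your bottom-up contradiction: since $h$ is onto, $h(U_\varepsilon)=\CCnr=U_{\theta_h(\varepsilon)}$ holds \emph{exactly}, and condition (3) at the children allows one to apply point (8) of Lemma \ref{claim:propert} to get that $h$ carries the cones at the first level bijectively onto cones at the first level (permuting them); iterating downward gives $h(U_\nu)=U_{\theta_h(\nu)}$ and $|\theta_h(\nu)|=|\nu|$ for every $\nu$, i.e.\ $(3)\Rightarrow(1)$, after which $(4)$ follows from point (3) of Lemma \ref{claim:propert}. Your pigeonhole observation for $n=2$ is consistent with this picture: there the two colliding children exhaust all children of the branch point, so root-minimality alone forces the contradiction, which is exactly why no example of the above kind exists for $n=2$ but does for $n\geq 3$.
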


\begin{proof}
$(1)\Rightarrow (2)$: This follows from the monotonicity of $\theta_h$.

$(2)\Rightarrow (3)$: This is trivial.

$(3)\Rightarrow (1)$ Assume that for every $\nu\in \Wnr$, we have $\nu\bar{\theta}_{h}\ne \varepsilon$.
Then as $U_{\varepsilon}h=(\CCnr)h=\CCnr=U_{(\varepsilon\theta_{h})}$, by Lemma \ref{claim:propert} $(8)$,
we get that  for every $j\in \{0,\ldots,n-1\}$ there exists $l\in \{0,\ldots,n-1\}$
such that $j\bar{\theta}_{h}=l$ and  $U_{j}h=U_{l}$.
Now the claim follows by induction.

$(1)\Longleftrightarrow(4)$ By Lemma \ref{claim:propert} $(3)$.
\end{proof}

\vspace{0.25cm}


Recall that for a permutation $\pi \in \Sym(\{0,\ldots,n-1\})$, we defined ({\em c.f.,} Definition \ref{definition:pi-twist})
the maps $\widehat{\pi}_{n} : \CCn \rightarrow \CCn$  and $\widehat{\pi}_{n,r} : \CCnr \rightarrow \CCnr$ by twisting each entry
according to $\pi$ (except at the first index which remains unchanged in the case of $\CCnr$). The next lemma should be obvious.

\begin{lemma}
\label{prop:1}
An element $h\in \Homeo(\CCnr)$ coincides with $\widehat{\pi}$ up to a permutation of the first coordinate if and only if for every $\nu\in \Wnr$ we have that $U_{\nu}h$ is a cone $U_{\eta} \in \Banr$ and  $U_{\nu\concat l}h=U_{\eta\concat \pi(l)}$ for all $0\leqslant l\leqslant n-1$.
\end{lemma}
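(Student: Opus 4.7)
The plan is to prove both directions of the equivalence. The forward direction should be a one-line unpacking of the definition of $\widehat{\pi}_{n,r}$: if $h(c\, a_1 a_2 \ldots) = \tau(c)\, \pi(a_1) \pi(a_2) \ldots$ for some $\tau \in \Sym(\rd)$, then for any word $\nu = c\, a_1 \ldots a_k$ one has $h(U_\nu) = U_\eta$ with $\eta = \tau(c)\, \pi(a_1) \ldots \pi(a_k)$, and adjoining a letter $l$ to the right of $\nu$ corresponds to adjoining $\pi(l)$ to the right of $\eta$.

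For the backward direction, I would first apply the cone-preserving hypothesis at the shortest allowed prefixes, taking $\nu = c \in \rd$ (reading $\Wnr$ in the inclusive convention so that single-letter words from $\rd$ are permitted, without which the conclusion genuinely fails). Then $h(U_c) = U_{\eta_c}$ for some $\eta_c \in \Wnre$. Since the cones $\{U_c\}_{c \in \rd}$ partition $\CCnr$ and $h$ is a homeomorphism, the family $\{U_{\eta_c}\}_{c \in \rd}$ forms a maximal antichain of size exactly $r$ in $\Wnre$. The only such antichain is $\{U_\beta : \beta \in \rd\}$ itself: each of the $r$ pairwise disjoint subtrees of $\Wnre$ rooted at the elements of $\rd$ must contribute at least one antichain element in order to cover the corresponding $U_\beta$, and a total count of $r$ forces exactly one element per subtree, which must moreover be the root itself. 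Hence $c \mapsto \eta_c$ is a permutation $\tau \in \Sym(\rd)$.

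Once the level-one behaviour is pinned down, I would iterate the children relation to obtain by induction on $k$ that $h(U_{c\, a_1 \ldots a_k}) = U_{\tau(c)\, \pi(a_1) \ldots \pi(a_k)}$ for every $\nu = c\, a_1 \ldots a_k$. Intersecting the nested cylinder neighborhoods of an arbitrary point $c\, a_1 a_2 \ldots \in \CCnr$ then gives $h(c\, a_1 a_2 \ldots) = \tau(c)\, \pi(a_1) \pi(a_2) \ldots$, exhibiting $h$ as the twist $\widehat{\pi}_{n,r}$ composed with the first-coordinate permutation $\tau$. The only non-mechanical step is the maximal-antichain identification of the $U_{\eta_c}$'s with the root cones $U_\beta$, which is a short forest-counting observation; the remainder of the proof is the sort of formal bookkeeping consistent with the authors' remark that the lemma should be obvious.
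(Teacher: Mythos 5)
Your proof is correct, and there is nothing in the paper to compare it against: the authors introduce this statement with ``The next lemma should be obvious'' and give no argument, so the routine verification you supply (forward direction by unpacking the definition of the $\sigma$-twist; backward direction by pinning down the images of the root cones, inducting along prefixes, and intersecting nested cylinders) is exactly the intended one. Your parenthetical observation is moreover a genuine and correct catch rather than pedantry: as literally defined, $\Wnr=\rd\times\Wn$ excludes the single-letter words $c\in\rd$, and with that strict quantification the backward implication is false --- for example the homeomorphism $c\concat a_1a_2a_3\ldots\mapsto c\concat\rho(a_1)\pi(a_2)\pi(a_3)\ldots$ with $\rho\neq\pi$ satisfies the displayed condition for every $\nu$ containing at least one letter of $\nset$, yet does not coincide with $\widehat{\pi}$ up to a permutation of the first coordinate. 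Reading the quantifier inclusively (over $\Wnre$, i.e.\ allowing $\nu=c$ for $c\in\rd$) repairs this, and your counting argument that any partition of $\CCnr$ into $r$ cones of $\Bnr$ must consist of the $r$ root cones is sound: each cone lies in a unique root cone, covering forces at least one per root cone, and the count $r$ forces exactly one, which must then be the root cone itself, yielding the permutation $\tau$ of $\rd$. The subsequent induction on prefix length and the passage to points via nested cylinders are both correct, so the proposal stands as a complete proof of the lemma the paper leaves unproved.
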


The following proposition detects when a homeomorphism of $\CCnr$ acts, on some cone, as an iterated permutation.
\begin{prop}
\label{pro:permut}
Let $h\in \Homeo(\CCnr)$ and $\nu\in \Wnr$.
If $h_\nu=h_{\nu\concat l}$ holds for every $0\leqslant l\leqslant n-1$,
then $h_\nu=\hat{\pi}_n$ for some $\pi \in \Sym(\{0,\ldots,n-1\})$.
\end{prop}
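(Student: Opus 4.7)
The plan is to translate the hypothesis into a self-similarity condition on $g := h_\nu$, and then to leverage the hypothesis that $h$ is a self-homeomorphism of $\CCnr$ to extract the crucial additional information that $g(\CCn)$ is clopen in $\CCn$; a measure-theoretic Kraft identity then finishes the argument.

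First I would unfold the definition of local action at $\nu \concat l$: computing $h(\nu \concat l \concat y)$ in two ways---as $\theta_h(\nu) \concat h_\nu(l \concat y) = \theta_h(\nu) \concat \theta_g(l) \concat g_l(y)$ on one hand and as $\theta_h(\nu \concat l) \concat h_{\nu \concat l}(y)$ on the other---one reads off $h_{\nu \concat l} = (h_\nu)_l = g_l$. The hypothesis therefore becomes $g_l = g$ for every $l \in \{0, \ldots, n-1\}$, which iterates (via Lemma \ref{claim:propert}(5)) to $g_w = g$ for every $w \in \Wne$; in particular $\theta_g(w \concat w') = \theta_g(w) \concat \theta_g(w')$ and $g(w \concat y) = \theta_g(w) \concat g(y)$ for every finite $w$ and every $y \in \CCn$.

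Second, since $h$ is a homeomorphism of $\CCnr$ and $U_\nu$ is clopen, $h(U_\nu)$ is clopen. Writing $h(U_\nu) = \theta_h(\nu) \concat g(\CCn)$ and noting that $x \mapsto \theta_h(\nu) \concat x$ is a homeomorphism from $\CCn$ onto $U_{\theta_h(\nu)}$, I would conclude that $T := g(\CCn)$ is clopen in $\CCn$; hence $T$ contains a basic cone and so has strictly positive uniform Bernoulli measure $\mu(T) > 0$.

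Third, the self-similar decomposition $T = \bigsqcup_{l} \theta_g(l) \concat T$---which is a disjoint union since $\CCn = \bigsqcup_l U_l$ and $g$ is injective---yields $\mu(T) = \sum_l n^{-|\theta_g(l)|}\mu(T)$ and hence the Kraft equality $\sum_l n^{-|\theta_g(l)|} = 1$. Each $\theta_g(l)$ is non-empty, since $\theta_g(l) = \varepsilon$ would give $g(U_l) = g(\CCn) = T$ and then $U_l = \CCn$ by injectivity of $g$; with $n$ terms of length at least $1$ summing to $1$, Kraft forces $|\theta_g(l)| = 1$ for every $l$, and distinctness of the $\theta_g(l)$ (again from injectivity of $g$ applied to $g(U_l) = \theta_g(l) \concat T$) makes $\pi : l \mapsto \theta_g(l)$ a permutation of $\{0, \ldots, n-1\}$.

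Finally, iterating the recursion $g(l \concat y) = \pi(l) \concat g(y)$ and passing to the limit by continuity gives $g(a_1 a_2 a_3 \cdots) = \pi(a_1)\pi(a_2)\pi(a_3)\cdots$, i.e.\ $g = \hat\pi_n$. The real obstacle is the recognition that the self-similarity $g_l = g$ is not by itself enough---for instance, the ``doubling'' injection $g(a_1 a_2 \cdots) = a_1 a_1 a_2 a_2 \cdots$ satisfies $g_l = g$ for every $l$ but is not a permutation twist---and so the hypothesis that $g$ arises as a local action of a genuine homeomorphism of $\CCnr$ (forcing $g(\CCn)$ to be \emph{clopen}, not merely closed) is essential: this is precisely the input needed for the Kraft identity to close the argument.
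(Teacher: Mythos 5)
Your proof is correct, but it takes a genuinely different route from the paper's at the decisive step. After the common reduction to a self-similar map $g=h_\nu$ with $g_l=g$ for all $l$ (your chain-rule computation $h_{\nu\concat l}=(h_\nu)_l$ is exactly how the paper's hypothesis is used, though citing Lemma \ref{claim:propert}(5) for the iteration is a slight misfit -- that item concerns a composite $gh$; the relevant fact is point (5) of the remark on local actions, and in any case you only need the single-letter case), the paper proceeds in two stages: it first proves that $h_\nu$ is \emph{surjective} by a compactness argument (if the image missed a cone $U_\Gamma$, surjectivity of $h$ on $\CCnr$ would produce points outside $U_\nu$ whose images accumulate on a point of $h(U_\nu)$, contradicting injectivity of $h$), and then applies Proposition \ref{prop:AllStatesHomeosToSynch} to the one-state automaton representing $g$, whose proof rests on the fact that the only complete antichain of size $n$ in $\nset^*$ consists of the single letters. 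You instead never prove surjectivity up front: you use only that $T=g(\CCn)$ is clopen (hence of positive Bernoulli measure) and extract the Kraft equality $\sum_l n^{-|\theta_g(l)|}=1$ from the disjoint self-similar decomposition $T=\bigsqcup_l \theta_g(l)\concat T$, which forces each $\theta_g(l)$ to be a single letter and $l\mapsto\theta_g(l)$ to be a permutation; surjectivity then falls out of the explicit formula $g=\hat\pi_n$. The decomposition you use is the same one underlying the paper's Proposition \ref{prop:AllStatesHomeosToSynch}, but applied to the clopen set $T$ rather than to all of $\CCn$, with the measure computation replacing both the antichain combinatorics and the paper's somewhat delicate limit-point argument for surjectivity; this makes your argument more self-contained, while the paper's version has the advantage of reusing a general structural fact about transducers all of whose states induce homeomorphisms. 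Your closing remark that self-similarity alone is insufficient (the doubling injection) correctly isolates the role of the clopen-image hypothesis, which plays the part that surjectivity of $h$ plays in the paper's proof.
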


\begin{proof}
Suppose that $h\in \Homeo(\CCnr)$, $\nu\in\Wnr$, and $h_\nu=h_{\nu\concat l}$ for every $0\leqslant l\leqslant n-1$.

We first observe that $|\nu|\geq 1$ so if we extend $\nu$ using a letter $x\in\Xn$ we obtain $\nu\concat x\in\Wnr$.  Now, by assumption, the local map of $h$ on any sub-cone in $U_\nu$ is equal to the local map $h_\nu$, so the map $h_\nu$ can be represented by a one-state transducer which we will call $A$, with solitary state $q$.  As $h$ is injective we see that $h_\nu$ is injective as well.

We now argue that $h_\nu$ must be surjective onto $\CCn$, using compactness and continuity.  Suppose $h_\nu$ is not surjective.  Observe first that the image of $U_\nu$ under $h$ is both open and closed, and hence the complement of this set is both open and closed.  In particular, since $h_\nu$ is not surjective, there is a finite word $\Gamma\in \Wn$ so that the image of $h_\nu$ is disjoint from the cone $U_\Gamma$ in $\CCn$.  That is, for no input to $A$ will $A$ produce an output with prefix $\Gamma$.  Now, let $w$ be the output of $A$ from $q$ on reading the input `$0$'.  Clearly the point $\overline{w}$ is in the image of $h_\nu$, and there is a minimal length prefix $x$ so that $\nu\concat\overline{0} h=x\concat\overline{w}$.  However, there is a sequence of points $(p_i)\in \CCnr\backslash U_\nu$  so that $z_i=p_i h=x\concat w^{(i)\varXi}\concat\Gamma\concat\overline{w}$ for $\varXi$ some increasing function, since $h$ is surjective, but the points $w^{(i)\varXi}\concat\Gamma\concat\overline{w}$ are not in the image of $h_\nu$.  Thus, there is some point $p$ (a limit of a subsequence of $(p_i)$) in $\CCnr\backslash U_\nu$ so that $p\cdot h=x\concat\overline{w}$ as well, which contradicts the fact that $h$ is a homeomorphism.  Therefore, we may indeed assume that $h_\nu$ is surjective.

Our argument is actually complete: the transducer $A$ has all states (there is only one) simultaneously injective and surjective, and so by Proposition \ref{prop:AllStatesHomeosToSynch}, the state $q$ also must act locally as a permutation.
\end{proof}

\section{Open problems}

Apart from the general problem of understanding better the groups introduced in this paper, we mention a few specific projects in need of further attention (for $2 \leq m, n$).

   From Proposition \ref{prop: nesting} and basic number theory, we know that if $\gcd(n-1,r_1)= \gcd(n-1,r_2)$ then $\Onk{n}{r_1}=\Onk{n}{r_2}$.  In \cite{OlukoyaAutTnr} a form of converse is shown. The work of Pardo and also of Dicks and Mart\'inez-P\'erez (see \cite{Pardo,DicksMartinezPerez}) establishes that $G_{m,r}\cong G_{n,s}$ if and only if $m=n$ and  $\gcd(n-1,r)= \gcd(n-1,s)$. We wonder the following:

   \begin{question}
    Let $n\in\N$, and $1<r_1<r_2< n$. If $\gcd(n-1,r_1)= \gcd(n-1,r_2)$ then $\Onk{n}{r_1}=\Onk{n}{r_2}$ so $\Onk{n}{r_1}\cong\Onk{n}{r_2}$.  Is the converse true?
   \end{question}

\begin{question}
Is it true that $\mathcal{O}_n \not\cong\mathcal{O}_m$  for $m\neq n$?
What about for $\mathcal{L}$ and $\mathcal{H}$?
\end{question}

Further fundamental questions we can ask are as follows.
\begin{question}
Describe the normal subgroup structure of $\On\cong \Onk{n}{n-1}$.  For instance, does the commutator subgroup of $\On$ have finite index in $\On$?
\end{question}

A related question which might not be difficult is the following.

\begin{question}
If $n$ is odd, then
$\Onr$ has a double cover given by
\[
1\to \Gnr/\Gnr'\to \Anr/\Gnr'\to \Onr\to 1,
\]
where $\Gnr/\Gnr'$ is of order two.
Does this cover split?
\end{question}

The following question is a bit more technical, but its importance is clear from Section \ref{sec:synchronization} and from our following investigations \cite{BleakCameronOlukoya} into the automorphisms of the shift.

\begin{question}\label{qn: pres}
Find presentations for the groups $\mathcal{O}_n$, $\mathcal{L}_n$ and $\mathcal{H}_n$.
\end{question}
Note that for Question \ref{qn: pres}, we know from the found connections to the automorphism groups of the full shift (one-sided and two-sided, as mentioned in the introduction), that $\Ln$ and $\Hn$ are not finitely generated.  However, having meaningful infinite presentations can often be of more practical use than having finite presentations, so this question is likely of interest for all three families of groups.

\begin{question}
Find weaker conditions for a homeomorphism state than those given in
Lemma~\ref{loop-homeo}. (The conditions of the lemma are all used in the
proof but we do not know whether they are all necessary.)
\end{question}

\appendix
\section{The inversion construction}\label{appendixInversion}
In order to keep this work self-contained and as we have made heavy use of the inversion construction in certain proofs, we give here a more detailed exposition and justification of this construction. Our exposition is based on the treatment within the paper \cite{GNSenglish}, and the more detailed explanation given in the PhD thesis \cite{Olukoya2018}. We shall give the construction in the first instance for an arbitrary transducer which induces a homeomorphism from its input space to its output space. We then indicate how this approach can be modified for the specific transducers considered in this article, that is, those initial transducers inducing self-homeomorphisms of the Cantor space $\CCnr$ as defined in Subsection~\ref{Subsection:transducersCCnr}.

\subsection{General transducers}\label{Subsection:inversiongeneral}
Throughout this section  $X_{i}$ and $X_{o}$ shall be finite alphabets, and all transducers shall have input alphabet $X_{i}$ and output alphabet $X_o$, thus, we shall often not write this out explicitly. We shall also make the implicit assumption, unless we state otherwise, that transducers are accessible and non-degenerate.

\begin{notation}
As in the main text, for $a \in \{i,o\}$ and $\nu \in X_{a}^{\ast}$ we set $U_{\nu}:= \{ \nu \delta \mid \delta \in X_{a}^{\omega} \}$.
\end{notation}

\begin{definition}
Let $A_{q_0} = \gen{X_i, X_o, Q_A, \pi_{A}, \lambda_{A}}$ be an initial transducer. Then $A_{q_0}$ is called \emph{invertible} if the map $h_{q_0}: X_{i}^{\omega} \to X_{o}^{\omega}$ is a homeomorphism.
\end{definition}

We have the following straight-forward lemma.

\begin{lemma}\label{Lemma:statesofinverseinjectivewithclpnimage}
Let $A_{q_0} = \gen{X_i, X_o, Q_A, \pi_{A}, \lambda_{A}}$ be an invertible transducer. Then  for all states $q \in Q_{A}$, $h_{q}:X_{i}^{\omega} \to X_{o}^{\omega}$ is injective and  $\Image(q):= (X_{i}^{\omega})h_{q}$ is clopen.
\end{lemma}
\begin{proof}
Let $q \in Q_{A}$ be arbitrary. Since $A_{q_0}$ is accessible by assumption, it is clear that for any state $q \in Q_{A}$, $h_{q}$ is injective.

To see that $\Image(q)$ is clopen we proceed as follows. Since $A_{q_0}$ is accessible, there is a word $\Gamma \in X_{i}^{+}$, such that $\pi_{A}(\Gamma, q_0) = q$. Let $\Delta = \lambda_{A}(\Gamma, q_0)$. Now as $h_{q_0}$ induces a homeomorphism from $ X_{i}^{\omega}$ to $X_{o}^{\omega}$, and since $U_{\Gamma}$ is clopen, then $(U_{\Gamma})h_{q_0}$ is also clopen. Since $(U_{\Gamma})h_{q_0}$ is clopen, it is compact, and so we may find words $\nu_1, \ldots, \nu_{m} \in \xns$ such that $\bigcup_{1 \le i \le m} U_{\nu_i} = (U_{\Gamma})h_{q_0}$. Observe that since for all $1 \le i \le m$, $U_{\nu_i} \subset (U_{\Gamma})h_{q_0}$, then $\Delta \leqslant \nu_i$ for all $1 \le i \le m$. Set $\mu_i = \nu_i - \Delta$, then it follows that $\bigcup_{1 \le i \le m} U_{\mu_i} = \Image(q)$ yielding the result.
\end{proof}

We now define below a function which enables us,  by shifting the origin, to compute the preimage of an element of the output space in steps.

\begin{definition}\label{Def:Lfunction}
Let $A_{q_0}$ be a transducer and $q \in Q_{A}$ be a state. Define a map $L_{q}: X_{o}^{\ast} \to X_{i}^{\ast}$, by $L_{q}(\nu) = (\{\delta \in \CCn \mid (\delta)h_{q} \in U_{\nu} \})\Root$. In other words, $L_{q}(\nu)$ returns the longest common prefix of the pre-image of the cone $U_{\nu}$.
\end{definition}

We have the following results.

\begin{lemma}\label{Lemma:transitionfunctionworks}
Let $A_{q_0}$ be an invertible transducer, $(w,q) \in X_{o}^{\ast} \times Q_{A}$ be a pair such that $ U_{w} \subset \im(q)$ and $L_q(w) = \epsilon$, and let $x \in X_{o}$ be arbitrary. Then the pair $(v,p) \in X_{o}^{\ast} \times Q_{A}$ defined by $v:= wx - \lambda_{A}(L_{q}(wx),q)$ and $p:= \pi_{A}(L_{q}(wx), q)$ also satisfies $U_{v} \subset \Image(p)$ and $L_{p}(v) = \epsilon$.
\end{lemma}
\begin{proof}
We begin by showing that $U_{v} \subset \Image(p)$.

Set $u = \lambda_{A}(L_{q}(wx),q)$, and note that by definition of $v$, $wx = uv$. Let $\rho \in X_{o}^{\omega}$ be arbitrary. Since $U_{w} \subset \Image(q)$, there is a word $\delta \in X_{i}^{\omega}$ such that $(\delta)h_{q} = wx\rho$. By definition of the function $L_{q}$, there is a $\bar{\delta} \in X_{i}^{\omega}$ such that $\delta = L_{q}(wx) \bar{\delta}$. Now observe that as $\lambda_{A}(L_{q}(wx), q) = u$, then $(\bar{\delta})h_{p} = v\rho$. Since $\rho \in X_{o}^{\omega}$ we arbitrarily chosen, we have $U_{v}\subset \Image(p)$.

We now argue that $L_{p}(v) = \epsilon$. As $$\lambda_{A}(L_{q}(wx), q) = u \mbox{ and } \pi_{A}(L_{q}(wx), q) = p,$$ it must be the case that $L_{q}(wx) = uL_{p}(v)$ yielding the equality, $\epsilon = L_{p}(v)$.
\end{proof}

\begin{lemma}\label{Lemma:inverseisaccessible}
Let $A_{q_0}$ be an invertible transducer and $(w,q) \in X_{o}^{\ast} \times Q_{A}$ be a pair such that $ U_{w} \subset \im(q)$ and $L_q(w) = \epsilon$. Then there is a word $\Gamma \in X_{o}^{\ast}$ such that $\Gamma - \lambda_{A}(L_{q_0}(\Gamma), q) = w$ and $\pi_{A}(L_{q_0}(\Gamma), q_0) =q$.
\end{lemma}
\begin{proof}
Since $A_{q_0}$ is accessible, fix a word $\Delta \in X_{i}^{+}$ such that $\pi_{A}(\Delta, q_0) = q$. Set $\gamma = \lambda_{A}(\Delta, q_0)$ and consider the word $\gamma w$. Since $U_{w} \subset \Image(q)$, and $h_{q_0}$ induces a homeomorphism from its input space to its output space, it follows that $(\gamma w)L_{q_0} = \Delta L_{q}(w) = \Delta$. Thus setting $\Gamma = \gamma w$, the following equalities are valid:  $\Gamma - \lambda_{A}(L_{q_0}(\Gamma), q_0) = w$ and $\pi_{A}(L_{q_0}(\Gamma), q_0) =q$.
\end{proof}

\begin{lemma}\label{Lemma:finite}
Let $A_{q_0}$ be an invertible transducer and $q \in Q_{A}$. There are only finitely many words $w \in X_{o}^{\ast}$ such that $U_{w} \cap \Image(q) \ne \emptyset$ and $(w)L_{q} = \epsilon$.
\end{lemma}
\begin{proof}
Let $q$ be a state of $A$ and let $\T{N}(q):= \{q\} \sqcup\{ \pi_{A}(x, q) \mid x \in X_{i} \}$. For each $p \in \T{N}(q)$, let $C_{p} \subset X_{o}^{\ast}$ be minimal such  that, $\bigcup_{\nu \in C_{p}}U_{\nu} = \Image(p)$. Since $\Image(p)$ is clopen for any state $p \in Q_A$, such a set exists. For $p \in \T{N}(q)$ let $m_p = \max\{ |\nu| \mid \nu \in C_{p}\}$ and $M= \max_{p \in \T{N}(q)} \{ m_p\}$.

Observe that as $A_{q_0}$ is assumed to be non-degenerate and $|\T{N}(q)| < \infty$, there is a $j \in \N$ such that for any word $\gamma \in X_{i}^{j}$, and any $p \in \T{N}(q)$, $|\lambda_{A}(\gamma, p)| \ge M$.

Let $\gamma \in X_{i}^{j}$ and  $x \in X_{i}$. Let $\nu = \lambda_{A}(x, q)$, $p = \pi_{A}(x,q)$ and $\eta = \lambda_{A}(\gamma,p)$. Observe that as $|\eta| \ge M$, we must have $U_{\eta} \subseteq \Image(p)$, since there is a prefix of $\eta$ which is an element of $C_{p}$. Moreover, as  $q$ is injective,  $L_{q}(\nu\eta)$ must have prefix $x$, since if there a word $\xi \in \xns$ and a letter $y \in X_{i}$ with $y \ne x$ such that $\lambda_{A}(y\xi, q)$ has $\nu\eta$ as a prefix, then $(U_{y\xi})h_{q} \subseteq (U_{x})h_{q}$ a contradiction.

By choice of $j$, the set $I_{q}:=\{\lambda_{A}(\Gamma, q) \mid \Gamma \in \xn^{j+1} \} \subset X_{i}^{p}$ satisfies $\bigcup_{\mu \in(I_q)} U_{\mu} = \Image(q)$, moreover, since $\gamma \in X_{i}^{j+1}$ was chosen arbitrarily in the previous paragraph, for any $\mu \in I_{q}$ we have, $(\mu)L_{q} \ne \epsilon$.
\end{proof}

We are now in a position to construct the inverse transducer.

Let $A_{q_0}$ be an invertible transducer. Set $$Q_{A'}:= \{ (w,q) \in X_{o} \times Q_{A} \mid U_{w} \subset \Image(q), L_{q}(w)= \epsilon  \}.$$ Define functions $\pi_{A'}: X_{o} \times Q_{A'} \to Q_{A'}$ and $\pi_{A'}: X_{o} \times Q_{A'} \to X_{i}^{\ast}$ by, $$\pi'_{A}(x, (w,q)) = (wx - \lambda_{A}(L_{q}(wx),q), \pi_{A}(L_{q}(wx), q))$$ and $\lambda'_{A}(x, (w,q)) = L_{q}(wx)$. Then set $A'_{(\epsilon, q_0)} := \gen{X_{o}, X_{i}, Q_{A'}, \pi_{A'}, \lambda_{A'}}$.

We have the following result.

\begin{lemma}\label{Lemma:inversehasnostatesofincompleteresponse}
Let $A_{q_0}$ be an invertible transducer. Then the transducer $A'_{(\epsilon, q_0)} := \gen{X_{o}, X_{i}, Q_{A'}, \pi_{A'}, \lambda_{A}'}$ is well-defined, accessible, has no states of incomplete response, is non-degenerate, and satisfies $h_{(\epsilon, q_0)} = h_{q_0}^{-1}$.
\end{lemma}
\begin{proof}
That the transducer $A'_{(\epsilon, q_0)}$ is well-defined and accessible is a consequence of Lemmas~\ref{Lemma:transitionfunctionworks} and \ref{Lemma:inverseisaccessible}.

We now demonstrate that $A'_{(\epsilon, q_0)}$ is non-degenerate, has no states of incomplete response and does in fact induce the inverse of $h_{q_0}$.

All of these statements will follow from the following claim.

\begin{claim}
Let $\Gamma \in X_{o}^{\ast}$ and $(w,q) \in Q_{A'}$. Then $\lambda_{A'}(\Gamma, (w,q)) = L_{q}(w\Gamma)$ and $\pi_{A'}(\Gamma, (w,q)) = (w\Gamma - \lambda_{A}(L_{q}(w\Gamma),q), \pi_{A}(L_{q}(w\Gamma),q))$.
\end{claim}
\begin{proof}
We proceed by induction on the size of $\Gamma$. By definition of the output function, the cases $|\Gamma| = 0$ and $|\Gamma| =1$ are satisfied.

Assume that the claim hold for words of length $k$. Let $\Gamma \in X_{o}^{k}$ and $x \in X_{o}$. We have
\begin{IEEEeqnarray*}{rCl}
\lambda_{A'}(\Gamma x, (w,q) ) &=& \lambda_{A'}(\Gamma, (w,q))\concat\lambda_{A'}(x, \pi_{A'}(\Gamma, (w,q))) \nonumber \\&=& L_{q}(w\Gamma)\concat\lambda_{A'}(x, \pi_{A'}(\Gamma, (w,q))).
\end{IEEEeqnarray*}
 Let $(v,p) = \pi_{A'}(\Gamma, (w,q))$, then by the inductive assumption, $$v = w\Gamma - \lambda_{A}(L_{q}(w\Gamma,q) \mbox{ and } p = \pi_{A}(L_{q}(w\Gamma),q).$$ Thus we have that $L_{q}(w\Gamma)\concat L_{p}(vx) =(w\Gamma x)L_{q}$. Since $U_{v} \subset \Image(p)$, $h_{q}$ is injective, $w\Gamma = \lambda_{A}(L_{q}(w\Gamma),q)\concat v$ and $\pi_{A}(L_{q}(w\Gamma),q) = p$, then $(w\Gamma x)L_{q} = (w\Gamma)L_{q}\concat(vx)L_{p}$. Thus the equality: $$\lambda_{A'}(\Gamma x, (w,q) ) = L_{q}(w\Gamma x)$$ is valid.

 Now as $\pi_{A'}(\Gamma x, (w,q)) = \pi_{A'}(x,\pi_{A'}(\Gamma, (w,q)))$, we have $$\pi_{A'}(\Gamma x, (w,q)) = \pi_{A'}(x, (v,p)).$$ By definition $\pi'_{A}(x, (v,p))= (vx - \lambda_{A}(L_{p}(vx,p), \pi_{A}(L_{p}(vx),p)$. However we observe that  since $L_{q}(w\Gamma x)= L_{q}(w\Gamma)\concat L_{p}(vx)$, then $$w\Gamma x  - \lambda_{A}(L_{q}(w\Gamma x), q) = w\Gamma x -  \lambda_{A}(L_{q}(w\Gamma ), q) \concat \lambda_{A}(L_{p}(vx),p).$$ Therefore $$w\Gamma x  - \lambda_{A}(L_{q}(w\Gamma x), q) = vx - \lambda_{A}(L_{p}(vx),p).$$ Moreover, $$\pi_{A}(L_{q}(w\Gamma x), q) = \pi_{A}(L_{p}(vx), \pi_{A}(L_{q}(w\Gamma), q)  = \pi_{A}(L_{p}(vx),p).$$
Thus the equality $$\pi_{A'}(\Gamma x, (w,q)) = (w\Gamma x - \lambda_{A}(L_{q}(w\Gamma x), q), \pi_{A}(L_{q}(w\Gamma x), q) )$$ is valid.
\end{proof}

Now since for each state $q \in Q_{A}$, $h_{q}$ is injective and continuous, it follows that for a sequence $(\Gamma_i)_{i \in \N}$ such that $|\Gamma_{i}|$ tends to infinity as $i$ tends to infinity and $U_{\Gamma_i} \subset \Image(q)$, then $|(\Gamma_{i})L_{q}|$ tends to infinity also. Thus, it follows that $h_{(\epsilon, q_0)} = h_{q_0}^{-1}$. More generally, for a state $(w,q) \in Q_{A'}$, and a word $\delta \in X_{o}^{\omega}$, $(\delta)h_{(w,q)} = (w\delta)h_{q}^{-1}$.

To see that $A'_{(\epsilon, q_0)}$ has no states of incomplete response we observe that for $x \in X_{o}$ and a state $(w,q) \in Q_{A'}$:
\begin{IEEEeqnarray*}{rCl}
\lambda_{A'}(x, (w,q)) &=& L_{q}(wx) = ( (U_{wx})h_{q}^{-1})\Root \nonumber \\
&=& (\{ (\delta)h_{(w,q)} \mid \delta \in X_{o}^{\omega} \})\Root.
\end{IEEEeqnarray*}
\end{proof}

The lemma below is a consequence of Lemma~\ref{Lemma:finite}.

\begin{lemma}\label{Lemma:finitemeansfiniteinverse}
Let $A_{q_0}$ be a finite invertible transducer, then $A'_{(\epsilon,q_0)}$ is also finite.
\end{lemma}

\begin{remark}
We observe that given $A_{q_0}$ an invertible transducer, $A'_{(\epsilon, q_0)}$ is not far from minimal. More specifically, $A'_{(\epsilon, q_0)}$ can be made minimally by performing the operation of identifying its $\omega$-equivalent states. Although carrying out this operation potentially reduces the size of the transducer making it easier to compute with, in practise, the obfuscation resulting from the identification of $\omega$-equivalent states makes the minimal representative difficult to work with when trying to prove things. It is therefore much easier to work with the transducer $A'_{(\epsilon, q_0)}$ and then deduce conclusions about its minimal representative.
\end{remark}

\subsection{Transducers acting on $\CCnr$}
We now demonstrate how to adapt the construction in Subsection~\ref{Subsection:inversiongeneral} for transducers acting on $\CCnr$. We shall omit most proofs as they are not dissimilar to those given in the general case.  All notation shall be as  in the main text, and we shall implicitly assume throughout this section assume that transducers act on $\CCnr$, are connected and are non-degenerate unless we state otherwise. We recall that for a transducer $A_{q_0} = (\rd, \xn, R_{A}, S_{A}, \pi_{A}, \lambda_{A})$ acting on $\CCnr$, $Q_{A} = R_{A} \sqcup S_{A}$ and $q_0 \in R_{A}$.

We begin be extending the definitions of the previous subsection. We observe that as some of the states of a transducer acting on $\CCnr$ induce maps from $\CCn$ and others induce maps on $\CCnr$, the value $U_{\epsilon}$ might mean either $\CCnr$ or $\CCn$ depending on the map under consideration. This does not result in confusion as an element $h_{q}$ has domain $\CCnr$ if and only  $q= q_0$ while it has range $\CCnr$ if and only if $q \in R_{A}$ (see Subsection~\ref{Subsection:transducersCCnr}). Thus in the definition below of the map $L_{q}$, $L_{q}$ takes a value $\epsilon$ satisfying $U_{\epsilon} = \CCnr$ or $U_{\epsilon} = \CCn$ depending on whether or not $q = q_0$.

\begin{definition}
Let $A_{q_0} = (\rd, \xn, R_{A}, S_{A}, \pi_{A}, \lambda_{A})$ be a transducer acting on $\CCnr$. Then $A_{q_0}$ is called \emph{invertible} if $h_{q_0}: \CCnr \to \CCnr$ is a homeomorphism.
\end{definition}

\begin{definition}
Let $A_{q_0}$ be a transducer and $q \in Q_{A}\backslash\{q_0\}$ be a state. Define a map $L_{q}: \wnre{n}{r} \sqcup \Wne \to \Wne$, by $L_{q}(\nu) = (\{\delta \in \CCn \mid (\delta)h_{q} \in U_{\nu} \})\Root$. We also define $L_{q_0}: \wnre{n}{r} \to \wnre{n}{r}$ by $L_{q_0}(\nu) = (\{\delta \in \CCnr \mid (\delta)h_{q_0} \in U_{\nu} \})\Root$. In other words, for $q \in Q_{A}$, initial or not, $L_{q}(\nu)$ returns the longest common prefix of the pre-image of the cone $U_{\nu}$.
\end{definition}

\begin{remark}
Let $A_{q_0}$ be a transducer, then $L_{q}$ maps all elements of $\Wnre$ to $\epsilon$ if $q \in S_{A}$ and it maps all elements of $\Wne$ to $\epsilon$ if $q \in R_{A}$.
\end{remark}

The lemmas of the previous subsection are equally valid when restated in the context of transducers acting on $\CCnr$. As we mentioned above, the proofs only need to be adjusted to account for the action on $\CCnr$ and so we state the lemmas below without proof. However, we first require the following lemma which clarifies a confusion arising from the notation $U_{\epsilon}$ used when constructing the inverse.

\begin{lemma}\label{Lemma:distinguishingepsilon}
Let $A_{q_0}$ be an invertible transducer acting on $\CCnr$, and let $q \in Q_{A}$ be a state. Let $\epsilon$ and $\varepsilon$ both represent the empty word, where $\epsilon \in \Wnre$ and $\varepsilon \in \Wne$ so that $U_{\epsilon} = \CCnr$ and $U_{\varepsilon} = \CCn$. Then the pair $(\epsilon, q)$ satisfies $U_{\epsilon} \subset \Image(q)$ if and only if $q = q_0$ and $(\varepsilon, q)$ satisfies $U_{\varepsilon} \subset \Image(q)$ implies that  $q \in S_{A}$.
\end{lemma}
\begin{proof}
This is a straight-forward consequence of the the definition of transducers acting on $\CCnr$. Let $(\epsilon, q)$ for $q \in Q_{A}$ satisfy $U_{\epsilon} \subset \Image(q)$, then this implies that the range of $q = \CCnr$, thus $q \in R_{A}$. However as $A_{q_0}$ induces a homeomorphism and is accessible, then range of $q$ is all of $\CCnr$ if and only if $q = q_0$.

Now suppose $(\varepsilon, q)$ satisfies $U_{\varepsilon} \subset \Image(q)$. By definition of elements of $R_{A}$, there is a letter $\dot{a} \in \rd$ such that if $q \in R_{A}$, then $\Image(q) \subset U_{\dot{a}}$, thus it must be the case that $q \in S_{A}$.
\end{proof}

\begin{remark}
Given $A_{q_0}$ an invertible transducer acting on $\CCnr$, the Lemma above enables us to write a pair $(\epsilon, q)$ for $q \in Q_{A}$ without ambiguity, as $U_{\epsilon}$ represents  $\CCnr$ if and only if $q = q_0$.  Therefore we will only use the $\epsilon$ symbol for the empty word going forward.
\end{remark}

\begin{lemma}\label{Lemma:statesofinverseinjectivewithclpnimage CCnr}
Let $A_{q_0} = (\rd, X_n, R_{A}, S_{A}, \pi_{A}, \lambda_{A})$ be an invertible transducer acting on $\CCnr$. Then  for all states $q \in Q_{A}$, $h_{q}$ is injective and  $\Image(q)$ is clopen.
\end{lemma}

\begin{lemma}\label{Lemma:transitionfunctionworks CCnr}
Let $A_{q_0}$ be an invertible transducer, $(w,q) \in (\Wnre \sqcup \Wne ) \times Q_{A}$ be a pair such that $ U_{w} \subset \im(q)$ and $L_q(w) = \epsilon$, and let $x \in \rd \sqcup \Xn$ be arbitrary such that $x \in \rd$ if $w = \epsilon$ and $q= q_0$ and $x \in \Xn$ otherwise. Then the pair $(v,p) \in (\Wnre \sqcup \Wne ) \times Q_{A}$ defined by $v:= wx - \lambda_{A}(L_{q}(wx),q)$ and $p:= \pi_{A}(L_{q}(wx), q)$ also satisfies $U_{v} \subset \Image(p)$ and $L_{p}(v) = \epsilon$.
\end{lemma}

\begin{lemma}\label{Lemma:inverseisaccessible CCnr}
Let $A_{q_0}$ be an invertible transducer and $$(w,q) \in (\Wnre \sqcup \Wne ) \times Q_{A}$$ be a pair such that $ U_{w} \subset \im(q)$ and $L_q(w) = \epsilon$. Then there is a word $\Gamma \in \Wnre$ such that $\Gamma - \lambda_{A}(L_{q_0}(\Gamma), q_0) = w$ and $\pi_{A}(L_{q_0}(\Gamma), q_0) =q$.
\end{lemma}

\begin{lemma}\label{Lemma:finite CCnr}
Let $A_{q_0}$ be an invertible transducer and $q \in Q_{A}$, then there are only finitely many words $w \in \Wnre \sqcup \Wne$ such that $U_{w} \cap \Image(q) \ne \emptyset$ and $L_{q}(w)= \epsilon$.
\end{lemma}

We may now construct the inverse transducer.

Let $A_{q_0}$ be an invertible transducer. Set $$Q_{A'}:= \{ (w,q) \in (\Wnre \sqcup \Wne) \times Q_{A} \mid U_{w} \subset \Image(q), L_{q}(w)= \epsilon  \}.$$ Define functions $\pi_{A'}: (\rd \sqcup \Xn) \times Q_{A'} \to Q_{A'}$ and $\lambda_{A'}: (\rd \sqcup \Xn) \times Q_{A'} \to \Wnre \sqcup \Wne$ by $$\pi'_{A}(x, (w,q)) = (wx - \lambda_{A}(L_{q}(wx),q), \pi_{A}(L_{q}(wx), q)),$$ and $$\lambda'_{A}(x, (w,q)) = L_{q}(wx) $$ for $x \in \Xn$ if $q \in Q_{A}\backslash \{q_0\}$ and $x \in \rd$ otherwise. Set $S_{A'} := \{ (w,q) \in  Q_{A'} \mid q \ne q_0 \}$ and set $R_{A'} := Q_{A'} \backslash S_{A'}$. This definition is justified since if $(w,q_0) \in R_{A'}$ and $x \in \rd \sqcup \xn$ ($x \in\rd$ if $w = \epsilon$ and $x \in \xn$ otherwise) are such that $L_{q_0}(wx) \ne \epsilon$, then $\pi_{A}(L_{q_0}(wx), q_0) \ne q_0$.
Finally set $A'_{(\epsilon, q_0)} := (\rd,\Xn,R_{A'}, S_{A'}, \pi_{A'}, \lambda_{A'})$.

\begin{lemma}\label{Lemma:inversehasnostatesofincompleteresponse CCnr}
Let $A_{q_0}$ be an invertible transducer. Then the transducer $A'_{(\epsilon, q_0)} := (\rd,\Xn,R_{A'}, S_{A'}, \pi_{A'}, \lambda_{A'})$ is well-defined, accessible, has no states of incomplete response, is non-degenerate, and satisfies $$h_{(\epsilon, q_0)} = h_{q_0}^{-1}.$$ If $A_{q_0}$ is finite then so also is $A'_{(\epsilon, q_0)}$.
\end{lemma}
Observe that this construction may produce a transducer with equivalent states, but these may be identified resulting in a minimal transducer.

\bibliographystyle{amsplain}
\bibliography{ploiBib}

\def\cprime{$'$} \def\cprime{$'$} \def\cprime{$'$} \def\cprime{$'$}
\providecommand{\bysame}{\leavevmode\hbox to3em{\hrulefill}\thinspace}
\providecommand{\MR}{\relax\ifhmode\unskip\space\fi MR }
\providecommand{\MRhref}[2]{%
  \href{http://www.ams.org/mathscinet-getitem?mr=#1}{#2}
}
\providecommand{\href}[2]{#2}
\begin{thebibliography}{10}

\bibitem{BleakCameronOlukoya}
Collin Bleak, Peter Cameron, and Feyishayo Olukoya, \emph{On {D}e {B}ruijn
  automorphisms of shift spaces}, In Preparation, 2018, pp.~1--30.

\bibitem{BleakDonovenJonusas}
Collin Bleak, Casey Donoven, and Julius Jonu\v{s}as, \emph{Some isomorphism
  results for {T}hompson-like groups {$V_n(G)$}}, Israel J. Math. \textbf{222}
  (2017), no.~1, 1--19. \MR{3736497}

\bibitem{BleakLanoue}
Collin Bleak and Daniel Lanoue, \emph{A family of non-isomorphism results},
  Geom. Dedicata \textbf{146} (2010), 21--26. \MR{2644268 (2011d:20054)}

\bibitem{BrinCh}
Matthew~G. Brin, \emph{The chameleon groups of {R}ichard {J}. {T}hompson:
  automorphisms and dynamics}, Inst. Hautes \'Etudes Sci. Publ. Math. (1996),
  no.~84, 5--33 (1997).

\bibitem{brinHigherV}
\bysame, \emph{Higher dimensional {T}hompson groups}, Geom. Dedicata
  \textbf{108} (2004), 163--192.

\bibitem{BGAut}
Matthew~G. Brin and Fernando Guzm{\'a}n, \emph{Automorphisms of generalized
  {T}hompson groups}, J. Algebra \textbf{203} (1998), no.~1, 285--348.

\bibitem{BrownFinite}
Kenneth~S. Brown, \emph{Finiteness properties of groups}, Journal of Pure and
  Applied Algebra \textbf{44} (1987), 45--75.

\bibitem{BurilloClearyAut}
J.~Burillo and S.~Cleary, \emph{The automorphism group of {T}hompson's group
  ${F}$: presentation and subgroups}, to appear.

\bibitem{CFP}
J.~W. Cannon, W.~J. Floyd, and W.~R. Parry, \emph{Introductory notes on
  {R}ichard {T}hompson's groups}, Enseign. Math. (2) \textbf{42} (1996),
  no.~3-4, 215--256.

\bibitem{DicksMartinezPerez}
Warren Dicks and Conchita Mart\'\i~nez P\'erez, \emph{Isomorphisms of
  {B}rin-{H}igman-{T}hompson groups}, Israel J. Math. \textbf{199} (2014),
  no.~1, 189--218. \MR{3219533}

\bibitem{MR1170379}
Peter Greenberg, \emph{Projective aspects of the {H}igman-{T}hompson group},
  Group theory from a geometrical viewpoint ({T}rieste, 1990), World Sci.
  Publ., River Edge, NJ, 1991, pp.~633--644. \MR{1170379 (93j:20079)}

\bibitem{GNSenglish}
R.~I. Grigorchuk, V.~V. Nekrashevich, and V.~I. Sushchanski{\u\i},
  \emph{Automata, dynamical systems, and groups}, Proc. Steklov Inst. Math
  \textbf{231} (2000), 128--203.

\bibitem{GNSrussian}
\bysame, \emph{Automata, dynamical systems, and groups}, Tr. Mat. Inst.
  Steklova \textbf{231} (2000), 134--214. (English trans.: Proc. Steklov Inst.
  Math, v. 231, no. 4, pp 128--203, 2000). \MR{1841755 (2002m:37016)}

\bibitem{Hedlund69}
G.~A. Hedlund, \emph{Endomorphisms and automorphisms of the shift dynamical
  system}, Math. Systems Theory \textbf{3} (1969), 320--375. \MR{0259881 (41
  \#4510)}

\bibitem{Higmanfpsg}
Graham Higman, \emph{Finitely presented infinite simple groups}, Department of
  Pure Mathematics, Department of Mathematics, I.A.S. Australian National
  University, Canberra, 1974, Notes on Pure Mathematics, No. 8 (1974).
  \MR{0376874 (51 \#13049)}

\bibitem{kunen2011set}
K.~Kunen, \emph{Set theory}, Studies in logic, College Publications, 2011.

\bibitem{NekrashevychIMG}
Volodymyr Nekrashevych, \emph{Iterated monodromy groups}, Groups {S}t {A}ndrews
  2009 in {B}ath. {V}olume 1, London Math. Soc. Lecture Note Ser., vol. 387,
  Cambridge Univ. Press, Cambridge, 2011, pp.~41--93. \MR{2858850}

\bibitem{Britaetal}
B.E.A. Nucinkis and C.~Mart\'inez-P\'erez, \emph{Bredon cohomological
  finiteness conditions for generalisations of {T}hompson groups}, to appear.

\bibitem{Olukoya2018}
Feyisayo Olukoya, \emph{Decision problems in groups of homeomorphisms of
  {C}antor space}, Ph.D. thesis, University of {S}t {A}ndrews, 3 2018.

\bibitem{OlukoyaAutTnr}
Feyishayo Olukoya, \emph{Automorphisms of the generalised {T}hompson's group
  {$T_{n,r}$}}, In Preparation, 2018, pp.~1--35.

\bibitem{OlukoyaOrder}
\bysame, \emph{The growth rates of automaton groups generated by reset
  automata}, 2017, submitted, pp.~1--45.

\bibitem{Pardo}
E.~Pardo, \emph{The isomorphism problem for {H}igman-{T}hompson groups}, J.
  Algebra \textbf{344} (2011), 172--183. \MR{2831934 (2012g:20060)}

\bibitem{Rubin}
Matatyahu Rubin, \emph{Locally moving groups and reconstruction problems},
  Ordered groups and infinite permutation groups, Math. Appl., vol. 354, Kluwer
  Acad. Publ., Dordrecht, 1996, pp.~121--157. \MR{1486199 (99d:20003)}

\bibitem{ThompsonNotes}
Richard~J. Thompson, \emph{Notes on three groups of homeomorphisms},
  Unpublished but widely circulated handwritten notes (1965), 1--11.

\bibitem{Thumann}
Werner Thumann, \emph{Operad groups and their finiteness properties}, Advances
  in Mathematics, to appear (2016), 1--66.

\bibitem{Trahtman09}
Avraham Trahtman, \emph{The road coloring problem}, Israel Journal of
  Mathematics \textbf{172} (2009), 51--60.

\bibitem{Volkov2008}
Mikhail~V. Volkov, \emph{Synchronizing automata and the \v{C}ern\'y
  conjecture}, Language and Automata Theory and Applications (Carlos
  Mart\'{\i}n-Vide, Friedrich Otto, and Henning Fernau, eds.), Springer-Verlag,
  Berlin, Heidelberg, 2008, pp.~11--27.

\end{thebibliography}

\end{document}